\newtheorem{proposition}{Proposition}[section]
\newtheorem{definition}{Definition}[section]
\newtheorem{example}{Example}[section]
\newtheorem{theorem}{Theorem}[section]
\newtheorem{lemma}{Lemma}[section]
\newtheorem{corollary}{Corollary}[section]
\newtheorem{remark}{Remark}[section]
\numberwithin{equation}{section} 
\numberwithin{table}{section}
\begin{document}
	\thispagestyle{empty}
	\begin{center}
		\begin{Large}
			\textsc
			{\bf{ALGORITHMIC CONSTRUCTION OF REPRESENTATIONS OF FINITE SOLVABLE GROUPS}}\\ 
		\end{Large}
		\begin{center}
			\large{Thesis}\\
			\large{{Submitted in Partial Fulfillment of the Requirements}}\\
			\large{{ for the Degree of }}\\
			\large{\bf \large{DOCTOR OF PHILOSOPHY}}\\

		\end{center}
	\end{center}
	\begin{center}
		{\bf \large{by}}\\
		\vspace{.5cm}
		\begin{Large}
			\textsc{\bf SOHAM SWADHIN PRADHAN}\\
			\textsc{(07409305)}
		\end{Large}
	\end{center}
	\begin{center}
		\begin{Large}
			{\bf Supervisor: Prof. Murali K. Srinivasan}
		\end{Large}
		\\
		\vspace{.2cm}
		\begin{Large}
			{\bf Co-supervisor: Prof. Ravi S. Kulkarni}
		\end{Large}
	\end{center}
	\vspace {.2cm}
	\begin{center}
		\begin{Large}
			\textsc{\bf{DEPARTMENT OF MATHEMATICS\\
					\bf{INDIAN INSTITUTE OF TECHNOLOGY BOMBAY}\\
					2018\\[1ex]}}
		\end{Large}
	\end{center}	
	\pagenumbering{roman}
 	
\cleardoublepage      	 
	\thispagestyle{empty}
	\vspace*{8cm}
	\begin{center}
		{\Large \it 
			Dedicated To \\
			\vspace*{0.5cm}
			\it My Teachers }
	\end{center}
\cleardoublepage
\newpage
\thispagestyle{empty}
\cleardoublepage
\thispagestyle{plain}
	
	\newpage
	\thispagestyle{empty}
	\cleardoublepage
	
	\thispagestyle{plain} 
	\begin{center}
		{\bf \large Declaration}
	\end{center}
	I declare that this written submission represents my ideas in my own words and where others' ideas or words have
	been included, I have adequately cited and referenced the original sources.  I also declare that I have adhered to
	all principles of academic honesty and integrity, and have not misrepresented or fabricated or falsified any
	idea/data/fact/source in my submission. I understand that any violation of the above will be cause for
	disciplinary action by the Institute and can also evoke penal action from the sources which have thus not been
	properly cited or from whom proper permission has not been taken when needed.
	
	\vspace*{3cm}
	Place: IIT Bombay\hfill \hspace{.7cm} Soham Swadhin Pradhan \hspace{1.5cm} \par 
	\hfill Roll No. 07409305\hspace*{2.3cm} \par
	\vspace*{1cm}
	\noindent
	\newpage
	\thispagestyle{empty}
	\cleardoublepage
\chapter*{Abstract}
	\addcontentsline{toc}{chapter}{Abstract} 
	\markboth{Abstract}{Abstract}
	The dominant theme of this thesis is the construction of matrix representations of finite solvable groups using a suitable system of generators. For a finite group $G$, generated by $n$ elements $x_{1}, \dots , x_{n}$, say, to know an $F$-representation $(\rho, V)$, we need to know only $\rho(x_{i})$'s. If $V$ is finite dimensional, for the matrix representation of $\rho$, we choose a basis of $V$, and w.r.t. that basis we need to find matrices for $\rho(x_{i})$'s. If $F$ is a field of characteristic $0$ or prime to $|G|$ and $(\rho, V)$ is an irreducible $F$-representation of $G$, then $(\rho, V)$ lies as a minimal left-sided ideal in the semisimple group ring $F[G]$. So for the matrix representation of $\rho$, we find a basis of $V$ as elements of $F[G]$, and w.r.t. that basis we find the matrices for $\rho(x_{i})$'s. We follow this procedure for constructing irreducible matrix representations of finite solvable groups over $\mathbb{C}$, the field of complex numbers. 
	
	For a finite solvable group $G$ of order $N = p_{1}p_{2}\dots p_{n}$, where $p_{i}$'s are primes, there always exists a subnormal series: $\langle {e} \rangle = G_{o} < G_{1} < \dots < G_{n} = G$ such that $G_{i}/G_{i-1}$ is isomorphic to a cyclic group of order $p_{i}$, $i = 1,2,\dots,n$. Associated with this series, there exists a system of generators consisting $n$ elements $x_{1}, x_{2}, \dots , x_{n}$ (say), such that $G_{i} = \langle x_{1}, x_{2}, \dots , x_{i} \rangle$, $i = 1,2,\dots,n$, which is called a ``long system of generators". In terms of this system of generators and conjugacy class sum of $x_{i}$ in $G_{i}$, $i = 1,2, \dots, n$, we present an algorithm for constructing the irreducible matrix representations of $G$ over $\mathbb{C}$ within the group algebra $\mathbb{C}[G]$. This algorithmic construction needs the knowledge of primitive central idempotents, a well defined set of primitive (not necessarily central) idempotents and the ``diagonal subalgebra" of $\mathbb{C}[G]$. In terms of this system of generators, we give simple expressions for the primitive central idempotents, a well defined system of primitive (not necessarily central) idempotents  and a convenient set of generators of the ``diagonal subalgebra" of $\mathbb{C}[G]$.
	
	For a finite abelian group, we present an algorithm for constructing the inequivalent irreducible matrix representations over a field of characteristic $0$ or prime to the order of the group and a systematic way of computing the primitive central idempotents of the group algebra. Besides that, we give simple expressions of the primitive central idempotents of the rational group algebra of a finite abelian group using a ``long presentation" and its Wedderburn decomposition. 
\newpage
\thispagestyle{empty}
\cleardoublepage
\tableofcontents
\cleardoublepage
\newpage
\setcounter{page}{1}
\pagenumbering{arabic}
\chapter{Introduction}
\thispagestyle{empty}
The thesis mainly consists of the algorithmic construction of matrix representations of finite solvable groups and the computation of primitive central idempotents of semisimple group algebras. 
Two basic problems in ordinary representation theory of finite groups are 
\begin{itemize}
\item [1.] construction of the inequivalent irreducible matrix representations of a finite group over a field of characteristic $0$ or prime to order of the group; 
\item[2.] computation of the primitive central idempotents of a semisimple group algebra.
\end{itemize}
The theme of this thesis work is to solve these two problems for certain classes of groups over certain fields. This dissertation consists of the following works:
\begin{itemize}
\item[1.] Let $G$ be a finite group and $F$ be a field of characteristic $0$ or prime to $|G|$. We show that $F$-conjugacy of an element of order $n$ in $G$ is the union of certain conjugacy classes, and which can be determined by the decomposition of $n$-th cyclotomic polynomial $\Phi_{n}(X)$ into irreducible polynomials over $F$. We give a formula for computing the primitive central idempotents of $F[G]$, in terms of "$F$-conjugacy classes", which can be obtained from the "$F$-character table" of $G$. 
\item[2.] The classical approach to complex representations of finite solvable groups is based on Clifford theory (see \cite{alg111}, \cite{cl50}, \cite{combi91}). We study this approach in details with various examples and note some important observations.  
\item[3.] We present an algorithm for constructing the inequivalent irreducible matrix representations of a finite solvable group over the field of complex numbers using a "long presentation". In terms of "system of long generators", we obtain simple expressions for the primitive central idempotents, a well-defined system of primitive (not necessarily central) idempotents and a convenient set of generators of the "diagonal subalgebra" of the complex group algebra of a finite solvable group. 
\item[4.] For a finite abelian group $G$ and a field $F$ of characteristic $0$ or prime to $|G|$, we present an algorithm for constructing the irreducible matrix representations of $G$ over ${F}$ and present a systematic way of computing the primitive central idempotents of $F[G]$. Besides that, we give simple expressions of the primitive central idempotents in $\mathbb{Q}[G]$ using a "long presentation" of $G$ and find its Wedderburn decomposition.
\end{itemize}
\section{Definitions}
We begin with some basic definitions, which can be found in \cite{cur93}, \cite{Weintraub}.
\subsection{Representations}
\begin{definition}\rm
Let $G$ be a finite group and $F$ be a field. Let $V$ be a finite dimensional vector space over $F$. An {\bf{$F$-representation}} $(\rho, V)$ of $G$ is a homomorphism $\rho: G \rightarrow \mathrm{GL}(V)$. The dimension of $V$ is called the {\bf{degree}} of the representation $(\rho, V)$, and is denoted by deg$\rho$.
\end{definition}
\begin{remark}
If $\rho: G \rightarrow \mathrm{GL}(V)$ is an $F$-representation of $G$, then to understand $\rho$ we do not need to know $\rho(x)$ for every $x$ in $G$. If $G$ has a presentation
\begin{equation*}
G = \langle \,\, x_1, x_2, ..., x_n \, | \, \cdots \,\, \rangle,
\end{equation*}
where $x_i'$s are certain generators, satisfying certain relations, then every element of $G$ can be expressed as a word in $x_1, x_2, \dots , x_{n}$, not necessarily in a unique way. Then to know $\rho$ we need to know only $\rho(x_i)'s$.
\end{remark}
\subsection{Long presentations}

\begin{definition}\rm
A {\bf subnormal series} of a group $G$ is a chain of subgroups 
\begin{center}
			$\langle{e}\rangle = G_0 < G_1 < \dots < G_n = G$
\end{center}
such that, for all $i$, $ G_{i+1}$ is normal in  $G_i$. 
\end{definition}
\begin{definition}\rm
A finite group $G$ is {\bf{solvable}} if it has a subnormal series
\begin{center}
$\langle{e}\rangle = G_0 < G_1 < \dots < G_n = G$
\end{center}
such that each factor group $G_{i+1}/G_i$ is a cyclic group of prime order.
\end{definition}

\begin{definition}\rm$\label{long1}$
		Let $G$ be a finite solvable group. 
		Then there is a subnormal series $\langle{e}\rangle = G_0 < G_1 < \dots < G_n = G$ such that for $i = 1, 2, \dots , n$, we have $G_i/G_{i-1}$ is isomorphic to 
		a cyclic group of prime order $p_i$ (say). 
		Suppose that $x_i$ is an element in $G_i$ such that $x_iG_{i-1}$ is a generator of $G_i/G_{i-1}.$ So $G$ has a presentation
		\begin{align*}
		\langle \,x_1, x_2, \dots , x_n \, |\, & x_i^{p_i} = w_i(x_1, x_2, ..., x_{i-1}), \,x_i^{-1}x_jx_i = w_{ij}(x_1, x_2,..., x_{i-1}) \mbox{ for } j<i \,\rangle,
		\end{align*}
		where $w_i$ and $w_{ij}$ are certain words in $x_1, x_2, ..., x_{i-1}$. 
		We call such a presentation, a 
		{\bf{long presentation}} of $G$ and $\{x_1, x_2, \dots , x_n\}$ is called {{\bf long system of generators}} or simply {\bf{long generators}}. 

\end{definition}
	\begin{remark}
		Each element of $G$ can be expressed uniquely as $x_1^{a_1}x_2^{a_2}...x_n^{a_n}$, where  
		$0 \le a_i < p_i$.
	\end{remark}
	\subsection{Primitive central idempotents}
	
	Let $G$ be a finite group and $F$ be a field. We consider all formal sums $\sum_{g \in G}{\alpha_{g}g}, \,\,\,\, \alpha_{g} \in F.$
	We define operations on the formal sums by the rules	
	$$\sum_{g \in G}{\alpha_{g}g} + \sum_{g \in G}{\beta_{g}g} = \sum_{g \in G}{(\alpha_{g} + \beta_{g})g}$$
	and 
	$$(\sum_{g \in G}{\alpha_{g}g})(\sum_{h \in G}{\alpha_{h}h}) = \sum_{g, h \in G}{\alpha_{g}\beta_{h}{gh}} = \sum_{t \in G}{\gamma_{t}t},\,
	\mathrm{where}\, \gamma_{t} = \sum_{g \in G}{\alpha_{g}{\beta_{g^{-1}t}}}.$$
	We define 
	$\alpha{\sum_{g \in G}{\alpha_{g}g}} = {\sum_{g \in G}{\alpha\alpha_{g}g}}, \, \alpha \in F$.
	With these definitions, it can be shown that the set of all formal sums forms an $F$-algebra, which is denoted by $F[G]$ and is called the {\bf{group algebra}} of $G$ over $F$. 
	
	Artin-Wedderburn structure theorem on semisimple algebras (see \cite{Weintraub}, Theorem $2.16$) states that a semisimple group algebra $F[G]$ is the direct sum of its minimal two sided ideals $A_{1}, A_{2}, \dots ,$ $ A_{k}$ (say), where each $A_{i}$ is isomorphic to a matrix ring over finite dimensional division algebra over $F$. Moreover, each $A_{i}$ is equal to $F[G]e_{i}$, where $e_{i}$, $1 \leq i \leq k$ satisfy the following properties:
	\begin{itemize}
		\item[(i)] $e_{i}$ is an { \bf idempotent}, i.e., $e^{2}_{i} = e_{i}$;
		\item[(ii)] $e_{i}$ is a {\bf central} element of $F[G]$, i.e., $e_{i}$ belongs to the center of $F[G]$;
		\item[(iii)] $e_{i}$ and $e_{j}$ are { \bf orthogonal}, i.e., $e_{i}e_{j} = 0,\, \mathrm{for}\, i \neq j$;
		\item[(iv)] $1 = e_{1} + e_{2} + \cdots + e_{k}$;
		\item[(v)] $e_{i}$ can not be written as $e_{i} = e^{'}_{i} + e^{''}_{i}$, where $e^{'}_{i}$ and $e^{''}_{i}$ are orthogonal central idempotents in $F[G]$. 
	\end{itemize}
	The set of elements of $F[G]$ satisfying $(i)-(v)$ is called {\bf{the complete set of primitive central idempotents}} of $F[G]$.
	\subsection{PCI-diagrams}
	Let $G$ be a finite group and $H$ be a subgroup of $G$. Let $F$ be a field with characteristic $0$ or prime to $|G|$. Let $\Omega_{G, F}$ and $\Omega_{H, F}$ denote the set of inequivalent irreducible $F$-representations of $G$ and $H$ respectively. 
	If $(\rho, V)$ is a representation of $G$, then its restriction to $H$, denoted by
	$\rho\downarrow^G_H$ or Res$(V)\downarrow^{G}_{H}$, is a representation of $H$. There is an obvious correspondence $\phi: \Omega_{G,F}\rightharpoonup \Omega_{H, F},$ which is associated with an irreducible
	representation $\rho$ of $G$, the irreducible components of $\rho\downarrow^G_H$. 
	
	Let
	\begin{equation*}
	\rho\downarrow^G_H = \sum_i m_i\eta_i,
	\end{equation*}
	where $m_i$ is a natural number and $\eta_i \in \Omega_{H, F}$ is the decomposition of $\rho\downarrow^G_H$ into irreducible representations of $H$ over $F$. Then draw $m_i$ edges from $\rho$ to $\eta_i$. We call this bipartite graph the {\bf{restriction graph}} of the pair $(G, H)$ and 
	denote it by {\bf $\Gamma(G \downarrow H)$}. 
	Pictorially, the directed restriction graph is:
	\begin{center}
		\begin{tikzpicture}
		\node at (0,0) {$ $};
		\node at (-2,0) {$\bullet$};
		\node at (-2,2) {$\bullet$};
		\node at (-4,1) {$\bullet$};
	
		\node at (-2,1) {$\vdots$};
	
		\node at (-4,2) {$\vdots$};
		\node at (-4,0.5) {$\vdots$};
	
		\node at (-4,1.3) {$\rho$};
	
		\node at (-1.7,0) {$\eta_r$};
		\node at (-1.7,2) {$\eta_1$};
	
		\draw [thick, ->] (-4,1) -- (-3,1.5); \draw (-3,1.5)--(-2,2);
		\draw [thick, ->] (-4,1) -- (-3,0.5); \draw (-3,0.5)--(-2,0);
	
		\draw [thick, ->] (-4,1) to [out=60, in=210] (-3.2,1.8); 
		\draw [thick] (-3.2,1.8) to [out=20,in=180] (-2,2);
		
		\draw [thick, ->] (-4,1) to [out=10, in=210] (-2.8,1.2); 
		\draw [thick] (-2.8,1.2) to [out=30,in=250] (-2,2);
	
		\draw [thick, ->] (-4,1) to [out=-60, in=-210] (-3.2,0.2); 
		\draw [thick] (-3.2,0.2) to [out=-20,in=-180] (-2,0);
	
		\draw [thick, ->] (-4,1) to [out=-10, in=-210] (-2.8,0.8); 
		\draw [thick] (-2.8,0.8) to [out=-30,in=-250] (-2,0);
	
		\node at (-3,2.7) {$\Omega_{G, F}\,\,\,\rightharpoonup \,\,\,\Omega_{H, F} $};
	
		\node at (-3,-.8) {$\Gamma(G \downarrow H)$};
	
		\end{tikzpicture}
	\end{center}

	There is an adjoint correspondence $\psi: \Omega_{H, F} \rightharpoonup \Omega_{G, F}$. 
	 
	Let $(\eta, W)$  be an irreducible representation of $H$.
	Then $V := F[G]\otimes_{F[H]} W$ is an $F$-vector space on which $G$ starts 
	acting from the left. Let $\{g_1 = 1, g_{2}, \dots ,g_n\}$ be a complete set of left-coset representatives of $H$ in $G$. Then 
	$$V  = (g_1\otimes W) \oplus (g_2\otimes W) \oplus \cdots \oplus (g_n\otimes W) = W_1 \oplus \cdots \oplus W_n.$$
	To define action of $G$ on $V$, it suffices to define action on elements of each component $g_i\otimes W$. 
	Consider $g\in G$ and $g_i\otimes w\in g_i\otimes W$. 
	Now $gg_i = g_jh$ for uniquely determined $j$ ($1\leq j\leq n$) and $h\in H$.
	Then we define $g.(g_i\otimes w)=gg_i\otimes w$. Notice that 
	$$gg_i\otimes w=g_jh\otimes w=g_j\otimes hw\in g_j\otimes W\subseteq V,$$
	and this action is independent of a choice of 
	left-coset representatives $g_i$'s. 
	The corresponding representation is called the   
	{\bf induced representation} 
	induced from $\eta$, and is denoted by  $\eta\uparrow^G_H$ or 
	$\mathrm{Ind}(W)\uparrow^G_H$. Let
	\begin{equation*}
	\eta\uparrow^G_H = \sum_j  \,\,n_j \rho_j,
	\end{equation*}
	where $n_j$ is a natural number and 
	$\rho_j \in \Omega_{G, F}$,
	is the decomposition into irreducible representations of $G$. Then we construct another directed bipartite graph based again on the two sets $\Omega_{G, F}$  and $\Omega_{H,F}$ with $n_j$ 
	edges joining   $\eta$ to $\rho_j$. We call this directed bipartite graph, the {\bf{induction graph}} of the pair $(G, H)$ and denote it by $\Gamma(H\uparrow G)$. Pictorially, the induction graph is:
	\begin{center}
		\begin{tikzpicture}
	
		\node at (1,1) {$\bullet$};
		\node at (3,0) {$\bullet$};
		\node at (3,2) {$\bullet$};
	
		\node at (3,1) {$\vdots$};

		\node at (1,2) {$\vdots$};
		\node at (1,0.5) {$\vdots$};

		\node at (1,1.3) {$\eta$};

		\node at (3.3,0) {$\rho_s$};
		\node at (3.3,2) {$\rho_1$};

		\draw [thick, ->] (1,1) -- (2,1.5); \draw (2,1.5)--(3,2);
		\draw [thick, ->] (1,1) -- (2,0.5); \draw (2,0.5)--(3,0);

		\draw [thick, ->](1,1) to [out=50, in= 210] (1.8, 1.8);
		\draw [thick](1.8, 1.8) to [out = 20, in= 180] (3,2);

		\draw [thick, ->](1,1) to [out=5, in=210] (2.2,1.2);
		\draw [thick, ->](2.2, 1.2) to [out=30, in=250] (3,2);

		\draw [thick, ->](1,1) to [out=120, in= 175] (1.9, .2);
		\draw[thick] (1.9,.2) to [out = 175, in = 180] (3,0);
		\draw [thick, ->](1,1) to [out=0, in=140] (2.2, .8);
		\draw [thick] (2.2, .8) to [out = 140, in = 100] (3,0);

		\node at (2,2.7) {$\Omega_{H, F}\,\,\,\rightharpoonup\,\,\, \Omega_{G, F} $};
	
		\node at (2,-.6) {$\Gamma(H \uparrow G)$};
		\end{tikzpicture}
	\end{center}

	\begin{theorem}$(Frobenius\, Reciprocity)$ $(\cite{cur93}, \cite{ful108}, \cite{Weintraub}, \cite{ser118}, \cite{james123})$\label{Fro1}
		Let $G$ be a finite group and $H$ be a subgroup of $G$. Let $F$ be a field of characteristic $0$ or prime to the order of $G$. 
		Let $(\rho, V)$ be a representation of $G$ and $(\eta, W)$ that of $H$. 
		Then there is an isomorphism of $F$-vector spaces
		$$\mathrm{Hom}_{F[G]}(V, \mathrm{Ind}(W)\uparrow^{G}_{H}) 
		\simeq \mathrm{Hom}_{F[H]}(\mathrm{Res}(V)\downarrow^{G}_{H}, W).$$
	\end{theorem}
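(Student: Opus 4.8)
The plan is to prove this adjunction (induction is right adjoint to restriction, the index being finite) by writing down an explicit pair of mutually inverse $F$-linear maps between the two $\mathrm{Hom}$-spaces, built from the description of the induced module recalled above. Fix the complete set of left-coset representatives $\{g_1 = 1, g_2, \dots, g_n\}$ of $H$ in $G$, so that $\mathrm{Ind}(W)\uparrow^G_H = \bigoplus_{i=1}^n (g_i\otimes W)$, and let $\pi\colon \mathrm{Ind}(W)\uparrow^G_H \to W$ be the projection onto the first summand, $\pi\big(\sum_i g_i\otimes w_i\big) = w_1$, where $1\otimes W$ is identified with $W$ via $1\otimes w\mapsto w$. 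Since $h\cdot(1\otimes w) = h\otimes w = 1\otimes hw$ for $h\in H$, the summand $1\otimes W$ is an $F[H]$-submodule and $\pi$ is $F[H]$-linear. (Alternatively one could argue structurally: as $[G:H]$ is finite there is an isomorphism of $F[G]$-modules $F[G]\otimes_{F[H]}W\cong \mathrm{Hom}_{F[H]}(F[G], W)$, and the claim then follows from the tensor--hom adjunction applied to the $(F[G],F[H])$-bimodule $F[G]$; but the direct argument is shorter and stays at the level of the thesis.)

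First I would define $\Theta\colon \mathrm{Hom}_{F[G]}(V, \mathrm{Ind}(W)\uparrow^G_H)\to \mathrm{Hom}_{F[H]}(\mathrm{Res}(V)\downarrow^G_H, W)$ by $\Theta(f) = \pi\circ f$. As $f$ is $F[G]$-linear it is in particular $F[H]$-linear on $\mathrm{Res}(V)$, and $\pi$ is $F[H]$-linear, so $\Theta(f)$ lies in the right-hand space; clearly $\Theta$ is $F$-linear. Next I would define the candidate inverse $\Psi\colon \mathrm{Hom}_{F[H]}(\mathrm{Res}(V)\downarrow^G_H, W)\to \mathrm{Hom}_{F[G]}(V, \mathrm{Ind}(W)\uparrow^G_H)$ by
\begin{equation*}
\Psi(\varphi)(v) = \sum_{i=1}^n g_i\otimes\varphi(g_i^{-1}v).
\end{equation*}
Two checks are needed. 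Independence of the coset representatives: replacing $g_i$ by $g_ih$ with $h\in H$, the term becomes $g_ih\otimes\varphi(h^{-1}g_i^{-1}v) = g_ih\otimes h^{-1}\varphi(g_i^{-1}v) = g_i\otimes\varphi(g_i^{-1}v)$, using $F[H]$-linearity of $\varphi$ and the defining relation $xh'\otimes w = x\otimes h'w$ of $\otimes_{F[H]}$. $F[G]$-equivariance: for $g\in G$ write $g^{-1}g_i = g_{\tau(i)}k_i$ with $k_i\in H$ and $\tau$ a permutation of $\{1,\dots,n\}$; then $g_i^{-1}g = k_i^{-1}g_{\tau(i)}^{-1}$ and $g_ik_i^{-1} = gg_{\tau(i)}$, so
\begin{equation*}
\Psi(\varphi)(gv) = \sum_i g_i\otimes k_i^{-1}\varphi(g_{\tau(i)}^{-1}v) = \sum_i gg_{\tau(i)}\otimes\varphi(g_{\tau(i)}^{-1}v) = g\cdot\Psi(\varphi)(v).
\end{equation*}
Hence $\Psi$ is well defined and $F$-linear.

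Finally I would check that $\Theta$ and $\Psi$ are mutually inverse. For $\varphi$ in the right-hand space, $\pi\big(\Psi(\varphi)(v)\big)$ picks out the $i=1$ term $1\otimes\varphi(v)$, giving $\Theta(\Psi(\varphi)) = \varphi$. Conversely, given $f$, write $f(v) = \sum_j g_j\otimes w_j(v)$; by $F[G]$-linearity $f(g_i^{-1}v) = g_i^{-1}f(v) = \sum_j g_i^{-1}g_j\otimes w_j(v)$, and $g_i^{-1}g_j\in H$ forces $j=i$ because the $g_k$ lie in distinct cosets, so $\pi(f(g_i^{-1}v)) = w_i(v)$ and therefore $\Psi(\Theta(f))(v) = \sum_i g_i\otimes w_i(v) = f(v)$. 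The only genuinely delicate point in all of this is the coset-representative bookkeeping in the equivariance check for $\Psi$; the rest is formal. Note in passing that the hypothesis that $\mathrm{char}\,F$ is $0$ or prime to $|G|$ is not actually used here — it is carried along only because it is the standing assumption of this section.
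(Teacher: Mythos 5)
Your proof is correct, and it is the standard explicit argument: the maps $\Theta(f)=\pi\circ f$ and $\Psi(\varphi)(v)=\sum_i g_i\otimes\varphi(g_i^{-1}v)$ are exactly the counit/unit of the (finite-index) adjunction, and your checks of well-definedness, $G$-equivariance and mutual inversion all go through. For comparison: the thesis does not prove this theorem at all --- it is quoted with references to standard texts --- so there is no ``paper proof'' to measure against; your argument matches the classical textbook proof found in those references. Two small remarks. First, your justification that $\pi$ is $F[H]$-linear only observes that $1\otimes W$ is an $F[H]$-submodule; strictly you also need that $H$ stabilizes the complement $\bigoplus_{i\neq 1} g_i\otimes W$, which holds because $H$ permutes the summands $g_i\otimes W$ according to its action on the left cosets and fixes the coset $H$ itself (for $i\neq 1$, $hg_i\notin H$), so the projection onto the first summand is indeed $H$-equivariant. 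Second, your closing observation is accurate: the hypothesis on $\mathrm{char}\,F$ is nowhere used, since the hom-space isomorphism is a purely module-theoretic adjunction valid over any coefficient field (finiteness of $[G:H]$ is what makes tensor induction also a right adjoint, as you note); the characteristic assumption matters only for the semisimplicity consequences the thesis draws from the theorem, such as Corollary \ref{Fro}.
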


	\begin{corollary}\label{Fro} 
		Let $G$ be a finite group and $H$ be a subgroup of $G$. Let $F$ be a field of characteristic $0$ or prime to $|G|$. Then the following statements are true.
		\begin{itemize}	
			\item [(1)] If the multiple edges are ignored, the directed bipartite graph $\Gamma(G \downarrow H)$ is same as the directed bipartite graph $\Gamma(H \uparrow G)$ with arrows reversed.
			\item[(2)] If $F$ is an algebraically closed field, then the directed bipartite graph $\Gamma(G \downarrow H)$ is same as the directed bipartite graph $\Gamma(H \uparrow G)$ with arrows reversed.
		\end{itemize}
	\end{corollary}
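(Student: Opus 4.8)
The plan is to obtain both assertions as immediate consequences of Frobenius Reciprocity (Theorem~\ref{Fro1}), the only extra ingredient being the semisimplicity of $F[G]$ and $F[H]$, which holds by Maschke's theorem since $\mathrm{char}\,F$ is $0$ or coprime to $|G|$ (hence to $|H|$, as $|H|$ divides $|G|$). First I would translate edge-existence into nonvanishing of $\mathrm{Hom}$-spaces. Fix $\rho=(\rho,V)\in\Omega_{G,F}$ and $\eta=(\eta,W)\in\Omega_{H,F}$. Because $F[H]$ is semisimple, $\eta$ occurs in $\mathrm{Res}(V)\downarrow^G_H$ if and only if $\mathrm{Hom}_{F[H]}\!\bigl(\mathrm{Res}(V)\downarrow^G_H,\,W\bigr)\neq 0$; equivalently, $\Gamma(G\downarrow H)$ has at least one edge joining $\rho$ and $\eta$. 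Likewise, because $F[G]$ is semisimple, $\rho$ occurs in $\mathrm{Ind}(W)\uparrow^G_H$ if and only if $\mathrm{Hom}_{F[G]}\!\bigl(V,\,\mathrm{Ind}(W)\uparrow^G_H\bigr)\neq 0$, i.e.\ $\Gamma(H\uparrow G)$ has at least one edge joining $\eta$ and $\rho$. Theorem~\ref{Fro1} supplies an $F$-linear isomorphism between these two $\mathrm{Hom}$-spaces, so one vanishes exactly when the other does; hence the two graphs have the same edges once multiplicities are forgotten, which is part~(1).

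For part~(2) I would sharpen this from edge-existence to edge-count. When $F$ is algebraically closed, Schur's Lemma gives $\mathrm{End}_{F[G]}(V)=F$ and $\mathrm{End}_{F[H]}(W)=F$. In a semisimple module category one has, for a simple $S\cong S_j$ and any $M=\bigoplus_i m_i S_i$ with the $S_i$ pairwise non-isomorphic, $\dim_F\mathrm{Hom}(S,M)=\dim_F\mathrm{Hom}(M,S)=m_j\,\dim_F\mathrm{End}(S_j)$, which reduces to $m_j$ once $\mathrm{End}(S_j)=F$. Applying this with $M=\mathrm{Res}(V)\downarrow^G_H$ and $S=W$, the number of edges from $\rho$ to $\eta$ in $\Gamma(G\downarrow H)$ equals $\dim_F\mathrm{Hom}_{F[H]}\!\bigl(\mathrm{Res}(V)\downarrow^G_H,\,W\bigr)$; applying it with $M=\mathrm{Ind}(W)\uparrow^G_H$ and $S=V$, the number of edges from $\eta$ to $\rho$ in $\Gamma(H\uparrow G)$ equals $\dim_F\mathrm{Hom}_{F[G]}\!\bigl(V,\,\mathrm{Ind}(W)\uparrow^G_H\bigr)$. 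Theorem~\ref{Fro1} makes these two integers equal, so for every pair $(\rho,\eta)$ the multiplicity of the edge $\rho\to\eta$ in $\Gamma(G\downarrow H)$ matches the multiplicity of the edge $\eta\to\rho$ in $\Gamma(H\uparrow G)$; reversing the arrows of $\Gamma(H\uparrow G)$ then reproduces $\Gamma(G\downarrow H)$ with multiplicities, giving part~(2).

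I expect no serious obstacle; the one point needing care — and the reason the statement splits into (1) and (2) — is the passage from ``an edge exists'' to ``how many edges''. Over a field that is not algebraically closed, the division algebra $\mathrm{End}_{F[H]}(W)$ may be strictly larger than $F$, so although Frobenius Reciprocity still forces $\dim_F\mathrm{Hom}_{F[H]}\!\bigl(\mathrm{Res}(V)\downarrow^G_H,W\bigr)=\dim_F\mathrm{Hom}_{F[G]}\!\bigl(V,\mathrm{Ind}(W)\uparrow^G_H\bigr)$, neither side need equal the corresponding multiplicity, and indeed $\mathrm{End}_{F[G]}(V)$ and $\mathrm{End}_{F[H]}(W)$ can have different $F$-dimensions. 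I would therefore present part~(1) purely at the level of the underlying simple bipartite graphs, where nonvanishing of a single $\mathrm{Hom}$-space is all that is transported, and reserve the multiplicity-preserving statement for the algebraically closed case in part~(2).
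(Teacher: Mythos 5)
Your proof is correct and follows exactly the route the paper intends: the corollary is stated as an immediate consequence of Theorem~\ref{Fro1} (the paper gives no separate proof), and your argument supplies precisely the expected details — semisimplicity to translate edge-existence into nonvanishing of the two $\mathrm{Hom}$-spaces for part (1), and Schur's lemma over an algebraically closed field to upgrade $\dim_F\mathrm{Hom}$ to actual multiplicities for part (2). Your closing remark about division algebras of different $F$-dimensions correctly explains why the multiplicity-preserving statement must be restricted to the algebraically closed case.
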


	\begin{definition}\rm
		Let $G$ be a finite group. Let $F$ be a field with characteristic $0$ or prime to $|G|$. Let
		\begin{equation}\tag{*}  
		\langle{e}\rangle = G_0 < G_1 < G_2 < \dots < G_n = G.
		\end{equation}
		be a multiplicity free chain of subgroups.
		By Corollary \ref{Fro}, for each $i = 1,2,\dots, n$, the restriction graph 
		$\Gamma( G_{i} \downarrow G_{i-1})$ is same as induction graph $\Gamma( G_{i-1} \uparrow G_{i})$ with arrows reversed, and ignoring arrows we abbreviate it simply by $\Gamma(G_{i} | G_{i-1})$. There is $1-1$ correspondence between the set of inequivalent $F$-irreducible representations of $G_{i}$ and the  
		set of primitive central idempotents in the semisimple group algebra $F[G_{i}]$. We attach to each vertex in the undirected bipartite graph $\Gamma(G_{i} | G_{i-1})$, the corresponding primitive central idempotent, and we call the resulting undirected  bipartite graph a {\bf{PCI-diagram}} of the pair $(G_{i}, G_{i-1})$. We call the union of PCI-diagrams of all  pairs $(G_{i}, G_{i-1}), i = 1, 2, \dots, n,$ a {\bf{ PCI-diagram}} of $G$ over $F$ associated with the series $(*)$. 
	\end{definition}
	\section{Motivation}
	In this section, we give the motivation of four works which are mentioned at the beginning of this chapter.
	
	$(1)$ The Witt-Berman theorem (see \cite{berman121}, \cite{cur93}, Theorem $42.8$) asserts that, for a finite group $G$ and a field $F$ of characteristic $0$, the number of irreducible $F$-representations of $G$ is equal to the number of "$F$-conjugacy classes" of $G$, which allows us to consider "$F$-character table" like usual character table in Frobenius theory on group representations. If $F$ is an algebraically closed field, there is a formula (see \cite{Yam80}, \cite{combi91}, Corollary $2.1.7$) for computing primitive central idempotents of $F[G]$ in terms of usual characters and usual conjugacy classes. If $F$ is non-algebraically closed field, there is also a formula for computing primitive central idempotents of $F[G]$ (see \cite{Yam80}, \cite{combi91}, Theorem $2.1.6$), but we express it in terms of irreducible $F$-characters and "$F$-conjugacy classes". Most importantly, the complete set of primitive central idempotents of $F[G]$ can be obtained from the "$F$-character table". 
	When one asks for representations over a non-algebraically closed field $F$, the {\it arithmetic} of $F$ significantly comes into play, in particular, the knowledge of which roots of unity lie in the field $F$. We show that "$F$-conjugacy class" of an element of order $n$ is the union of certain conjugacy classes of $G$, and which can be determined by the decomposition of $n$-th cyclotomic polynomial $\Phi_{n}(X)$ into irreducible polynomials over $F$.
	
	$(2)$ The classical approach to representations of finite solvable groups over algebraically closed fields is based on Clifford theory on group representations (see \cite{alg111}, \cite{cl50}, \cite{combi91}, Section $3.6$). But there is no such approach to representations of finite solvable groups over non-algebraically closed fields. To my knowledge, there is no literature on it. In general, the rationality question on group representations was answered by Schur (see \cite{re98}, \cite{sc70}). After reading Schur's work (see \cite{re98}, \cite{sc70}), we realized that to study representations of solvable groups over non-algebraically closed fields it is important to understand the algebraically closed situation. This motivated us to study the classical approach in details and independently explore many results with various examples.  
	
	$(3)$ Frobenius, one of the pioneers in group representations, besides contributing to the development of general theory, as an illustration he classified, up to isomorphism, the irreducible complex representations of $S_{n}$. But Frobenius actually gave only the characters of irreducible representations of $S_n$. On the other hand, Young (see \cite{young}) dealt only with the representations of $S_{n}$. He used natural inclusions $S_{1} \subset S_{2} \subset \cdots \subset S_{n}$ to construct irreducible matrix representations of $S_{n}$ over $\mathbb{C}$. The crucial property of this chain is that $S_{k}$ is generated by adding a single element to $S_{k-1}$. The idea of "long generators" for finite solvable groups came from this chain. A new approach to the finite dimensional irreducible complex representations of symmetric groups was developed by Anatoly Vershik and Andrei Okounkov (see \cite{Mu106}, \cite{ver107}). We are going to do something similar for finite solvable groups.
	Intrinsically, our philosophy of constructing the irreducible matrix representations of finite solvable groups over $\mathbb{C}$ is that of Young, but the details are much simpler.
	
$(4)$ The connection between group representations and the structure theory of group algebras was widely recognized after very influential papers (see \cite{no60}, \cite{noe117}) by Emmy Noether. 
Artin-Wedderburn theorem of semisimple algebras (see \cite{Dor94}, Theorem $3.1$, \cite{Weintraub}, Theorem $2.16$) states that simple components of a semisimple group algebra $F[G]$ are in $1-1$ correspondence with irreducible $F$-representations of $G$, and each simple component is generated by a primitive central idempotent in $F[G]$. So, the problem of constructing irreducible $F$-representations of $G$ is closely related to the problem: {\it Given a semisimple group algebra $F[G]$, compute the complete set of primitive central idempotents of $F[G]$?} The traditional approach (see \cite{combi91}, \cite{Yam80}) for computing the primitive central idempotents of $F[G]$ involves characters and computations in algebraic extensions of $F$. In view of computational
difficulties for this method, the following problem has been raised in recent years: 

{\bf{Problem:} \it Find a character-free expression of the primitive central idempotents of a semisimple group algebra $F[G]$ and its Wedderburn decomposition.}

In the last few years, this problem has been solved for certain class of groups and for some
specific fields. Jespers, Leal, Paques (see \cite{jespers125}) worked out expressions of primitive central
idempotents of nilpotent groups over $\mathbb{Q}$ . 
After we studied their papers, we realised that their works could be explained much better if a suitable system of generators of groups is used in a systematic way.
In fact, for a finite abelian group $G$, we give simple expressions of primitive central idempotents of $\mathbb{Q}[G]$ using a long presentation of $G$ and find its Wedderburn decomposition. Jespers, Leal and Paques (see \cite{jespers125} ) solved this problem.  However, we solve it by using a long presentation of $G$. To my knowledge, a systematic way of construction of  irreducible representations of finite abelian groups over arbitrary fields is not available in the literature. We give algorithm for constructing the irreducible representations of a finite abelian group over a field of characteristic $0$ or prime to the order of the group and present a systematic way for computing the primitive central idempotents of the abelian group algebra.
 \section{Main results of the thesis}
	My thesis work starts from Chapter $3$. We briefly touch upon the main results.
	
	In Chapter $3$, we show that "$F$-conjugacy class" of an element of order $n$ in a group $G$ is the union of certain conjugacy classes, and which is determined by the decomposition of $n$-th cyclotomic polynomial $\Phi_{n}(X)$ into irreducible polynomials over $F$. We give a formula for computing the primitive central idempotents of a semisimple group algebra $F[G]$ in terms of $F$-characters and "$F$-conjugacy classes" of $G$, and which can be read from the "$F$-character table" of $G$. 
 
	Let $L_1, L_2,\dots , L_r$ be the $F$-conjugacy classes of $G$. Let $\chi$ be an ${F}$-character of $G$, and $\chi(L_i)$ denote the common value of $\chi$ over $L_i$. Let $L_{i}$ be the $F$-conjugacy class of an element of $x$. We denote the $F$-conjugacy class of $x^{-1}$ by $L_{i}{^{-1}}$. For any subset $S$ of $G$, $S^*$ denotes the formal sum of elements of $S$. 

\begin{theorem}
		Let $G$ be a finite group and $F$ a field of characteristic $0$ or prime to $|G|$. Let $(\rho, V)$ be an irreducible $F$-representation of $G$ with corresponding primitive central idempotent $e$. 	Let $\chi$ be the character corresponding to $\rho$ and $\mathrm{End}_{F[G]}(V) = D$, dim$_{D}(V) = n$. Then $e = \frac{n}{|G|} \sum_{i = 1}^{r} {\chi(L_{{i}}^{-1})}L_{{i}}^*.$
\end{theorem}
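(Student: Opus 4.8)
The plan is to reduce the statement to the classical-looking formula $e=\tfrac{n}{|G|}\sum_{g\in G}\chi(g^{-1})\,g$ by a trace computation in the regular representation, and then to merge the sum over $G$ into a sum over $F$-conjugacy classes. First I would fix an Artin--Wedderburn decomposition $F[G]=A_1\oplus\cdots\oplus A_k$ with $A_i=F[G]e_i$ and arrange the labelling so that $(\rho,V)$ is the irreducible representation afforded by $A_1$ and $e=e_1$. Since $e_1$ is the unit of $A_1$ and $A_1A_j\subseteq A_1\cap A_j=\{0\}$ for $j\neq 1$, the element $e$ acts as the identity on $V=V_1$ and annihilates $V_j$ for every $j\neq 1$; this is the only structural input needed.

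Next I would introduce the canonical trace $\tau\colon F[G]\to F$, $\tau\!\left(\sum_{g}a_g g\right)=a_1$, and observe that $\tau=\tfrac{1}{|G|}\chi_{\mathrm{reg}}$, where $\chi_{\mathrm{reg}}$ is the character of the left regular representation: left multiplication by $h$ fixes no basis vector when $h\neq 1$ and is the identity when $h=1$, so its trace is $|G|\,\delta_{h,1}$. Writing $e=\sum_g a_g g$, the coefficient $a_g$ is the identity-coefficient of $g^{-1}e$, i.e. $a_g=\tau(g^{-1}e)=\tfrac{1}{|G|}\chi_{\mathrm{reg}}(g^{-1}e)$. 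Decomposing the regular module as $F[G]\cong\bigoplus_j V_j^{\oplus n_j}$ with $n_j=\dim_{D_j}V_j$ (the number of columns when $A_j\cong M_{n_j}(D_j)$), we get $\chi_{\mathrm{reg}}=\sum_j n_j\chi_j$. Because $\rho_j(e)=0$ for $j\neq 1$ while $\rho_1(e)=\mathrm{id}_V$, every term vanishes except the first, so $\chi_{\mathrm{reg}}(g^{-1}e)=n_1\chi(g^{-1})=n\,\chi(g^{-1})$ and therefore $e=\tfrac{n}{|G|}\sum_{g\in G}\chi(g^{-1})\,g$.

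It remains to collapse this to $\tfrac{n}{|G|}\sum_{i=1}^r\chi(L_i^{-1})L_i^*$. Partition $G$ into its $F$-conjugacy classes $L_1,\dots,L_r$. An $F$-character is constant on each $F$-conjugacy class: $\chi$ takes values in $F$, $\rho(g)$ is diagonalizable over $\overline{F}$ with roots-of-unity eigenvalues (here the hypothesis on $\mathrm{char}\,F$ is used), and for $\sigma\in\mathrm{Gal}(F(\zeta_m)/F)$ with $\sigma(\zeta_m)=\zeta_m^{k}$ and $m=|g|$ one has $\chi(g^{k})=\sigma(\chi(g))=\chi(g)$, so that $\chi$ is constant on each $F$-conjugacy class by the definition of the latter. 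The set of inverses of $L_i$ is again a single $F$-conjugacy class $L_i^{-1}$, on which $\chi$ is constant with value $\chi(L_i^{-1})$; hence $\sum_{g\in L_i}\chi(g^{-1})g=\chi(L_i^{-1})\sum_{g\in L_i}g=\chi(L_i^{-1})L_i^*$, and summing over $i$ yields the stated formula.

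The trace manipulation and the orthogonality of the simple components are routine; the step that needs genuine care is the identification of the multiplicity of $V_j$ in $\chi_{\mathrm{reg}}$ with $\dim_{D_j}V_j$ over a non-algebraically closed $F$ --- keeping the division algebra $D$ and the distinction between $D$ and $D^{\mathrm{op}}$ straight (the final multiplicity is insensitive to it, but one must see why) --- together with confirming that nothing in the trace argument degenerates when $\mathrm{char}\,F$ is a prime not dividing $|G|$. I expect this bookkeeping, not any deep difficulty, to be the main obstacle.
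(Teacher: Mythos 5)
Your proposal is correct, but it takes a genuinely different route from the paper. The paper proves the formula by extending scalars: it invokes Schur's theorem to write $V\otimes_F\overline{F}\cong m\,(U_{1}\oplus\cdots\oplus U_{\delta})$ as a sum of Galois-conjugate absolutely irreducible constituents of degree $mn$, applies the classical idempotent formula over $\overline{F}$ to each $U_i$ (a sum over ordinary conjugacy classes), cites Yamada for the fact that $e$ is the sum of the $\delta$ distinct $\overline{F}$-idempotents in the Galois orbit, and then uses $\chi=m(\chi_{1}+\cdots+\chi_{\delta})$ together with the regrouping of ordinary classes into $F$-classes to land on $e=\frac{n}{|G|}\sum_i\chi(L_i^{-1})L_i^*$. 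You instead stay entirely over $F$: the coefficient-extraction functional $\tau=\frac{1}{|G|}\chi_{\mathrm{reg}}$, the Wedderburn decomposition of the regular module with multiplicity $n_j=\dim_{D_j}V_j$, and the fact that $e$ acts as $\mathrm{id}_V$ on $V$ and as $0$ on the other simple modules give $e=\frac{n}{|G|}\sum_{g}\chi(g^{-1})g$ directly, after which the collapse to $F$-classes is immediate. Your approach buys elementarity and uniformity (no Schur index, no Galois descent, no appeal to Yamada), and the multiplicity identification you flag is indeed the only point needing care and is standard; the paper's approach buys an explicit link between $e$, the absolutely irreducible constituents, and the Schur index $m$, which it exploits elsewhere in relating $F$-idempotents to the $F$-character table. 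Two minor remarks: the constancy of $\chi$ on $F$-conjugacy classes is immediate from the paper's definition of $F$-conjugacy (your Galois/eigenvalue argument proves a stronger structural fact that is not needed here), and the assertion that the inverses of $L_i$ form a single $F$-class $L_i^{-1}$ is justified by noting that $g\mapsto\chi(g^{-1})$ is the character of the contragredient, which is again an $F$-representation --- the paper glosses this equally.
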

In Chapter $4$, we describe inductive construction of complex representations of finite solvable groups using Clifford theory on group representations (see \cite{alg111}, \cite{cl50}, \cite{combi91}). 
Using the following theorem, one can inductively construct the irreducible representations of a finite solvable group over $\mathbb{C}$. This is a known result (see \cite{alg111}, Theorem $13.52$), but we have given an indirect proof and also  given many important consequences.   
	\begin{theorem}(Index-$p$ theorem)
		Let $G$ be a group, and $H$ a normal subgroup of index $p$, $p$ a prime.
		Let $\eta$ be an irreducible representation of $H$ over $\mathbb{C}$.
		\begin{itemize}
			\item[(1)]If the $G/H$-orbit of $\eta$ is a singleton, then $\eta$ extends
			to $p$ mutually inequivalent representations $\rho_1, \rho_2,\dots, \rho_p$  of $G$ over $\mathbb{C}$.
			\item[(2)] If the $G/H$-orbit of $\eta$ consists of $p$ points $\eta = \eta_1, \eta_2,\dots, \eta_p$ then the induced representations $\eta_1\uparrow_H^G$, $\eta_2\uparrow_H^G,\dots, \eta_p\uparrow_H^G$ are equivalent, say $\rho$, which is an irreducible representation over $\mathbb{C}$.
		\end{itemize}
	\end{theorem}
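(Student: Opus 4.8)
The plan is to route everything through the single induced representation $\rho := \eta\uparrow_H^G$ together with one inner-product (equivalently, $\mathrm{Hom}$-dimension) computation carried out with Frobenius reciprocity (Theorem~\ref{Fro1}). Since $[G:H]=p$ with $p$ prime, $G/H$ is cyclic; fix $t\in G\setminus H$, so $G=\bigsqcup_{i=0}^{p-1}t^{i}H$ and $t^{p}\in H$. Writing $W$ for the space of $\eta$ and $\eta^{t^{i}}$ for the $H$-representation $h\mapsto\eta(t^{-i}ht^{i})$, the identity $h\cdot(t^{i}\otimes w)=t^{i}\otimes\eta(t^{-i}ht^{i})w$ gives at once the elementary decomposition $\rho\downarrow^{G}_{H}=\bigoplus_{i=0}^{p-1}\eta^{t^{i}}$, a sum of $p$ irreducibles. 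The number of $i\in\{0,\dots,p-1\}$ with $\eta^{t^{i}}\cong\eta$ is the order of the stabilizer of $\eta$ in $G/H$, which is $p$ in case (1) and $1$ in case (2). Combining this with Frobenius reciprocity and Schur's lemma,
$$\dim\mathrm{Hom}_{\mathbb{C}[G]}(\rho,\rho)=\dim\mathrm{Hom}_{\mathbb{C}[H]}\!\big(\rho\downarrow^{G}_{H},\,\eta\big)=\#\{\,i:\eta^{t^{i}}\cong\eta\,\}=\begin{cases}p,&\text{case }(1),\\ 1,&\text{case }(2).\end{cases}$$

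Case (2) then falls out immediately. From $\dim\mathrm{Hom}_{\mathbb{C}[G]}(\rho,\rho)=1$ we get that $\rho$ is irreducible. For any $j$, the same computation gives $\dim\mathrm{Hom}_{\mathbb{C}[G]}(\eta_{j}\uparrow_H^G,\rho)=\#\{\,i:(\eta_{j})^{t^{i}}\cong\eta\,\}$; but $\eta_{j}=\eta^{t^{j-1}}$, so $(\eta_{j})^{t^{i}}=\eta^{t^{i+j-1}}$, and as $i$ varies these run over the whole orbit $\{\eta_{1},\dots,\eta_{p}\}$, which contains $\eta$ exactly once. Hence $\eta_{j}\uparrow_H^G$ is irreducible (norm-$1$ computation applied to it) and admits a nonzero homomorphism to the irreducible $\rho$, so $\eta_{j}\uparrow_H^G\cong\rho$ for every $j$.

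For case (1) I would first produce one honest extension. Since $\eta$ is $G/H$-fixed, Schur's lemma yields an invertible $T$ with $T\eta(h)T^{-1}=\eta(tht^{-1})$ for all $h\in H$; then $\eta(t^{p})^{-1}T^{p}$ commutes with $\eta(H)$, hence equals a scalar $\lambda\ne0$, and after replacing $T$ by $\mu T$ with $\mu^{p}=\lambda^{-1}$ (this is where algebraic closedness of $\mathbb{C}$ enters) we may assume $T^{p}=\eta(t^{p})$. One checks that $t^{i}h\mapsto T^{i}\eta(h)$ respects the defining relations $t^{p}\in H$ and $tht^{-1}\in H$ of $G$ as a cyclic extension of $H$, giving a representation $\rho_{1}$ of $G$ with $\rho_{1}\downarrow^{G}_{H}=\eta$. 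Tensoring $\rho_{1}$ with the $p$ distinct one-dimensional characters $\chi_{0},\dots,\chi_{p-1}$ of $G/H$ produces $\rho_{i+1}:=\rho_{1}\otimes\chi_{i}$, each still restricting to $\eta$. That these $p$ representations are pairwise inequivalent, and that they exhaust the constituents of $\rho$, both follow from the projection formula: $\bigoplus_{i=0}^{p-1}\rho_{1}\otimes\chi_{i}\cong\rho_{1}\otimes\mathbb{C}[G/H]\cong\mathrm{Ind}_{H}^{G}(\rho_{1}\downarrow^{G}_{H})=\rho$, and since the left-hand side is a direct sum of $p$ irreducibles while $\dim\mathrm{Hom}_{\mathbb{C}[G]}(\rho,\rho)=p$, the summands must be mutually inequivalent. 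The genuinely \emph{indirect} variant avoids constructing $\rho_{1}$ altogether: write $\rho=\bigoplus_{j}m_{j}\rho_{j}$ with $\rho_{j}\downarrow^{G}_{H}=m_{j}\eta$ and $\sum_{j}m_{j}^{2}=p$; the projection formula shows $\widehat{G/H}$ permutes $\{\rho_{j}\}$ preserving multiplicities, and a constituent fixed by all of $\widehat{G/H}$ would satisfy $m_{j}\rho\cong p\,\rho_{j}$, forcing $p=m_{j}^{2}$, impossible; hence every $\widehat{G/H}$-orbit has size $p$, which with $\sum_{j}m_{j}^{2}=p$ forces exactly $p$ constituents, each of multiplicity $1$ and each restricting to $\eta$, i.e.\ $p$ inequivalent extensions.

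The step I expect to be the real obstacle is case (1): the bare numerology $\sum_{j}m_{j}^{2}=p$ does \emph{not} by itself force all multiplicities to equal $1$ (e.g.\ $5=4+1$), so one genuinely needs the extra leverage of either the explicit Schur-lemma extension or the $\widehat{G/H}$-symmetry from the projection formula to conclude that $\eta$ really extends, and does so in precisely $p$ inequivalent ways. Case (2) and the Mackey/Frobenius bookkeeping are routine by comparison.
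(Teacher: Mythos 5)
Your argument is correct, but it reaches the conclusion by a genuinely different route from the thesis, above all in part (1). For part (2) you and the thesis do essentially the same bookkeeping, except that where the thesis invokes Mackey's irreducibility criterion you prove the needed instance by hand: the decomposition $\rho\downarrow_H^G=\bigoplus_i\eta^{t^i}$ plus Frobenius reciprocity gives $\dim\mathrm{End}_{\mathbb{C}[G]}(\rho)=\#\{i:\eta^{t^i}\cong\eta\}$, which equals $1$ there. The real divergence is part (1). The thesis is deliberately non-constructive: it first shows any irreducible constituent of $\eta\uparrow_H^G$ restricts to $\eta$ itself, and then obtains ``exactly $p$ extensions'' by a global count over \emph{all} irreducibles of $H$ and $G$, comparing $|G|=p|H|$ with the sums of squares of degrees; this is short but leans on the completeness relation of Frobenius theory and pins down the number of extensions only collectively. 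You instead work locally at the single $\eta$: either you construct an extension explicitly (an intertwiner $T$ with $T\eta(h)T^{-1}=\eta(tht^{-1})$, rescaled so that $T^p=\eta(t^p)$ --- this, rather than any counting, is where algebraic closedness enters), and then combine the projection formula $\rho_1\otimes\mathrm{Ind}_H^G\mathbf{1}\cong\eta\uparrow_H^G$ with $\dim\mathrm{End}(\rho)=p$ to get $p$ pairwise inequivalent twists $\rho_1\otimes\chi_i$; or, in your indirect variant, you let $\widehat{G/H}$ act on the constituents and exclude fixed constituents via the impossible equation $m_j^2=p$. Your route buys an explicit model of the extensions and the relation $\rho_k=\chi^k\otimes\rho_1$ (which the thesis quotes separately as its extension theorem), and it isolates exactly where $\mathbb{C}$ is used; the thesis's count is shorter and needs no intertwiner, at the price of being global and character-theoretic. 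Two cosmetic points: the existence of $T$ comes from the assumed equivalence $\eta\cong\eta^t$, Schur's lemma entering only to make $\eta(t^p)^{-1}T^p$ a scalar; and your caution that $\sum_j m_j^2=p$ alone does not force all $m_j=1$ is well placed --- that is precisely the gap your $\widehat{G/H}$-symmetry (or the explicit extension) closes.
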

\begin{corollary}
		Let $G$ be a solvable group. Then $G$ has a 
		subnormal series such that the successive quotients are 
		isomorphic to cyclic groups of prime order. The last but one term in the subnormal series is a cyclic group of prime order. So all its irreducible 
		complex representations are of degree one. Then starting with degree one 
		representations, we can build all the complex irreducible representations of $G$ by the processes of successive extension and induction.
	\end{corollary}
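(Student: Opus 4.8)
The plan is to treat the three assertions in turn: the existence of the series and the identification of its smallest nontrivial term are essentially bookkeeping, while the construction of all irreducible complex representations is an induction powered by the Index-$p$ theorem.

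First I would note that the existence of a subnormal series $\langle e\rangle = G_0 \triangleleft G_1 \triangleleft \cdots \triangleleft G_n = G$ with each $G_i/G_{i-1}$ cyclic of prime order $p_i$ is precisely the defining property of a finite solvable group recorded in Definition~\ref{long1} above; equivalently, one starts from any subnormal series with abelian quotients and refines each abelian section by a composition series of that (finite abelian) quotient, pulling the refinement back to $G$, so that every new successive quotient becomes cyclic of prime order. Reading this chain from $G$ downward, the term immediately above the trivial subgroup is $G_1 = G_1/G_0$, which is cyclic of prime order $p_1$; being abelian with $\mathbb{C}$ algebraically closed, every irreducible complex representation of $G_1$ is one-dimensional (by Schur's lemma), and in fact these are exactly the $p_1$ characters $x_1 \mapsto \zeta^{k}$, $0 \le k < p_1$, with $\zeta$ a primitive $p_1$-th root of unity.

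Next I would set up the induction on $i$ with the following inductive statement: given a complete list of the inequivalent irreducible complex representations of $G_{i-1}$, the Index-$p$ theorem applied to the normal subgroup $G_{i-1}\triangleleft G_i$ of prime index $p_i$ produces a complete list for $G_i$. For each irreducible $\eta$ of $G_{i-1}$ one examines its $G_i/G_{i-1}$-orbit: when the orbit is a singleton, part~(1) supplies $p_i$ pairwise inequivalent extensions of $\eta$ to $G_i$; when the orbit has $p_i$ points, part~(2) supplies a single irreducible representation $\eta\uparrow^{G_i}_{G_{i-1}}$. Gathering these contributions over all $\eta$ (taking one $\eta$ per orbit in the second case) yields the proposed list for $G_i$, and since $G_n = G$, iterating from the degree-one representations of $G_1$ gives all irreducible complex representations of $G$ after $n-1$ steps.

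The step I expect to carry the real weight is \emph{exhaustiveness}: that every irreducible complex representation $\rho$ of $G_i$ indeed occurs in the list just described. Here I would use Frobenius reciprocity (Theorem~\ref{Fro1}): since $\mathrm{Res}(\rho)\downarrow^{G_i}_{G_{i-1}}$ is nonzero it has an irreducible constituent $\eta$, so $\mathrm{Hom}_{\mathbb{C}[G_{i-1}]}\!\big(\mathrm{Res}(\rho)\downarrow^{G_i}_{G_{i-1}},\,\eta\big)\neq 0$, whence by Theorem~\ref{Fro1} the space $\mathrm{Hom}_{\mathbb{C}[G_i]}\!\big(\rho,\,\eta\uparrow^{G_i}_{G_{i-1}}\big)\neq 0$; that is, $\rho$ is a constituent of $\eta\uparrow^{G_i}_{G_{i-1}}$. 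It then remains to identify this induced representation: if the $G_i/G_{i-1}$-orbit of $\eta$ is a singleton then $\eta\uparrow^{G_i}_{G_{i-1}}$ is the direct sum of the $p_i$ extensions of $\eta$ from part~(1) (for any one extension $\rho_0$ one has $\eta\uparrow^{G_i}_{G_{i-1}} \cong \rho_0\otimes\mathbb{C}[G_i/G_{i-1}]$ by the projection formula, i.e.\ the sum of the twists of $\rho_0$ by the $p_i$ characters of the cyclic quotient), while if the orbit has $p_i$ points then $\eta\uparrow^{G_i}_{G_{i-1}}$ is itself the irreducible representation of part~(2); either way $\rho$ lies in the list. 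The only remaining subtleties — that the listed representations are pairwise inequivalent and that the $p_i$ extensions in the singleton case are genuinely distinct — are already part of the Index-$p$ theorem quoted above, so the corollary follows.
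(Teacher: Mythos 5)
Your proposal is correct and follows the paper's own route: the paper treats this corollary as an immediate consequence of the Index-$p$ theorem applied inductively along the subnormal series $\langle e\rangle = G_0 < G_1 < \cdots < G_n = G$, starting from the degree-one representations of the cyclic group $G_1$, with exhaustiveness already built into the proof of that theorem (every irreducible of $G_i$ restricts to contain some irreducible of $G_{i-1}$ and is then either an extension or induced). Your added Frobenius-reciprocity and projection-formula details merely make explicit what the paper's Theorems \ref{index} and \ref{ext} assert.
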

\begin{definition}
Let $G$ be a finite group. Let $H$ be a subgroup of $G$ and of index $n$. 
Let $\eta$ be an $F$-representation of $H$.
An $F$-representation $\rho$ of $G$ is said to be a 
{\it generalized extension} of $\eta$ if $\rho \downarrow_{H}^{G} = 
\underbrace{ \eta \oplus \cdots \oplus \eta }_{m \, \mathrm{copies}}$, where $1 < m \leq n$.
\end{definition}
The following theorem is an analogue of the Index-$p$ theorem in general situation.
\begin{theorem}
Let $G$ be a group and $H$ a normal subgroup of index $p$, a prime. 
Let $F$ be a field of characteristic $0$ or prime to $|G|$. Let $\eta$ be an irreducible $F$-representation of $H$.
\begin{enumerate}
\item[(1)] If $G/H$-orbit of $\eta$ is singleton, then there exists an irreducible $F$-representation of $G$ which is either an extension or a generalized extension of $\eta$.

\item[(2)] If the $G/H$-orbit of $\eta$ consists of $p$ points $\eta = \eta_1, \eta_2, ..., \eta_p$ then the induced representations $\eta_1\uparrow_H^G$, $\eta_2\uparrow_H^G, ... , \eta_p\uparrow_H^G$, are equivalent, say $\rho$, which is an irreducible.
		\end{enumerate}
		\begin{corollary}
			Let $G$ be a finite solvable group. Let $F$ be a field of characteristic $0$ or prime to $|G|$. Then every irreducible representation of $G$ can be obtained 
			starting with an irreducible representation of a cyclic subgroup of prime order by
			the processes of successive induction, extension and generalized extension.
		\end{corollary}
	\end{theorem}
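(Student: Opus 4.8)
The plan is to run the usual Clifford-theoretic case split according to the size of the $G/H$-conjugation orbit of $\eta$, but to replace every appeal to Schur's lemma over an algebraically closed field (which here would only furnish a division algebra of endomorphisms) by a comparison of dimensions, and then to feed the two cases into an induction along a subnormal series with prime cyclic quotients to obtain the Corollary. The two tools used throughout are Frobenius reciprocity (Theorem~\ref{Fro1}), in the form $\mathrm{Hom}_{F[G]}(\sigma,\eta\uparrow^G_H)\cong\mathrm{Hom}_{F[H]}(\sigma\downarrow^G_H,\eta)$, and Clifford's theorem: since $F[H]$ is semisimple, for an irreducible $F[G]$-module $\sigma$ the restriction $\sigma\downarrow^G_H$ is semisimple, its homogeneous components form a single $G$-orbit of irreducible $F[H]$-modules, and they all occur with one common multiplicity $e$. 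Because $[G:H]=p$ is prime, each such orbit has size $1$ or $p$, which is precisely the dichotomy in the statement.

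For part~(1), assume $\eta$ is $G$-invariant. As $F[G]$ is semisimple, $\eta\uparrow^G_H$ is a nonzero semisimple module, so it has an irreducible submodule $\rho$; then $\mathrm{Hom}_{F[H]}(\rho\downarrow^G_H,\eta)\neq 0$ by Frobenius reciprocity, so $\eta$ is a constituent of $\rho\downarrow^G_H$, and by Clifford together with $G$-invariance its only homogeneous component is $\eta$, i.e.\ $\rho\downarrow^G_H\cong\eta^{\oplus e}$. Comparing dimensions, $e\dim\eta=\dim\rho\le\dim(\eta\uparrow^G_H)=p\dim\eta$, so $1\le e\le p$, and hence $\rho$ is an extension of $\eta$ when $e=1$ and a generalized extension of $\eta$ when $1<e\le p$. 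For part~(2), let the orbit of $\eta=\eta_1$ be $\{\eta_1,\dots,\eta_p\}$ and let $\sigma$ be an irreducible submodule of $\eta_1\uparrow^G_H$. As before $\eta_1$ is a constituent of $\sigma\downarrow^G_H$, so by Clifford $\sigma\downarrow^G_H\cong e(\eta_1\oplus\cdots\oplus\eta_p)$; dimension counting gives $ep\dim\eta=\dim\sigma\le\dim(\eta_1\uparrow^G_H)=p\dim\eta$, forcing $e=1$ and $\sigma=\eta_1\uparrow^G_H$. Thus each $\eta_i\uparrow^G_H$ is irreducible with $(\eta_i\uparrow^G_H)\downarrow^G_H\cong\eta_1\oplus\cdots\oplus\eta_p$, and for all $i,j$ we get $\mathrm{Hom}_{F[G]}(\eta_j\uparrow^G_H,\eta_i\uparrow^G_H)\cong\mathrm{Hom}_{F[H]}(\eta_j,(\eta_i\uparrow^G_H)\downarrow^G_H)\neq 0$; since a nonzero map between irreducibles is an isomorphism, all the induced representations $\eta_i\uparrow^G_H$ are equivalent, as claimed.

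For the Corollary, I would induct on the length $n$ of a subnormal series $\langle e\rangle=G_0<G_1<\cdots<G_n=G$ with $G_i/G_{i-1}$ cyclic of prime order $p_i$. The base case $n=1$ is trivial, since then $G$ is itself a cyclic group of prime order. For $n>1$, take an irreducible $F$-representation $\rho$ of $G=G_n$, restrict to the normal prime-index subgroup $G_{n-1}$, and choose an irreducible constituent $\eta$ of $\rho\downarrow^{G_n}_{G_{n-1}}$. If the $G_n/G_{n-1}$-orbit of $\eta$ is a singleton, the computation of part~(1) (applied to $\rho$, which is a constituent of $\eta\uparrow^{G_n}_{G_{n-1}}$ by Frobenius reciprocity) shows $\rho$ is an extension or generalized extension of $\eta$; if the orbit has $p_n$ points, then $\mathrm{Hom}_{F[G_n]}(\eta\uparrow^{G_n}_{G_{n-1}},\rho)\neq 0$ while $\eta\uparrow^{G_n}_{G_{n-1}}$ is irreducible by part~(2), so $\rho\cong\eta\uparrow^{G_n}_{G_{n-1}}$ is induced from $\eta$. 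In all cases $\rho$ is obtained from the irreducible $F$-representation $\eta$ of $G_{n-1}$ by a single step of induction, extension, or generalized extension, and the inductive hypothesis expresses $\eta$ in terms of an irreducible representation of the prime-order cyclic group $G_1$.

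The step I expect to demand the most care is not conceptual but a matter of invoking Clifford's theorem in the right generality: over an arbitrary field $F$ with $|G|$ invertible one needs the semisimplicity of $\sigma\downarrow^G_H$, the transitivity of the $G$-action on its homogeneous components, and the equality of the multiplicities across those components; and, in part~(1), one must pin down the inequality $e\le p=[G:H]$ so that $\rho$ genuinely satisfies the definition of a generalized extension (rather than just $\rho\downarrow^G_H\cong\eta^{\oplus e}$ for some unbounded $e$). Everything else reduces to Frobenius reciprocity plus a comparison of degrees, which is exactly what lets us sidestep the division-algebra endomorphism rings that make the non-algebraically-closed case delicate.
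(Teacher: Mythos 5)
Your proof is correct, and for part (1) it coincides with the paper's argument: both pick (via Frobenius reciprocity) an irreducible $\rho$ whose restriction contains $\eta$, apply Clifford's decomposition theorem and the singleton-orbit hypothesis to get $\rho\downarrow^G_H\cong\eta^{\oplus m}$, and bound $m\leq p$ from the embedding of $\rho$ into $\eta\uparrow^G_H$. Where you diverge is part (2): the paper simply refers back to its proof of the Index-$p$ theorem, which establishes irreducibility of $\eta\uparrow^G_H$ by citing Mackey's irreducibility criterion (as stated in Serre, i.e.\ in the complex setting) and then gets the equivalence of the $p$ induced representations from Frobenius reciprocity. You instead prove irreducibility directly: take an irreducible submodule $\sigma\subseteq\eta_1\uparrow^G_H$, use Clifford to see that $\sigma\downarrow^G_H$ contains the whole orbit $\eta_1\oplus\cdots\oplus\eta_p$, and conclude $\dim\sigma=p\dim\eta$, hence $\sigma=\eta_1\uparrow^G_H$; equivalence of the $\eta_i\uparrow^G_H$ then follows from Frobenius reciprocity plus the first half of Schur's lemma, which is valid over any field. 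This buys you a self-contained argument that is manifestly uniform over an arbitrary field of characteristic $0$ or prime to $|G|$ (exactly the generality claimed in the statement), sidestepping the question of whether the Mackey criterion as cited for $\mathbb{C}$ transfers verbatim; the paper's route is shorter but leans on the earlier complex-field proof. You also spell out the induction along the subnormal series that proves the corollary, which the paper leaves implicit; your version, including the use of part (2) to identify $\rho\cong\eta\uparrow^{G_n}_{G_{n-1}}$ in the non-invariant case, is sound.
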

	In Chapter $5$, we define notion of the "diagonal subalgebra" (unique up to conjugacy) of a group algebra over an algebraically closed field of characteristic $0$. A finite solvable group always has a subnormal series whose successive quotients are cyclic groups of prime order. By Theorem $\ref{index}$, such a subnormal series is a multiplicity free chain of subgroups over $\mathbb{C}$ so that one can consider the "Gelfand-Tsetlin algebra" associated with that series. Moreover, "Gelfand-Tsetlin algebra" (see \cite{Mu106}, Section $2$) is the "diagonal subalgebra" of the complex group algebra. 
	The following theorem gives a convenient set of generators of the "Gelfand-Tsetlin algebra" in terms of a long system of generators. 
	\begin{theorem}\label{diagonal}
		Let $G$ be a finite solvable group. Let
		\begin{center}
			$ \langle{e}\rangle = G_0 < G_1 < \cdots < G_n = G$
		\end{center}
		be a subnormal series with quotient groups $G_{i}/G_{i-1}$ are isomorphic to cyclic groups
		of prime order $p_{i}$
		and associated long presentation 
		\begin{align*}
		G=\langle x_1,\ldots,x_n \, |\, & x_i^{p_i} = w_i(x_1,\ldots,x_{i-1}), 
		x_i^{-1}x_jx_i = w_{ij}(x_1,\ldots, x_{i-1}), \,\, j<i \rangle,
		\end{align*}
		where $w_i$ and $w_{ij}$ are certain words in $x_1, x_2,\ldots, x_{i-1}$. 
		For each $i = 1, 2, \ldots, n$, let $\overline{C}_{G_{i}}(x_{i})$ 
		denote the $G_{i}$-conjugacy class sum of $x_{i}$ in $\mathbb{C}[G_{i}]$. 
		Then
		$${GZ}_{n} = \langle\,\, {\overline{C}_{G_1}(x_1)}, 
		\,\,{\overline{C}_{G_2}(x_2)},\ldots, {\overline{C}_{G_n}(x_n)}\rangle.$$
	\end{theorem}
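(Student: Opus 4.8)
The plan is to realise $GZ_n$ through its canonical idempotent basis and then reduce the statement to a separation property of the class sums $\overline{C}_{G_i}(x_i)$. By the Index-$p$ theorem the chain $\langle e\rangle = G_0 < G_1 < \cdots < G_n = G$ is multiplicity free over $\mathbb{C}$, so its branching graph $\mathcal{B}$ — vertices at level $i$ the irreducible $\mathbb{C}$-characters of $G_i$, with $\lambda_{i-1}$ joined to $\lambda_i$ exactly when $\lambda_{i-1}$ occurs in $\lambda_i\!\downarrow_{G_{i-1}}$ — has all edges of multiplicity one, and $GZ_n$ is the maximal commutative subalgebra of $\mathbb{C}[G]$ with a basis $\{e_T\}$ of orthogonal primitive idempotents indexed by the directed paths $T=(\lambda_0,\lambda_1,\dots,\lambda_n)$ of $\mathcal{B}$. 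Thus $GZ_n\cong\mathbb{C}^{m}$ as a $\mathbb{C}$-algebra, $m$ the number of such paths, and each $a\in GZ_n$ corresponds to the function $T\mapsto\omega_T(a)$, where $\omega_T(a)$ is the scalar by which $a$ acts on the line $V_T$. Since a unital subalgebra of $\mathbb{C}^m$ is exactly the set of functions constant on the blocks of a partition of the index set, a family $a_1,\dots,a_n\in GZ_n$ generates $GZ_n$ as a unital algebra if and only if the tuple $(\omega_T(a_1),\dots,\omega_T(a_n))$ separates the paths $T$. So the theorem reduces to: (a) $\overline{C}_{G_i}(x_i)\in GZ_n$, and (b) these $n$ elements separate the paths of $\mathcal{B}$.

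Claim (a) is immediate, since $\overline{C}_{G_i}(x_i)$ is a conjugacy-class sum in $G_i$ and so lies in $Z(\mathbb{C}[G_i])\subseteq GZ_n$. For (b) I would first compute $\omega_T(\overline{C}_{G_i}(x_i))$. Write $K_i$ for the conjugacy class of $x_i$ in $G_i$. As $\overline{C}_{G_i}(x_i)$ is central in $\mathbb{C}[G_i]$ it acts on the $\lambda_i$-isotypic part of any $G_i$-module by the corresponding central character, and $V_T$ lies inside the $\lambda_i$-isotypic part of $V_{\lambda_n}\!\downarrow_{G_i}$; hence
\[
\omega_T\bigl(\overline{C}_{G_i}(x_i)\bigr)=\frac{|K_i|\,\chi_{\lambda_i}(x_i)}{\chi_{\lambda_i}(1)},
\]
which depends on the path $T$ only through its level-$i$ vertex $\lambda_i$. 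Given two distinct paths $T=(\lambda_0,\dots,\lambda_n)$ and $T'=(\lambda_0',\dots,\lambda_n')$, let $i$ be least with $\lambda_i\neq\lambda_i'$, so that $\lambda_{i-1}=\lambda_{i-1}'=:\nu$; it then suffices to show $\overline{C}_{G_i}(x_i)$ distinguishes them, i.e. $\omega_T(\overline{C}_{G_i}(x_i))\neq\omega_{T'}(\overline{C}_{G_i}(x_i))$.

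Now apply the Index-$p$ theorem to the normal subgroup $G_{i-1}$ of prime index $p_i$ in $G_i$. If the $G_i/G_{i-1}$-orbit of $\nu$ has $p_i$ points there is a unique irreducible character of $G_i$ lying over $\nu$, forcing $\lambda_i=\lambda_i'$, contrary to hypothesis; hence $\nu$ is $G_i$-stable and its $p_i$ extensions to $G_i$ are $\mu\otimes\psi^{0},\dots,\mu\otimes\psi^{\,p_i-1}$ for one fixed extension $\mu$ and a generator $\psi$ of the character group of $G_i/G_{i-1}\cong\mathbb{Z}/p_i$. Thus $\lambda_i=\mu\otimes\psi^{a}$ and $\lambda_i'=\mu\otimes\psi^{b}$ with $a\neq b$, both of degree $\chi_\mu(1)$, and $\chi_{\mu\otimes\psi^{t}}(x_i)=\psi(x_i)^{t}\chi_\mu(x_i)$. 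Since $x_iG_{i-1}$ generates $G_i/G_{i-1}$, the scalar $\psi(x_i)$ is a primitive $p_i$-th root of unity, so $\psi(x_i)^{a}\neq\psi(x_i)^{b}$; therefore the two eigenvalues above are distinct provided $\chi_\mu(x_i)\neq0$.

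The whole argument therefore rests on showing that $\chi_\mu(x_i)\neq0$ for the extensions $\mu$ that occur, and this is the point I expect to be the main obstacle, as it is not automatic for an arbitrary long generator: one has to exploit the freedom in the choice of $x_i$ within its coset $x_iG_{i-1}$. From the orthogonality relations, $\sum_{h\in G_i}|\chi_\mu(h)|^2=|G_i|$ while $\sum_{h\in G_{i-1}}|\chi_\mu(h)|^2=|G_{i-1}|$ because $\mu\!\downarrow_{G_{i-1}}=\nu$ is irreducible, so $\chi_\mu$ cannot vanish identically on the non-trivial coset $x_iG_{i-1}$; the delicate task is to pin down one value of $x_i$ that is good for every stable $\nu$ occurring at level $i$ and at the same time compatible with the long presentation already chosen on $G_{i-1}$, and to carry this inductively up the series. (That such care is genuinely needed is visible in small cases — for instance with $G_i\cong SD_{16}$ and $G_{i-1}\cong Q_8$ an involution generator $x_i$ annihilates both two-dimensional extensions of the two-dimensional irreducible character of $Q_8$, whereas an order-$8$ generator does not.) Once $\chi_\mu(x_i)\neq0$ has been arranged at every level, the least differing level separates any two paths, (b) holds, and the generation statement ${GZ}_n=\langle\,\overline{C}_{G_1}(x_1),\dots,\overline{C}_{G_n}(x_n)\,\rangle$ follows.
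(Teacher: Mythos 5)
Your argument runs along a genuinely different line from the paper's (the paper inducts on the chain, quoting Berman's theorem through Corollary \ref{cor1} to get $Z_{G_i}=\langle Z_{G_{i-1}},\overline{C}_{G_i}(x_i)\rangle$, whereas you work in the Gelfand--Tsetlin idempotent basis and reduce generation to separation of paths by central-character eigenvalues), and everything you actually prove is correct: membership of the class sums in $GZ_n$, the eigenvalue formula $\omega_T(\overline{C}_{G_i}(x_i))=|K_i|\chi_{\lambda_i}(x_i)/\chi_{\lambda_i}(1)$, the reduction to the first level of disagreement, and the identification of that level as a stable character $\nu$ with extensions $\mu\otimes\psi^t$. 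But the proposal is not a proof: the step you yourself flag, $\chi_\mu(x_i)\neq 0$ for every extension occurring at level $i$, is precisely the content of the theorem and is left open. Worse, your own example shows it cannot be closed for the statement as written. Take $\mathrm{SD}_{16}=\langle r,s\mid r^8=s^2=1,\ srs^{-1}=r^3\rangle$ with the series $\langle e\rangle<\langle r^4\rangle<\langle r^2\rangle<Q_8<\mathrm{SD}_{16}$ and the perfectly legitimate long generator $x_4=s$. The two $2$-dimensional extensions of the $2$-dimensional character of $Q_8$ are induced from $\langle r\rangle$, hence vanish at $s$, so $\overline{C}_{G_4}(s)$ has eigenvalue $0$ on both; two paths agreeing up to level $3$ and ending at these two extensions are therefore separated by none of the four class sums, so the algebra they generate omits the corresponding idempotents and is a proper subalgebra of $GZ_4$ (with $x_4=r$ instead, the values $\pm i\sqrt2$ separate them and the theorem holds). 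So for an arbitrary long system of generators the asserted equality is simply false; a genuine proof must either choose the $x_i$ suitably or add a hypothesis.

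It is worth noting that you have in effect located the weak point of the paper's own proof rather than merely failing to finish yours. The induction rests on Corollary \ref{cor1}, which rests on Theorem \ref{berman} as stated for the fixed coset representative $x$; but inside that proof the nonvanishing of $\lambda$ is obtained by the ``Claim'' that $x$ \emph{can be chosen} in $G-H$ with $\chi_{\rho}(x)\neq 0$ --- the generator is silently re-chosen, possibly differently for different $\eta$, after having been fixed in the statement, and in the example above no such rescue is available for $x=s$. Your orthogonality remark shows a good element exists for each stable $\nu$ separately; what is actually needed, and what would repair both your argument and the paper's, is a single $x_i$ in the nontrivial coset at which the extensions of \emph{all} stable level-$(i-1)$ characters are nonzero, carried compatibly up the chain --- or else a reformulation of the theorem for suitably chosen long generators. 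Until that is supplied, your proposal (like the paper's proof) establishes the result only conditionally.
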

	The following theorem (see \cite{berman120}) is due to S. D. Berman.  
	We give an indirect proof of this theorem, which we do not find in any literature.
	\begin{theorem}
Let $G$ be a finite group and $H$ be a normal subgroup of index $p$,
a prime. Let $G/H = \langle {xH} \rangle$, for some $x$ in $G$. Let $(\eta, W)$ be an irreducible $\mathbb{C}$-representation of $H$ and $e_{\eta}$ be its corresponding primitive central idempotent in $\mathbb{C}[H]$. 
We distinguish two cases:
\begin{itemize}
			\item[(1)] If $e_{\eta}$ is a central idempotent in $\mathbb{C}[G]$, 
			then $\eta$ extends to $p$ distinct irreducible representations
			$\rho_{1}, \rho_{2}, \dots , \rho_{p}$ (say) of $G$. 
			It follows that $(\overline{C}_{G}(x))^{p}{e_{\eta}} = {\lambda}{e_{\eta}}$,
			where $\overline{C}_{G}(x)$ denotes the $G$-conjugacy class sum of $x$ and $\lambda \neq 0$. For each $i$, 
			$1 \leq i \leq p$, let $e_{\rho_{i}}$ be the primitive central idempotent corresponding to the representation $\rho_{i}$. Then
			$${e_{\rho_{i}} = \frac{1}{p} 
				\Big{(} 1 + \zeta^ic + \zeta^{2i} c^2 +\cdots +\zeta^{i(p-1)}c^{p-1}\Big{)} 
				e_{\eta},}$$
			where $c = \frac{\overline{C}_{G}(x){e_{\eta}} }{\sqrt[p]{\lambda}}$ and $\zeta$ 
			is a primitive $p^{th}$ root of unity in $\mathbb{C}$.  
			Moreover, 
			$$e_{\eta} = e_{\rho_1} + e_{\rho_2} + \cdots + e_{\rho_p}.$$
			
			\item[(2)] If $e_{\eta}$ is not a central idempotent in $\mathbb{C}[G]$,
			then 
			$\eta\uparrow_H^G, \eta^{x}\uparrow_H^G, \ldots ,\eta^{x^{p-1}}\uparrow_H^G$ 
			are all equivalent to an irreducible representation $\rho$ (say) of $G$ 
			over $\mathbb{C}$ and in this case,
			$$e_{\rho} = e_{\eta} + e_{\eta^{x}} + \cdots + e_{\eta^{x^{p-1}}}.$$
		\end{itemize}
	\end{theorem}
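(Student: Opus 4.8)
The plan is to treat the two cases separately, using the Index-$p$ theorem and the structure of $\mathbb{C}[H]$-modules under induction and restriction.

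\medskip

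\emph{Case (1): $e_{\eta}$ is central in $\mathbb{C}[G]$.} Since $e_\eta$ is central in $\mathbb{C}[G]$, the two-sided ideal $\mathbb{C}[G]e_\eta$ is itself a semisimple algebra, and it contains $\mathbb{C}[H]e_\eta \cong M_d(\mathbb{C})$ (where $d=\deg\eta$) as a subalgebra of index $p$. First I would observe that the $G/H$-orbit of $\eta$ is a singleton: otherwise $e_\eta$ could not be $G$-invariant, hence not central. By part (1) of the Index-$p$ theorem, $\eta$ extends to exactly $p$ inequivalent irreducibles $\rho_1,\dots,\rho_p$ of $G$, and each $\rho_i$ restricts to $\eta$, so $e_{\rho_i}e_\eta=e_{\rho_i}$; summing over the $p$ extensions and using that these are all the irreducibles lying over $\eta$ gives $e_\eta=e_{\rho_1}+\dots+e_{\rho_p}$. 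The key computational step is to identify $\mathbb{C}[G]e_\eta \cong M_d(\mathbb{C})[G/H]$ — more precisely, to show that conjugation by (a lift of) $x$ acts on $\mathbb{C}[H]e_\eta\cong M_d(\mathbb{C})$ as an inner automorphism, so that after adjusting by an inner automorphism the element $\overline{C}_G(x)e_\eta$ plays the role of the generator of the group algebra of $G/H\cong \mathbb{Z}/p$ over the center. Concretely, $\overline{C}_G(x)e_\eta$ commutes with $\mathbb{C}[H]e_\eta$ (being a class sum in the center of $\mathbb{C}[G]e_\eta$ — wait, $\overline{C}_G(x)$ is a class sum so it is central in $\mathbb{C}[G]$, hence central in $\mathbb{C}[G]e_\eta$), so $\big(\overline{C}_G(x)e_\eta\big)^p$ is a central element of the $p$-dimensional-over-center algebra $\mathbb{C}[G]e_\eta$ and must act as a scalar $\lambda$ on the simple module; that $\lambda\neq 0$ follows because $\overline{C}_G(x)$ acts invertibly (its image generates $\mathbb{C}[G]e_\eta$ over $\mathbb{C}[H]e_\eta$, which would fail if it were nilpotent). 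Setting $c=\overline{C}_G(x)e_\eta/\sqrt[p]{\lambda}$, we have $c^p=e_\eta$, so $c$ has eigenvalues among the $p$-th roots of unity $\zeta^i$, and the spectral projections $\frac1p\sum_{k=0}^{p-1}\zeta^{ik}c^k$ are precisely the $e_{\rho_i}$ — checking this amounts to verifying that $c$ acts on $\rho_i$ as the scalar $\zeta^{-i}$ (up to relabelling), which follows since the $p$ extensions of $\eta$ are exactly the twists by the $p$ characters of $G/H$.

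\medskip

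\emph{Case (2): $e_{\eta}$ is not central in $\mathbb{C}[G]$.} Here the $G/H$-orbit of $\eta$ cannot be a singleton — if it were, $e_\eta$ would be $G$-invariant, and being an idempotent in the center of $\mathbb{C}[H]$ that is fixed under the $G$-action it would lie in the center of $\mathbb{C}[G]$, contradiction; so by part (2) of the Index-$p$ theorem the orbit has $p$ points $\eta=\eta_1,\dots,\eta_p$ (realized as $\eta^{x^k}$) and $\eta\uparrow_H^G$ is irreducible, call it $\rho$, with the $p$ induced representations $\eta^{x^k}\uparrow_H^G$ all equivalent to $\rho$. For the idempotent identity $e_\rho=e_\eta+e_{\eta^x}+\dots+e_{\eta^{x^{p-1}}}$, I would argue via restriction: by Frobenius reciprocity / Mackey, $\rho\downarrow_H^G=\eta\oplus\eta^x\oplus\dots\oplus\eta^{x^{p-1}}$, so $e_\rho$ (as an element of $\mathbb{C}[G]$, viewed inside $\mathbb{C}[H]$ after... ) — cleaner: the element $f:=\sum_{k=0}^{p-1}e_{\eta^{x^k}}$ is a $G$-conjugation-invariant sum of orthogonal primitive central idempotents of $\mathbb{C}[H]$ (orthogonal because the $\eta^{x^k}$ are pairwise inequivalent), hence $f$ is an idempotent in the center of $\mathbb{C}[G]$; it is nonzero and its associated $\mathbb{C}[G]$-module, being the unique $G$-submodule-closure of the $\eta^{x^k}$, is exactly the simple module $\rho$ (no smaller $G$-invariant idempotent exists since $G$ permutes the summands transitively), so $f=e_\rho$.

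\medskip

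\emph{Main obstacle.} The routine part is the bookkeeping with idempotents; the genuinely delicate step is in Case (1): establishing the isomorphism $\mathbb{C}[G]e_\eta\cong M_d(\mathbb{C})\otimes \mathbb{C}[\mathbb{Z}/p]$ and, within it, pinning down that the \emph{class-sum} element $\overline{C}_G(x)e_\eta$ (rather than some arbitrary lift of the generator of $G/H$) has $p$-th power a nonzero scalar and acts with distinct $p$-th-root-of-unity eigenvalues on the $\rho_i$. The scalar $\lambda$ is the value of the central character of $\rho_i$ on the class sum of $x$ (independent of $i$ by the extension structure), and one must check it does not vanish — equivalently that $x\notin\ker\rho_i$ in the relevant quotient, i.e. that the class sum is not killed — which I would deduce from the fact that $\{e_\eta, \overline{C}_G(x)e_\eta,\dots,\overline{C}_G(x)^{p-1}e_\eta\}$ must span $\mathbb{C}[G]e_\eta$ over $M_d(\mathbb{C})$ (a dimension count: $\dim_\mathbb{C}\mathbb{C}[G]e_\eta=p\,d^2$), which forces invertibility of $\overline{C}_G(x)e_\eta$ and hence $\lambda\neq 0$.
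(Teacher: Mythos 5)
Your Case (2) and the overall skeleton of Case (1) (the orbit is a singleton, the Index-$p$ theorem gives $p$ extensions, $e_\eta=\sum_i e_{\rho_i}$ via Frobenius reciprocity, and the spectral-projection computation once $c^pe_\eta=e_\eta$ is known) agree with the paper. The genuine gap is your justification that $\lambda\neq 0$. You argue that $\overline{C}_{G}(x)e_{\eta}$ must be invertible because $\{e_\eta,\overline{C}_G(x)e_\eta,\dots,\overline{C}_G(x)^{p-1}e_\eta\}$ ``must span $\mathbb{C}[G]e_\eta$ over $M_d(\mathbb{C})$'' by a dimension count. Nothing forces this: what spans $\mathbb{C}[G]e_\eta$ over $\mathbb{C}[H]e_\eta$ are the coset representatives $x^ke_\eta$, not the powers of the class sum, which is a single central element and can perfectly well vanish. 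Concretely, take $G=D_{16}=\langle r,s\mid r^8=s^2=1,\ srs^{-1}=r^{-1}\rangle$, $H=\langle r^2,s\rangle\cong D_8$, $\eta$ the $2$-dimensional irreducible representation of $H$ (it is $G$-stable, so $e_\eta$ is central in $\mathbb{C}[G]$), and $x=rs$; then $xH$ generates $G/H$, but both extensions of $\eta$ (the two faithful $2$-dimensional representations of $D_{16}$) have character $0$ on the class of $rs$, so $\overline{C}_G(x)e_\eta=0$ and $\lambda=0$. Hence ``$\lambda\neq 0$'' is not automatic for an arbitrary generator $x$ of $G/H$, and no purely structural or dimension-count argument can establish it.

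The paper's proof addresses exactly this point: it first shows the character $\chi_\rho$ of an extension cannot vanish on all of $G-H$ (otherwise $e_\rho=\frac{1}{p}e_\eta$, which is not an idempotent), then chooses $x\in G-H$ with $\chi_\rho(x)\neq 0$ (any such $x$ generates $G/H$ since $p$ is prime), and finally gets $\rho(\overline{C}_G(x))=\mu I$ with $\mu\neq 0$ by Schur's lemma and a trace computation, whence $\lambda=\mu^p\neq 0$. You need an argument of this character-theoretic kind, or an explicit good choice of $x$; with it, the rest of your Case (1) goes through. Your Case (2) argument (the $G$-invariant sum $\sum_k e_{\eta^{x^k}}$ is a central idempotent which must equal $e_\rho$, e.g.\ because by Frobenius reciprocity every irreducible of $G$ lying under it is a constituent of $\eta\uparrow_H^G=\rho$) is sound, though it takes a slightly different route than the paper, which simply observes that $\chi_\rho$ vanishes off $H$ and restricts to $\sum_k\chi_{\eta^{x^k}}$ and reads the identity off the idempotent formula.
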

In Chapter $6$, we present an algorithm for constructing the irreducible matrix representations of a finite solvable group $G$ over $\mathbb{C}$.  We briefly describe the algorithm.
	
We first fix a subnormal series of $G$ whose successive quotients are cyclic groups of prime order. Associated with that series, we have long presentation, long system of generators and PCI-diagram.
	Let $(\rho, V_{\rho})$ be an irreducible $\mathbb{C}$-representation of $G$ and $e_{\rho}$ 
	the corresponding primitive central idempotent in $\mathbb{C}[G]$. From the PCI-diagram, we can obtain the primitive central idempotent $e_{\rho}$, a system of primitive (not necessarily central) idempotents whose sum is $e_{\rho}$ and the "diagonal subalgebra" of $\mathbb{C}[G]e_{\rho}$. Using them, we find an ordered basis for the representation space $V$ as elements of $\mathbb{C}[G]$. Moreover, all of them can be expressed in terms of the long system of generators. Finally w.r.t. that basis, we find matrices for the long generators.  
	
	In Chapter $7$, we present an algorithm for constructing the inequivalent irreducible matrix representations of a finite abelian group over a field of characteristic $0$ or prime to the order of the group. For that, we first construct the irreducible representations of a cyclic group and then  consider the general case.
	
The following theorem gives the inequivalent irreducible representations of a cyclic group $C_{n}$ over a field of characteristic $0$ or prime to $n$.
	\begin{theorem}\label{theorem}
		Let $G = C_{n} = \langle{x \,|\, x^{n} = 1}\rangle$. Let $F$ be a field of characteristic $0$ or prime to $n$. Let $X^{n} - 1 =  \prod^{k}_{i = 1}f_{i}(X)$ be the decomposition into irreducible polynomials over $F$. Then $G$ has $k$ irreducible representations $\rho_{1}, \rho_{2}, \dots , \rho_{k}$, say, which are defined by: $\rho_{i}(x) = C_{f_{i}(X)}$, where $C_{f_{i}(X)}$ denotes the companion matrix of $f_{i}(X)$.
	\end{theorem}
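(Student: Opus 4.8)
The plan is to pass to the group algebra $F[G]$, identify it with $F[X]/(X^n-1)$, and apply the Chinese Remainder Theorem. First I would note that since $\operatorname{char} F$ is $0$ or prime to $n$, the polynomial $X^n-1$ is separable over $F$; hence its irreducible factors $f_1,\ldots,f_k$ are pairwise distinct and pairwise coprime. The isomorphism $F[G]\cong F[X]/(X^n-1)$ sending $x\mapsto \bar X$ together with the Chinese Remainder Theorem then gives
\[
F[G]\;\cong\;F[X]/(X^n-1)\;\cong\;\prod_{i=1}^{k}F[X]/(f_i(X))\;=\;\prod_{i=1}^{k}K_i,
\]
where each $K_i:=F[X]/(f_i(X))$ is a field, being the quotient of a PID by a maximal ideal. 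This exhibits the Wedderburn decomposition of the semisimple algebra $F[G]$ into $k$ simple components, so $G$ has exactly $k$ inequivalent irreducible $F$-representations, one for each $K_i$.

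Next I would identify the representation attached to the $i$-th component. The simple module corresponding to $K_i$ is $K_i$ itself, viewed as an $F[G]$-module on which the generator $x$ acts by multiplication by $\bar X\in K_i$; it is simple because its $F[G]$-submodules are precisely its ideals, and a field has none other than $0$ and $K_i$. Taking the $F$-basis $1,\bar X,\bar X^2,\ldots,\bar X^{\,d_i-1}$ of $K_i$, where $d_i=\deg f_i$, the matrix of ``multiplication by $\bar X$'' in this basis is, by the very definition of the companion matrix, precisely $C_{f_i(X)}$. Hence the representation is $\rho_i$ with $\rho_i(x)=C_{f_i(X)}$; one checks directly that this is a homomorphism, since $f_i(C_{f_i(X)})=0$ and $f_i\mid X^n-1$ force $C_{f_i(X)}^{\,n}=I$.

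Finally I would verify pairwise inequivalence and completeness. The minimal polynomial of the companion matrix $C_{f_i(X)}$ equals its characteristic polynomial, namely $f_i(X)$; since the $f_i$ are distinct irreducible polynomials, the matrices $\rho_i(x)$ have distinct minimal polynomials and are therefore not conjugate over $F$, so the $\rho_i$ are pairwise inequivalent. Having produced $k$ pairwise inequivalent irreducible representations, and the count above showing there are exactly $k$ of them, the list $\rho_1,\ldots,\rho_k$ is complete. I do not anticipate a serious obstacle; the only point needing care is the separability of $X^n-1$, which is exactly where the hypothesis on $\operatorname{char}F$ enters and which guarantees both that the factors $f_i$ are distinct and that $F[G]$ is semisimple, so that the component count is valid.
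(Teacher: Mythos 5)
Your proof is correct and follows essentially the same route as the paper: identifying $F[G]$ with $F[X]/\langle X^{n}-1\rangle$, invoking separability and the Chinese Remainder Theorem to split it into the fields $F[X]/\langle f_{i}(X)\rangle$, and reading off each $\rho_{i}(x)$ as multiplication by $\bar{X}$, whose matrix in the basis $1,\bar{X},\dots,\bar{X}^{d_{i}-1}$ is the companion matrix $C_{f_{i}(X)}$. Your added check of pairwise inequivalence via minimal polynomials is a harmless extra that the paper leaves implicit in the count of simple components.
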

	The following theorem gives the faithful irreducible representations of a cyclic group $C_{n}$ over a field of characteristic $0$ or prime to $n$.
\begin{theorem}\label{th2}
Let $G = C_{n} = \langle x \,|\, x^{n} = 1 \rangle$. Let $F$ be a field of characteristic $0$ or prime to $n$. Let $\Phi_{n}(X) = \prod^{k} _{i = 1}{f_{i}(X)}$ be the factorization into irreducible polynomials over $F$. Then 
\begin{itemize}
\item [1.] There is a bijective correspondence between the set of faithful irreducible representations of $G$ over $F$ and the set of irreducible factors $\Phi_{n}(X)$ over $F$. 
\item [2.] If $\rho_{i}$ is the faithful irreducible representation corresponding to the irreducible factor $f_{i}(X)$, then $\rho_{i}$ is defined by: $x \mapsto C_{f_{i}(X)}$, where $C_{f_{i}(X)}$ denotes the companion matrix of $f_{i}(X)$.
\end{itemize}
\end{theorem}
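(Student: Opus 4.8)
The plan is to leverage Theorem~\ref{theorem}, which already lists \emph{all} inequivalent irreducible $F$-representations of $C_n$: if $X^n - 1 = \prod_{i=1}^{\ell} g_i(X)$ is the factorization into monic irreducibles over $F$, these are the $\ell$ representations $\sigma_i\colon x \mapsto C_{g_i(X)}$. Since $\Phi_n(X) \mid X^n - 1$ in $F[X]$ and the hypothesis on $\operatorname{char} F$ makes $X^n - 1$ separable (so the $g_i$ are pairwise coprime, and $\Phi_n$ is a product of a sub-collection of the $g_i$), the irreducible factors $f_1,\dots,f_k$ of $\Phi_n(X)$ over $F$ are exactly those $g_i$ dividing $\Phi_n(X)$. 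Hence everything reduces to the single assertion: $\sigma_i$ is faithful if and only if $g_i(X) \mid \Phi_n(X)$. Granting this, $f_j \mapsto \rho_j := (x \mapsto C_{f_j(X)})$ is the claimed bijection and formula (2) is inherited verbatim from Theorem~\ref{theorem}.

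First I would translate faithfulness into a statement about companion matrices. Because $C_n = \langle x \rangle$ is cyclic, $\sigma_i$ is faithful precisely when $\sigma_i(x) = C_{g_i(X)}$ has multiplicative order $n$ in $\mathrm{GL}(F^{\deg g_i})$. The companion matrix $C_g$ of a monic polynomial $g$ has minimal polynomial equal to $g$, so $C_g^{\,m} = I \iff g(X) \mid X^m - 1$; thus the order of $C_{g_i(X)}$ is the least $m \geq 1$ with $g_i(X) \mid X^m - 1$.

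Next I would compute that least $m$ via the roots. Passing to a splitting field $E \supseteq F$ of $X^n - 1$, pick a root $\zeta$ of $g_i$; as $g_i \mid X^n - 1$, $\zeta$ is an $n$-th root of unity, of exact multiplicative order $d \mid n$, say. Then $\zeta$ is a root of $\Phi_d(X)$ and, $g_i$ being irreducible over $F$, $g_i$ is the minimal polynomial of $\zeta$ over $F$, so $g_i(X) \mid \Phi_d(X)$; by separability the zero set of $\Phi_d$ over $E$ is precisely the set of primitive $d$-th roots of unity, whence \emph{every} root of $g_i$ has order exactly $d$, and therefore $g_i(X) \mid X^m - 1 \iff d \mid m$. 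So $C_{g_i(X)}$ has order $d$, and $\sigma_i$ is faithful $\iff d = n \iff \zeta$ is a primitive $n$-th root of unity $\iff g_i(X) \mid \Phi_n(X)$, which is exactly the assertion isolated above, completing the argument.

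I expect the only real subtlety to be the step ``$g_i$ irreducible with a root of order $d$ $\Rightarrow$ all roots of $g_i$ have order $d$'': this rests on $\Phi_d$ being separable over $F$ (guaranteed since $\operatorname{char} F \nmid d$, as $d \mid n$), so that its roots are precisely the primitive $d$-th roots of unity, combined with the elementary fact that an irreducible polynomial vanishing at $\zeta$ divides every $F$-polynomial vanishing at $\zeta$. The remaining ingredients --- the divisibility $\Phi_n \mid X^n - 1$ coming from the identity $X^n - 1 = \prod_{e \mid n}\Phi_e(X)$ valid over the prime field, and the bookkeeping identifying the irreducible factors of $X^n-1$ that divide $\Phi_n$ with the irreducible factors of $\Phi_n$ --- are routine once separability of $X^n-1$ is in hand.
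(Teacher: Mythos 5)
Your proof is correct, but it takes a genuinely different route from the thesis. You stay inside Theorem~\ref{theorem}: since that result already exhibits all inequivalent irreducibles of $C_n$ as $x \mapsto C_{g(X)}$ for $g$ running over the irreducible factors of $X^n-1$, you reduce everything to the purely linear-algebraic criterion that $x \mapsto C_{g(X)}$ is faithful iff the order of $C_{g(X)}$ is $n$, iff $g \mid \Phi_n$, which you verify via the minimal polynomial of a companion matrix and the fact (using separability of $X^n-1$, valid since $\operatorname{char} F \nmid n$) that the roots of $\Phi_d$ are exactly the primitive $d$-th roots of unity. The thesis instead passes to the algebraic closure $\overline{F}$: it decomposes $\overline{F}[C_n]$ into the one-dimensional characters $x \mapsto \zeta_n^i$, identifies the faithful ones with $(i,n)=1$, and invokes Schur's theorem (Theorem~\ref{sch}) to assemble each Galois orbit of such characters into a faithful irreducible $F$-representation, concluding by a base-change/counting contradiction that these $k$ representations exhaust the faithful ones; part (2) is then quoted from Theorem~\ref{theorem}, just as you do. Your argument is more elementary and self-contained (no Schur index or algebraically conjugate characters, only companion matrices and the cyclotomic factorization), and it makes the faithfulness criterion completely explicit; the thesis's route buys the extra information that the representation attached to $f_i$ becomes, over $\overline{F}$, the direct sum of the characters $x \mapsto \zeta$ for $\zeta$ ranging over the roots of $f_i$, which is useful elsewhere in the thesis but not needed for the statement itself. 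The one point worth stating carefully in your write-up is the step you already flag: every root of the irreducible factor $g_i$ has the same multiplicative order (equivalently, $g_i \mid X^m-1 \iff \zeta^m=1$ for a single chosen root $\zeta$, since $g_i$ is the minimal polynomial of $\zeta$ over $F$); with that in place the argument is complete.
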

	The following theorem (see \cite{gor109}, Chapter $3$, Theorem $2.3$) is a known result, which asserts that to construct the irreducible representations of a finite abelian group, it is sufficient to construct the faithful irreducible representations of its cyclic quotients.
	\begin{theorem} \label{th3}
		Let $G$ be a finite abelian group. Let $F$ be a field of characteristic $0$ or prime to $|G|$. Then every irreducible representation of $G$ over $F$ factors through a faithful irreducible representation of a cyclic quotient.
	\end{theorem}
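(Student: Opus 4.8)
The plan is to reduce to the faithful case and then exploit the commutativity of $F[G]$. Let $(\rho, V)$ be an irreducible $F$-representation of $G$ and set $K = \ker\rho$. Then $\rho$ factors as $G \twoheadrightarrow G/K \xrightarrow{\,\bar\rho\,} \mathrm{GL}(V)$, where $\bar\rho$ is a faithful irreducible $F$-representation of the finite abelian group $G/K$. So it suffices to prove: if a finite abelian group $H$ admits a faithful irreducible $F$-representation, with $F$ of characteristic $0$ or prime to $|H|$, then $H$ is cyclic.

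To prove this, I would first record that $F[H]$ is semisimple (Maschke) and commutative, hence by the Artin--Wedderburn theorem it is a finite product $\prod_{j} L_j$ of finite field extensions $L_j/F$. The representation $\bar\rho$ corresponds to one simple component $L := L_j$: the projection $F[H] \to L$ is surjective, $V \cong L$ as an $F$-vector space of dimension $[L:F]$, and $H$ acts on $V = L$ by multiplication through the composite $H \hookrightarrow F[H] \to L$. One can also see this directly from Schur's lemma: $D := \mathrm{End}_{F[H]}(V)$ is a division ring containing the (commutative) image of $F[H]$ in $\mathrm{End}_F(V)$; since that image equals $\mathrm{End}_D(V)$ by the density theorem while being commutative, we get $\dim_D V = 1$ and $D = L$ a field, with $V \cong L$ and $H$ acting by multiplication by units of $L$.

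In either description, $\bar\rho$ restricts to a group homomorphism $H \to L^{\times}$, sending $h$ to multiplication by its image. Faithfulness of $\bar\rho$ means this homomorphism is injective, so $H$ is isomorphic to a finite subgroup of the multiplicative group of the field $L$. Every finite subgroup of the multiplicative group of a field is cyclic, hence $H$ is cyclic. Tracing back, $G/\ker\rho$ is cyclic, $\bar\rho$ is a faithful irreducible representation of it, and $\rho$ factors through $\bar\rho$; moreover $L = F(\bar\rho(h_0))$ for a generator $h_0$ of $G/\ker\rho$, which recovers the description of faithful irreducible representations of cyclic groups from Theorem \ref{th2}.

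The only real subtlety is the structural claim that the simple component of $F[H]$ attached to an irreducible representation of an abelian group is a field and that $V$ is one-dimensional over it; this is exactly where commutativity of $F[H]$ is essential, and it is what forces the degree of an irreducible $F$-representation of an abelian group to equal $[L:F]$. Once that is in place, the step \emph{finite subgroup of $F^{\times}$ $\Rightarrow$ cyclic} is standard, and the theorem follows with no further computation.
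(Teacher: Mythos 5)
Your proposal is correct. Structurally it follows the same outline as the paper: factor $\rho$ through $G/K$ with $K=\ker\rho$, observe that the induced representation of $G/K$ is faithful and irreducible, and conclude that $G/K$ is cyclic. The difference is in how that last conclusion is reached. The paper disposes of it in one line by invoking Lemma \ref{lem} (a faithful irreducible representation forces the center to be cyclic), which it cites to Gorenstein without proof; since $G/K$ is abelian, its center is itself. You instead prove the needed special case directly: $F[H]$ is commutative and semisimple, so the simple component attached to the irreducible module $V$ is a field $L$ with $V\cong L$ and $\dim_D V=1$ (your Schur-plus-density argument for this is sound, and commutativity of the image is exactly what rules out matrix components of size bigger than one), whence $H$ embeds in $L^{\times}$ and is cyclic because finite subgroups of the multiplicative group of a field are cyclic. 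What your route buys is a self-contained argument that also recovers the description $L=F(\bar\rho(h_0))$ and ties the degree of the representation to $[L:F]$, matching Theorem \ref{th2}; what the paper's route buys is brevity and a statement (cyclic center) valid for arbitrary finite groups, at the cost of leaning on an external reference. In effect you have supplied, for the abelian case, the proof of the very lemma the paper quotes, so the mathematical content is the same even though your write-up is the more complete one.
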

	The following theorem gives a parametrization of the set of irreducible representations of a finite abelian group.
	\begin{theorem}\label{th4}
		Let $G$ be a finite abelian group. Let $F$ be a field of characteristic $0$ or prime to $|G|$. Let $\Omega_{G}$ denote the set of irreducible $F$-representations of $G$. Let $C_{n}$ denote a cyclic group of order $n$.
		For $d$ dividing $|G|$, let $\mathcal{H}_{d} = \{H \leq G \,|\, G/H \cong C_{d}\}$. Let $\Omega^{o}_{C_{d}}$ denote the set of faithful irreducible representations of $C_{d}$. Then $\Omega_{G}$ is in a bijective correspondence with the set 
		$\displaystyle {\cup_{d | |G|}(\mathcal{H}_{d} \times \Omega^{o}_{C_{d}})}$.
	\end{theorem}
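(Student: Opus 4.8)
The plan is to exhibit an explicit bijection
$\Psi\colon \bigcup_{d\mid |G|}\bigl(\mathcal{H}_d\times\Omega^o_{C_d}\bigr)\longrightarrow\Omega_G$
and verify it is well defined, surjective, and injective, invoking Theorem \ref{th3} for surjectivity and using nothing deeper than the first isomorphism theorem and the elementary fact that irreducibility is preserved and reflected along a surjective homomorphism.

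First I would construct the map. For each divisor $d$ of $|G|$ and each $H\in\mathcal{H}_d$, fix once and for all a group isomorphism $\psi_H\colon G/H\xrightarrow{\ \sim\ }C_d$ (which exists by the very definition of $\mathcal{H}_d$; note $H\trianglelefteq G$ automatically since $G$ is abelian), and let $\pi_H\colon G\twoheadrightarrow G/H$ be the canonical projection. Given a pair $(H,\sigma)$ with $\sigma\in\Omega^o_{C_d}$, set $\Psi(H,\sigma):=\sigma\circ\psi_H\circ\pi_H$, an $F$-representation of $G$. Since $\psi_H\circ\pi_H\colon G\to C_d$ is surjective, the image of $G$ in $\mathrm{GL}(V)$ coincides with the image of $C_d$, so a subspace of $V$ is $G$-stable precisely when it is $C_d$-stable; hence $\Psi(H,\sigma)$ is irreducible, i.e.\ $\Psi(H,\sigma)\in\Omega_G$, and replacing $\sigma$ by an equivalent representation replaces $\Psi(H,\sigma)$ by an equivalent one, so $\Psi$ is well defined on equivalence classes.

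The observation that makes the rest transparent is that $H$ is recovered from $\Psi(H,\sigma)$ as its kernel: because $\sigma$ is faithful, $\ker\sigma$ is trivial, so $\ker\Psi(H,\sigma)=(\psi_H\circ\pi_H)^{-1}(\ker\sigma)=H$, and consequently $d=[G:H]$ is determined as well. For surjectivity, take $\rho\in\Omega_G$; by Theorem \ref{th3} there is $H\le G$ with $G/H$ cyclic and a faithful irreducible representation $\bar\rho$ of $G/H$ with $\rho=\bar\rho\circ\pi_H$ (faithfulness of $\bar\rho$ forces $H=\ker\rho$, so $H$ is genuinely determined by $\rho$); writing $d:=|G/H|$ we have $H\in\mathcal{H}_d$, and $\sigma:=\bar\rho\circ\psi_H^{-1}$ is a faithful irreducible representation of $C_d$, with $\Psi(H,\sigma)=\bar\rho\circ\pi_H=\rho$. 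For injectivity, if $\Psi(H,\sigma)\cong\Psi(H',\sigma')$, then comparing kernels forces $H=H'$, hence $d=d'$ and $\psi_H=\psi_{H'}$; cancelling the surjection $\psi_H\circ\pi_H$ then yields $\sigma\cong\sigma'$.

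The argument is routine once Theorem \ref{th3} is granted, so I do not expect a genuine obstacle; the only point requiring care is the bookkeeping around equivalence classes and the non-canonical isomorphisms $\psi_H$. Concretely, one must check (i) that ``$\rho$ factors through a faithful irreducible representation of a cyclic quotient'' pins down the subgroup uniquely as $\ker\rho$ — it does, by faithfulness — and (ii) that irreducibility transports in both directions along $G\twoheadrightarrow C_d$. No use is made of the structure theorem for finite abelian groups, nor of the arithmetic of $F$ beyond what is already encoded in $\Omega^o_{C_d}$ via Theorem \ref{th2}; one could, if desired, first reformulate the target as the set of pairs $(H,\bar\sigma)$ with $G/H$ cyclic and $\bar\sigma$ a faithful irreducible representation of $G/H$, which is canonical, and then identify $\{\bar\sigma\}$ with $\Omega^o_{C_d}$ through $\psi_H$.
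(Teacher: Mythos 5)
Your proposal is correct and follows essentially the same route as the paper: associate to each irreducible $\rho$ its kernel $H=\ker\rho$ (which is recoverable because the quotient representation is faithful), note that $G/H$ is cyclic so $\rho$ yields a pair $(H,\sigma)$ with $\sigma$ faithful irreducible on $C_d$, and invert by pulling back along $G\twoheadrightarrow G/H$. Your write-up is merely more explicit than the paper's about the fixed isomorphisms $\psi_H$, the two-way transport of irreducibility along the surjection, and the kernel comparison in the injectivity step.
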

	By Theorems $1.3.7 - 1.3.9$, we give an algorithm for constructing the irreducible representations of a finite abelian group.
	
	Besides that, we give expressions for the primitive central idempotents of rational group algebra of a finite abelian group using a long presentation and its Wedderburn decomposition. For that, it is sufficient to consider its $p$-primary parts. We now briefly describe how to get the primitive central idempotents of rational group algebra of a finite abelian $p$-group.

Let $G$ be an abelian $p$-group of order $p^m$, $p$ a prime and $m \geq 1$. Let us consider a subnormal series:
$$ \langle e \rangle = G_{o} \leq G_1 \leq \dots \leq G_{l}\leq G_{l+1}\dots \leq G_{m} = G$$ such that $[G_{i}\colon G_{i-1}] = p$. Associated with the above subnormal series, we have a long presentation of $G$ and let $\{x_1, x_2, \dots , x_m\}$
	be the associated long system of generators of $G$. 
	We define 
	$e_{x_i}= ({1+x_i + \dots + x_i^{p-1}})/{p}$ and  $e_{x_i}'=1-e_{x_i}$.
	We stated four rules to draw the PCI-diagram of $G$ over $\mathbb{Q}$ from which we can read primitive central idempotents of $\mathbb{Q}[G]$.\\
	\noindent
	{\it \textbf{Rule 0:}}
	{\it Each non trivial primitive central idempotent at any stage can be expressed in a product form and
		contains exactly one $e^{'}_{z}$ as a factor, where $z$ is a generator of the chosen long presentation of $G$.}

	\noindent
	{\it \textbf{Rule 1:}}
	{\it Let $e$ be the trivial primitive central idempotent in the $l$-th stage of the PCI-diagram. Let $u$ be the new generator
		introduced in the $(l + 1)$-st stage of chosen long presentation of $G$. Then $e$ is adjacent to precisely two vertices
		$ee_{u}$ and $ee^{'}_{u}$ in the $(l + 1)$-st stage of PCI-diagram.}\\ 
\noindent
{ \it \textbf{Rule 2:}}
{\it Let $e$ be a non trivial primitive central idempotent in the $l$-th stage of PCI-diagram. Then $e$ contains precisely one
$e^{'}_{z}$ as a factor, where $z$ is a generator of the chosen long presentation. Let $u$ be the long generator introduced in the
$(l + 1)$-st stage of PCI-diagram such that $z = u^{p^s}$, for some positive integer $s$. Then $e$ is adjacent to itself in the $(l + 1)$-st stage of the PCI-diagram.}\\ 

\noindent
{\it \textbf{Rule 3:}}
{\it Let $e$ be a non trivial primitive central idempotent in the $l$-th stage of PCI-diagram. Then $e$ contains precisely one
$e^{'}_{z}$ as a factor,
where $z$ is a generator of the chosen long presentation. Let $u$ be the long generator introduced in the $(l + 1)$-st stage of PCI-diagram
such that $z \neq u^{p^s}$, for any positive integer $s$. Then $e$ is adjacent to precisely $p$ vertices $ee_{u}, ee_{zu}, \dots ,
ee_{z^{p - 1}u}$ in the $(l + 1)$-st stage of the PCI-diagram.}
 \section{Organisation of the thesis}
This thesis consists of nine chapters. We briefly touch upon the contents of each chapter.
	\begin{itemize}
		\item In Chapter $2$, {'\it {Semisimple algebras}'}, we give a brief introduction to semisimple algebras.  
		\item In Chapter $3$, {'\it {$F$-conjugacy, $F$-character table, $F$-idempotents'}}, we present a brief introduction to Schur's theory (see \cite{re98}, \cite{sc70}) on group representations over a field with characteristic $0$. We give a formula for computing primitive central idempotents in a semisimple group algebra $F[G]$ in terms of $F$-characters and $F$-conjuagcy classes in $G$, which can be obtained from the "$F$-character table".
		
		\item In Chapter $4$, {'\it{Clifford theory'}}, we present the traditional approach to construction of irreducible representations of finite solvable groups over algebraically closed fields, and which is based on Clifford theory (see \cite{alg111}, \cite{cl50}, \cite{combi91}). We describe this approach in details and illustrate it with various examples. 

		\item In Chapter $5$, {'\it{Diagonal subalgebra'}}, we introduce the notion of "diagonal subalgebra" (unique up to conjugacy) of a group algebra over an algebraically closed field. We give an introduction to "Gelfand-Tsetlin alegbra" (see \cite{Mu106}, Section $2$) for an inductive chain of subgroups of a finite group with "simple branching". A finite solvable group always has a subnormal series such that each successive quotient group is a cyclic group of prime order, and which is a multiplicity free chain of subgroups. So, one can consider long generators and "Gelfand-Tsetlin algebra" associated with that. 
		We find a set of generators of "Gelfand-Tsetlin algebra" (see \cite{Mu106}, Section $2$) of a solvable group using long generators.
		
		\item In Chapter $6$, {'\it {Algorithmic construction of matrix representations of a finite solvable group over $\mathbb{C}$'}}, we give an algorithm for constructing the irreducible matrix representations of a finite solvable group $G$ over $\mathbb{C}$ using a long presentation.
		
		\item In Chapter $7$, {'\it {Representations of finite abelian groups'}}, we give an algorithm for constructing the irreducible matrix representations and the primitive central idempotents of a finite abelian group over a field of characteristic $0$ or prime to order of the group and give a systematic way of computing the primitive central idempotents of the group algebra. Besides that, using a long presentation, we give a  character-free expression of the primitive central idempotents of a rational group algebra of finite abelian group and its Wedderburn decomposition. 

		\item In Chapter $8$, {'\it{Examples'}}, we illustrate our algorithm and the approach based on Clifford theory (see \cite{alg111}, \cite{cl50}, \cite{combi91}) for constructing irreducible matrix representations of finite solvable groups over $\mathbb{C}$ with various examples.
		\item In Chapter $9$, {'\it{Summary and future work'}}, we give a summary of the thesis and list few problems for future research work.

\end{itemize}
\chapter{Semisimple algebras}
In this chapter, we present a brief introduction to theory of semisimple algebras. We know that study of ordinary group representations is equivalent to study of modules over semisimple group algebras. The main objective of this chapter is to state Artin-Wedderburn structure theorem for semisimple algebras and its applications to semisimple group algebras.

\section{Basic definitions}
Let $F$ be a field. 
An {\bf algebra } $A$ over $F$ is a ring with identity, which is also an 
$F$-vector space such that 
\begin{equation*}
\alpha(ab)=(\alpha a)b=a(\alpha b) \hskip5mm(\alpha\in F, \,\, a, b \in A).
\end{equation*}
The two basic examples of algebras which frequently occur in 
representation theory (of groups) and which 
will also occur throughout the thesis are the following. 

(1) If $D$ is a division ring with center $F$, then ${\rm M_n}(D)$, the 
set of $n\times n$ matrices over $D$, is an algebra over $F$ with usual addition, 
multiplication and scaling of matrices. 

(2) If $G$ is any group and $F$ any field, consider $F[G]$ the 
free $F$-module on the set $G$. A typical element of $F[G]$ is of the form 
$\sum_{g\in G} \alpha_g g$ with $\alpha_g\in F$ and  
$\alpha_g=0$ for all but finitely many $g\in G$. Define multiplication in $F[G]$ as 
follows:
$$\Big{(}\sum_{g\in G} \alpha_g g\Big{)}
{\Big(}\sum_{g\in G} \beta_g g\Big{)}
=\sum_{g\in G}\Big{(}\sum_{hk=g} \alpha_h\beta_k \Big{)} g.$$
Then $F[G]$ is an algebra over $F$. 

With this definition and examples of algebra, we move to recall 
some well-known notions associated to algebra. For this we fix an 
algebra $A$ over a field $F$. 
A {\bf subalgebra} of $A$ is a subspace of $A$, which is also a subring of $A$. 
The set
\begin{equation*}
Z(A):= \{z \in A \,| \,za = az \mbox{ for all } a \in A \}
\end{equation*}
is called the {\bf center} of $A$. If $Z(A) = A$ then the algebra $A$ 
is said to be {\it{commutative}}.
An element $e$ in $A$ is said to be an {\bf idempotent} if $e^2=e$. 
there exists $k \in \mathbb{N}$ such that $n^{k} = 0$.
A left ideal $I$ of $A$ is said to be {\bf nilpotent} if 
there exists $k \in \mathbb{N}$ such that 
\begin{center}
	$I^k:=\Big{\{} a \in A \,|\, a = \sum_{s=1}^m a_{1,s}a_{2,s} 
	\cdots a_{k,s},\, m \in \mathbb{N},\, a_{i,s} \in I \Big{\}} = 0.$
\end{center}
The {\bf radical} of $A$ is defined as the sum of all nilpotent left ideals of $A$.

We are now ready to define special types of algebras. 
An algebra $A$ is {\bf semisimple} if $\mathrm{rad}A = 0$. 
An algebra $A$ is {\bf simple} if the only two-sided ideals of $A$ are $A$ 
and $\{0\}$. As an example, the algebra ${\rm M}_n(F)$, $F$ any field, is a
simple algebra over $F$. If $G$ is a finite group then $\mathbb{C}[G]$ is a
semisimple algebra. 

\begin{remark}\rm
	The radical rad$\,A$ of an algebra $A$ is a two-sided ideal of $A$ and the factor $A/\mathrm{rad}{\,A}$ is a semisimple algebra.
\end{remark}

Let $A$ be an algebra. Let $I$, $J$ be two-sided ideals of $A$.
The ideals $I$ and $J$ are said to be isomorphic if 
they are isomorphic as $A$-modules. 
The ideal $I$ is said to be {\bf minimal} in $A$ if 
$I \neq \{0\}$ and $I$ does not contain any two-sided ideal, except $0$ and $I$.

We now recall an important result regarding the structure of semisimple algebras.

\begin{theorem}[Structure of semisimple algebras]
	Let $A$ be a finite-dimensional semisimple algebra over a field $F$. 
	For a minimal left ideal $I$ of $A$, let $B_{I}$ denote the sum of all minimal 
	left ideals of $A$, which are isomorphic to $I$. Then the following holds. 
	\begin{enumerate}
		\item  Every $B_{I}$ is a simple algebra over $F$, and a minimal two-sided ideal 
		of $A$. 
		\item The algebra $A$ is direct sum of all minimal two-sided ideals $B_{I}$, 
		where $I$  runs over a full set of non isomorphic minimal left ideals of $A$. 
		\item The decomposition in (2) is unique up to the order of the summands, 
		and the algebras $B_{I}$'s are called the simple components of $A$.
	\end{enumerate}
\end{theorem}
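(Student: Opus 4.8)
The plan is to identify each $B_I$ with the isotypic (homogeneous) component of $A$, regarded as a left module over itself, corresponding to the isomorphism class of $I$, and then to extract all three assertions from the module structure of $A$ together with Schur's lemma. Throughout, the one structural input I will use repeatedly is that $A$, being finite-dimensional with $\mathrm{rad}\,A=0$, is a semisimple left $A$-module, hence a direct sum of minimal left ideals (see \cite{Weintraub}); consequently a submodule of $A$ is a direct summand, and a homomorphism defined on a submodule of $A$ extends to all of $A$.

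First I would group the minimal left ideals occurring in a decomposition of $A$ by isomorphism type, so that $A=\sum_I B_I$ with $I$ running over representatives of the finitely many isomorphism classes. The first substantive step is that each $B_I$ is a two-sided ideal: it is a left ideal by construction, and if $J$ is a minimal left ideal with $J\cong I$ and $a\in A$, then $x\mapsto xa$ is a homomorphism of left $A$-modules $J\to A$, so $Ja$ is either $0$ or, by Schur's lemma, a minimal left ideal isomorphic to $J$, hence isomorphic to $I$; therefore $Ja\subseteq B_I$ and so $B_Ia\subseteq B_I$. Next, distinct homogeneous components meet trivially (a nonzero common submodule would contain a simple submodule isomorphic to $I$ and to some $I'\not\cong I$, contradicting Schur), so $A=\bigoplus_I B_I$. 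Writing $1=\sum_I e_I$ with $e_I\in B_I$, the relations $B_IB_{I'}\subseteq B_I\cap B_{I'}=0$ for $I\neq I'$ force the $e_I$ to be pairwise orthogonal central idempotents with $B_I=Ae_I$ and $e_I$ the identity of $B_I$; in particular each $B_I$ is an $F$-algebra.

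It remains to prove that $B_I$ is simple, which I expect to be the main obstacle. Note first that because $B_{I'}B_I=0$ for $I'\neq I$, every two-sided ideal of the ring $B_I$ is automatically a two-sided ideal of $A$; hence ``$B_I$ is simple'' and ``$B_I$ is a minimal two-sided ideal of $A$'' are the same assertion. So let $K$ be a nonzero two-sided ideal of $B_I$; I must show $K=B_I$. Since $B_I$ is semisimple, $K$ contains a minimal left ideal $J$, and every minimal left ideal $J'$ of $B_I$ is isomorphic to $J$; extending an isomorphism $J\xrightarrow{\sim}J'$ to an endomorphism of the module $B_I$ (using that $J$ is a direct summand) and using that such an endomorphism is right multiplication by some $b\in B_I$, we get $J'=Jb\subseteq K$. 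Summing over all minimal left ideals of $B_I$ gives $B_I\subseteq K$, proving simplicity; together with the previous paragraph this establishes (1) and (2).

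Finally, for the uniqueness in (3): if $A=\bigoplus_j C_j$ is any decomposition into minimal two-sided ideals, then for each $j$ one has $C_j=C_j\cdot 1=\bigoplus_I C_je_I=\bigoplus_I(C_j\cap B_I)$, a direct sum of two-sided ideals of $A$; minimality of $C_j$ forces exactly one summand to be nonzero, i.e. $C_j\subseteq B_I$ for a unique $I$, and minimality of $B_I$ (from (1)) then gives $C_j=B_I$. Thus the $C_j$ are precisely the $B_I$, up to reordering. No computation beyond the two recalled facts --- semisimplicity of $A$ as a module over itself, and Schur's lemma controlling $\mathrm{End}_A(I)$ and maps between minimal left ideals --- is needed.
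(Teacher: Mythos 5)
Your proposal is correct. The thesis itself does not prove this theorem --- it only points to \cite{cur93}, Theorem 25.15 --- and your argument is precisely the standard isotypic-component proof found there: identify $B_I$ with the homogeneous component of the regular module, show it is a two-sided ideal via the maps $x\mapsto xa$, obtain the orthogonal central idempotents $e_I$, prove simplicity by moving a minimal left ideal $J\subseteq K$ onto any other minimal left ideal by right multiplication, and get uniqueness by cutting an arbitrary minimal two-sided ideal $C_j$ with the $e_I$. The only point worth tightening in a written version is the direct-sum claim: to conclude $A=\bigoplus_I B_I$ you need $B_I\cap\sum_{I'\neq I}B_{I'}=0$, not merely pairwise trivial intersections, but the same Schur-type argument (a nonzero submodule of a sum of simples contains a simple isomorphic to one of them) gives this immediately.
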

For proof of the theorem, one may refer to \cite{cur93}, Theorem $25.15$.

Let $A$ be a finite-dimensional semisimple algebra over a field $F$. 
By Theorem $1.1.1$, $A = B_1\oplus B_2\oplus\cdots\oplus B_k$, where $B_i$'s 
are simple algebras over $F$. Let $e_i$ denote the identity element of $B_i$.  
Then each $e_i$ is an idempotent in $A$ and 
$1_A = e_1+e_2+\cdots+e_k$, where $1_A$ is the identity element of $A$. 
Further $e_ie_j=0$ for $i \neq j$ and $e_i\in Z(A)$ for all $i$. 
Note that from $e_i$'s, one can get the simple algebra $B_{i}$ using the formula 
$B_{i} = Ae_{i}$.  This observation suggests that the computation of simple 
components of an algebra leads to the problem of determining a 
system of primitive central idempotents of $A$.

\begin{theorem} [Structure of simple algebras] 
	Let $A$ be a finite-dimensional simple $F$-algebra. 
	Then there exists $m \in \mathbb{N}$ such that $A$ is isomorphic to the 
	algebra of $m \times m$ matrices with entries in a finite dimensional 
	division algebra $D$ over $F$. There exists minimal left ideals 
	$I_1, I_2, \ldots, I_m$ of $A$ such that $A=I_1\oplus I_2\oplus\cdots\oplus I_m$, 
	and each of which are isomorphic to $D^{m}$. 
	Such a decomposition is unique up to isomorphism.
\end{theorem}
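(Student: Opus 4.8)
The plan is to deduce this from the structure theorem for semisimple algebras (Theorem~1.1.1 above) together with Schur's lemma and a careful analysis of endomorphism rings.

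First I would observe that $A$ is semisimple: its radical $\operatorname{rad} A$ is a nilpotent, hence proper (the identity of $A$ cannot lie in a nilpotent ideal), two-sided ideal, so $\operatorname{rad} A = 0$ by simplicity. Applying the structure theorem for semisimple algebras, $A$ decomposes as a direct sum of the subalgebras $B_I$, one for each isomorphism class of minimal left ideal; since each $B_I$ is a nonzero two-sided ideal, simplicity forces exactly one such class, represented by a minimal left ideal $I$ (which exists by finite dimensionality), with $B_I = A$. Hence $A$, as a left $A$-module, is a direct sum of minimal left ideals each isomorphic to $I$, and picking a maximal independent family among them gives $A = I_1 \oplus I_2 \oplus \cdots \oplus I_m$ with each $I_j \cong I$.

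Next I would set $\Delta := \operatorname{End}_A(I)$. By Schur's lemma $\Delta$ is a division ring, and since $\Delta \subseteq \operatorname{End}_F(I)$ with $I$ a finite-dimensional $F$-space, $\Delta$ is finite-dimensional over $F$. Using the ring isomorphism $A \cong \operatorname{End}_A({}_AA)^{\mathrm{op}}$ given by right multiplication, together with ${}_AA \cong I^{m}$ and the standard algebra isomorphism $\operatorname{End}_A(I^m) \cong \mathrm{M}_m(\operatorname{End}_A(I)) = \mathrm{M}_m(\Delta)$, I obtain $A^{\mathrm{op}} \cong \mathrm{M}_m(\Delta)$, and hence $A \cong \mathrm{M}_m(\Delta)^{\mathrm{op}} \cong \mathrm{M}_m(\Delta^{\mathrm{op}})$ via the transpose map. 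Putting $D := \Delta^{\mathrm{op}}$, a finite-dimensional division algebra over $F$, gives $A \cong \mathrm{M}_m(D)$. Transporting the column decomposition of $\mathrm{M}_m(D)$ back through this isomorphism identifies each minimal left ideal $I_j$ with $D^{m}$ as a left $A$-module, and a dimension count yields $\dim_F A = m^2 \dim_F D$. For uniqueness, suppose $A \cong \mathrm{M}_m(D) \cong \mathrm{M}_{m'}(D')$. The simple left $A$-module (unique up to isomorphism) is $D^m$ in the first description and $(D')^{m'}$ in the second; its endomorphism ring, being intrinsic to $A$, equals $D^{\mathrm{op}}$ and also $(D')^{\mathrm{op}}$, so $D \cong D'$, and then $m^2 \dim_F D = \dim_F A = (m')^2 \dim_F D'$ forces $m = m'$.

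I expect the main obstacle to be bookkeeping rather than conceptual: keeping the opposite-ring conventions straight in the chain $A \cong \operatorname{End}_A({}_AA)^{\mathrm{op}} \cong \mathrm{M}_m(\Delta)^{\mathrm{op}} \cong \mathrm{M}_m(\Delta^{\mathrm{op}})$, and checking that the matrix identification $\operatorname{End}_A(I^m) \cong \mathrm{M}_m(\Delta)$ respects multiplication and not merely addition. The one genuinely substantive input is Schur's lemma, which supplies the division-ring structure of $\operatorname{End}_A(I)$; finite dimensionality of $A$ enters both to guarantee a minimal left ideal exists and to make the various dimension counts meaningful.
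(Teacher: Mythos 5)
The paper does not actually prove this theorem; it simply refers the reader to Curtis--Reiner, Theorem 26.4, so there is no in-paper argument to compare against. Your proposal is correct and is essentially the classical Wedderburn argument one finds in that reference: semisimplicity of a finite-dimensional simple algebra (radical is nilpotent, hence proper, hence zero), reduction via the semisimple structure theorem to a single isotypic component ${}_AA\cong I^{m}$, Schur's lemma to make $\Delta=\operatorname{End}_A(I)$ a division ring, and the chain $A\cong \operatorname{End}_A({}_AA)^{\mathrm{op}}\cong \mathrm{M}_m(\Delta)^{\mathrm{op}}\cong \mathrm{M}_m(\Delta^{\mathrm{op}})$, with the columns of $\mathrm{M}_m(D)$ realizing the minimal left ideals $I_j\cong D^m$. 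Two small points you should make explicit if you write this up: (i) the properness of $\operatorname{rad}A$ rests on the standard fact that the radical of a \emph{finite-dimensional} algebra is nilpotent (so $1\notin\operatorname{rad}A$), which deserves a citation or a one-line proof; and (ii) in the uniqueness step you use that $A$ has a unique simple left module up to isomorphism --- this follows immediately from ${}_AA\cong I^m$ (any simple module is a quotient of ${}_AA$), so say so. Note also that your uniqueness argument proves more than the theorem literally asserts (uniqueness of $m$ and of $D$ up to isomorphism, not merely of the left-ideal decomposition, which would already follow from the Krull--Schmidt theorem the paper records); that is a feature, not a flaw.
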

For proof of the theorem, see \cite{cur93}, Theorem $26.4$.

Let $A$ be a finite-dimensional simple $F$-algebra. 
Then the problem of the computation of the Wedderburn decomposition of $A$ 
can be reduced to the problem of the extraction of a system of primitive 
idempotents $\{u_{1}, u_{2}, \dots , u_{m}\}$ in $A$. 
However, such idempotents are not central and the system 
is not in general unique.

\begin{theorem}[Wedderburn-Artin]
	Let $A$ be a finite dimensional semisimple algebras
	if and only if it is a direct sum of simple algebras, i.e., a direct sum
	of matrix algebras over division algebras.
\end{theorem}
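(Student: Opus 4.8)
The plan is to prove the two implications separately, using the two structure theorems recorded above (Structure of semisimple algebras and Structure of simple algebras) for the ``only if'' direction and a direct radical computation for the ``if'' direction.

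First, suppose $A$ is a finite-dimensional semisimple algebra. By the Structure of semisimple algebras theorem, $A = B_1 \oplus B_2 \oplus \cdots \oplus B_k$, where each $B_i$ is a simple algebra over $F$, namely the minimal two-sided ideal $B_I$ spanned by the minimal left ideals isomorphic to a fixed $I$. I would then apply the Structure of simple algebras theorem to each $B_i$ to obtain $B_i \cong \mathrm{M}_{m_i}(D_i)$ for some $m_i \in \mathbb{N}$ and some finite-dimensional division algebra $D_i$ over $F$. Assembling these gives $A \cong \bigoplus_{i=1}^{k} \mathrm{M}_{m_i}(D_i)$, a direct sum of matrix algebras over division algebras, as required.

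For the converse, suppose $A = \bigoplus_{i=1}^{k} \mathrm{M}_{m_i}(D_i)$ with each $D_i$ a division algebra over $F$. The first step is the single-block case: I claim $\mathrm{M}_m(D)$ is semisimple. The cleanest route is to write $\mathrm{M}_m(D) = I_1 \oplus \cdots \oplus I_m$, where $I_j$ is the left ideal of matrices supported on the $j$-th column, to check that each $I_j$ is a minimal left ideal (its endomorphism ring is $D^{\mathrm{op}}$, a division ring), and to conclude that $\mathrm{M}_m(D)$ is a sum of minimal left ideals and therefore has zero radical. Equivalently, one shows directly via the elementary matrices $E_{ij}$ that $\mathrm{M}_m(D)$ has no proper nonzero two-sided ideal, hence is simple, and a simple algebra with identity has zero radical since its radical is a proper two-sided ideal. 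The second step is to observe that the radical is additive over finite direct sums, $\mathrm{rad}(A_1 \oplus A_2) = \mathrm{rad}(A_1) \oplus \mathrm{rad}(A_2)$, which follows because a nilpotent left ideal of $A_1 \oplus A_2$ has nilpotent projections onto each factor and conversely. Combining the two steps, $\mathrm{rad}(A) = \bigoplus_{i=1}^{k} \mathrm{rad}(\mathrm{M}_{m_i}(D_i)) = 0$, so $A$ is semisimple.

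The main obstacle lies entirely in the converse, at the verification that $\mathrm{M}_m(D)$ has trivial radical; the rest is bookkeeping or a direct appeal to the two structure theorems. The subtlety is that $D$ may be a genuinely noncommutative division algebra, so one must be careful with sides (left versus right module structure, $D$ versus $D^{\mathrm{op}}$) when identifying the endomorphism ring of the column ideal and deducing its minimality. Once minimality of the column ideals is established, semisimplicity of each block and then of the whole direct sum is immediate, and this also re-derives the Artin--Wedderburn description of the simple components that is used throughout the thesis for group algebras such as $\mathbb{C}[G]$.
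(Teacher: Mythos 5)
Your proposal is correct, but note that the thesis does not actually prove this theorem: it states it and refers to Curtis--Reiner (Theorem 26.5), just as it does for the two structure theorems preceding it, so there is no internal proof to compare against. What you have done is assemble a self-contained argument from material the chapter only quotes: your forward direction is exactly the concatenation of the two cited structure theorems (semisimple $\Rightarrow$ direct sum of the minimal two-sided ideals $B_I$, each simple; simple $\Rightarrow \mathrm{M}_m(D)$ for a finite-dimensional division algebra $D$), and the converse you supply yourself via a radical computation. The converse is where the real content lies, and your sketch is sound; the two points you should nail down against the paper's definition of the radical (the sum of all nilpotent left ideals) are: (i) that an algebra which is a sum of minimal left ideals has zero radical --- if $N$ is a nilpotent left ideal and $I$ a minimal left ideal, then $NI=I$ would give $N^kI=I\neq 0$ for all $k$, so $NI=0$ for every such $I$, hence $N=N\cdot 1\subseteq NA=0$; or, if you take the simplicity route instead, (ii) that $\mathrm{rad}\,\mathrm{M}_m(D)$ is a \emph{proper} two-sided ideal, which needs the standard fact that in a finite-dimensional algebra the sum of the nilpotent left ideals is itself nilpotent and so cannot contain the identity. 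The additivity of the radical over finite direct sums and the minimality of the column ideals (by direct computation with matrix units, minding that $D$ need not be commutative) are fine as stated. In short, your route buys an actual proof where the thesis gives only a citation, at the cost of making these two routine lemmas explicit.
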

For proof of the theorem, see \cite{cur93}, Theorem $26.5$. 

\section{Modules over an algebra}

Let $A$ be an algebra over a field $F$. 
A {\bf left module} over $A$ is an $F$-vector space with an action 
$\delta: A \times M \longrightarrow M$ such that, for all $v, 
v'\in V$, $a, a'\in A, \alpha\in F$, the following conditions hold.
\begin{enumerate}
	\item $a(v + v') = av + av'$;
	\item $(a + a')v = av + av'$;
	\item $a'(av) = (a'a)v$;
	\item $1_A v = v$.
\end{enumerate}

From now onwards, we just say $A$-module instead of {\it left $A$-module}.

If $V$ and $W$ are two $A$-modules, then an $A$-{\bf homomorphism} 
is an $F$-linear map $\phi: V \rightarrow W$ 
such that $\phi(av) = a\phi(v)$ for $a \in A$ and $v \in V$. 
The set of all $A$-homomorphisms from $V$ to $W$ is denoted by Hom$_{A}(V, W)$ 
and is an $F$-vector space. The set of all the $A$-{\bf endomorphism} 
of $V$ is an $F$-algebra and is denoted with 
End$_{A}\,V =$ Hom$_{A}(V, V)$. 
An invertible $A$-homomorphism is called an $A$-{\bf isomorphism}. Two 
$A$-modules $V$ and  $W$ are said to be {\bf isomorphic} if there exists an 
$A$-isomorphism $\phi: V \longrightarrow W$.

Let $A$ be an $F$-algebra and let $V$ be an $A$-module.
\begin{enumerate}
	\item $V$ is said to be {\bf irreducible} if the only $A$-submodules of $V$ are 
	$0$ and $V$; otherwise, $V$ is said to be {\bf reducible}.
	\item $V$ is said to be {\bf decomposable} if there exists $A$-submodules 
	$U$, $W$ of $V$ such that $V = U \oplus W$;  when this happens, $W$ is called 
	a {\bf complement} of $U$ in $V$. $V$ is said to be {\bf indecomposable} if it 
	is not decomposable. A decomposition of $V$ into a direct sum of 
	indecomposable is a {\bf total decomposition} of $V$.
	\item  $V$ is said to be {\bf completely reducible} if for every $A$-submodule 
	$U$ of $V$ there exist a  complement for $U$ in $V$. 
	Equivalently, it can be shown that $V$ is completely reducible if it can be 
	decomposed as a direct sum of irreducible $A$-submodules of $V$.
\end{enumerate}

Every $A$-module admits a total decomposition, 
but only completely reducible have total decomposition made up of 
irreducible submodules. 
An important fact about total decompositions is that they are unique up 
to isomorphisms. This is due to the following classical result:

\begin{theorem}[Krull-Schmidt]
	Let $A$ be an Artinian algebra, and $V$ a finite-dimensional $A$-module. 
	Consider two decompositions of $V$ 
	into direct sums of non-zero indecomposable $A$-modules, namely:
	$V= V_1\oplus V_2\oplus \cdots \oplus V_h = W_1\oplus W_2\oplus\cdots\oplus W_k$. 
	Then $h = k$ and there exist a permutation $i\mapsto j_i$ of of 
	$\{1, 2,\ldots , h\}$ such that $V_1 \cong W_{j_1}, 
	V_{2} \cong W_{j_2}, \ldots , V_h\cong W_{j_h}$.
\end{theorem}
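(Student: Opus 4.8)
The plan is to reduce the whole statement to one structural fact --- that the endomorphism ring of an indecomposable finite-length module is local --- and then to run a Fitting-style exchange argument together with induction on the number of summands.

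First I would establish \emph{Fitting's Lemma}: for any $f\in\mathrm{End}_A(V)$, since $V$ has finite length (this is where the hypotheses that $A$ is Artinian and $V$ finite-dimensional are used), the descending chain $\mathrm{im}\,f\supseteq\mathrm{im}\,f^2\supseteq\cdots$ and the ascending chain $\ker f\subseteq\ker f^2\subseteq\cdots$ both stabilize, say at exponent $N$, and then a short check gives $V=\ker f^N\oplus\mathrm{im}\,f^N$. When $V$ is indecomposable this forces $\ker f^N=0$ or $\mathrm{im}\,f^N=0$, i.e.\ every endomorphism of an indecomposable $V$ is either an isomorphism or nilpotent. From this I would deduce that $E:=\mathrm{End}_A(V)$ is local for indecomposable $V$: if $f,g\in E$ are non-units but $f+g$ were a unit $u$, then $u^{-1}f$ and $u^{-1}g$ are non-units (hence nilpotent) summing to $1$, so $u^{-1}g=1-u^{-1}f$ is invertible, a contradiction; together with the fact that a left-invertible endomorphism of a finite-length module is already invertible, this shows the non-units of $E$ form a two-sided ideal. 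The consequence I will actually use: a unit of $E$ cannot be written as a sum of finitely many non-units.

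Next comes the exchange step. Let $\iota_i:V_i\hookrightarrow V$, $\pi_i:V\twoheadrightarrow V_i$ be the structure maps of the first decomposition and $\iota'_j,\pi'_j$ those of the second, so $\sum_j\iota'_j\pi'_j=\mathrm{id}_V$. Then $\mathrm{id}_{V_1}=\pi_1\iota_1=\sum_j\pi_1\iota'_j\pi'_j\iota_1$ inside $\mathrm{End}_A(V_1)$, which is local because $V_1$ is indecomposable; hence some summand --- after relabeling the $W_j$, say $\alpha:=\pi_1\iota'_1\pi'_1\iota_1$ --- is an automorphism of $V_1$. Factoring $\alpha=(\pi_1\iota'_1)\circ(\pi'_1\iota_1)$ shows $\theta:=\pi'_1\iota_1:V_1\to W_1$ is a split monomorphism; as $W_1$ is indecomposable and $V_1\neq 0$, $\theta$ is an isomorphism, so $V_1\cong W_1$. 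I would then upgrade this to $V=W_1\oplus V_2\oplus\cdots\oplus V_h$: writing $p=\iota'_1\pi'_1:V\to V$ for the projection onto $W_1$ and defining $g:V\to V$ by $g(v_1+v')=p(v_1)+v'$ for $v_1\in V_1$ and $v'\in V_2\oplus\cdots\oplus V_h$, one checks via $\pi_1 p\iota_1=\alpha$ that $g$ is injective, hence bijective by finiteness; since $g(V_1)=p(V_1)=W_1$ and $g$ fixes $V_2\oplus\cdots\oplus V_h$ pointwise, $V=g(V)=W_1\oplus V_2\oplus\cdots\oplus V_h$.

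Finally I would induct on $h$. The case $h=1$ is immediate: $V=V_1$ is indecomposable, forcing $k=1$ and $W_1=V_1$. For the inductive step, the two decompositions $V=W_1\oplus V_2\oplus\cdots\oplus V_h=W_1\oplus W_2\oplus\cdots\oplus W_k$ give $V_2\oplus\cdots\oplus V_h\cong V/W_1\cong W_2\oplus\cdots\oplus W_k$, two decompositions into non-zero indecomposables with $h-1$ summands on the left; the induction hypothesis yields $h-1=k-1$ and a matching permutation of $\{2,\dots,h\}$, which combined with $V_1\cong W_1$ (and the earlier relabeling) produces the desired permutation $i\mapsto j_i$. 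The main obstacle is entirely in the first two steps: everything hinges on the locality of $\mathrm{End}_A(V)$ for indecomposable $V$, and hence on Fitting's Lemma --- that is where the finiteness hypotheses are genuinely needed. Once that is in hand, the exchange manipulation and the induction are essentially bookkeeping.
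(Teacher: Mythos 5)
Your proof is correct. The paper does not actually prove this theorem --- it only states it and refers to Curtis--Reiner (Theorem 14.5) for the proof --- and your argument is essentially the classical one given in that reference: Fitting's Lemma for finite-length modules, locality of $\mathrm{End}_A(V)$ for indecomposable $V$, an exchange step replacing $V_1$ by some $W_j$, and induction on the number of summands. All the key verifications are in place (the locality argument, the split-mono upgrade to an isomorphism using indecomposability of $W_1$, and the injectivity of your map $g$ via $\pi_1 p\,\iota_1=\alpha$); the only point left implicit is that in the inductive step you must transport the decomposition $V_2\oplus\cdots\oplus V_h$ along the isomorphism onto $W_2\oplus\cdots\oplus W_k$ before invoking the hypothesis, which is routine bookkeeping and does not affect correctness.
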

For proof of the theorem, see \cite{cur93}, Theorem $14.5$.

There are facts which are true for {\it irreducible modules}, 
but not for general modules. 
\begin{lemma} [Schur's lemma]
	Let $A$ be an $F$-algebra, and $V,W$ be irreducible $A$-modules.
	\begin{enumerate}
		\item Let $\phi \in $ Hom$_{A}(V, W)$. 
		Then $\phi$ is either zero or an $A$-isomorphism;
		\item If $\phi$ has an eigenvalue $\mu$ in $F$, then $\phi = \mu\mathrm{I}$. 
		In particular, if $F$ is an algebraically closed, 
		then $\mathrm{Hom}_A(V, V)$ is the set of scalar multiplications.
	\end{enumerate}
\end{lemma}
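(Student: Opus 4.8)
The plan is to prove both parts by the standard submodule argument, exploiting irreducibility of $V$ and $W$ to force kernels and images to be trivial or everything.

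For part (1), I would start with a nonzero $\phi \in \mathrm{Hom}_A(V,W)$ and consider two naturally associated submodules. First, $\ker\phi$ is an $A$-submodule of $V$: it is closed under addition and scalar multiplication, and $A$-linearity of $\phi$ gives $\phi(av)=a\phi(v)=0$ whenever $\phi(v)=0$. Since $V$ is irreducible, $\ker\phi$ is either $0$ or $V$; as $\phi\neq 0$, it cannot be all of $V$, so $\ker\phi=0$ and $\phi$ is injective. Dually, $\mathrm{im}\,\phi$ is an $A$-submodule of $W$ (again using $\phi(av)=a\phi(v)$), and since $W$ is irreducible and $\mathrm{im}\,\phi\neq 0$, we must have $\mathrm{im}\,\phi=W$, so $\phi$ is surjective. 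A bijective $A$-homomorphism is an $A$-isomorphism (its set-theoretic inverse is automatically $A$-linear), which completes part (1).

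For part (2), I would take $\phi$ to be an endomorphism of $V$ with an eigenvalue $\mu\in F$, and apply part (1) to the endomorphism $\phi-\mu\,\mathrm{I}\in\mathrm{Hom}_A(V,V)$. Since $\mu$ is an eigenvalue, there is a nonzero $v$ with $(\phi-\mu\,\mathrm{I})(v)=0$, so $\phi-\mu\,\mathrm{I}$ is not injective; by part (1) it cannot be an isomorphism, hence it is the zero map, i.e.\ $\phi=\mu\,\mathrm{I}$. For the final assertion, when $F$ is algebraically closed and $V$ is finite-dimensional, the characteristic polynomial of any $\phi\in\mathrm{Hom}_A(V,V)$ splits over $F$, so $\phi$ has an eigenvalue in $F$, and the previous sentence forces $\phi$ to be scalar; conversely every scalar multiplication clearly lies in $\mathrm{Hom}_A(V,V)$, giving the stated equality.

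There is no serious obstacle here; the only point that needs a word of care is that part (2) implicitly assumes $V=W$ so that "eigenvalue of $\phi$" makes sense, and that the concluding statement about algebraic closure uses finite-dimensionality of $V$ (available by hypothesis in this thesis, where modules are finite-dimensional over $F$). Everything else is a direct application of the definitions of submodule and irreducibility together with part (1).
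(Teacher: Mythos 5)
Your proof is correct: the kernel/image argument for part (1) and the application of part (1) to $\phi-\mu\,\mathrm{I}$ for part (2), with the finite-dimensionality remark for the algebraically closed case, is exactly the standard argument. The thesis itself does not prove this lemma but simply cites Serre (Proposition 4), whose proof is the same as yours, so there is nothing to reconcile.
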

For proof of the lemma, see \cite{ser118}, Proposition $4$.

It is important to establish a good class of 
algebras such that every $A$-module is completely reducible. 
The following result tells us 
that a nice class of such algebras are group 
algebras over fields with characteristic not dividing $|G|$. 

\begin{theorem}[Maschke's theorem]
	Let $G$ be a finite group and $F$ a field whose characteristic does not divide 
	$|G|$. Then every $F[G]$-module is completely reducible.
\end{theorem}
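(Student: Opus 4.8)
The plan is to reduce the statement to showing that every $F[G]$-submodule $U$ of an $F[G]$-module $V$ admits an $F[G]$-module complement; by the characterization of complete reducibility recalled above (a module is completely reducible when every submodule has a complement, equivalently when it is a direct sum of irreducibles), this suffices, together with a routine induction on dimension to pass from "every submodule splits off" to an actual decomposition into irreducible submodules.

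First I would use the hypothesis that $F$ is a field to pick an $F$-linear (not yet $F[G]$-linear) projection $p\colon V\to V$ with $\operatorname{im} p = U$ and $p|_U = \mathrm{id}_U$: extend an $F$-basis of $U$ to an $F$-basis of $V$ and let $p$ fix the first set of basis vectors and kill the rest. The key step is then to average $p$ over $G$ to make it $G$-equivariant. Define
$$\tilde p \;=\; \frac{1}{|G|}\sum_{g\in G} g\,p\,g^{-1},$$
where $g$ also denotes the linear action of $g$ on $V$. This is the point at which the assumption $\mathrm{char}\,F\nmid |G|$ is used: $|G|$ is then invertible in $F$, so $\tilde p$ is well defined.

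Next I would verify three properties of $\tilde p$. First, $\tilde p$ is $F[G]$-linear: for $h\in G$ the conjugation $x\mapsto h\,x\,h^{-1}$ merely permutes the summands of $\sum_{g\in G} g\,p\,g^{-1}$, so $h\tilde p = \tilde p h$, and since the elements of $G$ span $F[G]$ this gives $a\tilde p = \tilde p a$ for all $a\in F[G]$. Second, $\operatorname{im}\tilde p\subseteq U$, because $g\,p\,g^{-1}(v)\in gU = U$ for each $g$, as $U$ is a submodule. Third, $\tilde p|_U = \mathrm{id}_U$: for $u\in U$ we have $g^{-1}u\in U$, hence $p(g^{-1}u)=g^{-1}u$ and $g\,p\,g^{-1}(u)=u$, so the average of these $|G|$ copies of $u$ divided by $|G|$ is again $u$.

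From the second and third properties, $\tilde p$ is an idempotent $F[G]$-endomorphism of $V$ with image exactly $U$, whence $V = \operatorname{im}\tilde p \oplus \ker\tilde p = U\oplus\ker\tilde p$, and $\ker\tilde p$ is an $F[G]$-submodule precisely because $\tilde p$ is $F[G]$-linear. This produces the required complement, completing the argument. The only genuine obstacle is the equivariance verification, and the essential use of the hypothesis is exactly the invertibility of $|G|$ in $F$; without it the averaging operator does not exist, which is consistent with the failure of the conclusion in the modular case.
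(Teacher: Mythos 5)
Your proof is correct: it is the classical averaging argument (take any $F$-linear projection onto a submodule, average over $G$ using the invertibility of $|G|$ in $F$ to obtain an $F[G]$-linear idempotent, and take its kernel as the complement), and every step you verify is sound. The thesis itself supplies no proof of Maschke's theorem but only cites James and Liebeck, where essentially this same averaging proof appears, so your argument coincides with the standard one the paper relies on.
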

For proof, see \cite{james123}, Theorem $8.1$.

Let $W$ be an $A$-module, and $V$ an irreducible $A$-module. 

{\bf $V$-homogeneous component} of 
$W$ is the sum of all irreducible $A$-submodules of $W$ that are isomorphic 
to $V$ and is denoted by $V(W)$. The module $W$ is said to be {\bf homogeneous} 
if $W = V(W)$ for some irreducible $A$-module $V$. 

Consider now a completely reducible $A$-module $W$. Then
$W = V_{1} \oplus V_{2} \dots \oplus V_{r},$
for some irreducible $A$-submodules $V_{i}$ of $W$. 
Consider an irreducible $A$-module $V$ and the homogeneous component $V(W)$. 
Then one can prove that $V(W)$ is the direct sum of those $V_{i}$'s 
which are isomorphic to $V$. It follows that every completely 
reducible module is direct sum of its homogeneous components. 
Every $F$-algebra $A$ has a natural structure of left $A$-module, 
called the {\bf regular module} of $A$. The submodules of 
the regular left module $A$ are exactly the left ideals of $A$ and 
it is possible to prove that the semi-simplicity of $A$ is 
equivalent to its complete reducibility as regular module. 
So, if characteristic of $F$ does not divide $|G|$, 
then the group algebra $F[G]$ is a semisimple algebra. 
A translation of the Wedderburn-Artin theorem in this context will give 
us plenty of information about the structure of structure of a group algebra.

\begin{theorem}
	Let $G$ be a finite group and let $F$ be a field such that characteristic of $F$ does not divide $|G|$. Then
	\begin{enumerate}
		\item $F[G]$ is a direct sum of a finite number of minimal two-sided ideals $\{B_{i}\}, 1 \leq i \leq r$, the simple components of $F[G]$. Each $B_{i}$ is a simple ring.
		\item Each simple component $B_{i}$ is isomorphic to a full matrix ring of the form $M_{n_{i}}(D_{i})$, where $D_{i}$ is a division ring containing an isomorphic copy of $F$ in its centre, and the isomorphism:
		$F[G] \cong \oplus^{r}_{i = 1}M_{n_{i}}(D_{i})$
		as an $F$-algebras.
	\end{enumerate}
\end{theorem}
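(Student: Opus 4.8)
The plan is to combine Maschke's theorem with the two structure theorems already recorded in this section, so that the statement becomes a direct transcription of Wedderburn--Artin to the group-algebra setting.

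First I would note that $F[G]$ is finite-dimensional over $F$, with $\dim_F F[G] = |G|$, since $G$ is an $F$-basis of $F[G]$. Because $\mathrm{char}\,F$ does not divide $|G|$, Maschke's theorem gives that every $F[G]$-module is completely reducible; applying this to the regular module of $F[G]$, and using the fact recorded above that semi-simplicity of an algebra is equivalent to complete reducibility of its regular module, we conclude $\mathrm{rad}\,F[G] = 0$. Thus $F[G]$ is a finite-dimensional semisimple $F$-algebra, and part (1) is then immediate from the structure theorem for semisimple algebras: $F[G] = B_1 \oplus B_2 \oplus \cdots \oplus B_r$, where the $B_i$ are the summands $B_I$ attached to the (finitely many, since $\dim_F F[G] < \infty$) isomorphism classes of minimal left ideals, each $B_i$ is a simple $F$-algebra and a minimal two-sided ideal of $F[G]$, and the decomposition is unique up to order.

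For part (2) I would apply the structure theorem for simple algebras to each simple component $B_i$: there is an $n_i \in \mathbb{N}$ and a division algebra $D_i$ with an $F$-algebra isomorphism $B_i \cong M_{n_i}(D_i)$, and $D_i$ is finite-dimensional over $F$ because it embeds (as scalar matrices) into $M_{n_i}(D_i) \cong B_i \subseteq F[G]$, a finite-dimensional space. To see that $D_i$ contains an isomorphic copy of $F$ in its center, let $e_i$ be the identity of $B_i$, which is a primitive central idempotent of $F[G]$. The map $\alpha \mapsto \alpha\cdot 1_{F[G]}$ followed by the projection $F[G]\to B_i$ sends $\alpha \mapsto \alpha e_i$, takes values in $Z(B_i)$, and is injective since $F$ is a field and $1 \mapsto e_i \neq 0$; so $F$ embeds as a subfield of $Z(B_i)$. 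Transporting along $B_i \cong M_{n_i}(D_i)$ and using $Z(M_{n_i}(D_i)) = Z(D_i)\cdot I$, this yields an embedding of $F$ into $Z(D_i)$. Taking the direct sum of the $F$-algebra isomorphisms $B_i \cong M_{n_i}(D_i)$ over $i = 1,\ldots,r$ gives $F[G] \cong \oplus^{r}_{i=1} M_{n_i}(D_i)$ as $F$-algebras.

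I do not expect a genuine obstacle here; the only thing requiring care is bookkeeping, namely checking that the isomorphism supplied by the structure theorem for simple algebras is one of $F$-algebras (not merely of rings), so that the assembled direct-sum isomorphism is $F$-linear, and confirming the finiteness of $\dim_F D_i$ as above. Once Maschke's theorem puts semi-simplicity in hand, everything else is an application of the structure theorems already proved.
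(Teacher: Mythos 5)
Your proposal is correct: the paper itself gives no proof of this theorem (it simply cites Milies--Sehgal, Theorem 3.4.9), and your argument assembles exactly the ingredients already recorded in this chapter --- Maschke's theorem, the equivalence of semisimplicity with complete reducibility of the regular module, and the two structure theorems for semisimple and simple algebras --- which is precisely the intended route. The extra care you take in checking that $F$ embeds via $\alpha \mapsto \alpha e_{i}$ into $Z(B_{i}) \cong Z(D_{i})\cdot I$ and that each $D_{i}$ is finite-dimensional over $F$ correctly supplies the only details not literally contained in the quoted structure theorems.
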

For proof of the theorem, see \cite{milies126}, Theorem $3.4.9$.
\chapter{$F$-conjugacy, $F$-character table, $F$-idempotents}
Let $G$ be a finite group and $F$ be a field with characteristic $0$ 
or prime to $|G|$. Let $\overline{F}$ be the algebraic closure of $F$. 
The complete set of primitive central idempotents in $F[G]$ 
is useful for constructing the irreducible $F$-representations of $G$. 
The classical approach (see \cite{Dor94}, \cite{Yam80}) 
involves computations in algebraic extensions of $F$.
In this chapter, we give a formula for computing the primitive 
central idempotents of $F[G]$ in terms of irreducible $F$-characters 
and "$F$-conjugacy classes" of $G$. Moreover, 
one can read the complete set of 
primitive central idempotents in $F[G]$ from the "$F$-character table".

\section{Schur's theory of representations of finite groups}
In this section, we briefly recall 
Schur's theory (see \cite{re98}, \cite{sc70}) of the representations of finite groups over fields of characteristic $0$ (not necessarily algebraically closed), which involves arithmetic aspects . 
\begin{definition}\rm
	Let $L$ be a field extension of $F$ and $(\rho,V)$ an $L$-representation of 
	$G$. The representation $(\rho,V)$ is said to be {\it{realizable}} 
	over $F$ if 
	there exists an $F$-representation $(\eta,U)$ such that 
	$V\cong U\otimes_F L.$
\end{definition}        

\begin{definition}\rm
	Let $(\rho,V)$ be an irreducible $\overline{F}$-representation of $G$ 
	with character $\chi$, and let $F(\chi)$ denotes the algebraic extension of 
	$F$ obtained by adjoining 
	the character values $\{\chi(g): g \in G\}$ to $F$. The {\it{Schur index}} 
	of $(\rho,V)$ with respect to the field $F$ is defined as
	$$m_F(\chi):=\mbox{min} \{ [K:F(\chi)] \,|\, V
	\mbox{ is realizable over } K\}$$ 
	or equivalently it is the smallest positive integer $m$ such that 
	$m(\rho,V)$ is defined over $F(\chi)$.
\end{definition}

\begin{definition}\rm 
	Let $(\rho,V)$ be an irreducible $F$-representation of $G$. 
	We say $(\rho,V)$ is {\it absolutely irreducible} if 
	for every extension $L$ over $F$, $V\otimes_F L$ is $L$-irreducible.
\end{definition} 

\begin{definition}\rm
	A field $S$ is said to be {\it splitting field} for $G$ if every irreducible 
	representation of $G$ over $S$ is absolutely irreducible.
\end{definition}

\begin{definition}\rm
	Let $(\rho, V)$ be an $F$-representation of $G$. Then the {\it $F$-character}
	$\chi_{\rho}$ corresponding to the representation  $(\rho, V)$ is a map from $G$ 
	into $F$, and for any element $x$ in $G$, $\chi_{\rho}(x)$ is defined by 
	trace of the linear operator $\rho(x)$.
\end{definition}

\begin{definition}\rm
	Suppose $S$ is a Galois extension of $F$ and a splitting field for $G$. 
	Let $\chi$ be an $S$-character of $G$ and $\alpha\in {\rm Gal}(S/F)$. 
	The function $\chi^{\alpha}: G \rightarrow S$ 
	defined by $\chi^{\alpha}(g):= \alpha(\chi(g)),$ for all $g\in G,$ is 
	an $S$-character of $G$, called an 
	{\it algebraically conjugate} of $\chi$  with respect to $F$.
\end{definition}
\begin{theorem} [see\cite{re98}, Theorem $2$] (Schur)  $\label{sch}$
	Let $G$ be a finite group and $F$ be field of characteristic $0$ or prime to $|G|$. Let $W$ an irreducible $F[G]$-module. 
	Let $V$ be an irreducible submodule of the $\overline{F}[G]$-module 
	$W\otimes_{F}{\overline{F}}$. Then 
	$$W\otimes_{F}{\overline{F}} \cong s(V \oplus V^{\alpha_{1}} \oplus \dots 
	\oplus V^{\alpha_{t}}),$$
	where $V^{\alpha_{i}}$'s are pairwise non-isomorphic algebraically 
	conjugates modules, and $s$ is an integer. If $V$ affords the character
	$\chi$, then $s = m_{F}(\chi)$. If $W$ affords the character $\theta$, then 
	$$\theta = m_{F}(\chi)\sum_{\alpha \in H}{\chi^{\alpha}},$$
	where $H$ is the Galois group of $F(\chi)$ over $F$.
\end{theorem}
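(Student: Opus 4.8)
The plan is to pass from $\overline{F}$ to a finite Galois splitting subextension, use Galois descent to pin down the orbit structure of the $\overline{F}$-constituents, and only then identify the common multiplicity with the Schur index. First I would observe that we may work inside a finite extension: by Brauer's splitting-field theorem the field $S:=F(\zeta)$, where $\zeta$ is a primitive $|G|$-th root of unity in $\overline{F}$, is a splitting field for $G$, and $S/F$ is Galois with abelian group $\Gamma:=\mathrm{Gal}(S/F)$ (it embeds into $(\mathbb{Z}/|G|\mathbb{Z})^\times$). Every irreducible constituent of $W\otimes_F\overline{F}$ is already defined over $S$, so it suffices to analyse the semisimple (Maschke) $S[G]$-module $W\otimes_F S$.

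Next I would run the Galois descent argument. The group $\Gamma$ acts on $W\otimes_F S$ by $F[G]$-semilinear automorphisms through the second factor; its fixed submodule is $W\otimes 1\cong W$, and for a finite Galois extension the $\Gamma$-stable $S[G]$-submodules of $W\otimes_F S$ correspond bijectively to the $F[G]$-submodules of $W$. An element $\alpha\in\Gamma$ carries the $U$-isotypic component of $W\otimes_F S$ isomorphically onto the $U^{\alpha}$-isotypic component, hence $\Gamma$ permutes isotypic components and preserves multiplicities. The sum of the isotypic components lying in a single $\Gamma$-orbit is a nonzero $\Gamma$-stable submodule, so by descent it equals all of $W\otimes_F S$; and since $W$ is $F$-irreducible there is no proper nonzero $\Gamma$-stable submodule, so the constituents form exactly one $\Gamma$-orbit, all with one common multiplicity $s$. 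Writing $V,V^{\alpha_1},\dots,V^{\alpha_t}$ for the distinct members of the orbit of $V$ yields $W\otimes_F\overline{F}\cong s\bigl(V\oplus V^{\alpha_1}\oplus\cdots\oplus V^{\alpha_t}\bigr)$ as stated.

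To count $t+1$: since $S$ is a splitting field, $V^{\alpha}\cong V$ iff $\chi^{\alpha}=\chi$ iff $\alpha$ fixes every character value $\chi(g)$, i.e.\ iff $\alpha\in\mathrm{Gal}(S/F(\chi))$. Hence the orbit has size $[\Gamma:\mathrm{Gal}(S/F(\chi))]=[F(\chi):F]$; in particular $F(\chi)/F$ is Galois (a subextension of the abelian extension $S/F$), $H:=\mathrm{Gal}(F(\chi)/F)$ makes sense, and $t+1=|H|$. Taking traces of the displayed isomorphism — the trace of the $G$-action is unchanged under extension of scalars — gives $\theta=s\sum_{\alpha\in H}\chi^{\alpha}$, so everything reduces to the identity $s=m_F(\chi)$.

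This last identity is the delicate point, and I would prove both inequalities. For $m_F(\chi)\le s$: the $sV$-isotypic component of $W\otimes_F S$ is stable under $\mathrm{Gal}(S/F(\chi))$ (which fixes $\chi$, hence the isomorphism class of $V$), so Galois descent from $S$ to $F(\chi)$ produces an $F(\chi)[G]$-module realizing $sV$ over $F(\chi)$. For $s\le m_F(\chi)$: choose an $F(\chi)[G]$-module $U$ with $U\otimes_{F(\chi)}S\cong m_F(\chi)\cdot V$; restricting scalars to $F$ and using $F(\chi)\otimes_F S\cong\prod_{\beta\in H}S$ gives $\mathrm{Res}^{F(\chi)}_F(U)\otimes_F S\cong m_F(\chi)\bigoplus_{\beta\in H}V^{\beta}$, so every $F$-irreducible constituent of $\mathrm{Res}^{F(\chi)}_F(U)$ has all its $S$-constituents inside the $\Gamma$-orbit of $V$. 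But $W$ is, up to isomorphism, the only such $F$-irreducible: if $W'$ were another, then $\mathrm{Hom}_{S[G]}(W\otimes_F S,W'\otimes_F S)=\mathrm{Hom}_{F[G]}(W,W')\otimes_F S\ne 0$, forcing $W\cong W'$ by Schur's lemma. Hence $\mathrm{Res}^{F(\chi)}_F(U)\cong c\cdot W$ with $c\ge 1$, and comparing multiplicities of $V$ gives $cs=m_F(\chi)$, so $s\le m_F(\chi)$. Combining the two inequalities gives $s=m_F(\chi)$, completing the character identity. (Alternatively one can argue via the simple component: if $e$ is the central primitive idempotent of $F[G]$ attached to $W$ and $F[G]e\cong M_\ell(D)$, then $Z(D)\cong F(\chi)$, the Schur index of $D$ equals $m_F(\chi)$, and tensoring with $S$ splits $D$; a dimension count $\dim_F W=s\,[F(\chi):F]\dim_S V$ against $\dim_F W=\ell\,m_F(\chi)^2[F(\chi):F]$ and $\dim_S V=\ell\,m_F(\chi)$ forces $s=m_F(\chi)$. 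The main obstacle in either route is the same: keeping the Galois-descent bookkeeping honest and, in the algebra-theoretic variant, correctly identifying $Z(D)$ with $F(\chi)$.)
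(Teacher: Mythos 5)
The paper never proves Theorem \ref{sch}: it is quoted as a known result of Schur with a pointer to Reiner \cite{re98}, so there is no in-text argument to measure you against, and your proposal has to stand on its own — which it does. Your Galois-descent route is correct: the reduction to the finite abelian extension $S=F(\zeta)$ (Brauer's splitting-field theorem; in characteristic $p$ prime to $|G|$ you are using its standard companion, not literally Brauer, but this is a citation-level input on a par with the theorem itself), the descent argument forcing the constituents of $W\otimes_F S$ into a single $\Gamma$-orbit with one common multiplicity $s$, the orbit count $t+1=[F(\chi):F]$ (using that over a splitting field of a semisimple group algebra the character determines the irreducible, valid since $\mathrm{char}\,F\nmid|G|$), and both inequalities — $m_F(\chi)\le s$ by descending the $V$-isotypic part to $F(\chi)$ and invoking the minimality definition of the Schur index adopted in the paper, and $s\le m_F(\chi)$ via restriction of scalars, $F(\chi)\otimes_F S\cong\prod_{\beta\in H}S$, the base-change isomorphism $\mathrm{Hom}_{F[G]}(W,W')\otimes_F S\cong\mathrm{Hom}_{S[G]}(W\otimes S,W'\otimes S)$ identifying $W$ as the unique $F$-irreducible lying over the orbit, and the multiplicity count $cs=m_F(\chi)$ — all check out, and taking traces gives the character identity. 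This is a genuinely different route from the classical proof you are implicitly being compared with (Reiner, Curtis--Reiner, Yamada), which works inside the simple component $F[G]e\cong M_{\ell}(D)$, identifies $Z(D)$ with $F(\chi)$, and reads off $s$ as the index of $D$ — essentially the alternative you sketch in your closing parenthesis. Your main argument buys a proof that never touches the structure theory of division algebras, needing only Maschke, Galois descent and the splitting-field fact; the classical route buys more structural information (that $s$ is the index of $\mathrm{End}_{F[G]}(W)$ and $Z(\mathrm{End}_{F[G]}(W))\cong F(\chi)$), which is exactly the data the paper's later idempotent computations in Chapter 3 draw on.
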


\section{{$F$-conjugacy classes}}

In this section, we define $F$-conjugacy and show $F$-conjugacy class of 
an element of $G$ depends on the arithmetic of the field $F$, 
in fact it depends on decomposition of cyclotomic polynomials into irreducible factors over $F$.
Throughout this section we assume that $F$ is a field of characteristic $0$ or prime to $|G|$.
\begin{definition}\rm
	Two elements $x$, $y$ in $G$ are said to be {\bf $F$-conjugate}: 
	$x \sim_F y$ if for all finite dimensional 
	$F$-representations $(\rho, V)$ with the associated characters $\chi_{\rho}$, 
	we have $\chi_{\rho}(x) = \chi_{\rho}(y)$.
\end{definition}

\begin{definition}\rm
	Let $x$ be an element in $G$. The {\bf $F$-conjugacy class} of $x$ 
	consists of all those elements in $G$, which are
	$F$-conjugate to $x$. We denote $F$-conjugacy class of $x$ by {\bf $C_{F}(x)$} 
	and conjugacy class of $x$ by {\bf $C(x)$}.
\end{definition}
\begin{remark}
	By the Frobenius-Schur theory, when $F$ is algebraically closed, then 
	$F$-conjugacy class is usual conjugacy class in purely group theoretic  sense. 
	When $F$ is not algebraically closed, $F$ has arithmetic and then 
	$F$-conjugacy depends on both the structure of $G$ and arithmetic of $F$. 
\end{remark}
\subsection{Decomposition of $n$-th cyclotomic polynomials over a field $F$ of characteristic not dividing $|G|$}
Let $n$ be a positive integer and $F$ a field of characteristic $0$ or prime to $n$. The $n$-th roots of $1$ in $\overline{F}$ form a cyclic 
group. Consider the $n$-th {\bf cyclotomic polynomial} $\Phi_{n}(X)\in\mathbb{Z}[X]$, 
which is the monic irreducible polynomial over $\mathbb{Z}$ satisfied by a 
primitive $n$-th root of unity in $\bar{F}$. The cyclotomic polynomial 
$\Phi_{n}(X)$ can be viewed an element 
in $F[X]$: if char$F=0$ then this is clear since $F$ contains 
$\mathbb{Q}$. If char$F=p$, a prime, then one considers the 
image of $\Phi_{n}(X)$ under the isomorphism 
$\psi: \mathbb{Z}[X] \mapsto \mathbb{F}_{p}[X]$, which maps $X$ to 
$X$ and $\mathbb{Z}$ onto $\mathbb{F}_{p}$ 
by reduction modulo $p$ of the coefficients. We denote again this 
image by $\Phi_{n}(X)$. We describe the decomposition 
of $\Phi_{n}(X)$ 
into irreducible factors over $F$.

\begin{proposition}\label{decom}
	Let $\Phi_n(X)=f_1(X)f_2(X)\cdots f_k(X)$ be the decomposition of 
	$\Phi_n(X)$ into irreducible factors over $F$. Then
	\begin{enumerate}
		\item The degrees of all $f_i(X)$'s are same.
		\item Let $\zeta$ be a root of one $f_i(X)$. Then all the 
		roots of $f_i(X)$ are $\{\zeta^{r_1}, \zeta^{r_2}, \ldots,\zeta^{r_s}\}$, 
		where all $r_i$'s are natural numbers with $r_1=1$, and 
		the sequence $\{ r_1, r_2,\ldots, r_s \}$ is independent of 
		irreducible factors of $\Phi_n(X)$ and any root of $\Phi_n(X)$.
	\end{enumerate}
\end{proposition}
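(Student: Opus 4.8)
The plan is to reduce the statement to an elementary computation with the Galois group of the cyclotomic extension. Write $L$ for a splitting field of $X^{n}-1$ over $F$ inside $\overline{F}$, and let $\zeta$ be a primitive $n$-th root of unity, so $L=F(\zeta)$. Since $\operatorname{char}F$ is $0$ or prime to $n$, the polynomial $X^{n}-1$ is separable; hence the group $\mu_{n}$ of $n$-th roots of unity in $\overline{F}$ is cyclic of order $n$, the set of primitive $n$-th roots of unity (equivalently, the generators of $\mu_{n}$) has exactly $\varphi(n)$ elements, and these are precisely the roots of $\Phi_{n}(X)$. In particular $\Phi_{n}$ is separable and $L/F$ is a finite Galois extension.

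First I would set up the embedding $\omega\colon\operatorname{Gal}(L/F)\hookrightarrow(\mathbb{Z}/n\mathbb{Z})^{\times}$. Any $\sigma\in\operatorname{Gal}(L/F)$ permutes $\mu_{n}$ and carries a generator to a generator, so $\sigma(\eta)=\eta^{a}$ for every $\eta\in\mu_{n}$, where $a=\omega(\sigma)\in(\mathbb{Z}/n\mathbb{Z})^{\times}$ is independent of the chosen generator $\eta$; one checks that $\omega$ is a homomorphism and is injective because $L=F(\zeta)$. Let $H:=\operatorname{im}\omega$, a subgroup of $(\mathbb{Z}/n\mathbb{Z})^{\times}$, and fix representatives $H=\{r_{1},r_{2},\dots,r_{s}\}$ with $r_{1}=1$, $\gcd(r_{i},n)=1$, and $s=|H|$. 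Note that $H$ depends only on $F$ and $n$.

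Next I would translate the factorization of $\Phi_{n}$ over $F$ into the orbit structure of $H$. Identify the set of primitive $n$-th roots of unity with $(\mathbb{Z}/n\mathbb{Z})^{\times}$ via $\zeta^{a}\leftrightarrow a$; under this identification the action of $\sigma$ becomes multiplication by $\omega(\sigma)$, so the $\operatorname{Gal}(L/F)$-orbits on the roots of $\Phi_{n}$ are exactly the cosets $Ha$, $a\in(\mathbb{Z}/n\mathbb{Z})^{\times}$. By the standard correspondence between the irreducible factors of a separable polynomial over $F$ and the Galois orbits of its roots (the minimal polynomial of a root $\eta$ over $F$ equals $\prod_{\sigma}(X-\sigma\eta)$ taken over the orbit of $\eta$, since $L/F$ is Galois), the irreducible factors $f_{1},\dots,f_{k}$ of $\Phi_{n}$ are in bijection with these cosets. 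As all cosets of $H$ have the same cardinality $|H|=s$, every $f_{i}$ has degree $s$ (and $k=\varphi(n)/s$), which is part (1). For part (2), let $\zeta$ be a root of some $f_{i}$; its roots form its $\operatorname{Gal}(L/F)$-orbit, i.e. $\{\sigma(\zeta):\sigma\in\operatorname{Gal}(L/F)\}=\{\zeta^{r}:r\in H\}=\{\zeta^{r_{1}},\zeta^{r_{2}},\dots,\zeta^{r_{s}}\}$, with $r_{1}=1$ the identity of $H$; and since $H$, hence the tuple $\{r_{1},\dots,r_{s}\}$, was defined purely from $F$ and $n$, it is independent of the choice of factor $f_{i}$ and of which root of $f_{i}$ is called $\zeta$. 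The only points needing care are the separability of $\Phi_{n}$ when $\operatorname{char}F=p>0$ (which is exactly why the hypothesis $p\nmid n$ is imposed) and the well-definedness of $\omega$ independently of the generator of $\mu_{n}$; neither is a substantial obstacle, so the main work is just this bookkeeping.
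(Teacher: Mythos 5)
Your proposal is correct and follows essentially the same route as the paper: both identify the roots of each irreducible factor $f_i(X)$ with the orbit of a root $\zeta$ under $\mathrm{Gal}(F(\zeta)/F)$, whose elements act as $\zeta\mapsto\zeta^{r}$ with $r$ prime to $n$, so every factor has degree $s=[F(\zeta):F]$ and the exponent set $\{r_1=1,\dots,r_s\}$ is the same for all factors and all roots. Your write-up merely makes explicit the embedding of the Galois group into $(\mathbb{Z}/n\mathbb{Z})^{\times}$ and the coset bookkeeping that the paper leaves implicit.
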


\begin{proof}\rm
	The splitting field of the polynomial $\Phi_{n}(X)$ over $F$ is $F(\zeta)$.
	The Galois group Gal$(F(\zeta)/F)$, 
	permutes all the roots of $\Phi_n(X)$. Then the set all the roots of $f_i(X)$
	is $\{\sigma(\zeta)\, | \, \sigma \in {\rm Gal}(F(\zeta)/F)\}$.
	So the set of all the roots of 
	$f_i(X)$ is $\{\zeta^{r_1},\zeta^{r_2},\ldots,\zeta^{r_s}\}$, 
	where $r_i$'s are natural numbers with $r_1 = 1$. 
	Therefore the degree of  $f_i(X)$ is $s$, and So the set of all the roots
	of $f_i(X)$ is $\{\zeta^{r_1}, \zeta^{r_2},\cdots, \zeta^{r_{s}}\}$, 
	where $r_i$'s are natural numbers with $r_{1} = 1$. Therefore the degree 
	of  $f_i(X)$ is $s$, and hence the degrees of  $f_{i}(X)$'s are same. 
	The sequence $\{{r_1} = 1, r_2,\cdots,r_s\}$ is independent roots
	of  $f_i(X)$ up to the order and also independent of $f_i(X)$.
\end{proof}
\begin{theorem}
	Let $G$ be a finite group and $F$ a field such that characteristic of
	$F$ does not divide $|G|$.
	Let $x$ be an element of $G$, and the order of $x$ is $n$.
	Then $C_{F}{(x)}$ is equal to
	$C(x)\cup C(x^{r_2})\cup\cdots \cup C(x^{r_s})$, where
	$r_1 = 1, r_2,\ldots, r_s$ is the sequence associated with $\Phi_n(X)$ as in the above proposition.
\end{theorem}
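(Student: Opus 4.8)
The plan is to reduce the statement to a purely character-theoretic computation using Schur's theorem (Theorem~\ref{sch}) and then match it with the combinatorial description of the roots of $\Phi_n(X)$ provided by Proposition~\ref{decom}. First I would recall that two elements $x,y\in G$ are $F$-conjugate precisely when $\chi_\rho(x)=\chi_\rho(y)$ for every irreducible $F$-character $\chi_\rho$, so it suffices to understand how irreducible $F$-characters evaluate on powers of $x$. The key observation is that each irreducible $F$-character $\theta$ arises, via Theorem~\ref{sch}, as $\theta = m_F(\chi)\sum_{\alpha\in H}\chi^\alpha$, where $\chi$ is an irreducible $\overline{F}$-character and $H=\mathrm{Gal}(F(\chi)/F)$; hence $\theta(x^j) = m_F(\chi)\sum_{\alpha\in H}\alpha(\chi(x^j))$. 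Since $x$ has order $n$, every eigenvalue of $\rho(x)$ (for the absolutely irreducible $\rho$ affording $\chi$) is an $n$-th root of unity, so $\chi(x^j)$ is a $\mathbb{Z}$-linear combination of $j$-th powers of $n$-th roots of unity; the Galois action of $\mathrm{Gal}(F(\zeta_n)/F)$ on these is exactly the action by the exponents $r_1=1,r_2,\dots,r_s$ singled out in Proposition~\ref{decom}.

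Next I would make the two inclusions explicit. For the inclusion $C(x^{r_i})\subseteq C_F(x)$: fix an index $i$ and an irreducible $F$-character $\theta$; I must show $\theta(x^{r_i})=\theta(x)$. Writing $\theta=m_F(\chi)\sum_{\alpha\in H}\chi^\alpha$ and using that $\chi(x^{r_i})$ is obtained from $\chi(x)$ by applying the field automorphism $\zeta_n\mapsto\zeta_n^{r_i}$ — which, since $\gcd(r_i,n)=1$ and $r_i$ lies in the subgroup of $(\mathbb{Z}/n\mathbb{Z})^\times$ corresponding to $\mathrm{Gal}(F(\zeta_n)/F)$ by Proposition~\ref{decom}, extends to an element $\alpha_0\in\mathrm{Gal}(F(\zeta_n)/F)$ restricting to an automorphism in $H=\mathrm{Gal}(F(\chi)/F)$ — the sum $\sum_{\alpha\in H}\chi^\alpha(x^{r_i}) = \sum_{\alpha\in H}\alpha(\alpha_0(\chi(x))) = \sum_{\alpha\in H}(\alpha\alpha_0)(\chi(x))$ just permutes the summands, so it equals $\sum_{\alpha\in H}\chi^\alpha(x)$. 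This gives $\theta(x^{r_i})=\theta(x)$ for all irreducible $F$-characters $\theta$, hence $x^{r_i}\sim_F x$, and conjugation is clearly compatible with $\sim_F$, so $C(x^{r_i})\subseteq C_F(x)$.

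For the reverse inclusion $C_F(x)\subseteq\bigcup_i C(x^{r_i})$: suppose $y\sim_F x$; I want $y$ conjugate in $G$ to some $x^{r_i}$. The standard fact is that $C_F(x)$ is a union of ordinary conjugacy classes, each of the form $C(x^j)$ with $\gcd(j,n)=1$ (an $F$-conjugate of $x$ has the same order $n$ and is $\overline F$-conjugate, by Frobenius--Schur, to some Galois conjugate, i.e.\ to some $x^j$). So it remains to check that if $C(x^j)\subseteq C_F(x)$ then $j\in\{r_1,\dots,r_s\}\pmod n$. If $j$ were \emph{not} among the $r_i$, then the automorphism $\zeta_n\mapsto\zeta_n^j$ would not lie in $\mathrm{Gal}(F(\zeta_n)/F)$, so $\Phi_n$ would have an irreducible factor separating $\zeta_n$ from $\zeta_n^j$; translating this into representations, the regular $F$-representation of the cyclic subgroup $\langle x\rangle$ — or more robustly a suitable irreducible $F$-representation of $G$ induced/inflated from $\langle x\rangle$ — would have a character taking different values on $x$ and $x^j$, contradicting $x^j\sim_F x$. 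The cleanest way to produce such a witnessing character is to apply Theorem~\ref{th2} (the faithful irreducible representations of $C_n$) to $\langle x\rangle$ and then use that characters of $C_n$ are detected inside $G$; this is the step I expect to require the most care, since one must ensure the distinguishing character is genuinely an $F$-character of $G$ and not merely of the subgroup. Combining both inclusions yields $C_F(x)=C(x)\cup C(x^{r_2})\cup\cdots\cup C(x^{r_s})$, as claimed.
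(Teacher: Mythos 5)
Your first inclusion is essentially the paper's argument (the paper does it more directly: $\theta(x)\in F$ because $\theta$ is an $F$-character, $\theta(x^{r_i})=\sigma_i(\theta(x))$ where $\sigma_i:\zeta\mapsto\zeta^{r_i}$ is an $F$-automorphism precisely because $r_i$ comes from one irreducible factor of $\Phi_n$, hence $\theta(x^{r_i})=\theta(x)$), and routing it through the Schur decomposition only adds bookkeeping. The reverse inclusion, however, has a genuine gap, and it is exactly where the real content of the theorem lies. You assert as a ``standard fact'' that any $y\sim_F x$ is $G$-conjugate to some $x^j$ with $\gcd(j,n)=1$ (and in particular has order $n$), justified by saying $y$ is ``$\overline{F}$-conjugate, by Frobenius--Schur, to some Galois conjugate.'' This does not follow: knowing $\theta(x)=\theta(y)$ for every irreducible $F$-character $\theta$ only tells you that certain \emph{sums} of Galois orbits of absolutely irreducible characters agree at $x$ and $y$; it does not let you conclude that some individual $\overline{F}$-character value satisfies $\chi(y)=\chi^{\alpha}(x)$, so you cannot invoke Frobenius--Schur separation over $\overline{F}$ to place $y$ in $C(x^j)$. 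That reduction is a substantial part of what must be proved, not an input. Your second step is also unresolved by your own admission: a faithful irreducible $F$-representation of $\langle x\rangle$ lives on the subgroup, and there is no mechanism offered to turn it into an $F$-character of $G$ that still separates $x$ from $x^j$ --- inflation is unavailable since $\langle x\rangle$ is not a quotient, and $\mathrm{Ind}_{\langle x\rangle}^{G}$ evaluates at $x$ as a sum over all $G$-conjugates of $x$ landing in $\langle x\rangle$, which can wash out the separation. Even the separation inside $C_n$ itself (that the characters attached to the irreducible factors of $\Phi_n$ distinguish $x$ from $x^j$ when $j\notin\{r_1,\dots,r_s\}$) needs an argument of the same nature as the theorem being proved.

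The paper's proof of this direction is structurally different and avoids both problems at once: it introduces the relation $g\sim h$ iff $g$ is conjugate to $h^a$ for some $a\in I_n$ (the subgroup of $(\mathbb{Z}/n\mathbb{Z})^{*}$ realizing $\mathrm{Gal}(F(\zeta)/F)$), observes $\mathrm{span}_F\{\phi_1,\dots,\phi_r\}\subseteq N\subseteq M$, where $M$ and $N$ are the spaces of functions constant on $\sim$-classes and on $\sim_F$-classes respectively and the $\phi_i$ are the irreducible $F$-characters, and then proves $M=\mathrm{span}_F\{\phi_1,\dots,\phi_r\}$: write $\theta\in M$ as an $F(\zeta)$-combination of absolutely irreducible characters, use $\theta(g)=\theta(g^a)$ together with linear independence of characters to see the coefficients are constant on Galois orbits, and a nonsingular-matrix argument to force the resulting coefficients into $F$. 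Equality of these spans forces the $\sim$- and $\sim_F$-classes to coincide, which is the reverse inclusion. Your proposal contains no counterpart of this descent/dimension argument, so as it stands the hard direction is unproved; to repair it you would either need to prove your ``standard fact'' and the separating-character step (each roughly as hard as the theorem), or adopt an argument of the paper's global type.
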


\begin{proof}
	Fix $x\in G$ with $x$ of order $n$.
	We first prove that $C(x)\cup C(x^{r_{2}})\cup\cdots\cup C(x^{r_{s}})
	\subseteq C_{F}(x)$.
	Let $\rho$ be an arbitrary  $F$-representation of $G$, and $\chi$ be its
	character. Let $\zeta$ be a primitive $n$-th root of
	$1\in F$ (where $n$ is the order of $x$).
	Then $\chi(x)=\zeta^{u_1}+\zeta^{u_2}+\cdots +\zeta^{u_t}$ for some
	$u_i\in\mathbb{Z}$, and
	it belongs to $F$.
	Since $r_i$ ($2\leq i\leq s$) is relatively prime to $n$, $\sigma_i:\zeta\mapsto
	\zeta^{r_i}$ is an $F$-automorphism of $F(\zeta)$ and
	$$\chi(x^{r_i})=\zeta^{r_iu_1}+\zeta^{r_iu_2}+\cdots+\zeta^{r_iu_t}\in F.$$
	But then
	$$\chi(x^{r_i})=\zeta^{r_iu_1}+\cdots+\zeta^{r_iu_t}
	=\sigma_i(\zeta^{u_1}+\cdots+\zeta^{u_t})=\sigma_i(\chi(x))=\chi(x),$$
	for every irreducible $F$-character $\chi$, which shows that $C(x)\cup
	C(x^{r_{2}})\cup\cdots\cup C(x^{r_{s}})
	\subseteq C_{F}(x)$.
	
	Let $I_n$ denotes those distinct integers, relatively
	prime to $n$, which give all the automorphisms in ${\rm Gal}(F(\zeta)/F)$.
	Thus, $I_n$ is a subgroup of $(\mathbb{Z}/n\mathbb{Z})^*$. Define following
	relations on $G$:
	\begin{align*}
	g\sim h & \mbox{ if } g \mbox{ is conjugate to } h^a \mbox{ for some } a\in
	I_n,\\
	g\sim_F h & \mbox{ if } \chi(g)=\chi(h) \mbox{ for all irreducible
		$F$-characters of $G$}.
	\end{align*}
	The relation $\sim$ is equivalence relation on $G$ since $I_n$ is a (sub)group.
	Let $[g]_{\sim}$ and $[g]_{\sim_F}$ denotes the equivalence classes of $g$
	under $\sim$ and $\sim_F$ respectively. Clearly $[g]_{\sim}\subseteq
	[g]_{\sim_F}$ for all $g\in G$. Consider the sets
	\begin{align*}
	M &=\{ f:G\rightarrow F ~\mid~  \mbox{$f$ is constant on each $\sim$
		equivalence class of $G$}\},\\
	N &=\{ f:G\rightarrow F ~\mid~  \mbox{$f$ is constant on each $\sim_F$
		equivalence class of $G$}\}.
	\end{align*}
	Then $M$ and $N$ are $F$-vector spaces and we have the subspace relations:
	$$\mbox{span}\{ \phi_1,\ldots,\phi_r\}\subseteq N\subseteq M$$
	where $\phi_1,\ldots, \phi_r$ are all the distinct $F$-irreducible characters
	of $G$. We show that $M=\mbox{span}\{ \phi_1,\ldots,\phi_r\}$,
	which will imply
	that $[x]_{\sim}=[x]_{\sim_F}$ and the proof of the theorem will be complete.
	
	Consider $\theta\in M$. Since $\theta$ is constant on each conjugacy class of
	$G$, we can write $\theta=\sum_{i=1}^k c_i\chi_i$ where $c_i\in F(\zeta)$ and
	$\chi_i$'s are
	irreducible characters of $G$ over $F(\zeta)$.
	For $a\in I_n$, let $\sigma$ denotes the $F$-automorphism $\zeta\mapsto
	\zeta^a$ of $F(\zeta)$. Then for any $g\in G$, $\theta(g)=\theta(g^a)$, hence
	$$\sum_{i=1}^k c_i\sigma(\chi_i(g))=\sum_{i=1}^k
	c_i\chi_i(g^a)=\theta(g^a)=\theta(g)=\sum_{i=1}^k c_i\chi_i(g).$$
	Therefore $c_i=c_j$ if and only if $\sigma(\chi_i)=\chi_j$ (by independence of
	$\chi_i$'s over $F(\zeta)$). Therefore $\theta$ is an $F(\zeta)$-linear
	combination of $\phi_i$'s, say
	$$\theta=\sum_{i=1}^r d_i\phi_i.$$
	
	Claim: Each $d_i$ belongs to $F$.
	
	Since $\phi_1,\ldots,\phi_r$ are linearly independent over $F$, there exists
	$x_j\in G$ for $j=1,2,\ldots, r$ such that the $r\times r$ matrix
	$(\phi_i(x_j))$ is non-singular. Therefore, $d_i$'s are uniquely determined by
	the system of linear equations
	$$\theta(x_j)=\sum_{i=1}^r d_i\phi_i(x_j), \hskip5mm j=1,2,\ldots, r.$$
	It follows that $d_i=\sigma(d_i)$, for any $F$-automorphism $\sigma$ of
	$F(\zeta)$.
	
	Thus $\theta$ is an $F$-linear combination of $\phi_1,\ldots, \phi_r$, and now
	the proof of the theorem is complete.
\end{proof}
\begin{corollary}\rm
	The $F$-conjugacy class of $x$ is uniquely determined by the roots of just 
	one irreducible factor of $\Phi_{n}(X)$ over $F$, where $n$ is order of $x$.
\end{corollary}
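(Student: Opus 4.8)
The plan is to read the statement off directly from the Theorem proved immediately above, together with Proposition~\ref{decom}. Fix once and for all a primitive $n$-th root of unity $\zeta$ in $\overline{F}$, and let $f(X)$ be \emph{any one} irreducible factor of $\Phi_n(X)$ over $F$. By Proposition~\ref{decom}(2), the root set of $f(X)$ is exactly $\{\zeta^{r_1},\zeta^{r_2},\ldots,\zeta^{r_s}\}$ for a sequence of natural numbers $r_1 = 1, r_2,\ldots, r_s$, each coprime to $n$, and crucially this sequence is independent both of which irreducible factor of $\Phi_n(X)$ we picked and of which root of $f(X)$ we single out as $\zeta$.

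First I would observe that the exponent sequence $\{r_1,\ldots,r_s\}$ is recoverable from the root set of $f(X)$ alone: once $\zeta$ is pinned down, every root $\beta$ of $f(X)$ equals $\zeta^{r}$ for a unique residue $r$ modulo $n$, and collecting these residues returns precisely $\{r_1,\ldots,r_s\}$. Thus "the roots of one irreducible factor of $\Phi_n(X)$" and "the sequence $r_1,\ldots,r_s$" encode the same data. This is where the small amount of care is needed — one must check the map "root $\mapsto$ exponent relative to the fixed $\zeta$" is well defined and choice-independent — but it is immediate, since $\mathrm{Gal}(F(\zeta)/F)$ acts on the roots of $\Phi_n(X)$ exactly by substitutions $\zeta \mapsto \zeta^{r_i}$, which is what Proposition~\ref{decom} records.

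Then I would invoke the Theorem, which asserts $C_F(x) = C(x)\cup C(x^{r_2})\cup\cdots\cup C(x^{r_s})$, where $r_1 = 1, r_2,\ldots, r_s$ is exactly the sequence associated with $\Phi_n(X)$ and $n$ is the order of $x$. Combining the two facts, $C_F(x)$ is a union of ordinary conjugacy classes indexed by the residues read off from the roots of the single irreducible factor $f(X)$; and since $f(X)$ was an arbitrary irreducible factor of $\Phi_n(X)$, the $F$-conjugacy class of $x$ is determined by any one such factor, as claimed.

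I do not anticipate a genuine obstacle: the corollary is essentially a repackaging of Proposition~\ref{decom} and the preceding Theorem, and the only non-formal ingredient — that the exponent sequence is intrinsic to a single irreducible factor — has already been established there.
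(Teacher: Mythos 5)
Your argument is correct and is essentially the paper's: the corollary is stated without a separate proof precisely because it follows immediately from the Theorem ($C_F(x)=C(x)\cup C(x^{r_2})\cup\cdots\cup C(x^{r_s})$) together with Proposition~\ref{decom}, whose point is exactly that the exponent sequence $r_1=1,r_2,\ldots,r_s$ is read off from the root set of any single irreducible factor of $\Phi_n(X)$, independently of the factor and of the chosen root. Your extra remark on the well-definedness of the map from roots to exponents modulo $n$ is a harmless (and correct) elaboration of the same reasoning.
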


\begin{remark}\rm
	In general $C_F(x)$ is not necessarily equal to the 
	{\it disjoint} union of $C(x^{r_i})$, for $i=1,2,\ldots,s$.
\end{remark}

\begin{example}\rm
	Let $F = {\mathbb{Q}}$. Then 
	$C_{\mathbb{Q}}(x)$ is equal to the union of $C(x^i)$, 
	where $i$ runs over the natural numbers relatively prime to $n$. 
	This follows immediately from the well known fact, 
	$\Phi_{n}(X)$ is irreducible over $\mathbb{Q}$.
\end{example}

\begin{example}\rm
	Let $F=\mathbb{R}$. Then  $C_{\mathbb{R}}(x)$ 
	is equal to the union of $C(x)$ and $C(x^{-1})$.
\end{example}

\begin{example}\rm
	Let $F=\mathbb{F}_q$, $q = p^r$, $p$ a prime and $r$ is a positive integer. 
	Let $d$ be the smallest positive integer such that $q^d \equiv 1 \pmod n$. 
	If $\zeta$ is a root of one irreducible factor of $\Phi_n(X)$ over 
	$\mathbb{F}_q$, then $\zeta^q, \zeta^{q^2},\ldots ,\zeta^{q^{d-1}}$ 
	are the other roots which follows immediately from the well known fact that 
	the automorphisms of $\mathbb{F}_q(\zeta)$ over $\mathbb{F}_q$ are 
	$\zeta\mapsto \zeta^{q^i}$ ($0\leq i<d$). 
	Then $C_{F}{(x)}$ is equal to the union of $C(x^{q^i})$ for 
	$i= 0,1,\ldots, d-1$.
\end{example}

\section{{$F$-character tables}}
In this section, we introduce the notion of "$F$-character table", where $F$ is a field of characteristic $0$,
which is an wonderful generalization of usual character table in the 
Frobenius theory. The following theorem is known as Witt-Berman theorem 
(see \cite{berman121}, \cite{cur93}). For the sake of completeness we now give a proof of this theorem. 
\begin{theorem} [\cite{cur93}, Theorem $42.8$](Witt-Berman)
	Let $G$ be a finite group and $F$ be a field of characteristic $0$. Then the number of $F$-conjugacy classes of $G$ 
	is equal to the number of irreducible $F$-representations of $G$.
\end{theorem}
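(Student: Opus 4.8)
The plan is to show that both numbers in the statement equal the number of orbits of a cyclotomic Galois group acting on two naturally dual sets, and then to match those orbit counts by the orbit-counting (Burnside / Cauchy--Frobenius) lemma. For the set-up, let $m$ be the exponent of $G$ and $K=F(\zeta_{m})$ with $\zeta_{m}$ a primitive $m$-th root of unity; it is classical (Brauer) that $K$ is a splitting field for $G$, and $K/F$ is a finite abelian Galois extension. Write $\Gamma=\mathrm{Gal}(K/F)$ and, for $\sigma\in\Gamma$, let $a(\sigma)$ be the integer with $\gcd(a(\sigma),m)=1$ such that $\sigma(\zeta_{m})=\zeta_{m}^{a(\sigma)}$. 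Since $K$ is a splitting field, the irreducible $K$-characters $\chi_{1},\ldots,\chi_{N}$ of $G$ are absolutely irreducible and $N$ is the number of conjugacy classes of $G$. The group $\Gamma$ acts on $\{\chi_{1},\ldots,\chi_{N}\}$ by $\chi\mapsto\chi^{\sigma}$, where $\chi^{\sigma}(g)=\sigma(\chi(g))=\chi(g^{a(\sigma)})$, and on the set of conjugacy classes of $G$ by $C(g)\mapsto C(g^{a(\sigma)})$.

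First I would identify each side of the asserted equality with an orbit count. By Schur's theorem (Theorem~\ref{sch}), every irreducible $F$-character of $G$ is of the form $m_{F}(\chi)\sum_{\sigma}\chi^{\sigma}$ with the sum running over a single $\Gamma$-orbit in $\{\chi_{1},\ldots,\chi_{N}\}$, and distinct orbits give distinct (indeed $F$-linearly independent) $F$-characters; hence the number of irreducible $F$-representations of $G$ equals the number of $\Gamma$-orbits on $\{\chi_{1},\ldots,\chi_{N}\}$. On the other side, the description of $C_{F}(x)$ proved in the previous section (together with the surjection $\mathrm{Gal}(F(\zeta_{m})/F)\twoheadrightarrow\mathrm{Gal}(F(\zeta_{n})/F)$ for $n$ the order of $x$) shows that $C_{F}(x)$ is exactly the union of the conjugacy classes lying in the $\Gamma$-orbit of $C(x)$; hence the number of $F$-conjugacy classes of $G$ equals the number of $\Gamma$-orbits on the conjugacy classes of $G$.

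It remains to match these two orbit counts. By the orbit-counting lemma, each of them equals $\frac{1}{|\Gamma|}\sum_{\sigma\in\Gamma}|\mathrm{Fix}(\sigma)|$ over the respective set, so it suffices to show that for every $\sigma\in\Gamma$ the number of $\chi_{i}$ fixed by $\sigma$ equals the number of conjugacy classes fixed by $\sigma$. Let $X=(\chi_{i}(g_{j}))$ be the $N\times N$ character table of $G$ over $K$, which is invertible. Writing $\chi_{i}^{\sigma}=\chi_{\alpha(i)}$ and $C(g_{j}^{a(\sigma)})=C(g_{\beta(j)})$ for permutations $\alpha,\beta$ of $\{1,\ldots,N\}$, the identity $\chi^{\sigma}(g)=\chi(g^{a(\sigma)})$ gives $\chi_{\alpha(i)}(g_{j})=\chi_{i}(g_{\beta(j)})$ for all $i,j$, i.e. $PX=XQ$ for the permutation matrices $P,Q$ of $\alpha,\beta$. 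Then $P=XQX^{-1}$, so $P$ and $Q$ are conjugate and have equal trace; since the trace of a permutation matrix is its number of fixed points, the two fixed-point counts coincide, and the theorem follows.

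The step I expect to require the most care is the bookkeeping in the second paragraph: checking that the Galois action on $K$-characters and the power-map action on conjugacy classes really are ``the same'' $\Gamma$-action sitting on the two sides of one invertible character table, including the point that $g\mapsto g^{a(\sigma)}$ need not be a group automorphism while still inducing a well-defined permutation of conjugacy classes, and the correct invocation of the splitting-field property of $F(\zeta_{m})$. Once this is in place, the Burnside step in the third paragraph is a short linear-algebra computation.
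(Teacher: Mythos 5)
Your proof is correct, but it follows a genuinely different route from the one given in the text. You reduce both sides to counts of orbits of $\Gamma=\mathrm{Gal}(F(\zeta_m)/F)$ -- on the absolutely irreducible characters via Schur's theorem (Theorem \ref{sch}), and on the ordinary conjugacy classes via the description of $C_F(x)$ from the previous section -- and then match the two orbit counts by Burnside's lemma together with the observation that the invertible character table $X$ intertwines the two permutation actions, $PX=XQ$, so the permutation matrices are conjugate and have equal trace; this last step is exactly Brauer's permutation lemma. The proof in the text instead argues by a dimension count: it considers the $F$-space $M$ of functions constant on $F$-conjugacy classes (of dimension $s$), notes that the irreducible $F$-characters $\phi_1,\dots,\phi_r$ are linearly independent elements of $M$, and then shows every $\theta\in M$ is an $F$-linear combination of the $\phi_i$ by expanding $\theta$ over the splitting field $L=F(\zeta)$, using Galois-stability of the coefficients and the nonsingularity of a matrix $(\phi_i(x_j))$ to pull the coefficients down to $F$; hence $r=\dim M=s$. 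Your argument is shorter on the matching step and yields the stronger statement that for every $\sigma\in\Gamma$ the number of $\sigma$-fixed absolutely irreducible characters equals the number of $\sigma$-fixed conjugacy classes, and it avoids the coefficient-descent bookkeeping; the text's argument buys the additional fact, used implicitly afterwards for the $F$-character table and the idempotent formula, that the irreducible $F$-characters actually form a basis of the space of $F$-class functions. Both proofs rest on the same two inputs: Schur's theorem and the splitting-field property of $F(\zeta_m)$, and your second paragraph (restriction surjectivity $\mathrm{Gal}(F(\zeta_m)/F)\to\mathrm{Gal}(F(\zeta_n)/F)$, well-definedness of $C(g)\mapsto C(g^{a(\sigma)})$) is exactly the bookkeeping needed to make the two $\Gamma$-actions compatible, so no gap there.
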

\begin{proof}
	Let $\rho_1,\rho_2,\ldots,\rho_r$ be all the inequivalent $F$-irreducible 
	representations of $G$ and $\phi_1,\phi_2,\ldots,\phi_r$ be their 
	corresponding $F$-characters. Let $s$ denotes the number of 
	$F$-conjugacy classes of $G$. We consider the $F$-space of mappings 
	$f$ from $G$ into $F$, which are constant on $F$-conjugacy classes of $G$, 
	and is denoted by $M$. Then of course the dimension of $M$ is 
	equal to $s$. By definition of $F$-conjugacy classes it is clear that 
	$\phi_1,\phi_2,\ldots,\phi_r$ are all belong to $M$. Since 
	$\phi_1, \phi_2,\ldots, \phi_r$ are $F$-linearly independent, 
	we get $r\leq s$. To complete the proof it is enough to show 
	$M=$span$\{\phi_1, \phi_2, \ldots , \phi_r\}$. 
	
	Let $u$ be the exponent of $G$ and 
	$\zeta$ be a primitive $u$-th roots of unity in $\overline{F}$. 
	The field $L = F(\zeta )$ is a splitting field for $G$. 
	By Schur's theory (see Theorem $\ref{sch}$), $\phi_i=m_i(\chi_1^i + \chi_2^i + \cdots +\chi_{\delta_i}^i)$ 
	is the decomposition of $\phi_i$ into irreducible $L$-characters of $G$, 
	where $m_i$ is the Schur index of $\chi^i_j$ ($j=1,2,\ldots,\delta_i$) and 
	$\chi_1^i, \chi_2^i, ...,\chi_{\delta_i}^i$ are algebraically conjugate characters.
	
	Let $x$ be an arbitrary element of $G$. Then $\chi_j^i(x)=\zeta^{u_1}+\zeta^{u_2}+
	\cdots +\zeta^{u_t}$(say). Then for any positive integer $a$ we get 
	$\chi_j^i(x^a) =\zeta^{au_1} + \zeta^{au_2} + \cdots + \zeta^{au_t}$. 
	Consider $a$ with $(a,u)=1$ and
	let 
	$\alpha$ be the $F$-automorphism of  $\zeta\mapsto \zeta^a$ of $L$. 
	Then $\phi_i(x)=\phi_i(x^a)$, and this is true for each $i = 1,2,\ldots,r$. 
	So by definition $F$-conjugacy, we get that $x$ is $F$-conjugate to $x^a$.
	
	Since  $\phi_1,\phi_2,\ldots,\phi_r$ are $F$-linearly independent 
	$\phi_i \neq 0$ and $m_i$ is not divisible by characteristic of $F$, 
	$\chi_1^i + \chi_2^i + \cdots +\chi_{\delta_i}^i$ is an 
	$F$-multiple of $\phi_i$. 
	
	It is to be proved that any element $\theta$ of $M$ is an $F$-linear 
	combination of $\phi_1, \phi_2, \ldots , \phi_r$. 
	Since $\theta$ belongs to $M$, $\theta$ is a class function on $G$ into $L$. 
	So we can write $\theta = \sum_i c_i \chi_i$, where
	$c_i \in L$ and $\chi_i$'s are distinct absolutely irreducible characters of $G$.
	Let $\alpha$ be an $F$-automorphism of $L$ and is defined to be 
	$\zeta \longmapsto {\zeta}^a$. Since  $x$ is $F$-conjugate to $x^a$, 
	then $\theta(x) = \theta(x^a)$. 
	Now $\sum_i c_i\alpha(\chi_i)(x) = \sum_ic_i (\chi_i)(x^a)
	=\theta(x^a)=\theta(x)=\sum_i c_i\chi_i(x)$. 
	So $c_i = c_j$ if and only if $\alpha(\chi_i)=\chi_j$. 
	Therefore $\theta$ is an $L$-linear combination of $\phi_i$, 
	say $\theta = \sum_{i=1}^{r}d_i{\phi_i}$.
	Now our claim is that each $d_i \in F$. As $\phi_1, \phi_2,\ldots, \phi_r$ 
	are linearly independent over $F$, there exist $x_j\in G$ for $j=1,2,\ldots,r$  
	such that the matrix $(\phi_i(x_j))$ $i,j = 1,2,\ldots, r$ is non-singular. 
	Therefore $d_i$ is uniquely determined by the system of equations is 
	$\theta(x_j) = \sum_{i=1}^{r}d_i{\phi_i(x_j)}$ for $j = 1,2,\ldots,r$. 
	It follows that $d_i = \alpha(d_i)$, implies that $d_i \in F$. 
	So $\theta$ is an $F$-linear combination of 
	$\phi_1, \phi_2,\ldots,\phi_r$. 
	Therefore $M=$span$\{\phi_1, \phi_2, \ldots , \phi_r\}$, as required. 
	This completes the proof of the theorem.
\end{proof}
Since the number of $F$-conjugacy classes is equal to the number of $F$-irreducible representations, and character determines the representation 
uniquely, we can list the $F$-character values on $F$-conjugacy classes
in the form of a square matrix over $F$. This is called the {\bf $F$-character table}. 
Thus the $F$-character table  is a square matrix with entries in $F$. 
The columns of $F$-character table are parametrized by $F$-conjugacy classes, 
and the rows are parametrized by irreducible $F$-characters. Since the number 
of $F$-conjugacy classes in general is less than or equal to the number of 
conjugacy classes, the size of the matrix representing $F$-character table 
is smaller than the usual character table. The $F$-character 
table contains more information than the usual character table. 
\subsection{Examples}

\begin{example}
	${D}_8 = \langle {x,y | x^4 = y^2 = 1, y^{-1}xy = x^{-1}}\rangle$.\\
	The set of all $\mathbb{Q}$-conjugacy classes of ${D}_8$ are:
	
	\begin{center}
		$L_1 = \{e \}$, $L_2 = \{x^2\}$, $L_3 = \{x, x^3\}$, 
		$L_4 = \{y, x^2y\}$, $L_5 = \{xy, x^3y\}$.
	\end{center}
	The $\mathbb{Q}$-character Table of ${D}_8$ is:
	\begin{center}
		\begin{tabular}{|| l | c | r | l | l | l||}
			\hline \hline
			& $e$ & $x^2$ & ${x}$ & $y$ & ${xy}$ \\ \hline
			$\psi_1 $ & 1 & 1 & 1 & 1 & 1 \\ \hline
			$\psi_2 $ & 1 & 1 & 1 & -1 & -1 \\ \hline
			$\psi_3 $ & 1 & 1 & -1 & 1 & -1 \\ \hline
			$\psi_4 $ & 1 & 1 & -1 & -1 & 1 \\ \hline
			$\psi_5 $ & 2 & -2 & 0 & 0 & 0 \\ \hline
			\hline
		\end{tabular}
	\end{center}
\end{example}

\begin{example}
	${Q}_8 = \langle {x,y |x^4 =  1, x^2 = y^2, y^{-1}xy = x^{-1}}\rangle$.\\
	The set of all $\mathbb{Q}$-conjugacy classes of ${Q}_8$ are: 
	\begin{center}
		$L_1 = \{e \}$, $L_2 = \{x^2\}$, $L_3 = \{x, x^3\}$, $L_4 = \{y, x^2y\}$, 
		$L_5 = \{xy, x^3y\}$.
	\end{center}
	The $\mathbb{Q}$-character Table of ${Q}_8$ is:
	\begin{center}
		\begin{tabular}{|| l | c | r | l | l | l||}
			\hline \hline
			& $e$ & $x^2$ & ${x}$ & $y$ & ${xy}$ \\ \hline
			$\psi_1 $ & 1 & 1 & 1 & 1 & 1 \\ \hline
			$\psi_2 $ & 1 & 1 & 1 & -1 & -1 \\ \hline
			$\psi_3 $ & 1 & 1 & -1 & 1 & -1 \\ \hline
			$\psi_4 $ & 1 & 1 & -1 & -1 & 1 \\ \hline
			$\psi_5 $ & 4 & -4 & 0 & 0 & 0 \\ \hline
			\hline
		\end{tabular}
	\end{center}
\end{example}

\begin{remark}
	The $\mathbb{Q}$-character tables of $D_{8}$ and $Q_{8}$ are different, 
	on the other hand the usual character tables of $D_{8}$ and $Q_{8}$ are same.
\end{remark}

\begin{example}
	${\rm SL}_2(3)$ is generated by $x,y,z$ which satisfies following relations. 
	$\langle {x, y, z|x^4 = z^3 = 1, x^2 = y^{2}}, 
	{y^{-1}xy = x^{-1}, z^{-1}xz = y, z^{-1}yz = xy}\rangle$.\\
	The set of all conjugacy classes of ${\rm SL}_2(3)$ are:
	\begin{center}
		$ S_1 = \{e \}$, $S_2 = \{ x^2\}$, $S_3 = \{x, x^3, y, y^3, xy, xy^3\}$,
		$S_4 = \{z, zx, zy, zxy \}$, $S_5 = \{z^2, z^2x, z^2y, z^2xy\}$,
		$S_6 = \{zx^2, zx^3, zx^2y, zx^3y\}$, $S_7 = 
		\{z^2x^2, z^2x^3, z^2x^2y, z^2x^3y\}$.
	\end{center}
	The usual character table of ${\rm SL}_2(3)$ is:\\
	\begin{center}
		\begin{tabular}{|| l | c | r | r | r | r | r | r ||}
			\hline\hline
			& $e$ & $x^2$ & ${x}$ & $z$ & ${z^2}$ & $zx^2$ & ${z^2}x^2$\\ \hline
			$\chi_1$ & 1 & 1 & 1 & 1 & 1 & 1 & 1\\ \hline
			$\chi_2$ & 1 & 1 & 1 & $w$ & $w^2$ & $w$ & $w^2$ \\ \hline
			$\chi_3$ & 1 & 1 & 1 & $w^2$ & $w$ & $w^2$ & $w$\\ \hline
			$\chi_4$ & 3 & 3 & -1 & 0 & 0 & 0 & 0 \\ \hline
			$\chi_5$ & 2 & -2 & 0 & -1 & -1 & 1 & 1 \\ \hline
			$\chi_6$ & 2 & -2 & 0 & -$w$ & -$w^2$ & $w$ & $w^2$ \\ \hline
			$\chi_7$ & 2 & -2 & 0 & -$w^2$& -$w$ & $w^2$ & $w$ \\ \hline
			\hline
		\end{tabular}
	\end{center}
	The set of all $\mathbb{Q}$-conjugacy classes of ${\rm SL}_2(3)$ are:
	\begin{center}
		$ L_1 = \{e \}$, $L_2 = \{ x^2\}$,
		$L_3 = \{x, x^3, y, y^3, xy, xy^3\}$,
		$L_4 =  \{z, zx, zy, zxy, z^2, z^2x, z^2y, z^2xy\}$,
		$L_5 = \{zx^2, zx^3, zx^2y, zx^3y, z^2x^2, z^2x^3, z^2x^2y, z^2x^3y\}$.
	\end{center}
	The $\mathbb{Q}$-character table of ${\rm SL}_2(3)$ is:
	\begin{center}
		\begin{tabular}{|| l | c | r | r | r | r | r | r||}
			\hline
			\hline
			& $e$ & $x^2$ & ${x}$ & $z$ & $zx^2$\\ \hline
			$\psi_1 $ & 1 & 1 & 1 & 1 & 1 \\ \hline
			$\psi_2 $ & 2 & 2 & 2 & -1 & -1 \\ \hline
			$\psi_3 $ & 3 & 3 & -1 & 0 & 0 \\ \hline
			$\psi_4 $ & 4 & -4 & 0 & -2 & 2  \\ \hline
			$\psi_5 $ & 8 & -8 & 0 & 2 & -2  \\ \hline
			\hline
		\end{tabular}
	\end{center}
\end{example}
\section{{$F$-idempotents}}
In this section, we give a formula for computing the primitive central idempotents
of the group ring $F[G]$, where $F$ is a field of characteristic $0$, in terms of irreducible $F$-characters and $F$-conjugacy classes 
of $G$. Let $R = F[G]$. Then $R$ is a semisimple $F$-algebra, and irreducible 
$F$-representations of $G$ correspond to simple $R$-modules. 
Let $e_i$ ($i = 1,2,\ldots, r$) be the primitive central idempotents of $R$, 
then $R = \sum^{r}_{i = 1} Re_i$. Each $Re_i$ is a minimal two-sided ideal of $R$. 
Each $Re_{i}$ is a simple $F$-algebra with $e_{i}$ as the identity element. 
It is not an $F$-subalgebra of $R$ (unless r = 1). Each $Re_{i}$ is isotypical 
component of $R$. Let $V$ denote one of these simple $R$-modules and $e$ be 
the corresponding primitive central idempotent in $R$. Then by Schur's lemma, 
End$_{F[G]}(V)$ is a division ring $D$, whose center $E$ contains $F$, and 
dim$_F{E}$ is finite. Let $n=$ dim$_D{V}$. Then $Re$ is isomorphic 
to $M_n({D}^o)$, $V$ is isomorphic to $(D^o)^n$, and $Re$ is isomorphic to 
the direct sum of $n$ copies of $V$.
Let $\dim_{F}{E} = \delta$, and dim$_{E}{D} = m^2$. 
Then $\dim_F{V} = {\delta}{ m^2}{n}$, and $\dim_{F}{Re} = {\delta}{ m^2}{n^2}$.

Let $L_1, L_2,\dots , L_r$ be the $F$-conjugacy classes of $G$ and $C_{1}, C_{2}, \dots, C_{s}$ be the conjugacy classes of $G$. 
Let $\chi$ be any ${F}$-character of $G$. 
Let $\chi(L_i)$ denote the common value of $\chi$ over $L_i$. 
Let $L_{i}$ be the $F$-conjugacy class of $x$.
We denote the $F$-conjugacy class of $x^{-1}$ by $L_{i}{^{-1}}$. 
For any subset $S$ of $G$, $S^*$ denotes the formal sum of elements of $S$. 
\begin{theorem}
	Let $G$ be a finite group and $F$ a field of characteristic $0$. 
	Let $(\rho, V)$ be an irreducible $F$-representation of $G$ with 
	corresponding primitive central idempotent $e$. 
	Let $\chi$ be the $F$-character corresponding to 
	$\rho$ and $\mathrm{End}_{F[G]}(V) = D$, dim$_{D}(V) = n$. 
	Then 
	$$e = \frac{n}{|G|} \sum_{i=1}^r \chi(L_i^{-1} ) L_i^*.$$
\end{theorem}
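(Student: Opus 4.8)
The strategy is to verify directly that the claimed element $e_0 := \frac{n}{|G|}\sum_{i=1}^r \chi(L_i^{-1})L_i^*$ coincides with the primitive central idempotent $e$ attached to $\rho$, by checking that it acts as the identity on the simple module $V$ (equivalently, on the simple component $Re$) and as zero on every other simple component. Since $F[G] = \bigoplus_j Re_j$, the element $e$ is characterized as the unique central element acting as $1$ on $Re$ and $0$ on $Re_j$ for $j\neq e$; so it suffices to show $e_0$ does the same. A convenient way to detect this is via the reduced trace or, more elementarily, via the action on $V$ followed by a trace computation.

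\textbf{Step 1: Reduce to the algebraically closed case.} Pass to $\overline{F}$ and use Schur's theorem (Theorem \ref{sch}): $V\otimes_F\overline{F}\cong m_F(\chi)\big(\tilde V\oplus \tilde V^{\alpha_1}\oplus\cdots\oplus \tilde V^{\alpha_t}\big)$, where $\tilde V$ is an absolutely irreducible module affording an absolutely irreducible character $\tilde\chi$ with $\chi = m_F(\chi)\sum_{\alpha\in H}\tilde\chi^{\alpha}$, $H = \mathrm{Gal}(F(\tilde\chi)/F)$. The classical Frobenius formula for the primitive central idempotent of $\overline{F}[G]$ attached to an absolutely irreducible character $\psi$ of degree $d$ is $e_\psi = \frac{d}{|G|}\sum_{g\in G}\psi(g^{-1})g$. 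The central idempotent $e$ in $F[G]$ attached to $\rho$ becomes, over $\overline F$, the sum $e_{\tilde\chi} + e_{\tilde\chi^{\alpha_1}} + \cdots + e_{\tilde\chi^{\alpha_t}}$ of the Galois-conjugate block idempotents.

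\textbf{Step 2: Match the scalar $n$ against the degrees.} Here is the crux. Writing $\deg\tilde\chi = d$, one computes $\dim_F V = m_F(\chi)^2(t+1)d$ on one side, and $\dim_F V = \delta m^2 n$ on the other (notation of the paragraph preceding the theorem), with $\delta = [E:F]$, $m^2 = [D:E]$, $n = \dim_D V$. The relations $\delta = (t+1)$ (the algebraic conjugates are indexed by $\mathrm{Gal}(E/F)$, $E = F(\chi)$) and $m = m_F(\chi)$ (the Schur index equals the index of the division algebra) then give $n = d$ — i.e. $\dim_D V$ equals the degree of the absolutely irreducible constituent. This is the identity that makes the single scalar $\frac{n}{|G|}$ in the statement correct; I expect this bookkeeping to be the main obstacle, since it requires carefully invoking the standard facts $E = F(\chi)$, $m_F(\chi)^2 = \dim_E D$, and that the number of Galois conjugates equals $[F(\chi):F]$.

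\textbf{Step 3: Assemble the sum over $F$-conjugacy classes.} Summing the Frobenius idempotents over the Galois conjugates,
\begin{align*}
e = \sum_{\alpha\in H} e_{\tilde\chi^\alpha} = \frac{d}{|G|}\sum_{g\in G}\Big(\sum_{\alpha\in H}\tilde\chi^\alpha(g^{-1})\Big)g = \frac{d}{m_F(\chi)|G|}\sum_{g\in G}\chi(g^{-1})g = \frac{n}{|G|}\sum_{g\in G}\chi(g^{-1})g,
\end{align*}
using $\sum_{\alpha\in H}\tilde\chi^\alpha = \frac{1}{m_F(\chi)}\chi$ from Step 1 and $n = d$ from Step 2. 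Finally, $\chi$ is constant on $F$-conjugacy classes (its defining property), so grouping $g$ by the class $L_i$ containing $g^{-1}$ — noting $g\in L_i^{-1}$ iff $g^{-1}\in L_i$, and $\chi(g^{-1}) = \chi(L_i)$ for such $g$ — turns $\sum_{g\in G}\chi(g^{-1})g$ into $\sum_{i=1}^r \chi(L_i)\,(L_i^{-1})^* = \sum_{i=1}^r\chi(L_i^{-1})L_i^*$, after re-indexing $i\mapsto i^{-1}$ over the involution on classes. This yields exactly $e = \frac{n}{|G|}\sum_{i=1}^r\chi(L_i^{-1})L_i^*$, completing the proof.
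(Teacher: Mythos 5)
Your overall route is the same as the paper's: decompose $V\otimes_F\overline{F}$ via Schur's theorem, write $e$ over $\overline{F}$ as the sum of the Frobenius block idempotents of the Galois-conjugate absolutely irreducible constituents, and regroup $\sum_{g\in G}\chi(g^{-1})g$ by $F$-conjugacy classes. Steps 1 and the regrouping in Step 3 are fine. The problem is Step 2, which you yourself call the crux. The dimension count is wrong: each conjugate constituent occurs with multiplicity $m_F(\chi)$ to the \emph{first} power, so $\dim_F V = m_F(\chi)(t+1)d$, not $m_F(\chi)^2(t+1)d$. Comparing with $\dim_F V = \delta m^2 n$ and using $\delta = t+1$, $m = m_F(\chi)$ yields $d = m_F(\chi)\,n$, i.e.\ the degree of each absolutely irreducible constituent is $mn$ (exactly what the paper's proof records), and \emph{not} $n = d$. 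Your identity $n=d$ fails whenever the Schur index exceeds $1$: for $G = Q_8$ over $F=\mathbb{Q}$, the $4$-dimensional irreducible has $D$ the rational quaternion algebra, $n=\dim_D V = 1$, while $d = 2$.

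This error also makes Step 3 internally inconsistent: the equality $\frac{d}{m_F(\chi)|G|}\sum_g\chi(g^{-1})g = \frac{n}{|G|}\sum_g\chi(g^{-1})g$, which you justify by ``$n=d$'', in fact requires $d = m_F(\chi)\,n$; if $n=d$ were true the coefficient would be $\frac{n}{m_F(\chi)|G|}$, and the resulting element is not an idempotent when $m_F(\chi)>1$. So you land on the correct final formula only because two mistakes offset each other: the relation you derive is false, and the substitution you then perform is precisely the one that relation forbids. The repair is short: fix the multiplicity to get $d = mn$, observe $\frac{d}{m_F(\chi)} = n$, and the rest of your argument goes through, recovering the paper's proof.
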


\begin{proof}
	By Theorem \ref{sch} the 
	$\overline{F}$-representation $V\otimes_{F}{\overline{F}}$ splits into 
	$\delta$ distinct irreducible representations over $\overline{F}$ of degree
	$mn$, and multiplicity of each irreducible representation is $m$. Let
	\begin{align*}
	V\otimes_{F}{\overline{F}} = \oplus^{m}_{j = 1}\oplus^{\delta}_{i = 1}U_{i,j}
	\end{align*}
	be the direct sum decomposition into irreducible $\overline{F}$-representations 
	$G$ over $\overline{F}$. Let $\chi_{i,j}$ be the $\overline{F}$-character 
	corresponding to the representation $U_{i,j}$, 
	where $i\in \{1,2,\ldots,$ ${\delta}$ $\}$, and $j\in \{1,2,\ldots,m\}$. 
	Notice that for each $i\in \{1,2,\ldots,\delta \}$, 
	$\chi_{i,1} = \chi_{i,2} =\cdots =  \chi_{i,m}$. 
	Then the primitive central idempotent corresponding to the 
	$\overline{F}$-representation $U_{i,j}$ is
	$$e_{i,j} = \frac{mn}{|G|} \sum_{k = 1}^{s} {\chi_{i,j}(C_{{k}}^{-1})}C_{{k}}^*.$$
	Notice that for each $i\in \{1,2, \ldots,\delta \}$, 
	$e_{i,1} = e_{i,2} =\cdots= e_{i,m}$. One can show the primitive central idempotent 
	corresponding to $(\rho, V)$ (see \cite{Yam80}) is
	$$e = e_{1,1} + e_{2,1} + \cdots +  e_{\delta,1}.$$
	This implies that
	$$e = 
	\frac{mn}{|G|} \sum_{k = 1}^{s} \Big{(}\chi_{1,1} + \chi_{2,1} 
	+\cdots +\chi_{\delta,1} \Big{)} (C_{{k}}^{-1})C_{{k}}^* = 
	\frac{n}{|G|} \sum_{i = 1}^{r}{\chi(L_{{i}}^{-1})}L_{{i}}^*.$$
	This completes the proof.
\end{proof}

\begin{corollary}
	Let $e'' = \sum _{x \in G} {\chi(x^{-1})x}$. 
	Then $e'' = \{|G|/n\} e$, and 
	${e''}^2 = \{|G|/n\}^2 e^2 = \{|G|/n\}^2 e = \{|G|/n\} e''$. 
	So if we know $\chi$, then one can determine $n$. 
	Therefore one can read the complete set of primitive central idempotents 
	of $F[G]$ from the $F$-character table.
\end{corollary}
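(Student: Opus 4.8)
The plan is to deduce the corollary directly from the idempotent formula $e = \frac{n}{|G|}\sum_{i=1}^{r}\chi(L_i^{-1})L_i^{*}$ just proved, using only the fact that an $F$-character is constant on $F$-conjugacy classes.

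First I would re-index the defining sum for $e''$ by $F$-conjugacy classes. Since $\sim_F$ is an equivalence relation, the classes $L_1,\dots,L_r$ partition $G$; and if $x\in L_i$ then $x^{-1}$ lies in the class $L_i^{-1}$ (the map $g\mapsto g^{-1}$ preserves $F$-conjugacy, e.g. via the contragredient representation, whose character is $g\mapsto\chi(g^{-1})$), so $\chi(x^{-1})=\chi(L_i^{-1})$. Hence $\sum_{x\in L_i}\chi(x^{-1})x=\chi(L_i^{-1})L_i^{*}$, and summing over $i$ gives $e''=\sum_{x\in G}\chi(x^{-1})x=\sum_{i=1}^{r}\chi(L_i^{-1})L_i^{*}$. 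Comparing with the formula for $e$ yields $e''=(|G|/n)\,e$, the first claim.

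Next I would square this, using $e^{2}=e$: ${e''}^{2}=(|G|/n)^{2}e^{2}=(|G|/n)^{2}e=(|G|/n)(|G|/n)e=(|G|/n)e''$, which is exactly the displayed chain. The point to extract is that $e''$ is a \emph{nonzero} scalar multiple of an idempotent: writing $\lambda:=|G|/n$, we have ${e''}^{2}=\lambda e''$, and $e''\neq 0$ because the coefficient of the identity $1\in G$ in $e''$ is $\chi(1)=\dim_F V\neq 0$ in characteristic $0$. Therefore $\lambda$ is the unique scalar with ${e''}^{2}=\lambda e''$ and can be read off by comparing one nonzero coefficient of ${e''}^{2}$ with the corresponding coefficient of $e''$; since $|G|\neq 0$ and $\lambda\neq 0$, this determines $n=|G|/\lambda$, and then $e=\lambda^{-1}e''=(n/|G|)e''$.

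Finally I would observe that every step uses only the values of the $F$-character $\chi$: from a row of the $F$-character table one forms $e''=\sum_{x\in G}\chi(x^{-1})x$, recovers $\lambda$ (hence $n$) from ${e''}^{2}=\lambda e''$ as above, and obtains the corresponding primitive central idempotent $e=(n/|G|)e''$; ranging over all rows of the table produces the complete set of primitive central idempotents of $F[G]$. I do not expect any genuine obstacle here — the content is the re-indexing of the sum by $F$-conjugacy classes and the elementary scalar bookkeeping; the only point needing care is the verification that $e''\neq 0$, which is what makes $\lambda$, and therefore $n$ and $e$, recoverable from $\chi$ alone.
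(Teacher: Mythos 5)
Your proposal is correct and follows essentially the same route as the paper, which treats the corollary as an immediate consequence of the idempotent formula $e = \frac{n}{|G|}\sum_{i}\chi(L_i^{-1})L_i^{*}$: regrouping $e''$ over $F$-conjugacy classes, squaring to get ${e''}^2=(|G|/n)e''$, and recovering $n$ and hence $e$ from $\chi$ alone. The details you supply (that inversion preserves $F$-conjugacy via the contragredient, and that $e''\neq 0$ because its coefficient at the identity is $\chi(1)\neq 0$ in characteristic $0$) are exactly the right justifications for the steps the paper leaves implicit.
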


\begin{remark}
	From the above theorem we see that a primitive central idempotent in 
	$F[G]$ is an $F$-linear combinations of $F$-conjugacy class sums, and so is any central idempotent. 
\end{remark}
\chapter{Clifford theory}
In this chapter, we give a brief 
introduction to Clifford theory (see \cite{alg111}, \cite{kar112}, \cite{combi91}, \cite{Dor94}) on group representations and its 
applications on representations of finite solvable groups. 
In $1937$, Alfred H. Clifford describes the relation between representations 
of a finite group and those of a normal subgroup (see \cite{cl50}). 
The Clifford's decomposition theorem (see \cite{musi104}, Theorem $4.4.1$, \cite{Dor94}, Theorem $14.1$) 
states that the restriction of any irreducible representation 
over an arbitrary field of a finite group, to a normal subgroup, 
is a multiple of the direct sum of all conjugates of an irreducible 
representation of the normal subgroup in the whole group. As an application
of this theorem, we construct the inequivalent irreducible representations of
a finite group with a normal subgroup of prime index which helps us to inductively construct the irreducible representations of a finite solvable group. 

\section{Clifford's decomposition theorem}
Let $(\rho, V)$ be an irreducible representation of $G$. 
Let $H$ be a subgroup of $G$. 
In general, very little can be said about 
the restriction $\rho \downarrow^{G}_{H}$. The situation is quite 
different if $H$ is normal in $G$. If $H$ is a normal subgroup of $G$, 
then the Clifford's decomposition theorem 
gives a description of decomposition of 
$\rho \downarrow^{G}_{H}$ into irreducible representations of $H$.

\begin{definition}\rm
	Let $H$ be a subgroup of $G$. Let $(\eta, W)$ be a representation of $H$.
	For $g \in G$, let $H^{g} = gHg^{-1}$ be the $g$-conjugate of $H$. 
	The representation $(\eta^{g}, W^{g})$ of $H^{g}$ on the same vector space 
	$W= W^{g}$, defined by $\eta^{g}(ghg^{-1}) = \eta(h)$ for all $h \in H$ 
	is called {\bf{conjugate representation}} of $H$ associated to $g$.
\end{definition}
\begin{proposition}[\cite{musi104}, Theorem $4.3.5$]\label{clifford}
	Let $G$ be a group, and $H$ a subgroup of $G$. Let $(\eta, W)$ be a 
	representation of $H$.
	Let $N_{G}(H)$ denote the normaliser of $H$ in $G$. Then the set $I_{W} = I_{\eta} = \{g \in N_{G}(H)\,|\,
	( \eta^{g}, W^{g}) = (\eta, W )\}$ is a subgroup containing $H$, 
	called the {\bf inertia group} of the representation $(\eta, W)$.
\end{proposition}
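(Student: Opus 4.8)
The statement bundles two claims: that $I_\eta$ is closed under the operations of $N_G(H)$ (so, being a subset of the group $N_G(H)$, is a subgroup), and that $H\subseteq I_\eta$. Throughout I read the condition $(\eta^g,W^g)=(\eta,W)$ as asserting that $\eta^g$ and $\eta$ are \emph{equivalent} representations of $H$: this is the only sensible reading, since $W^g=W$ as a vector space by definition and $H^g=H$ for $g\in N_G(H)$, so $\eta^g$ is again a representation of $H$, whereas the literal interpretation ``$\eta^g=\eta$ as homomorphisms'' would already make $H\subseteq I_\eta$ false for nonabelian $H$.

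The plan is to first record one purely formal lemma and then read everything off it. For $g_1,g_2\in N_G(H)$ and $x\in H$ one has $\eta^g(x)=\eta(g^{-1}xg)$, and unwinding this gives the composition law $\eta^{g_1g_2}=(\eta^{g_2})^{g_1}$ as representations of $H$. Moreover, conjugation by a fixed $g\in N_G(H)$ carries equivalent representations to equivalent ones: if $T\sigma(x)=\tau(x)T$ for all $x\in H$, then, replacing $x$ by $g^{-1}xg\in H$, the same $T$ satisfies $T\sigma^g(x)=\tau^g(x)T$; and the composition law shows this operation is invertible, since $(\sigma^g)^{g^{-1}}=\sigma^{g^{-1}g}=\sigma$ and likewise $(\sigma^{g^{-1}})^g=\sigma$.

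Granting this, the subgroup axioms are immediate. The identity lies in $I_\eta$ because $\eta^e=\eta$. If $g_1,g_2\in I_\eta$ then $\eta^{g_1g_2}=(\eta^{g_2})^{g_1}\cong\eta^{g_1}\cong\eta$, where the first equivalence uses that conjugation by $g_1$ preserves $\eta^{g_2}\cong\eta$ and the second uses $g_1\in I_\eta$; hence $g_1g_2\in I_\eta$. If $g\in I_\eta$, applying the equivalence-preserving operation ``conjugate by $g^{-1}$'' to $\eta^g\cong\eta$ yields $\eta=(\eta^g)^{g^{-1}}\cong\eta^{g^{-1}}$, so $g^{-1}\in I_\eta$. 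For the inclusion $H\subseteq I_\eta$, each $h\in H$ lies in $N_G(H)$, and $\eta(h)\in\mathrm{GL}(W)$ intertwines $\eta^h$ with $\eta$ because $\eta(h)\,\eta^h(x)\,\eta(h)^{-1}=\eta(h)\,\eta(h^{-1}xh)\,\eta(h)^{-1}=\eta(x)$ for all $x\in H$; thus $\eta^h\cong\eta$ and $h\in I_\eta$.

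I do not anticipate a real difficulty: the argument is entirely formal once the composition law and the equivalence-invariance of conjugation are in place. The only points to watch are the order of the factors in $\eta^{g_1g_2}=(\eta^{g_2})^{g_1}$ and the direction in which the intertwiners act, since a slip there would derail the closure and inverse steps; and one should commit at the outset to the convention that $(\eta^g,W^g)=(\eta,W)$ means equivalence of representations.
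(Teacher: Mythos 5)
Your proof is correct: the composition law $\eta^{g_1g_2}=(\eta^{g_2})^{g_1}$, the fact that conjugation by a fixed $g\in N_G(H)$ preserves equivalence, and the observation that $\eta(h)$ itself intertwines $\eta^h$ with $\eta$ together give all the subgroup axioms and the inclusion $H\subseteq I_\eta$; your remark that the condition must be read as equivalence of representations (literal equality would exclude $H$ for nonabelian image) is also the right call. The thesis itself offers no proof of this proposition — it is quoted from Musili, Theorem 4.3.5 — and your argument is exactly the standard one found there, so there is nothing further to reconcile.
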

\begin{proposition}[\cite{musi104}, Proposition $4.3.6$]
	Let $G$ be a group and $H$ be a subgroup of $G$. Let $(\eta, W)$ be 
	a representation of $H$. Then the representations 
	induced by all the conjugates of $(\eta, W)$ are equivalent.
\end{proposition}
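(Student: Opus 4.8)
The plan is to work directly with the concrete model of the induced representation, $\mathrm{Ind}(W)\uparrow^G_H = F[G]\otimes_{F[H]}W$ introduced above, and to exhibit, for each $g\in G$, an explicit isomorphism of $F[G]$-modules between $F[G]\otimes_{F[H^g]}W^g$ and $F[G]\otimes_{F[H]}W$. Since every conjugate of $(\eta,W)$ is some $(\eta^g,W^g)$ on $H^g=gHg^{-1}$, this shows that all the induced representations are equivalent to $\eta\uparrow^G_H$, hence to one another.

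First I would record the relevant identifications. Conjugation by $g$ is an automorphism of $G$ carrying $H$ onto $H^g$, so it induces a bijection $G/H\to G/H^g$; in particular $[G:H^g]=[G:H]$, which is a sanity check that the two induced modules have the same dimension. Also $F[H^g]=gF[H]g^{-1}$ as a subalgebra of $F[G]$, and by the definition of the conjugate representation the $F[H^g]$-action on $W^g=W$ is $(ghg^{-1})\cdot w=\eta(h)w$ for $h\in H$. Next I would define $\psi_g\colon F[G]\otimes_{F[H^g]}W^g\to F[G]\otimes_{F[H]}W$ on elementary tensors by $\psi_g(a\otimes w)=ag\otimes w$ and extend $F$-linearly.

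The crucial step is to check that $\psi_g$ is well defined, i.e. that it respects the balancing relations of $\otimes_{F[H^g]}$. It suffices to check it on a relation $a(ghg^{-1})\otimes w = a\otimes(ghg^{-1})\cdot w = a\otimes\eta(h)w$ with $h\in H$: the left-hand side maps to $a(ghg^{-1})g\otimes w = (ag)h\otimes w = ag\otimes\eta(h)w$, using the $F[H]$-balancing in the target, and the right-hand side maps to $ag\otimes\eta(h)w$; the two agree. Once well-definedness is in hand the remaining verifications are routine: $\psi_g$ is $F[G]$-linear because left multiplication by $g'\in G$ commutes with ``multiply on the right by $g$, then tensor'', since $g'\cdot(a\otimes w)=g'a\otimes w\mapsto g'ag\otimes w = g'\cdot\psi_g(a\otimes w)$; and the map $a\otimes w\mapsto ag^{-1}\otimes w$, which is well defined by the same argument with the roles of $H$ and $H^g$ interchanged, is a two-sided inverse. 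Hence $\psi_g$ is an isomorphism of $F[G]$-modules, so $\eta^g\uparrow^G_{H^g}\cong\eta\uparrow^G_H$ for every $g\in G$, which is the claim.

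I expect the only real obstacle to be bookkeeping: one must define $\psi_g$ on the tensor-product (quotient) module rather than on the free module, and keep straight which balancing relation — over $F[H]$ or over $F[H^g]$ — is being invoked at each step. Organizing the argument through the universal property of $\otimes_{F[H^g]}$ makes this clean and, as a bonus, avoids any dependence on a choice of coset representatives. (Alternatively, when $F$ is large enough one could compare the induced characters via the induced-character formula, but the module isomorphism above is valid over an arbitrary field, which is the generality claimed here.)
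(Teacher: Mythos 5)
Your argument is correct and complete. Note that the paper itself offers no proof of this proposition -- it is quoted from Musili with a citation -- so there is nothing internal to compare against; judged on its own, your construction is the standard and right one. The map $\psi_g\colon F[G]\otimes_{F[H^g]}W^g\to F[G]\otimes_{F[H]}W$, $a\otimes w\mapsto ag\otimes w$, is exactly the right-multiplication intertwiner: right multiplication by $g$ carries the right $F[H^g]$-structure on $F[G]$ to the right $F[H]$-structure (since $a(ghg^{-1})g=(ag)h$), and the identity on the underlying space $W^g=W$ matches $\eta^g(ghg^{-1})=\eta(h)$, so the balanced bilinear map $(a,w)\mapsto ag\otimes w$ factors through the tensor product by the universal property -- which is the cleaner way to phrase the well-definedness check you carry out on a single balancing relation (and it suffices to check it on group elements $ghg^{-1}$, since these span $F[H^g]$). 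Left $F[G]$-linearity and the inverse $a\otimes w\mapsto ag^{-1}\otimes w$ are verified exactly as you say. Two further points in your favour: your proof does not assume $H$ normal in $G$, so it covers the general statement (conjugates live on $H^g$, which may differ from $H$), and it works over an arbitrary field, whereas the character-comparison alternative you mention would require additional hypotheses (semisimplicity, characters determining representations) that the proposition, as stated in the Clifford-theory setting of arbitrary characteristic, does not grant.
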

Let $H$ be a normal subgroup of $G$. For a representation $(\eta, W )$ of 
$H$, let $I_{\eta}$ be the inertia group of $(\eta, W )$. 
Let $\{g_{1} = 1, g_{2}, \ldots , g_{m}\}$ be a complete set of left 
coset representatives of $I_{\eta}$ in $G$. Note that $\{(\eta^{g_{i}}, 
W^{g_i})\}_{i = 1}^m$ is a complete set of 
mutually inequivalent conjugate representations of $(\eta, W)$. Each element 
$g \in G$ permutes the conjugates under the isomorphism
$(\eta^{g_i})^g \cong \eta^{gg_i} \cong \eta^{g_{j(i)}y_i} 
\cong \eta^{g_{j(i)}}, \,\, 
\mathrm{where} \,\, gg_i = g_j(i)y_{i},\,\, y_{i} \in I_{\eta}.$

\begin{theorem} [\cite{musi104}, Theorem $4.4.1$, \cite{Dor94}, Theorem $14.1$]\label{clidecom}
	(Clifford's decomposition theorem)
	Let $F$ be a field of arbitrary characteristic. Let $(\rho, V)$ 
	be an irreducible representation of $G$. Suppose $(\eta, W)$ is an 
	irreducible representation of $H$ in $\rho \downarrow_{H}^{G}$. 
	Then we have the following.
	\begin{enumerate}
		\item $\rho \downarrow_{H}^{G}$ is the direct sum of conjugates 
		of $\eta$. Up to equivalence, each conjugate occurs with the same 
		multiplicity, say $e$.
		\item Let $(\rho_{i}, V_{i})$ be the isotypical component of 
		type $(\eta^{g_{i}}, W^{g_{i}})$. Then
		$$\rho_{i} = \underbrace{\eta^{g_{i}} \oplus \dots \oplus \eta^{g_{i}}}_{e} \, 
		\mathrm{and} \, \rho\downarrow_{N}^{G} = \bigoplus_{i = 1}^{m}
		\rho_{i} = (\bigoplus_{i = 1}^{m}{\eta^{g_{i}}})^{e}.$$
		\item $G$ transitively permutes the isotypical components of 
		$\rho \downarrow_{N}^{G}$.
		\item The isotypical component $(\rho_{1}, V_{1})$ of type $(\eta, W)$ 
		is a $F[I_{\eta}]$-module and 
		$\rho$ is equivalent to $\rho_{1}\uparrow_{I_{\eta}}^{G}$.
	\end{enumerate}
\end{theorem}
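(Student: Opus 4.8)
The plan is to recover the whole structure of $\rho\downarrow_{H}^{G}$ from a single simple $F[H]$-submodule of $V$ together with the way $G$ moves such submodules by conjugation. First I would choose a simple $F[H]$-submodule $U\subseteq V$ (it exists since $V$ is finite dimensional) and, after relabelling, assume $U$ affords $\eta$. A one-line computation shows that for $g\in G$ the subspace $\rho(g)U$ is again an $H$-submodule on which $h\in H$ acts as $\rho(g)\,\rho(g^{-1}hg)$, so $\rho(g)U$ affords the conjugate representation $\eta^{g}$ in the sense of the definition preceding Proposition \ref{clifford}; in particular $\rho(g)U$ is simple. Since $\sum_{g\in G}\rho(g)U$ is a nonzero $G$-submodule of $V$, irreducibility of $\rho$ forces $V=\sum_{g\in G}\rho(g)U$. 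Hence $V$ is a sum of simple $F[H]$-submodules, so $V\downarrow_{H}^{G}$ is semisimple (note this must be deduced from irreducibility of $V$, \emph{not} from Maschke's theorem, since $F$ has arbitrary characteristic), and every simple constituent is a conjugate $\eta^{g}$ of $\eta$.

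Next I would analyse the $F[H]$-isotypical components. Write $V=\bigoplus_{\theta}V[\theta]$, the direct sum of the isotypical components of $V\downarrow_{H}^{G}$, the sum running over the finitely many conjugates $\theta$ of $\eta$ that actually occur. The same computation as above shows that $\rho(g)$ carries a simple submodule of type $\eta^{g'}$ to one of type $\eta^{gg'}$, hence maps $V[\eta^{g'}]$ bijectively onto $V[\eta^{gg'}]$; thus $G$ permutes the set $\{V[\theta]\}$, and since the sum of the $G$-translates of any one of them is $G$-stable and nonzero it must be all of $V$, so this permutation action is transitive --- that is part $(3)$. Moreover each $\rho(g)\colon V[\theta]\to V[\theta']$ is an $F$-isomorphism intertwining the two $H$-actions up to the conjugation $h\mapsto ghg^{-1}$, and transporting a homogeneous semisimple module along an automorphism of $H$ preserves the number of copies; comparing $V[\eta^{g_i}]\cong(\eta^{g_i})^{\oplus e_i}$ across the orbit forces all $e_i$ to equal a common value $e$. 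Choosing a transversal $\{g_1=1,g_2,\dots,g_m\}$ of the left cosets of $I_{\eta}$ in $G$, the component of type $\eta^{g_i}$ is exactly $\rho_i:=\rho(g_i)V_1$, where $V_1:=V[\eta]$, and we obtain $\rho_i\cong(\eta^{g_i})^{\oplus e}$ together with $V\downarrow_{H}^{G}=\bigoplus_{i=1}^{m}\rho_i=\big(\bigoplus_{i=1}^{m}\eta^{g_i}\big)^{\oplus e}$, which gives $(1)$ and $(2)$.

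For $(4)$: since $\rho(g)$ preserves the isomorphism type $\eta$ precisely when $\eta^{g}$ and $\eta$ are equivalent, i.e. precisely when $g\in I_{\eta}$ (Proposition \ref{clifford}), the subspace $V_1=V[\eta]$ is stable under $\rho(I_{\eta})$ and so becomes an $F[I_{\eta}]$-module $(\rho_1,V_1)$. I would then write down the canonical $F[G]$-homomorphism
\[
\Phi\colon F[G]\otimes_{F[I_{\eta}]}V_1=\rho_1\uparrow_{I_{\eta}}^{G}\longrightarrow V,\qquad g\otimes v\longmapsto \rho(g)v,
\]
which is well defined because $\rho(k)v\in V_1$ for $k\in I_{\eta}$. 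Its image contains every $\rho(g_i)V_1=\rho_i$, hence equals $V$, so $\Phi$ is surjective; and $\dim\big(\rho_1\uparrow_{I_{\eta}}^{G}\big)=[G:I_{\eta}]\dim V_1=\sum_{i=1}^{m}\dim\rho_i=\dim V$, so $\Phi$ is an isomorphism and $\rho\cong\rho_1\uparrow_{I_{\eta}}^{G}$. (One could instead invoke Frobenius reciprocity, Theorem \ref{Fro1}: the projection $V\downarrow_{I_{\eta}}^{G}\to V_1$ is a nonzero $F[I_{\eta}]$-map, hence corresponds to a nonzero --- therefore injective, as $V$ is irreducible --- $F[G]$-map $V\to\rho_1\uparrow_{I_{\eta}}^{G}$, and the same dimension count finishes it; but the direct map $\Phi$ is cleaner.) I expect the points needing the most care to be the two conventions issues rather than any deep difficulty: first, that semisimplicity of $V\downarrow_{H}^{G}$ must come from the irreducibility of $V$ (this is what makes the theorem work in arbitrary characteristic), and second, keeping the conjugation bookkeeping straight so that "$\rho(g)$ sends the $\eta^{g'}$-component to the $\eta^{gg'}$-component" --- and hence the transitivity in $(3)$ and the identification $\rho_i=\rho(g_i)V_1$ along a \emph{left} transversal --- come out on the correct side; once these are pinned down, $(1)$, $(2)$ and $(4)$ are essentially a homogeneity-plus-dimension argument.
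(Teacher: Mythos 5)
Your proposal is correct. Note that the paper does not actually prove Theorem \ref{clidecom}: it is quoted from \cite{musi104} (Theorem $4.4.1$) and \cite{Dor94} (Theorem $14.1$), so there is no in-text proof to compare against, and your argument is essentially the standard one found in those references --- generate $V$ by the translates $\rho(g)U$ of one simple $F[H]$-submodule so that semisimplicity of $\rho\downarrow_{H}^{G}$ comes from irreducibility of $V$ rather than from Maschke, track how $\rho(g)$ permutes the isotypical components to get transitivity and equality of multiplicities, and identify $\rho$ with $\rho_1\uparrow_{I_\eta}^{G}$ by surjectivity of the canonical map plus a dimension count. The only step deserving one more sentence of care is the opening relabelling: the simple submodule $U$ you choose a priori affords some irreducible $\theta$, and only after you know every constituent of $V\downarrow_{H}^{G}$ is a conjugate of $\theta$ (and that the restriction is semisimple, so the hypothesis that $\eta$ occurs makes it an actual submodule type) can you write $\eta\cong\theta^{g_0}$ and replace $U$ by $\rho(g_0)U$; with your convention $(\eta^{g'})^{g}=\eta^{gg'}$, matching the definition before Proposition \ref{clifford}, the remaining bookkeeping --- the left transversal of $I_\eta$, the identification $\rho_i=\rho(g_i)V_1$, and the well-definedness of $\Phi$ over $F[I_\eta]$ --- is exactly right.
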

\begin{corollary}[see \cite{musi104}, Corollary $4.4.2$]
	The restriction of an irreducible representation $(\rho, V)$ of $G$ 
	to $H$ is isotypical if and only if the inertia group of any irreducible 
	component of $(\rho, V)$ is $G$.
\end{corollary}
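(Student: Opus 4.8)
The plan is to read the statement off directly from Clifford's decomposition theorem (Theorem~\ref{clidecom}), exploiting that $H$ is normal in $G$ and $(\rho,V)$ is irreducible. Fix an irreducible component $(\eta,W)$ of $\rho\downarrow^G_H$, let $I_\eta$ denote its inertia group (Proposition~\ref{clifford}), and let $\{g_1=1,g_2,\dots,g_m\}$ be a complete set of left coset representatives of $I_\eta$ in $G$, so $m=[G:I_\eta]$. As recalled just before Theorem~\ref{clidecom}, the conjugates $\eta^{g_1},\dots,\eta^{g_m}$ form a complete set of pairwise inequivalent conjugates of $\eta$, and part~(2) of that theorem gives $\rho\downarrow^G_H=\big(\bigoplus_{i=1}^m\eta^{g_i}\big)^{e}$ for some multiplicity $e$.

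From this decomposition the equivalence is immediate for the chosen component: $\rho\downarrow^G_H$ is isotypical (a direct sum of copies of a single irreducible $H$-representation) exactly when there is a single conjugate in the sum, i.e.\ $m=1$, and since $m=[G:I_\eta]$ this happens precisely when $I_\eta=G$. The one small thing to verify is that $m=1$ is genuinely equivalent to $I_\eta=G$, not merely to all conjugates $\eta^{g_i}$ being abstractly equivalent as representations of $H$; but this is built into the description of the $g_i$ as coset representatives of $I_\eta$, so no extra work is needed.

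Finally I would check that the condition does not depend on which irreducible component is chosen, so that the phrase ``any irreducible component'' is unambiguous. By part~(1) of Theorem~\ref{clidecom} every irreducible component of $\rho\downarrow^G_H$ has the form $\eta^{g}$ for some $g\in G$; since $H\trianglelefteq G$, unwinding the definition of the conjugate representation gives $I_{\eta^{g}}=g\,I_\eta\,g^{-1}$, hence $I_\eta=G$ if and only if $I_{\eta^{g}}=G$. Combining the three observations yields the corollary. I do not anticipate any genuine obstacle: the result is essentially a restatement of Clifford's decomposition theorem, and the only routine points are the index computation $m=[G:I_\eta]$ and the conjugation formula $I_{\eta^{g}}=g\,I_\eta\,g^{-1}$ for inertia groups.
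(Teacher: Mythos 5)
Your proof is correct: the paper itself gives no proof of this corollary (it is simply cited from the reference), and your argument is the standard one, reading the statement directly off Theorem~\ref{clidecom} via $m=[G:I_\eta]$, together with the routine verifications that $m=1$ is equivalent to $I_\eta=G$ and that $I_{\eta^{g}}=gI_\eta g^{-1}$ makes the condition independent of the chosen component. Nothing further is needed.
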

\begin{corollary}[see \cite{musi104}, Corollary $4.4.3$]
	Let $(\rho, V)$ be an irreducible representation of $G$. 
	Then we have the following.
	\begin{enumerate}
		\item[(1)] Either $\rho \downarrow_{H}^{G}$ is isotypical,
		\item[(2)] or else there exists a subgroup $N$ with 
		$H \subseteq N \neq G$ and an irreducible representation 
		$(\eta, W)$ of $N$ such that $\rho = \eta \uparrow_{N}^{G}$.
	\end{enumerate}
\end{corollary}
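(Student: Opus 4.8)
The plan is to take an irreducible constituent of $\rho\downarrow_{H}^{G}$, feed it into Clifford's decomposition theorem (Theorem~\ref{clidecom}), and let the size of its inertia group decide which of the two alternatives occurs; recall from the surrounding text that $H$ is normal in $G$, so that theorem is available. First I would choose any irreducible component $(\eta, W)$ of $\rho\downarrow_{H}^{G}$; part~(1) of Theorem~\ref{clidecom} guarantees it exists and that $\rho\downarrow_{H}^{G}$ is the direct sum of the conjugates $\eta^{g_{i}}$. Write $I_{\eta}$ for the inertia group of $(\eta, W)$; by Proposition~\ref{clifford} we have $H \subseteq I_{\eta} \subseteq G$. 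Part~(4) of Theorem~\ref{clidecom} then supplies the isotypical component $(\rho_{1}, V_{1})$ of type $(\eta, W)$, which is an $F[I_{\eta}]$-module, together with an equivalence $\rho \cong \rho_{1}\uparrow_{I_{\eta}}^{G}$.

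Now I would split into two cases. If $I_{\eta} = G$, then every conjugate $\eta^{g_{i}}$ also has inertia group $g_{i}I_{\eta}g_{i}^{-1} = G$, so the preceding corollary (see \cite{musi104}, Corollary~$4.4.2$) forces $\rho\downarrow_{H}^{G}$ to be isotypical --- indeed $I_{\eta} = G$ means $\eta$ has just one conjugate, so $\rho\downarrow_{H}^{G}$ is simply a multiple of $\eta$ --- which is alternative~(1). If instead $I_{\eta} \neq G$, put $N := I_{\eta}$, so that $H \subseteq N \neq G$ as required by alternative~(2); the representation inducing up to $\rho$ will be $(\rho_{1}, V_{1})$ itself. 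To see it is $F[N]$-irreducible, suppose $\rho_{1} \cong \rho_{1}' \oplus \rho_{1}''$ with both summands nonzero; since induction is additive and sends nonzero modules to nonzero modules, $\rho \cong \rho_{1}\uparrow_{N}^{G} \cong (\rho_{1}'\uparrow_{N}^{G}) \oplus (\rho_{1}''\uparrow_{N}^{G})$ is then a nontrivial decomposition of $\rho$, contradicting its irreducibility. Hence $(\eta, W) := (\rho_{1}, V_{1})$ is an irreducible representation of $N$ with $\rho = \eta\uparrow_{N}^{G}$, which is alternative~(2), and the two cases together establish the corollary.

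The only slightly delicate point is the last one --- that a representation whose induced module is irreducible must itself be irreducible --- and even that reduces to the elementary facts that induction commutes with direct sums and does not annihilate a nonzero module (the induced module has dimension $[G:N]$ times the original). Everything else is bookkeeping on top of Theorem~\ref{clidecom}, Proposition~\ref{clifford}, and the preceding corollary, so I do not anticipate a genuine obstacle; the one thing to stay careful about is matching each inertia group to the correct constituent when invoking the preceding corollary.
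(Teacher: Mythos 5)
Your overall route is the standard one (and, since the thesis itself gives no proof of this corollary but only cites \cite{musi104}, it is the route one expects from the source): pick an irreducible constituent $(\eta,W)$ of $\rho\downarrow_H^G$, note $H\subseteq I_\eta\subseteq G$ because $H$ is normal, and split according to whether $I_\eta=G$ (isotypical case, via Theorem~\ref{clidecom}(1) or Corollary~4.4.2) or $I_\eta\neq G$ (take $N=I_\eta$ and use Theorem~\ref{clidecom}(4) to get $\rho\cong\rho_1\uparrow_N^G$). All of that is correct. The one genuine gap is your last step: your argument shows only that $\rho_1$ is \emph{indecomposable} (a nontrivial direct-sum splitting of $\rho_1$ would induce one of $\rho$), whereas the corollary needs $\rho_1$ to be \emph{irreducible}, i.e.\ to have no proper nonzero $F[N]$-submodule at all. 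Theorem~\ref{clidecom} and this corollary are stated for a field of arbitrary characteristic, where $F[N]$ need not be semisimple, so indecomposable does not imply irreducible and nothing guarantees a priori that $V_1$ is a semisimple $F[N]$-module; only under the standing hypothesis of the rest of the thesis (characteristic $0$ or prime to $|G|$) does Maschke close this gap and make your argument complete.

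The repair is short but uses a different property of induction than additivity. Either use exactness: $F[G]$ is free as a right $F[N]$-module, so if $U$ is a nonzero proper $F[N]$-submodule of $V_1$, then $U\uparrow_N^G$ embeds in $V_1\uparrow_N^G\cong V$ as a $G$-submodule of dimension $[G:N]\dim U<\dim V$, contradicting irreducibility of $V$. Or argue inside $V$ directly: with coset representatives $g_1=1,\dots,g_m$ of $N$ in $G$ one has $V=\bigoplus_i g_iV_1$ (Theorem~\ref{clidecom}), and for a nonzero $F[N]$-submodule $U\subseteq V_1$ the subspace $F[G]U=\sum_i g_iU$ is a nonzero $G$-submodule of $V$, hence equals $V$; since $g_iU\subseteq g_iV_1$ and the sum of the $g_iV_1$ is direct, comparing the $i=1$ components gives $U=V_1$. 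With either of these in place of the direct-sum argument, your proof is complete in the stated generality.
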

\section{Representations of finite solvable groups}
Let $G$ be a finite group. Let $F$ be the field of complex numbers.
We denote the set of  conjugacy classes in $G$ by 
$\mathcal C_G$ and the set of irreducible representations of $G$ by 
$\Omega_{G}$. For an element $g$ in $G$,
we denote by $C_{G}(g)$ the $G$-conjugacy class of $g$. 
Let $H$ be normal subgroup of $G$. If $h$ in $H$, let $C_{H}(h)$ 
denotes the $H$-conjugacy class of $h$. Then $G$ starts acting on 
$\mathcal C_H$ by conjugation.
For $h \in H$, and $g$ in $G$, the $g$-action on $\mathcal C_H$ 
maps $C_{H}(h)$ onto $C_{H}(ghg^{-1})$.
In fact, $C_G(h)$ is a union of the $H$-conjugacy classes
$C_{H}(ghg^{-1})$ for all $g$ in $G$. In the $G$-action on $\mathcal C_H$, 
$H$ itself acts 
trivially so we have actually a $G/H$-action on $\mathcal C_H$; 
the $G$-conjugacy leaves the complement $G-H$ of $H$ also invariant, 
and so $G-H$ is  also a union of certain $G$-conjugacy classes.
As remarked earlier, for a finite group $G$, $\Omega_G$ and 
$\mathcal C_G$ contain the same number of elements,
and these two sets are dual to each other. When $H$ is a 
normal subgroup of $G$, then $G$ also starts acting on $\Omega_H$ 
by conjugation. Let $\eta$ be in $\Omega_H$, and $g$ in $G$. Then 
$g\cdot \eta(h)=\eta(ghg^{-1})$ is also a representation of $H$, 
which is conjugate representation conjugating by $g$ in $G$.
This action of $G$ restricted to $H$ is trivial, so we actually 
have the action of $G/H$. A remarkable fact is that the $G/H$-actions on 
$\Omega_H$ and 
$\mathcal C_H$ have the same number of orbits (see \cite{combi91}, Corollary$3.6.20$).
\subsection{Index-$p$ theorem}
\begin{definition}\rm
	Let $G$ be a group, and $H$ a subgroup of $G$. Let $(\eta, W)$ be a 
	representation of $H$. A representation $(\rho, W)$ of $G$ is said to be an 
	{\it{extension}} of $(\eta, W)$ if $\rho \downarrow_{H}^{G} = \eta$.
\end{definition}

The following theorem is well known, but for the sake of completeness we give a proof of the theorem.
\begin{theorem}[$\cite{alg111}$, Theorem $13.52$] (Index-$p$ theorem)\label{index}
	Let $G$ be a group, $H$ a normal subgroup of index $p$, 
	$p$ a prime and $\eta$ an irreducible representation of $H$.
	\begin{enumerate}
		\item If the $G/H$-orbit of $\eta$ is a singleton, then $\eta$ extends
		to $p$ mutually inequivalent representations 
		$\rho_1, \rho_2, \ldots, \rho_p$  of $G$.
		\item If the $G/H$-orbit of $\eta$ consists of $p$ points 
		$\eta = \eta_1, \eta_2,\ldots, \eta_p$
		then the induced representations $\eta_1\uparrow_H^G$, 
		$\eta_2\uparrow_H^G,\ldots, \eta_p\uparrow_H^G$, are equivalent,
		say $\rho$ and $\rho$ is irreducible.
	\end{enumerate}
\end{theorem}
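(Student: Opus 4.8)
The plan is to split the argument according to the inertia group $I_{\eta}$ of Proposition~\ref{clifford}. Since $H$ is normal in $G$ with $[G:H]=p$ a prime and $H\subseteq I_{\eta}\subseteq G$, only two cases occur: $I_{\eta}=G$, which is precisely the hypothesis of part~(1) (the $G/H$-orbit of $\eta$ is a singleton), and $I_{\eta}=H$, which forces the orbit to consist of exactly $p$ elements, the hypothesis of part~(2). So (1) and (2) are exhaustive and I handle them separately, working over $\mathbb{C}$ so that Schur's lemma yields scalars.

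For part~(1): fix $t\in G$ with $G=\langle H,t\rangle$ and $t^{p}\in H$; put $h_{0}=t^{p}$ and let $\phi\in\mathrm{Aut}(H)$ be conjugation by $t$, so that a homomorphism out of $G$ is pinned down by its restriction to $H$ and the image of $t$, subject only to $tht^{-1}=\phi(h)$ and $t^{p}=h_{0}$. Since $\eta$ is $G$-invariant there is an invertible operator $T$ on $W$ with $T\eta(h)T^{-1}=\eta(tht^{-1})$ for all $h\in H$. Iterating gives $T^{p}\eta(h)T^{-p}=\eta(t^{p}ht^{-p})=\eta(h_{0})\eta(h)\eta(h_{0})^{-1}$, so $\eta(h_{0})^{-1}T^{p}$ commutes with $\eta(H)$ and hence equals $\mu\,\mathrm{I}$ for some $\mu\in\mathbb{C}^{\times}$ by Schur's lemma; that is, $T^{p}=\mu\,\eta(h_{0})$. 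Now set $\rho(h)=\eta(h)$ on $H$ and $\rho(t)=\lambda T$, where $\lambda$ is one of the $p$ solutions of $\lambda^{p}\mu=1$ (these exist since $\mathbb{C}^{\times}$ is $p$-divisible). Then $\rho(t)\rho(h)\rho(t)^{-1}=\eta(\phi(h))$ and $\rho(t)^{p}=\lambda^{p}T^{p}=\lambda^{p}\mu\,\eta(h_{0})=\eta(h_{0})$, so $\rho$ extends to a representation of $G$ with $\rho\downarrow_{H}^{G}=\eta$, and the $p$ choices of $\lambda$ give $\rho_{1},\dots,\rho_{p}$. They are pairwise inequivalent: an intertwiner between $\rho_{i}$ and $\rho_{j}$ restricts to a $\mathbb{C}[H]$-endomorphism of $W$, hence is a scalar by Schur's lemma, hence commutes with $\rho_{i}(t)$, forcing $\rho_{i}(t)=\rho_{j}(t)$; but for $i\neq j$ the corresponding values of $\lambda$ differ by a nontrivial $p$-th root of unity.

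For part~(2): the equivalence $\eta_{1}\uparrow_{H}^{G}\cong\cdots\cong\eta_{p}\uparrow_{H}^{G}=:\rho$ is immediate from the proposition right after Proposition~\ref{clifford}, that the representations induced by all conjugates of $(\eta,W)$ are equivalent. To prove $\rho$ irreducible, let $\sigma$ be any irreducible constituent of $\rho$. By Frobenius reciprocity (Theorem~\ref{Fro1}) one has $\langle\sigma\downarrow_{H}^{G},\eta\rangle\geq 1$, so $\eta$ occurs in $\sigma\downarrow_{H}^{G}$; then Clifford's decomposition theorem (Theorem~\ref{clidecom}) gives $\sigma\downarrow_{H}^{G}\cong e(\eta_{1}\oplus\cdots\oplus\eta_{p})$ for some $e\geq 1$ (all $p$ conjugates appear because $I_{\eta}=H$), so $\deg\sigma=e\,p\,\deg\eta$. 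Since $\deg\rho=[G:H]\deg\eta=p\,\deg\eta$ and $\sigma\subseteq\rho$, we get $e\,p\,\deg\eta\leq p\,\deg\eta$, hence $e=1$ and $\deg\sigma=\deg\rho$, so $\sigma=\rho$ and $\rho$ is irreducible.

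The main obstacle is the extension step in part~(1): one must exhibit the single scalar $\lambda$ that simultaneously repairs the $p$-th power relation and stays compatible with the conjugation relation, and then confirm that no other relation of $G$ is broken. Once Schur's lemma fixes $T^{p}$ up to a scalar, this collapses to $p$-divisibility of $\mathbb{C}^{\times}$; the inequivalence count and all of part~(2) are then routine applications of Schur's lemma, Frobenius reciprocity and Clifford's decomposition, all available above.
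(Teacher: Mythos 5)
Your proof is correct, but it follows a genuinely different route from the one in the thesis. For part (1) the thesis is deliberately \emph{indirect}: it first shows (via Clifford and Frobenius reciprocity) that any irreducible $\rho$ of $G$ lying over a $G$-invariant $\eta$ must restrict to a single copy of $\eta$, and then counts the number of extensions by comparing $|G|=p|H|$ with the sum-of-squares formula $\sum\deg^2=|{\cdot}|$, never exhibiting the $p$ extensions. You instead construct them explicitly: an intertwiner $T$ realizing $\eta\cong\eta^{t}$, Schur's lemma giving $T^{p}=\mu\,\eta(t^{p})$, and the $p$ rescalings $\rho(t)=\lambda T$ with $\lambda^{p}\mu=1$; inequivalence again by Schur. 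This is the classical projective-lifting argument, and it buys something the thesis's count does not: the extensions are produced concretely (very much in the spirit of the algorithmic theme of the thesis), at the modest cost of having to justify that specifying $\eta$ on $H$ together with $\rho(t)$ compatible with $tht^{-1}=\phi(h)$ and $t^{p}=h_{0}$ really defines a homomorphism on $G$ --- which you flag, and which follows from the coset decomposition $G=\bigsqcup_{0\le k<p}Ht^{k}$. For part (2) the thesis invokes Mackey's irreducibility criterion ($\eta\not\cong\eta^{s}$ for $s\in G-H$), whereas you prove irreducibility of $\eta\uparrow_{H}^{G}$ from Frobenius reciprocity plus Clifford's decomposition theorem and a degree comparison ($\deg\sigma=ep\deg\eta\le p\deg\eta$ forces $e=1$ and $\sigma=\rho$); both are sound over $\mathbb{C}$, yours avoiding Mackey entirely at the price of using Clifford's theorem, which is in any case already available earlier in the chapter.
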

\begin{proof}
	Since $H$ is a normal subgroup of index $p$, then 
	$G/H = \langle { xH } \rangle$, for some $ x \in G $. 
	Let $(\rho, V)$ be an irreducible representation of $G$, 
	and $(\eta, W)$ be an irreducible component of 
	$\rho \downarrow^{G}_{H}$.
	
	We first prove second statement of the theorem. We assume that the $G/H$-orbit of $\eta$ consists of $p$ points, 
	i.e., 
	$(\eta, W) \ncong (\eta^x, W^x)$.
	So all the irreducible representations 
	$\{(\eta^{x^i},W^{x^i})\}_{i=0}^{i=p-1}$ are pairwise inequivalent. 
	Therefore $(\eta, W)$ is irreducible representation to $H$ and is not 
	equivalent to $(\eta^s, W^s)$ for any
	$s\in {G - H}$, hence by the Mackey irreducibility criterion 
	(see \cite{ser118}, Chapter$7$, Proposition$23$)
	$\eta \uparrow_{H}^{G}$ is irreducible.
	By Frobenius reciprocity formula (see \cite{ser118}, Chapter$7$, Theorem$13$),
	$ \rho  = \eta \uparrow_{H}^{G}$ and also
	\begin{center}
		$\rho  \cong \eta \uparrow _{H}^{G} \cong \eta^x \uparrow _{H}^{G} \cong 
		\dots \cong \eta^{x^{p-1}}\uparrow _{H}^{G}.$
	\end{center}

Now we assume, the $G/H$-orbit of $(\eta, W )$ is singleton, that is,  
	$(\eta, W) \cong (\eta^x, W^x)$.
	Then $\{(\eta^{x^i},W^{x^i})\}_{i=0}^{i=p-1}$ are all equivalent 
	$H$-representations. As $(\rho, V)$  is an
	irreducible representation of $G$, then $V$ as $G$-representation 
	equal to  $W + xW + \cdots +x^{p-1}W$. Now the representation  on 
	$x^iW$ is equivalent to $\eta^{x^i}$ as $H$-representation. So by 
	our hypothesis the restriction of the $(\rho, V)$ to $H$ i.e., 
	$V_H$ is a direct sum of 1 or $p$ copies of $W$. Again by the 
	Frobenius Reciprocity formula, $V_H$ cannot be a direct sum of 
	$p$ copies of $W$. So $\rho$ is an extension of $\eta$. 
	In this case there are at most $p$ extensions of  an irreducible 
	representation of $H$, since all these extensions must occur as 
	sub-representations of the representation of $G$ induced from the  
	representation $(\eta, W)$ of $H$.
	
	To complete the proof, we need to show that in the first case 
	there are exactly $p$ distinct extensions of an irreducible 
	representation of $H$. The proof is indirect. We do not 
	construct the $p$ distinct representations, but simply count 
	the total number of irreducible representations of $G$ 
	obtained in this way. 
	
	Let $\eta_i$ be an  irreducible representation of 
	$H$ such that   
	$\eta_i$  is not equivalent to $\eta_i^x$:
	Let degree of $\eta_i = n_i$. The degree of the induced 
	representation  is $pn_i$, and we know that $p$ representations of 
	the form $\eta^{x^i}$ induce the same representation of $G$. So the 
	contribution to $|G|$ coming from these representations is 
	$(1/p){p^2n_i^2} = pn_i^2$. 
	
	Let $\eta_k$ be  an irreducible representation of 
	$H$ such that   
	$\eta_k$ is  equivalent to $\eta_k^x$.
	Let degree of $\eta_k = n_k$. The degree of the corresponding 
	representation of $G$ is also $n_k$. Let there be $m_k$  inequivalent 
	extensions of $\eta_k$.
	We have $|H| =  \sum n_i^2 +  \sum n_k^2$,
	and $|G| = p\sum n_i^2 +  m_k\sum n_k^2. = p|H|$. We see that each 
	$m_k$ must be $p$, and there are $p$ extensions of $\eta_k$.
\end{proof}

\begin{example}\rm
	Let $G = {\rm SL}_2(3)$, $H = Q_8$, $F = \mathbb{C}$. Here the index of
	$H$ in $G$ is equal to $3$. The group $Q_{8}$ has four irreducible 
	representations of degree 1 and one irreducible representation of degree 2. 
	The trivial representation of $Q_8$ extends to three degree 1 
	representations of ${\rm SL}_2(3)$. The three non-trivial degree 1 
	representations of $Q_8$ induce the same irreducible representation of degree 3. 
	The remaining degree 2 representation of $Q_8$ extends to three distinct degree 
	2 irreducible representations of ${\rm SL}_2(3)$.
\end{example}
\begin{theorem}[$\cite{alg111}$, Theorem $13.52$]\label{ext}
	Let $G$ be a finite group and  $H$ be a normal subgroup of of $G$ such that 
	$G/H$ is a cyclic group of order $n$. 
	Let $G/H =  \langle xH \rangle$, for some $x\in G$.
	Let $(\eta, W)$ be an irreducible representation of $H$ over $\mathbb{C}$. 
	If $(\eta, W)$ is equivalent to  
	$(\eta^{x},W^{x})$, then there are exactly $n$ irreducible representations 
	$\rho_0, \rho_1, \ldots , \rho_{n-1}$ extending $\eta$ and satisfy 
	\begin{center}
		$\eta \uparrow_{H}^{G}\cong\rho_0\oplus\rho_1\oplus\cdots\oplus\rho_{n-1}$.
	\end{center}
	Moreover, if $\chi^0, \chi^1, \ldots , \chi^{n-1}$ are linear character of the 
	cyclic group $G/H$ in a suitable order,	we have 
	$\rho_k = \chi^k \otimes \rho_0$, i.e., 
	$\rho_k(g) = \chi^k(gH)\rho_0(g)$ for all $g \in G$. 
\end{theorem}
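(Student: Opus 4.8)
The plan is to construct one extension $\rho_0$ of $\eta$ to $G$ explicitly, obtain the remaining extensions as twists of $\rho_0$ by the linear characters of $G/H$, and then use Frobenius reciprocity to see that these are exactly $n$ pairwise inequivalent representations and that they exhaust all extensions.

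First I would build $\rho_0$. Since $H\triangleleft G$ and $G/H=\langle xH\rangle$ has order $n$, every element of $G$ is uniquely $x^ih$ with $0\le i<n$ and $h\in H$, and $x^n\in H$. From $(\eta,W)\cong(\eta^x,W^x)$ pick an invertible $T$ with $T\eta(h)T^{-1}=\eta(xhx^{-1})$ for all $h\in H$. Iterating, $T^n\eta(h)T^{-n}=\eta(x^nhx^{-n})=\eta(x^n)\eta(h)\eta(x^n)^{-1}$, so $\eta(x^n)^{-1}T^n$ commutes with $\eta(H)$ and is therefore a scalar $\mu\in\mathbb{C}^\times$ by Schur's Lemma (here $\mathbb{C}$ algebraically closed and $\eta$ irreducible are used), i.e. $T^n=\mu\,\eta(x^n)$. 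Now choose $\lambda$ with $\lambda^n=\mu^{-1}$ and set $A=\lambda T$; then $A\eta(h)A^{-1}=\eta(xhx^{-1})$ and $A^n=\eta(x^n)$, using that $T$ (hence $A$) commutes with $\eta(x^n)$ because $x$ and $x^n$ commute. Then I would set $\rho_0(x^ih)=A^i\eta(h)$ and check directly --- invoking only these two identities, the commutation of $A$ with $\eta(x^n)$, and the reduction of the exponent of $A$ modulo $n$ --- that $\rho_0$ is a homomorphism $G\to\mathrm{GL}(W)$ extending $\eta$, with $\rho_0(x)=A$. Its restriction to $H$ is irreducible, so $\rho_0$ is irreducible.

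Next, let $\chi^0,\dots,\chi^{n-1}$ be the $n$ distinct linear characters of the cyclic group $G/H$, inflated to $G$, and set $\rho_k=\chi^k\otimes\rho_0$. Each $\rho_k$ has the same underlying space as $\rho_0$, so it is irreducible, and $\chi^k$ is trivial on $H$, so $\rho_k\downarrow_H^G=\eta$; thus every $\rho_k$ extends $\eta$. By the projection formula, $\eta\uparrow_H^G=(\rho_0\downarrow_H^G)\uparrow_H^G\cong\rho_0\otimes(\mathbf{1}_H\uparrow_H^G)$, and $\mathbf{1}_H\uparrow_H^G$ is the inflation to $G$ of the regular representation of the abelian group $G/H$, namely $\bigoplus_{k=0}^{n-1}\chi^k$; hence $\eta\uparrow_H^G\cong\bigoplus_{k=0}^{n-1}\rho_k$. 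Now by Frobenius reciprocity (Theorem \ref{Fro1}) and Schur's Lemma over $\mathbb{C}$, $\dim\mathrm{Hom}_{\mathbb{C}[G]}(\rho_k,\eta\uparrow_H^G)=\dim\mathrm{Hom}_{\mathbb{C}[H]}(\eta,\eta)=1$, so each $\rho_k$ occurs in $\eta\uparrow_H^G$ with multiplicity $1$; comparing with the displayed decomposition forces the $\rho_k$ to be pairwise inequivalent. And if $\rho$ is any irreducible extension of $\eta$, the same reciprocity gives $\dim\mathrm{Hom}_{\mathbb{C}[G]}(\rho,\eta\uparrow_H^G)=1\neq0$, so $\rho$ is a constituent of $\bigoplus_k\rho_k$ and hence equals some $\rho_k$. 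Therefore $\rho_0,\dots,\rho_{n-1}$ are exactly the extensions of $\eta$, they satisfy $\eta\uparrow_H^G\cong\rho_0\oplus\cdots\oplus\rho_{n-1}$, and reindexing the $\chi^k$ gives $\rho_k=\chi^k\otimes\rho_0$.

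I expect the only real obstacle to be the first step: choosing $\lambda$ so that $A^n=\eta(x^n)$ holds exactly (this genuinely needs Schur's Lemma together with the commutation $T\eta(x^n)=\eta(x^n)T$), and then verifying that $x^ih\mapsto A^i\eta(h)$ is multiplicative across the carry when $i+j\ge n$; everything after that is formal bookkeeping. An alternative route is to ascend a chain $H=K_0\triangleleft K_1\triangleleft\cdots\triangleleft K_r=G$ of prime indices and apply the Index-$p$ theorem (Theorem \ref{index}) at each step, but one then has to track that a singleton orbit is preserved at every stage, which is more delicate than the direct construction above.
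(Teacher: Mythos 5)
Your proof is correct. Note, though, that the paper does not actually prove Theorem \ref{ext}: it is quoted from \cite{alg111}, Theorem $13.52$, and the only argument given in that neighbourhood is the proof of the Index-$p$ theorem (Theorem \ref{index}), which the author deliberately keeps \emph{indirect} --- the extensions are never constructed, their number is obtained by the degree count $|G| = p\sum n_i^2 + m_k\sum n_k^2 = p|H|$. Your route is the opposite, constructive one: you normalize an intertwiner $T$ between $\eta$ and $\eta^x$ via Schur's lemma so that $A=\lambda T$ satisfies $A\eta(h)A^{-1}=\eta(xhx^{-1})$ and $A^n=\eta(x^n)$, define $\rho_0(x^ih)=A^i\eta(h)$, and verify multiplicativity across the carry $i+j\ge n$ (which indeed uses only those two identities and the uniqueness of the writing $g=x^ih$, $0\le i<n$); then the twists $\rho_k=\chi^k\otimes\rho_0$, the projection formula $\eta\uparrow_H^G\cong\rho_0\otimes(\mathbf{1}_H\uparrow_H^G)$ with $\mathbf{1}_H\uparrow_H^G$ the regular representation of $G/H$, and Frobenius reciprocity (multiplicity one of each $\rho_k$ in $\eta\uparrow_H^G$) yield the displayed decomposition, the pairwise inequivalence, and the fact that every extension is some $\rho_k$. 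What your approach buys is precisely what the paper's counting forgoes: an explicit model of the $n$ extensions, which also makes the ``moreover'' clause $\rho_k=\chi^k\otimes\rho_0$ immediate rather than a separate statement; what the paper's indirect style buys is brevity and independence from the normalization step ($T^n=\mu\,\eta(x^n)$ with $\mu$ scalar), which is the one place where algebraic closedness of $\mathbb{C}$ and irreducibility of $\eta$ are genuinely consumed, and which you handled correctly. Your closing remark is also apt: iterating the Index-$p$ theorem along a chain of prime-index subgroups would require tracking that the orbit stays a singleton at every stage, so the direct construction is the cleaner path to this statement.
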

%
\begin{corollary}
	Let $G$ be a solvable group. Then $G$ has a maximal 
	subnormal series such that the successive quotients are 
	isomorphic to cyclic groups of prime order. The last but one term in 
	the maximal subnormal series is a cyclic group of prime order. 
	So all its irreducible 
	complex representations are of degree one. So starting with degree one 
	representations of subgroups, we can build all the irreducible complex 
	representations of $G$ by the processes of  extension and induction.
\end{corollary}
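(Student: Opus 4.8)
The plan is to run an induction along a composition series of $G$, using the Index-$p$ theorem (Theorem~\ref{index}) as the one inductive tool. First I would dispose of the structural assertions. Since $G$ is a finite solvable group it admits a subnormal series $\langle e\rangle = G_0 < G_1 < \cdots < G_n = G$ in which every quotient $G_i/G_{i-1}$ is cyclic of prime order $p_i$. Each such quotient is a simple group, so this series cannot be properly refined; hence it is already a maximal subnormal series. Reading the chain from the top, $G = G_n > G_{n-1} > \cdots > G_1 > G_0 = \langle e\rangle$, the term immediately preceding the trivial subgroup is $G_1$, and $G_1 \cong G_1/G_0$ is cyclic of prime order, hence abelian; therefore every irreducible $\mathbb{C}$-representation of $G_1$ has degree one.

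Next I would prove, by induction on $i$ for $1 \le i \le n$, the statement: \emph{every irreducible $\mathbb{C}$-representation of $G_i$ is obtained from a degree-one representation of $G_1$ by a finite sequence of extensions and inductions.} For $i = 1$ there is nothing to prove. For the inductive step, let $(\rho, V)$ be an irreducible $\mathbb{C}$-representation of $G_i$. Since $G_{i-1}$ is normal in $G_i$ of prime index $p_i$ and $\mathbb{C}[G_{i-1}]$ is semisimple (Maschke's theorem), $\rho\downarrow^{G_i}_{G_{i-1}}$ has an irreducible constituent $(\eta, W)$. Apply Theorem~\ref{index} to the pair $(G_i, G_{i-1})$ and $\eta$. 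If the $G_i/G_{i-1}$-orbit of $\eta$ is a singleton, part~(1) yields exactly $p_i$ pairwise inequivalent extensions of $\eta$ to $G_i$, and I will argue below that $\rho$ must be one of these, so $\rho$ is an extension of $\eta$. If instead that orbit has $p_i$ points, part~(2) says $\eta\uparrow^{G_i}_{G_{i-1}}$ is irreducible; by Frobenius reciprocity (Theorem~\ref{Fro1}) $\rho$ occurs in $\eta\uparrow^{G_i}_{G_{i-1}}$ with positive multiplicity, and since $\eta\uparrow^{G_i}_{G_{i-1}}$ is irreducible this forces $\rho \cong \eta\uparrow^{G_i}_{G_{i-1}}$, so $\rho$ is an induction of $\eta$. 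In either case $\rho$ is an extension or an induction of the irreducible representation $\eta$ of $G_{i-1}$, and the induction hypothesis applied to $\eta$ then expresses $\rho$ in the required form. Taking $i = n$ gives the last assertion of the corollary.

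The step I expect to be the main obstacle is the completeness claim in the singleton-orbit case: that a given irreducible $\rho$ of $G_i$ with constituent $\eta$ is actually one of the $p_i$ extensions produced by Theorem~\ref{index}, not merely that such extensions exist. I would settle this with Clifford's decomposition theorem (Theorem~\ref{clidecom}): when the orbit of $\eta$ is a singleton its inertia group in $G_i$ is all of $G_i$, so $\rho\downarrow^{G_i}_{G_{i-1}}$ is a multiple of $\eta$, and by Frobenius reciprocity the multiplicity of $\eta$ in $\rho\downarrow^{G_i}_{G_{i-1}}$ equals the multiplicity of $\rho$ in $\eta\uparrow^{G_i}_{G_{i-1}}$; the degree count at the end of the proof of Theorem~\ref{index} shows that $\eta\uparrow^{G_i}_{G_{i-1}}$ is precisely the direct sum of the $p_i$ distinct extensions of $\eta$, each with multiplicity one, so that multiplicity is $1$ and $\rho$ coincides with one of the extensions. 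Finally, it is worth noting that the corollary implicitly takes $G$ finite, since the whole argument — semisimplicity, Frobenius reciprocity, the Index-$p$ theorem — belongs to the representation theory of finite groups, and the conclusion fails for general infinite solvable groups.
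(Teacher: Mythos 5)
Your proposal is correct and follows essentially the same route the paper intends: the corollary is an immediate consequence of the Index-$p$ theorem applied inductively along a subnormal series with prime-order cyclic quotients, starting from the degree-one representations of $G_1$. The one point you flag as a potential obstacle (that in the singleton-orbit case a given irreducible $\rho$ lying above $\eta$ must itself be an extension of $\eta$) is already established inside the paper's proof of Theorem~\ref{index}, where the restriction $\rho\downarrow^{G_i}_{G_{i-1}}$ is shown via Frobenius reciprocity to consist of a single copy of $\eta$, so your Clifford-plus-counting argument supplies the same detail.
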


\begin{definition}\rm
	Let $G$ be a finite group. Let $H$ be a subgroup of $G$ and of index $n$. 
	Let $\eta$ be an $F$-representation of $H$.
	An $F$-representation $\rho$ of $G$ is said to be a 
	{\it generalized extension} of $\eta$ if $\rho \downarrow_{H}^{G} = 
	\underbrace{ \eta \oplus \cdots \oplus \eta }_{m \, \mathrm{copies}}$, where $1 < m \leq n$.
\end{definition}

\begin{theorem}
	Let $G$ be a group and $H$ a normal subgroup of index $p$, a prime. 
	Let $F$ be a field of characteristic $0$ or prime to $|G|$. Let $\eta$ be an 
	irreducible $F$-representation of $H$.
	\begin{enumerate}
		\item[(1)] If $G/H$-orbit of $\eta$ is singleton, then there exists an irreducible $F$-representation of $G$, which is either an extension or a generalized extension of $\eta$.
		\item[(2)] If the $G/H$-orbit of $\eta$ consists of $p$ points $\eta = \eta_1, \eta_2, ..., \eta_p$, then the induced representations $\eta_1\uparrow_H^G$, $\eta_2\uparrow_H^G, ... , \eta_p\uparrow_H^G$, are equivalent, say $\rho$ and $\rho$ is irreducible.
	\end{enumerate}
\end{theorem}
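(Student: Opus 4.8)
The plan is to route everything through the induced representation $\eta\uparrow_{H}^{G}$, Clifford's decomposition theorem (Theorem~\ref{clidecom}), and the semisimplicity of $F[G]$ and $F[H]$ guaranteed by Maschke's theorem. Since $H$ is normal of index $p$, fix $x\in G$ with $G/H=\langle xH\rangle$, let $W$ be the space of $\eta$, and set $V=\eta\uparrow_{H}^{G}=F[G]\otimes_{F[H]}W=\bigoplus_{i=0}^{p-1}x^{i}\otimes W$. The first step is routine bookkeeping on this model: from $h\cdot(x^{i}\otimes w)=x^{i}\otimes\eta(x^{-i}hx^{i})w$ one reads off that the $H$-submodule $x^{i}\otimes W$ is isomorphic to the conjugate $\eta^{x^{i}}$, and from $x\cdot(x^{i}\otimes W)=x^{i+1}\otimes W$ that $\langle xH\rangle$ permutes the $p$ summands cyclically, hence transitively.

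For part~(2) I would proceed as follows. If the $G/H$-orbit of $\eta$ has $p$ points, the modules $\eta^{x^{0}},\dots,\eta^{x^{p-1}}$ are pairwise inequivalent irreducible $H$-modules, so $V\downarrow_{H}$ is multiplicity free and its $H$-isotypic components are precisely the $x^{i}\otimes W$. Consequently, for any $G$-submodule $U\subseteq V$ the restriction $U\downarrow_{H}$ is a direct sum of a subset of these components (this is where semisimplicity of $F[H]$ and the pairwise non-isomorphism enter, making the isotypic decomposition canonical); invariance of $U$ under $x$ together with the cyclic transitivity then forces $U=0$ or $U=V$, so $\rho:=\eta\uparrow_{H}^{G}$ is irreducible. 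The equivalences $\eta_{1}\uparrow_{H}^{G}\cong\cdots\cong\eta_{p}\uparrow_{H}^{G}\cong\rho$ follow at once from the earlier proposition that the representations induced by all conjugates of a representation are equivalent.

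For part~(1), if the orbit is a singleton then $\eta^{x^{i}}\cong\eta$ for all $i$, so $V\downarrow_{H}\cong\eta^{\oplus p}$. Because $F[G]$ is semisimple, $V$ has an irreducible $G$-submodule $\rho$; its restriction $\rho\downarrow_{H}$ is a nonzero $H$-submodule of the $\eta$-isotypic module $\eta^{\oplus p}$, hence $\rho\downarrow_{H}\cong\eta^{\oplus m}$ with $1\le m\le p$. If $m=1$ then $\rho$ is an extension of $\eta$; if $m>1$ then $1<m\le[G:H]$, so $\rho$ is a generalized extension of $\eta$. In either case $\rho$ is the required irreducible $F$-representation of $G$, completing the proof.

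I do not expect a genuine obstacle: this is a faithful transcription of the complex-coefficient proof of the Index-$p$ theorem (Theorem~\ref{index}), and the only real content is conceptual --- over a non-algebraically closed $F$ one cannot hope for $p$ inequivalent extensions of $\eta$, and ``generalized extension'' is exactly the weakening that rescues the statement. If any point needs care it is the assertion in part~(2) that a $G$-submodule of $V$ restricts to a sub-sum of the $H$-isotypic components, which relies on Maschke's theorem for $H$ (available since the characteristic of $F$ does not divide $|H|$) and on the $\eta^{x^{i}}$ being pairwise non-isomorphic; everything else is bookkeeping.
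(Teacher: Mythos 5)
Your proof is correct, and it takes a somewhat more self-contained route than the paper's. For part (1) the paper invokes Frobenius reciprocity to produce an irreducible $\rho$ of $G$ with $\eta$ occurring in $\rho\downarrow^{G}_{H}$ and then applies Clifford's decomposition theorem (Theorem~\ref{clidecom}) to conclude that $\rho\downarrow^{G}_{H}$ is $\eta$-isotypic with multiplicity $m\le p$; you instead work inside the explicit model $\eta\uparrow^{G}_{H}=\bigoplus_{i}x^{i}\otimes W$, extract an irreducible $G$-submodule by semisimplicity of $F[G]$, and use the fact that a submodule of an isotypic semisimple module is again isotypic of the same type --- the same conclusion, with Clifford's theorem replaced by elementary facts about semisimple modules. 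For part (2) the paper merely refers back to the proof of the Index-$p$ theorem (Theorem~\ref{index}), which rests on Mackey's irreducibility criterion as cited from Serre in the complex setting; your argument re-proves exactly the needed case of that criterion by hand (the restriction of the induced module is multiplicity free, $x$ permutes its $p$ pairwise non-isomorphic $H$-isotypic components transitively, so any nonzero $G$-submodule is everything), and then, as the paper implicitly does, quotes the earlier proposition that conjugate representations induce equivalent ones. What your route buys is that every step is visibly valid over any $F$ of characteristic $0$ or prime to $|G|$ --- only Maschke's theorem and the direction of Schur's lemma valid over arbitrary fields are used --- whereas the paper's appeal to the proof of Theorem~\ref{index} imports an argument stated for $\mathbb{C}$ and leaves the reader to check that Mackey's criterion survives the loss of algebraic closure. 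The one delicate point, which you correctly flag, is that a $G$-submodule of $V$ restricts to a sub-sum of the $H$-isotypic components; since those components are pairwise non-isomorphic irreducibles this is standard, so there is no gap.
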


\begin{proof}
	$(1)$ By Frobenius reciprocity theorem (see \cite{ser118}, Chapter$7$, Theorem$13$), there exists an irreducible $F$-representation $\rho$ of $G$ such that $\rho \downarrow^{G}_{H}$ contains $\eta$. By Theorem$\ref{clidecom}$, $\rho \downarrow^{G}_{H}$ contains all the conjugates of $\eta$ with the same multiplicity. Since $G/H$-orbit of $\eta$ is singleton, then each conjugate of $\eta$ is equivalent to $\eta$. So, $\rho \downarrow^{G}_{H}$ is an isotypical component of type $\eta$, that is, $\rho \downarrow_{H}^{G} = 
	{ \eta \oplus \cdots \oplus \eta }$, $m$ copies (say). Since $\rho$ is a sub-representation of $\eta\uparrow^{G}_{H}$, implies $m \leq p$. Therefore, $\rho$ is either an extension or a generalized extension of $\eta$. 
	
	$(2)$ Follows from the proof of Theorem$\ref{index}$.
\end{proof}
\begin{corollary}
	Let $G$ be a finite solvable group. Let $F$ be a field of characteristic $0$ or prime to $|G|$. Then every irreducible representation of $G$ can be obtained 
	starting from an irreducible representation of a cyclic subgroup by
	the process of successive inductions, extensions and generalized extensions.
\end{corollary}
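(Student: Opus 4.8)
The plan is to induct on the length $n$ of a subnormal series
\[
\langle e\rangle = G_0 < G_1 < \cdots < G_n = G
\]
whose successive quotients $G_i/G_{i-1}$ are cyclic of prime order $p_i$; such a series exists because $G$ is solvable. The base case $n\le 1$ is immediate: then $G$ is trivial or cyclic of prime order, so $G$ is itself a cyclic subgroup and any irreducible $F$-representation of $G$ is already of the required form, obtained by zero steps. For the inductive step I put $H=G_{n-1}$, a normal subgroup of $G$ of prime index $p=p_n$; note that $H$ is again solvable, witnessed by the truncated series $\langle e\rangle=G_0<\cdots<G_{n-1}$ of length $n-1$, so the inductive hypothesis applies to $H$.

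Now let $(\rho,V)$ be an irreducible $F$-representation of $G$. By Frobenius reciprocity (Theorem~\ref{Fro1}) the restriction $\rho\downarrow^G_H$ is nonzero, so Clifford's decomposition theorem (Theorem~\ref{clidecom}) gives an irreducible $F$-representation $(\eta,W)$ of $H$ with $\eta$ a component of $\rho\downarrow^G_H$, and $\rho\downarrow^G_H$ is a direct sum of the $G/H$-conjugates of $\eta$, each appearing with a common multiplicity. Again by Theorem~\ref{Fro1}, $\rho$ embeds in $\eta\uparrow^G_H$, which has degree $p\cdot\deg\eta$. I then split into the two cases of the $G/H$-action on $\eta$, exactly mirroring the analogue of the Index-$p$ theorem proved just above. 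If the $G/H$-orbit of $\eta$ is a singleton, every conjugate of $\eta$ equals $\eta$, so $\rho\downarrow^G_H\cong\eta\oplus\cdots\oplus\eta$ ($m$ copies, say), and the degree bound forces $1\le m\le p$; hence $\rho$ is an extension of $\eta$ when $m=1$ and a generalized extension of $\eta$ when $1<m\le p$. If the orbit has $p$ points $\eta=\eta_1,\dots,\eta_p$, then by part~(2) of the preceding theorem the representations $\eta_i\uparrow^G_H$ are all equivalent to a single irreducible representation, and since $\rho$ embeds in the irreducible $\eta\uparrow^G_H$ we get $\rho\cong\eta\uparrow^G_H$.

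In every case $\rho$ is obtained from the irreducible $F$-representation $\eta$ of $H$ by one induction, extension, or generalized extension. By the inductive hypothesis, $\eta$ is obtained from an irreducible $F$-representation of a cyclic subgroup of $H$, hence of $G$, by successive inductions, extensions, and generalized extensions; appending the last step proves the claim for $\rho$.

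I expect the only real subtlety to be the bookkeeping in the singleton-orbit case: one must make sure that it is $\rho$ itself, and not merely some auxiliary irreducible representation of $G$, that is an extension or generalized extension of the chosen $\eta$. This is handled by combining the isotypicality of $\rho\downarrow^G_H$ (from Clifford's theorem, using that the orbit is trivial) with the degree inequality coming from the embedding $\rho\hookrightarrow\eta\uparrow^G_H$; no explicit construction of the extensions is required. The remaining points, namely that subgroups of solvable groups are solvable and that the "cyclic subgroup" reached at the bottom of the recursion is just $G_1$, cyclic of prime order, are routine.
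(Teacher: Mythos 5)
Your proposal is correct and follows the route the paper intends: the corollary is stated there without a separate proof, as an immediate consequence of the preceding index-$p$ analogue, and your induction along a subnormal series with prime-order cyclic quotients is exactly that intended argument. The one place you genuinely add something is the singleton-orbit case, where the paper's theorem is only existential in $\rho$ while the corollary needs the conclusion for the given $\rho$; you close that gap correctly by re-running the same ingredients as the paper's own proof of the theorem (Clifford's decomposition giving isotypicality of $\rho\downarrow^G_H$, Frobenius reciprocity giving $\rho\hookrightarrow\eta\uparrow^G_H$, and the resulting degree bound $m\le p$), so the whole argument is sound.
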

\begin{example}\rm
	Let $G = {\rm SL}_2(3)$, $H = Q_8$, $F = \mathbb{Q}$. 
	The index of $H$ in $G$ is equal to $3$. $Q_{8}$ has 
	four $\mathbb{Q}$-representations of degree 1 and one irreducible 
	$\mathbb{Q}$-representation of degree 4. The trivial representation of 
	$Q_8$ extends to trivial representation of ${\rm SL}_2(3)$. 
	Two copies of the trivial representation of $Q_{8}$ extends to degree $2$ 
	irreducible representation of ${\rm SL}_{2}(3)$, that is, a genenalized extension 
	of degree 2. The three non-trivial degree 1 representations of $Q_8$ induce 
	the same irreducible representation of degree 3. 
	The degree 4 representation of $Q_8$ extends to a degree 4 irreducible 
	representation of ${\rm SL}_2(3)$. Two copies of the degree 4 irreducible 
	representations of $Q_{8}$ extends to degree 8 irreducible representation 
	of ${\rm SL}_2(3)$, that is, a generalized extension of degree 8.
\end{example}
\subsection{Consequences of index-$p$ theorem}
Let $G$ be a group and $H$ a normal subgroup of $G$. Let 
$\Omega_{G}$ and $\Omega_{H}$ denotes the set of irreducible complex 
representations of $G$ and $H$ respectively. 
Suppose that $G/H$ is isomorphic to a cyclic group 
$ C_p = \langle {u} \rangle $, $p$ a prime. In any $C_p$-action on 
any set $X$, the orbits are of two types. Either a point $x$ in $X$ is
fixed, i.e., its orbit is a singleton set $\{ x \}$, or
the orbit consists of $p$ points $\{x, u(x), \ldots , u^{p-1}(x)\}$.
Apply this observation to the $G/H$-action on the set $\mathcal {C}_{H}$ 
of $H$-conjugacy classes of $H$.
Let there be $k$ fixed points and $l$ orbits each consisting of $p$ 
points, and therefore the cardinality of $\mathcal {C}_{H}$ is equal 
to $k + lp.$ Again apply this observation to the $G/H$-action on the 
set $\Omega_H$ of irreducible representation of $H$.
In this case also, there are again $k$ fixed points and $l$ 
orbits each consisting of $p$ points.

By the Theorem \ref{index}, 
either every irreducible representation of $H$ 
extends $p$ mutually inequivalent irreducible representations of $G$ or 
$p$ mutually inequivalent irreducible representations induce the same 
irreducible representations of $G$.
Let $\eta_{1}, \eta_{2}, \dots , 
\eta_{k}$ be the irreducible representations of $H$, which extends to 
an irreducible representation of $G$. If $\rho_{i, 0}$ is 
an extension of 
$\eta_{i}$, $i = 1, 2, \ldots , k$, then by Theorem\ref{ext}, $\rho_{i,0} \otimes {\chi}$ 
is also an extension of $\eta_{i}$, where $\chi$ is an irreducible 
character of $G/H$, which is isomorphic to $C_{p}$. Thus there are 
$p$ extensions $\rho_{i, r},$ $r = 0, 1, \ldots , p - 1$ of each 
$\eta_{i}$.
Let $\eta_{j, 1}, \eta_{j,2}, \ldots , \eta_{j, p}$, 
$ l = 1, 2, \ldots , l$, be the irreducible $H$-representations 
so that $\eta_{j,1}, \eta_{j,2}, \ldots , \eta_{j,p}$ induce 
the same irreducible $G$-representation $\rho_{j}$. Thus there are 
$k + lp$ irreducible $H$-representations, and $kp + l$ irreducible 
$G$-representations.

Let $s_{1}, s_{2}, \ldots , s_{k}$ be the $H$-conjugacy class 
representatives in $H$, which are invariant under conjugation by $G$, 
i.e., $C_{H}(s_{i}) = C_{G}(s_{i}), i = 1,2, \ldots , k$. 
Let $t_{j,1}, t_{j,2}, \ldots , t_{j,p}$ ($j= 1,2, \ldots , l$) be 
the representatives of distinct $H$-conjugacy classes in $H$, 
so that $t_{j,1}, t_{j,2}, \ldots , t_{j,p}$ are conjugate in $G$. 
Thus there are $k + l$ many $G$-conjugacy classes in $H$, and 
since the number of conjugacy classes in $G$ is equals to the 
number of irreducible representations of $G$, therefore there 
are $(kp + l)-(k + l) = (p-1)k$ many $G$-conjugacy classes in 
$G - H$. This discussion can be summarized in 
the following remarks.
\begin{remark}
	Let $G$ be a group, and $H$ a normal subgroup of index $p$, a prime. 
	Let $k$ denotes the number of fixed points, and $l$ denotes the number 
	of orbits of size $p$ under the action of $G/H$ on $\mathcal{C}_H$ by 
	conjugation. Then
	\begin{enumerate}
		\item[(1)] $|\Omega_H| = k + lp$,
		\item[(2)] $|\Omega_G| = pk + l$,
		\item[(3)] The number of $G$-conjugacy classes in $G-H$ is $(p-1)k$.
	\end{enumerate}
\end{remark}
\chapter{Diagonal subalgebra}
In this chapter, we find a convenient set of generators of a "diagonal subalgebra" (unique up to conjugacy) of complex group algebra of a finite solvable group in terms of a long system of generators. A finite solvable group 
always has a subnormal series of which successive quotients are cyclic groups of prime order. 
By Theorem $\ref{index}$, such a maximal subnormal series is a 
multiplicity free chain of subgroups over $\mathbb{C}$, so one can  
consider the "Gelfand-Tsetlin algebra" (see \cite{Mu106}, Section $2$) associated with that. We find a 
system of generators of the "Gelfand-Tsetlin algebra", which is a "diagonal subalgebra", in terms of a long system of generators.
Besides that, for a finite solvable group $G$, a deep theorem due to S.D. Berman gives an inductive construction of the primitive central idempotents of $\mathbb{C}[G]$, we give a proof of this theorem. To my knowledge, this proof in not available in literatures. 
\section{Diagonal subalgebra of a group algebra}\label{diag}
Let $R$ be a ring with unity $1$.
A nonzero element $x$ in $R$ is called an {\it{idempotent}} if $x^{2} = x$. 
An element $x$ is called {\it{central}} if it belongs to center of $R$.
An idempotent $x$ in $R$ is said to be {\it primitive} if it cannot be written
as $x = x_1 + x_2$, where $x_1, x_2$ are idempotents such that 
$x_1x_2 = x_2x_1 = 0$.
In this case, $x_1$ and $x_2$ are called {\it orthogonal} idempotents.
For example, if $x \not= 1$ is an idempotent, then $1-x$ is also an idempotent,
and they are orthogonal to each other.
A central idempotent $x$ in $R$ is said to be {\it centrally primitive} if it
cannot be written
as $x = x_1 + x_2$, where $x_1, x_2$ are central idempotents such that 
$x_1x_2 = x_2x_1 = 0$.\\
\noindent
{\bf{Question:}} Let $F$ be an algebraically closed field  
with characteristic zero.
Let $V$ be an $n$-dimensional vector space over ${F}$ and  
$\mathrm{End}V$ be the associative ring of endomorphisms of $V$.
What are all the central idempotents of End$(V)$?
\begin{proposition}\label{identity}
	The idempotents in $\mathrm{End}\,V$ are projection operators, and 
	the only central idempotent is identity.
\end{proposition}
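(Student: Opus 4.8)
The plan is to establish the two assertions in turn, both by elementary linear algebra. First I would show that every idempotent $e \in \mathrm{End}\,V$ is a projection operator. Given $e^2 = e$, I write each $v \in V$ as $v = (v - ev) + ev$; here $ev \in \mathrm{im}\,e$, while $e(v - ev) = ev - e^2 v = 0$, so $v - ev \in \ker e$, which gives $V = \ker e + \mathrm{im}\,e$. The sum is direct: if $w = eu \in \ker e \cap \mathrm{im}\,e$, then $w = eu = e^2 u = e(eu) = ew = 0$. Hence $e$ is exactly the projection onto $\mathrm{im}\,e$ along $\ker e$, proving the first claim. (Under the convention adopted just above, an idempotent is nonzero, so $\mathrm{im}\,e \neq 0$; this nondegeneracy is used next.)

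For the second claim, let $e$ be a central idempotent. Since $e \neq 0$, I pick $v \in \mathrm{im}\,e$ with $v \neq 0$; writing $v = eu$ gives $ev = e^2 u = eu = v$. Now for an arbitrary $w \in V$, choose $T \in \mathrm{End}\,V$ with $Tv = w$, which is possible because $v \neq 0$. Since $e$ is central, $ew = e(Tv) = T(ev) = Tv = w$, and as $w$ was arbitrary, $e = \mathrm{id}_V$. So the identity is the only central idempotent.

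As an alternative route I could invoke the structure theory already recorded in the excerpt: $\mathrm{End}\,V \cong M_n(F)$ is a simple $F$-algebra, so its center consists of the scalar maps $F \cdot \mathrm{id}_V$, and a scalar $\lambda\,\mathrm{id}_V$ is idempotent iff $\lambda^2 = \lambda$, i.e. $\lambda \in \{0,1\}$; discarding $0$ leaves the identity. I do not expect a genuine obstacle here; the only point needing care is bookkeeping around the paper's convention that ``idempotent'' means ``nonzero idempotent'', so that the statement ``the only central idempotent is the identity'' is literally correct (otherwise $0$ would be a second one) and the nondegeneracy $\mathrm{im}\,e \neq 0$ driving the transitivity argument is available.
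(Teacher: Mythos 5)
Your proof is correct, and it takes a genuinely different route from the paper on both halves. For the first claim the paper argues via the minimal polynomial: from $A^2 = A$ it divides $X(X-1)$, which has distinct roots, so $V$ splits into the $0$- and $1$-eigenspaces $\ker A$ and $\mathrm{Im}\,A$ and $A$ is the projection onto $\mathrm{Im}\,A$ along $\ker A$ (after setting aside $A=0$ and $A=\mathrm{id}$ separately). Your decomposition $v = (v-ev)+ev$ with directness from $w = ew = 0$ on $\ker e \cap \mathrm{im}\,e$ reaches the same conclusion more economically, without invoking minimal polynomials or any diagonalizability discussion, and it works verbatim over any field and without excluding the degenerate cases. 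For the second claim the paper simply quotes the fact that the only central elements of $\mathrm{End}\,V$ are the scalar transformations and concludes; your transitivity argument (pick $0 \neq v \in \mathrm{im}\,e$, choose $T$ with $Tv = w$, and use $ew = eTv = Tev = Tv = w$) proves the needed special case of that fact from scratch, which is self-contained, while your alternative route via $\mathrm{End}\,V \cong M_n(F)$ having scalar center is essentially the paper's argument. Your care about the paper's convention that idempotents are nonzero (so $\mathrm{im}\,e \neq 0$ is available and $0$ is not a competing central idempotent) is exactly the right bookkeeping; the paper handles the same point by excluding $A = 0$ and $A = \mathrm{id}$ at the start of its minimal-polynomial argument.
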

\begin{proof} 
	Let $A: V \rightarrow V$ be a non-zero and non-identity 
	idempotent operator, so $A^2 = A$. Then 
	the minimal  polynomial $m_A(x)$ of $A$ divides $x^2-x=x(x-1)$. 
	The minimal polynomial can not be $x$ or  $x-1$ since $A$ is neither 
	$0$ nor identity. Thus $m_A(x)=x(x-1)$, which has distinct roots. 
	Thus $V$ is direct sum of eigenspaces of $A$ with eigenvalues $0$ and $1$, 
	and these eigenspaces are precisely $\ker A$ and $Im(A)=A(V)$. It 
	follows that $A$ is projection onto $Im(A)$ along $\ker A$. 
	
	On the other hand, since the only central elements in ${\rm End}(V)$ are 
	the scalar transformations, it follows that the only central idempotent is 
	identity. This completes the proof.
\end{proof}
Note  that for any  decomposition of $V$ as a direct sum of two subspaces 
$W \oplus W',$
an element $\vec v$ in $V$ can be written uniquely as 
$\vec v = \vec w + \vec w'$, where $\vec w \in W$ and $\vec w' \in W'.$ 
Moreover
$\vec v \rightarrow \vec w$ is simply the {\it projection operator} $p_W$ 
of $V$ onto the subspace $W$. Choose a basis $\underline{e}$ of $W,$ and a
basis
$\underline{e'}$ of $W'.$ Then $\underline{f} = (\underline{e},  
\underline{e'})$ is a basis of $V$, and the matrix
of $p_W$ w.r.t. this basis is
\begin{equation*}
[p_W] =
\begin{bmatrix}
I_m & 0\\
0 & 0
\end{bmatrix},
\end{equation*}
where dim$W$ is $m$.
Thus a  projection operator is an idempotent in $\mathrm{End}\,V$.
Conversely, from the matrix representation it follows that
every idempotent in $\mathrm{End}\,V$ arises in this way.
More precisely, the set of idempotents in $\mathrm{End}\,V$ is in a natural 
1-1 correspondence with the set
of ordered direct sum decompositions into two subspaces of $V$.

Let $R$ be ring with unity $1$. If $x \neq 1$ is an idempotent, 
then $1-x$ is also an idempotent and $x$, $1-x$ are orthogonal to each other.
Notice that if $x$ is a central idempotent, then $Rx$  is a two-sided ideal. 
In the ring $Rx$ the element $x$ acts as the identity element. If $x \not= 1$ 
is a central idempotent in $R$, then $R = Rx \oplus R(1-x)$ is a direct sum of 
rings.
A slight generalization: Let
\begin{equation*}
1 = e_1 + e_2 + \cdots + e_k,\tag{*}
\end{equation*}
where $e_i$'s are central, pairwise orthogonal, idempotents. Then
\begin{equation*}
R = Re_1 \oplus  Re_2 \oplus \cdots \oplus Re_k
\end{equation*}
is a direct sum of rings. Note that $Re_i$'s are two sided ideals in $R$.
A decomposition of $1$ as in $(*)$ is called a {\it{\bf partition of unity}}, 
to borrow a phrase from Differential Topology.
It is easy to see that the set of partitions of unity is in a natural 1-1 
correspondence with
the set of decompositions of $R$ as a direct sum of rings. We call an expression of $1$ as in $(*)$, where $e_i$'s are pairwise orthogonal idempotents, not necessarily central, a {\it{\bf weak partition of unity}}.
Evidently, a weak partition of unity gives rise to a direct sum decomposition into left-ideals $Re_i$'s.

Now suppose $R$ is a finite-dimensional ${F}$-algebra. Then the left ideals are 
subspaces.
So for any proper direct sum decomposition of $R$ into left ideals, the
dimensions of the summands must go down. So a maximal direct sum decomposition 
of $R$
into left ideals exists. So the maximal direct sum decomposition cannot be 
properly refined further, and
the corresponding idempotents are actually primitive.

Now we apply these considerations to the case $R = \mathrm{End}\,V$, where $V$ is an $n$-dimensional space.
In this case, $I_{V}$ is the only central idempotent (see Theorem \ref{identity}). So a proper direct 
sum decomposition of $R$ into
two-sided ideals does not exist. However, a weak partition of unity, where 
the idempotents are primitive,
exists. Moreover, the primitive idempotents simply correspond to projection 
operators
onto one dimensional subspaces. So a maximal direct sum decomposition of 
$R$ into left ideals has exactly $n$ summands.

Write a maximal weak partition of unity of the identity operator,
$ I_{V} = f_1  + f_2 + \ldots + f_n$, where $f_i$'s are primitive idempotents, 
which correspond to projections operators
onto $1$-dimensional subspaces of $V$. The  ${F}$-algebra $\mathcal D = 
\langle f_1, f_2, \ldots, f_n\rangle$
is called a {\it \bf diagonal subalgebra} of $\mathrm{End}\,V$. The set 
$\{f_1, f_2, \ldots, f_n\}$ is a basis of
$\mathcal D.$ Although its cardinality is $n$, it should not be confused 
with a basis of $V$! In fact, if $\{e_1, e_2, \ldots, e_n\}$ is  basis of $V$, and $W_i = 
\langle e_i\rangle$, the $1$-dimensional subspace generated by
$e_i$, then the projection operators $p_{W_i}$ form a basis of a diagonal 
subalgebra.
In geometric terms, we may call $p_{W_i}$'s, the  {\it \bf basis-directions}. Thus the set of basis directions 
is in a 1-1 correspondence
with the set of diagonal subalgebras of $\mathrm{End}\,V$. 
Note that
$\{e_1, e_2, \ldots, e_n\}$ and $\{u_1e_1, u_2e_2, \ldots, u_ne_n\},$ 
where $u_i$'s are non-zero elements of ${F}$
correspond to the same basis-directions.

Note that the group $\mathrm{GL}(V)$ acts on $V$, on $\mathrm{End}\,V$,
on the set of bases of $V$, as well as on the set of
basis-directions of $V$. This action is simply transitive on the set of bases of $V$, whereas it is only  transitive on the set of basis-directions of $V$.
In  particular, two diagonal subalgebras are not only isomorphic, but they are isomorphic by a $\mathrm{GL}(V)$- conjugacy! So up to $\mathrm{GL}(V)$-conjugacy there
is a {\it unique} diagonal subalgebra of $\mathrm{End}\,V$.

From a view point of algebraic groups, a diagonal subalgebra of 
$\mathrm{End}\,V$ may be characterized as a
maximal abelian  diagonalizable subalgebra of $\mathrm{End}\,V$. 
In this subalgebra each operator has $n$ linearly independent
eigenvectors. This maximal abelian subalgebra of $\mathrm{End}\,V$ is unique 
up to $\mathrm{GL}(V)$-conjugacy. A curious reverse question:
Can we recover $V$ from $\mathrm{End}\,V$?
That is, if we consider $R =\mathrm{End}\,V$ as an abstract ring, 
can we recover $V$?
Before answering this question, we need to ask what does the question 
mean?
Starting with $R$, we have a category of $R$-modules, which include the 
left ideals of $R.$
So the question can be made precise
as an open-ended question:  \textit{characterize $V$ among $R$-modules}.
The Skolem-Noether theorem asserts that $\mathrm{GL}(V)$ can be recovered as
the group of ring automorphisms of the ring $R.$ The answer to the original
question is: Yes.
There is a unique minimal $R$-module up
to isomorphism; that module  is $V$! It can also be actually realized as 
a minimal left 
ideal in $R$.
\section{Representation theory from the view point of group rings}
Let $G$ be a group of order $N$ and $k$ be the number of conjugacy classes 
in $G$. Let $F$ be an algebraically closed field of characteristic zero. 
Then by Frobenius theorem $k$ is also the number of 
irreducible representations $(\rho_i, V_i)$ of degree $d_{i}$, where $i$ 
runs over the set $\{1,  2, \dots , k\}$. Let $\chi_{i}$ be the 
character corresponding to the representation $(\rho_i, V_i)$. 

A representation $\rho: G \rightarrow {\rm GL}(V)$ canonically extends 
to an ${F}$-algebra homomorphism  
$\rho_*: {F}[G] \rightarrow \mathrm{End}\,V$, making $V$ an ${F}[G]$-module. 
This makes the representation theory of $G$ categorically isomorphic to the 
theory of ${F}[G]$-modules. 
The characters extend by linearity as ${F}$-valued functions on ${F}[G]$. 
If $\chi: G \rightarrow {F}$ is the 
character of a representation, we denote its extension by
$\chi_*: {F}[G] \rightarrow {F}$.
For $g$ in $G$, let $C(g)$ denotes the conjugacy class of $g$ in $G$.
We shall also write $C_G(g)$ for $C(g)$
when we wish to note $G$ specifically.
Let $Z$ denotes the center of $F[G]$. There is a good
understanding of 
the subalgebra $Z$ in terms the internal structure of $G$,
particularly the conjugacy classes in $G$. Note that for any $h$  
in $G$, $hC(g)h^{-1} = C(g),$ so $hC(g) = C(g)h$. Let $\overline{C}(g)$ 
denote an element of 
$F[G]$ which is a formal sum of elements in $C(g)$. So 
$h\overline{C}(g) = \overline{C}(g)h$. That is 
$\overline{C}(g)$ belongs to the center $Z$ of $F[G]$. 
Converse is also true.
Let $g_i$, $i = 1, 2,\ldots, k$ denotes 
the conjugacy class representatives in $G$. Since for $i\neq j$, 
$C(g_i) \cap C(g_j)$ is empty, we see that 
$\overline{C}(g_i)$'s are linearly independent elements in $Z$. 
So $\overline{C}(g_i)$'s from a basis of $Z.$

A major reinterpretation of the Frobenius theory is that
${F}[G]$ is isomorphic to a direct sum of $\mathrm{End}(V_i)$, 
$i = 1,  2,\ldots, k$.
Let $e_i$ be an element in ${F}[G]$ which corresponds to
the identity elements of $\mathrm{End}(V_i)$. A beautiful formula 
relates $e_i$ to $\chi_i$: $e_i = \frac{d_i}{|G|} \sum_g \,\,\chi_i(g^{-1})g$.
These $e_i$'s are all the primitive central idempotents in ${F}[G]$.
Let $\mathcal D_i$ be a diagonal subalgebra of $\mathrm{End}(V_i)$. Then
$\mathcal D = \bigoplus^{k}_{i = 1}\mathcal D_i$ is called a diagonal 
subalgebra of ${F}[G]$. Clearly $\mathcal D_i = \mathcal De_i$, and $\mathcal D$ is unique up to $(GL(V_{1}), \ldots , GL(V_{k})))$-conjugacy.

Let $\{f_{i1}, f_{i2}, \cdots, f_{id_i} \}$ be the unique set of basis-directions of $\mathcal{D}_i$. Then $e_i=f_{i1}+f_{i2}+\cdots + f_{id_i}$ 
is the maximal weak partition of unity in $\mathrm{End}\, V_i$. Moreover, $V_i$ can be recovered from $\mathrm{End}\,V_i$ as the minimal 
left $R$-ideal $Rf_{ij}$, for any choice of $j$. The passage from Frobenius to Young is the same as 
\begin{enumerate}
	\item identifying in $R$ the objects corresponding to $f_{ij}$, 
	\item choosing a basis in $Rf_{ij}\cong V_i$, for any $j $,
	\item writing down the matrices of $\rho_i(x)'$s w.r.t. this basis. 
\end{enumerate}
Young did it for the Symmetric group $S_n$. He used the natural inclusions $1 = S_1 \subset S_2 \subset \cdots \subset S_n.$
We do it for any finite solvable group algorithmically, in terms of some of its subgroups.
\section{Gelfand-Tsetlin algebras} \label{gel}
We introduce {\it \bf Gelfand-Tsetlin bases} and
{\it \bf Gelfand-Tsetlin algebras} (see \cite{Mu106}, Section $2$) for an inductive 
chain of finite groups.
Let $$1= G_{0} < G_{1} < \cdots < G_{n} < \cdots$$
be an inductive chain of finite groups with simple branching, that is,
the multiplicity of the restriction of 
an irreducible representation of $G_{n}$ to $G_{n-1}$ is equal to one or
zero. Let $\Omega_{G_{n}}$ denotes the set of inequivalent finite 
dimensional complex irreducible representations of $G_{n}$. Let 
$V^{\lambda}$ denotes the irreducible $\mathbb{C}[G_{n}]$-module
corresponding to the irreducible representation $\lambda$ in 
$\Omega_{G_{n}}$. Therefore, $V^{\lambda}$ is the representation space corresponding to the irreducible representation $\lambda$.
The {\it \bf{Bratelli diagram}} of this inductive chain, whose vertices 
are the elements of the set
$$\bigcup_{n \geq 0}{\Omega_{G_{n}}}$$
and two vertices $\mu$, $\lambda$ are joined by $k$ directed edges
from $\mu$ to $\lambda$ whenever $\mu$ in $\Omega_{G_{n-1}}$ and
$\lambda$ in $\Omega_{G_{n}}$ for some $n$, and the multiplicity 
of $\mu$ in the restriction of $\lambda$ to $G_{n-1}$ is $k$. We 
write $\mu \nearrow \lambda$ if there is an edge from $\mu$ to $\lambda$.
\noindent
From now onwards, we assume that the branching multigraph defined
above is actually graph, i.e., the multiplicities of all the 
restrictions are $0$ or $1$. In this case we say that the {\it \bf{branching}} 
or {\it\bf{multiplicities}} are {\it\bf{simple}}. Since the branching is simple, 
the decomposition
$$V^{\lambda} = \bigoplus_{\mu}{V^{\mu}},$$
where the sum is over all $\mu$ in $\Omega_{G_{n-1}}$ with
$\mu \nearrow \lambda$, is a canonical decomposition. Iterating this 
decomposition, we obtain a canonical decomposition of $V^{\lambda}$
into irreducible $G_{1}$-modules
\begin{align*}
V_{\lambda} = \bigoplus_{T}{V_{T}}\tag{1},
\end{align*}
where the sum is over all possible chains
\begin{align*}
T = \lambda_{0} \nearrow \lambda_{1} \nearrow \cdots 
\nearrow \lambda_{n}, \tag{2}
\end{align*}
with $\lambda_{i} \in \Omega_{G_{i}}$ and $\lambda_{n} = \lambda$. 
By choosing a nonzero vector $v_{T}$ in each one dimensional space $V_{T}$, we obtain a basis $\{v_{T}\}$ of $V^{\lambda}$ and is called the {\it \bf{Gelfand-Tsetlin basis}} of $V^{\lambda}$. By definition of $v_{T}$, we have 
$\mathbb{C}[G_{i}]v_{T} = V^{\lambda_{i}}$, $i = 0, 1, \ldots ,n.$
Also note that chains in $(2)$ are in bijection with directed paths in the branching graph from the unique element $\lambda_{0}$ to $\lambda$.

We have identified a canonical basis up to scalars, the Gelfand-Tsetlin basis, in each irreducible irreducible representations of $G_{n}$. 
A natural question at this point is to identify those elements of 
$\mathbb{C}[G_{n}]$ that act diagonally on this basis for every
irreducible representation. In other words, consider the algebra 
isomorphism
\begin{align*}
\mathbb{C}[G_{n}]
\cong \bigoplus _{\lambda \in \Omega_{G_{n}}}{\mathrm{End}}(V^{\lambda}). 
\tag{3}\end{align*}
Let $\mathcal{D}(V^{\lambda})$ consists of all linear operators 
on $V^{\lambda}$ which are diagonalizable 
w.r.t. the Gelfand-Tsetlin basis of $V^{\lambda}$.
Let $Z_{r}$ denotes the center of the algebra $\mathbb{C}[G_{r}]$,
$r = 0, 1, \dots , n$ and set ${GZ_{n}} = \langle{\,Z_{0}, Z_{1}, 
	\ldots ,Z_{n} \,}\rangle$. Notice that ${GZ_{n}}$ is commutative 
subalgebra of $\mathbb{C}[G_{n}]$. The algebra ${GZ_{n}}$ is called the
{\it\bf{Gelfand-Tsetlin algebra}} of the inductive family of group algebras
$\mathbb{C}[G_{i}], i = 0,1, \dots , n$. We want to emphasize that the
notion of $GZ_{n}$-subalgebra of an algebra $\mathbb{C}[G_{n}]$ does depend 
on the structure of the inductive family $\mathbb{C}[G_{i}], i = 0, 1, \dots, n$,
and not only $\mathbb{C}[G_{n}]$ itself; so if we choose another inductive family inside $\mathbb{C}[G_{n}]$, then $GZ_{n}$ also can change.

The following theorem (see \cite{Mu106}, Theorem $2.1$) is known, but for our purpose we state the theorem with proof.
\begin{theorem} [see \cite{Mu106}, Theorem $2.1$]\label{murli}
	The Gelfand-Tsetlin algebra ${GZ_{n}}$ is the inverse image
	of $\displaystyle{\oplus_{\lambda \in \Omega_{G_{n}}} 
		\mathcal{D}(V^{\lambda})}$ under the isomorphism $(3)$, 
	that is, ${GZ_{n}}$ consists of all elements of $\mathbb{C}[G_{n}]$ 
	which act diagonally in the Gelfand-Tsetlin basis in every irreducible 
	representation of $G_{n}$. Thus ${GZ_{n}}$ is a maximal commutative 
	subalgebra of $\mathbb{C}[G_{n}]$ and its dimension is 
	$\displaystyle{\sum_{\lambda \in \Omega_{G_{n}}}{\mathrm{dim} V^{\lambda}}}$.
\end{theorem}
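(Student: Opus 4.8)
The goal is to prove the single assertion $GZ_{n}=\mathcal{D}_{n}$ inside $\mathbb{C}[G_{n}]$, where I abbreviate $\mathcal{D}_{n}:=\bigoplus_{\lambda\in\Omega_{G_{n}}}\mathcal{D}(V^{\lambda})$ identified with a subalgebra of $\mathbb{C}[G_{n}]$ via the isomorphism $(3)$; once this is done, the description of $GZ_{n}$, the maximality, and the dimension formula all fall out immediately. I would first dispatch the easy inclusion $GZ_{n}\subseteq\mathcal{D}_{n}$. It suffices to show each generating subalgebra $Z_{r}$ lies in $\mathcal{D}_{n}$, i.e.\ that every $z\in Z_{r}$ acts diagonally on the Gelfand--Tsetlin basis of every $V^{\lambda}$. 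Fix $\lambda$ and a chain $T=\lambda_{0}\nearrow\lambda_{1}\nearrow\cdots\nearrow\lambda_{n}=\lambda$. By construction $\mathbb{C}[G_{r}]v_{T}=V^{\lambda_{r}}$ is an irreducible $\mathbb{C}[G_{r}]$-module, so by Schur's lemma $z$ acts on it as a scalar; in particular $z\,v_{T}$ is a multiple of $v_{T}$. Hence $z$ is diagonal in $\{v_{T}\}$, so $Z_{r}\subseteq\mathcal{D}_{n}$ for every $r$, and since $\mathcal{D}_{n}$ is a subalgebra, $GZ_{n}=\langle Z_{0},\ldots,Z_{n}\rangle\subseteq\mathcal{D}_{n}$ (this also re-proves that $GZ_{n}$ is commutative, as $\mathcal{D}_{n}$ is).

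For the reverse inclusion I would use that $\mathcal{D}_{n}$, as an algebra, is isomorphic to $\mathbb{C}^{m}$ with $m=\sum_{\lambda}\dim V^{\lambda}$, its coordinates being indexed by the one-dimensional lines $V_{T}$, one for each chain $T$. A unital subalgebra of $\mathbb{C}^{m}$ is the whole algebra exactly when it separates the $m$ coordinates (its primitive idempotents are the indicators of the blocks of the partition it induces), so it is enough to show $GZ_{n}$ separates the lines $V_{T}$. Given distinct chains $S=(\mu_{0}\nearrow\cdots\nearrow\mu_{n})$ and $T=(\nu_{0}\nearrow\cdots\nearrow\nu_{n})$ (here $\mu_{0}=\nu_{0}=\lambda_{0}$ is forced, $\Omega_{G_{0}}$ being a singleton), let $r\ge 1$ be least with $\mu_{r}\neq\nu_{r}$ and let $e\in Z_{r}$ be the primitive central idempotent of $\mathbb{C}[G_{r}]$ attached to the irreducible representation $\mu_{r}$. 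Since $\mathbb{C}[G_{r}]v_{S}\cong V^{\mu_{r}}$ is of type $\mu_{r}$ we get $e\,v_{S}=v_{S}$, while $\mathbb{C}[G_{r}]v_{T}\cong V^{\nu_{r}}$ is of type $\nu_{r}\neq\mu_{r}$, so $e\,v_{T}=0$. Thus $e\in Z_{r}\subseteq GZ_{n}$ takes different eigenvalues on $V_{S}$ and $V_{T}$. Hence the joint eigenspaces (equivalently, the primitive idempotents) of $GZ_{n}$ are exactly the lines $V_{T}$, which forces $GZ_{n}=\mathcal{D}_{n}$. Equivalently, for each chain $T$ the product of primitive central idempotents along $T$, one taken from each $Z_{i}$, is the projector onto $V_{T}$, and these $m$ orthogonal idempotents already span $\mathcal{D}_{n}$ — either formulation closes the argument.

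From $GZ_{n}=\mathcal{D}_{n}$ the remaining claims are formal: $GZ_{n}$ is by definition of $\mathcal{D}_{n}$ the inverse image under $(3)$ of $\bigoplus_{\lambda}\mathcal{D}(V^{\lambda})$, i.e.\ precisely the elements acting diagonally in every Gelfand--Tsetlin basis; $\dim GZ_{n}=\dim\mathcal{D}_{n}=\sum_{\lambda}\dim\mathcal{D}(V^{\lambda})=\sum_{\lambda}\dim V^{\lambda}$; and $GZ_{n}$ is maximal commutative because in each $\mathrm{End}(V^{\lambda})$ the algebra $\mathcal{D}(V^{\lambda})$ of operators diagonal in a fixed basis is maximal commutative, and a direct sum of maximal commutative subalgebras is maximal commutative in the direct sum (if $x$ commutes with $\mathcal{D}_{n}$, each component of $x$ commutes with all diagonal matrices, hence is diagonal, so $x\in\mathcal{D}_{n}$). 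The step that needs the most care is the bookkeeping around the Gelfand--Tsetlin construction: one must invoke simple branching to know that the iterated decomposition $V^{\lambda}=\bigoplus_{T}V_{T}$ is canonical with one-dimensional summands — so that ``diagonal in the GT basis'' is an honest choice-independent subalgebra — and that along a chain $T$ one has $\mathbb{C}[G_{i}]v_{T}=V^{\lambda_{i}}$ with isomorphism type exactly $\lambda_{i}$, which is exactly what lets the primitive central idempotent $e$ separate $V_{S}$ from $V_{T}$. Beyond that, the only ingredient is the elementary structure theory of commutative semisimple algebras.
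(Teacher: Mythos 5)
Your proposal is correct and is essentially the paper's own argument: the paper likewise forms the products $p_{T}=p_{\lambda_{0}}p_{\lambda_{1}}\cdots p_{\lambda_{n}}$ of primitive central idempotents along Gelfand--Tsetlin chains, identifies their images under the isomorphism $(3)$ with the projectors onto the lines $V_{T}$, and concludes that the image of $GZ_{n}$ contains, hence (by commutativity of $GZ_{n}$ and maximal commutativity of $\oplus_{\lambda}\mathcal{D}(V^{\lambda})$) equals, the diagonal algebra. Your variations --- proving the inclusion $GZ_{n}\subseteq\mathcal{D}_{n}$ directly via Schur's lemma and closing by coordinate separation in $\mathbb{C}^{m}$ rather than by the maximality argument --- are minor repackagings of the same key idea, and your explicit verification that $\oplus_{\lambda}\mathcal{D}(V^{\lambda})$ is maximal commutative fills in a step the paper merely asserts.
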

\begin{proof}
	Consider a chain $T$ from $(2)$ above. Let $p_{\lambda_{i}}$ denotes 
	the primitive central idempotent corresponding to the irreducible 
	representation $\lambda_{i} \in \Omega_{G_{i}}$, $i = 0,1, \dots , n$.
	Note that each $p_{\lambda_{i}} \in Z_{i}$, $i = 0,1, \dots , n$. 
	We define $p_{T}$ by
	$$p_{T} = p_{\lambda_{0}}p_{\lambda_{1}} \dots p_{\lambda_{n}},$$
	which is an element in $GZ_{n}$.
	A little reflection shows that the image of $p_{T}$ under the 
	isomorphism $(3)$ is $(f_{\mu} : \mu \in \Omega_{G_{n}})$, where 
	$f_{\mu} = 0$, if $ \mu \neq \lambda$ and $f_{\lambda}$ is the 
	projection of $V_{T}$ with respect to the decomposition of $(1)$ of 
	$V^{\lambda}$. Note that $p_{T}$ is a primitive idempotent in 
	$\mathbb{C}[G_{n}]$.
	
	It follows that the image of $GZ_{n}$ under $(3)$ contains 
	$\oplus_{\lambda \in \Omega_{G_{n}}}{\mathcal{D}(V^{\lambda})}$, 
	which is a maximal commutative subalgebra of 
	$\oplus_{\lambda \in \Omega_{G_{n}}}\mathrm{End}{(V^{\lambda})}$. 
	Since $GZ_{n}$ is 
	commutative, and ${GZ_{n}}$ is the inverse image of 
	$\displaystyle{\oplus_{\lambda \in \Omega_{G_{n}}} \mathcal{D}(V^{\lambda})}$ 
	under 
	the isomorphism $(3)$, this implies that ${GZ_{n}}$ is a maximal commutative
	subalgebra of $\mathbb{C}[G_{n}]$, and its dimension is equal to 
	$\displaystyle{\sum_{\lambda \in \Omega_{n}}{\mathrm{dim} V^{\lambda}}}$.
\end{proof}
\begin{remark}
	The Gelfand-Tsetlin algebra $GZ_{n}$ is a diagonal 
	subalgebra of $\mathbb{C}[G_{n}]$ w.r.t. the Gelfand-Tsetlin basis in every 
	complex irreducible representation of $G_{n}$.
\end{remark}
\section{The diagonal subalgebra of a finite solvable group}
We have seen that a finite solvable group always has a subnormal series whose
successive quotients are cyclic groups of prime order and a long presentation associated with it. We find a convenient set of generators of the Gelfand-Tsetlin algebra associated with such a maximal subnormal series in terms of a long system of generators.
\subsection{Berman's theorem}
Let $G$ be a finite solvable group. Let us fix a subnormal series: 
$$1 = G_0 < G_{1} < \cdots < G_{n} = G$$
such that successive quotient groups are cyclic groups of prime order. If all the primitive central idempotents of $\mathbb{C}[G_{t}]$, 
$2 \leq t < n$ are known to us, then the primitive central of idempotents of $\mathbb{C}[G_{t + 1}]$ can be constructed starting from them. 
This construction is detailed in the following theorem  due to S. D. Berman (see \cite{berman120}).
For the sake of completeness, we give an indirect proof of the theorem, which we do not find in any literature.
\begin{theorem} [\cite{berman120}] (Berman)\label{berman} 
	Let $G$ be a finite group and $H$ be a normal subgroup of index $p$,
	a prime. 
	Let $G/H = \langle {xH} \rangle$, for some $x$ in $G$. 
	Let $(\eta, W)$ be an irreducible representation of $H$ over $\mathbb{C}$. 
	We distinguish two cases:
	\begin{itemize}
		\item[(1)] If $e_{\eta}$ is a central idempotent in $R$, 
		then $\eta$ extends to $p$ distinct irreducible representations
		$\rho_{1}, \rho_{2}, \dots , \rho_{p}$ (say) of $G$ over $\mathbb{C}$. 
		It follows that $(\overline{C}_{G}(x))^{p}{e_{\eta}} = {\lambda}{e_{\eta}}$,
		where $\lambda$ is a non-zero complex number. For each $i$, 
		$1 \leq i \leq p$,
		$${e_{\rho_{i}} = \frac{1}{p} 
			\Big{(} 1 + \zeta^ic + \zeta^{2i} c^2 +\cdots +\zeta^{i(p-1)}c^{p-1}\Big{)} 
			e_{\eta},}$$
		where $c = \frac{\overline{C}_{G}(x){e_{\eta}} }{\sqrt[p]{\lambda}}$ and $\zeta$ 
		is a primitive $p^{th}$ root of unity in $\mathbb{C}$. 
		Moreover 
		$$e_{\eta} = e_{\rho_1} + e_{\rho_2} + \cdots + e_{\rho_p}.$$ 
		
		\item[(2)] If $e_{\eta}$ is not a central idempotent in $R$,
		then all the representations
		$\eta\uparrow_H^G, \eta^{x}\uparrow_H^G, \ldots ,\eta^{x^{p-1}}\uparrow_H^G$ 
		are equivalent to an irreducible representation $\rho$ (say) of $G$ 
		over $\mathbb{C}$. In this case,
		$$e_{\rho} = e_{\eta} + e_{\eta^{x}} + \cdots + e_{\eta^{x^{p-1}}}.$$
	\end{itemize}
\end{theorem}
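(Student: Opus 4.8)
\emph{Setup and the dichotomy.} The first step is to match the two cases of the statement with the two cases of the Index-$p$ theorem (Theorem~\ref{index}). Conjugation by $x$ is a $\mathbb{C}$-algebra automorphism of $\mathbb{C}[H]$, and from the character formula $e_{\eta}=\frac{\deg\eta}{|H|}\sum_{h\in H}\chi_{\eta}(h^{-1})h$ one reads off $x e_{\eta} x^{-1}=e_{\eta^{x}}$. Hence $e_{\eta}$ is central in $\mathbb{C}[G]$ iff $e_{\eta^{x}}=e_{\eta}$ iff $\eta^{x}\cong\eta$ iff the $G/H$-orbit of $\eta$ is a singleton; otherwise the orbit has exactly $p$ elements $\eta=\eta_{1},\dots,\eta_{p}$ with $\eta_{i+1}\cong\eta_{i}^{x}$. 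So case $(1)$ is the ``singleton orbit'' case and case $(2)$ the ``orbit of size $p$'' case of Theorem~\ref{index}, and the assertions there about $p$ inequivalent extensions, resp.\ about $\eta_{1}\!\uparrow_{H}^{G},\dots,\eta_{p}\!\uparrow_{H}^{G}$ being equivalent to one irreducible $\rho$, are already in hand; it remains to prove the idempotent identities.

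\emph{Case $(2)$.} Put $f=e_{\eta}+e_{\eta^{x}}+\dots+e_{\eta^{x^{p-1}}}$, a sum of pairwise orthogonal primitive central idempotents of $\mathbb{C}[H]$, hence an idempotent. Conjugation by $x$ permutes the summands cyclically, so $xfx^{-1}=f$; together with $f\in Z(\mathbb{C}[H])$ this gives $f\in Z(\mathbb{C}[G])$. With $d=\deg\eta$ (all conjugates have degree $d$) a dimension count yields $\dim_{\mathbb{C}}\mathbb{C}[G]f=[G:H]\dim_{\mathbb{C}}\mathbb{C}[H]f=p\cdot pd^{2}=p^{2}d^{2}$. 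Now $\rho=\eta\!\uparrow_{H}^{G}$ is irreducible of degree $pd$, and $\eta$ occurs in $\rho\downarrow_{H}^{G}$ by Frobenius reciprocity, so $e_{\rho}f\neq 0$; being a central idempotent $\le$ the primitive central idempotent $e_{\rho}$, it equals $e_{\rho}$, i.e.\ $e_{\rho}\le f$. Thus $\mathbb{C}[G]e_{\rho}\subseteq\mathbb{C}[G]f$ and both sides have dimension $p^{2}d^{2}=(\deg\rho)^{2}$, so $e_{\rho}=f$, as claimed.

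\emph{Case $(1)$.} Since $e_{\eta}$ is central in $\mathbb{C}[G]$, the ideal $\mathbb{C}[G]e_{\eta}$ is the direct sum of the simple components $\mathbb{C}[G]e_{\rho}$ with $e_{\rho}\le e_{\eta}$, equivalently $\eta$ occurs in $\rho\downarrow_{H}^{G}$, equivalently $\rho$ occurs in $\eta\!\uparrow_{H}^{G}$. As $\langle\rho_{i}\downarrow_{H}^{G},\eta\rangle=1$ for each extension $\rho_{i}$ and $\sum_{i=1}^{p}\deg\rho_{i}=pd=\deg(\eta\!\uparrow_{H}^{G})$, we get $\eta\!\uparrow_{H}^{G}=\rho_{1}\oplus\dots\oplus\rho_{p}$, so these are exactly the irreducibles over $\eta$; hence $e_{\eta}=e_{\rho_{1}}+\dots+e_{\rho_{p}}$ and $\mathbb{C}[G]e_{\eta}\cong\bigoplus_{j=1}^{p}M_{d}(\mathbb{C})$. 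Put $z=\overline{C}_{G}(x)$. As $z\in Z(\mathbb{C}[G])$, $ze_{\eta}$ lies in the centre $\bigoplus_{j}\mathbb{C}e_{\rho_{j}}$ of this ideal, so $ze_{\eta}=\sum_{j}\mu_{j}e_{\rho_{j}}$, where $\mu_{j}=|C_{G}(x)|\,\chi_{\rho_{j}}(x)/d$ is the scalar by which $z$ acts in $\rho_{j}$. By Theorem~\ref{ext} the $\rho_{j}$ are the twists $\chi^{j}\otimes\rho_{0}$ of one of them by the characters of $G/H\cong C_{p}$, with $\chi(xH)=\zeta$; hence $\chi_{\rho_{j}}(x)=\zeta^{j}\chi_{\rho_{0}}(x)$ and $\mu_{j}=\zeta^{j}\mu_{0}$. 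Therefore $z^{p}e_{\eta}=\sum_{j}\mu_{j}^{p}e_{\rho_{j}}=\mu_{0}^{p}e_{\eta}$, so $\lambda:=\mu_{0}^{p}$ works, and with $c=ze_{\eta}/\sqrt[p]{\lambda}$ one computes $c=\sum_{j}\zeta^{j}e_{\rho_{j}}$ (up to the choice of root), whence $c^{p}=e_{\eta}$ and, by the orthogonality relations $\tfrac1p\sum_{k=0}^{p-1}\zeta^{mk}=\delta_{m,0}$ in $\mathbb{Z}/p$, the $e_{\rho_{i}}$ are recovered as $\tfrac1p\sum_{k=0}^{p-1}\zeta^{ik}c^{k}e_{\eta}$ for a suitable indexing of $\rho_{1},\dots,\rho_{p}$, which is the asserted formula.

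\emph{Main obstacle.} All of the above is bookkeeping with central idempotents, dimension counts, and Fourier inversion over $\mathbb{Z}/p$; the one substantive point is $\lambda\neq 0$, i.e.\ that $\overline{C}_{G}(x)e_{\eta}$ is a unit of $\mathbb{C}[G]e_{\eta}$, equivalently $\chi_{\rho_{0}}(x)\neq 0$. There is no formal reason for this to hold for an arbitrary representative of the generating coset, so the plan is to observe that $\sum_{g\in G\setminus H}|\chi_{\rho_{0}}(g)|^{2}=|G|-|H|>0$ (from $\langle\chi_{\rho_{0}},\chi_{\rho_{0}}\rangle_{G}=1$ and $\langle\chi_{\eta},\chi_{\eta}\rangle_{H}=1$), so $\chi_{\rho_{0}}$ does not vanish identically on $G\setminus H$; since every element of $G\setminus H$ generates $G/H$, one may choose $x$ with $\chi_{\rho_{0}}(x)\neq 0$, and then $\chi_{\rho_{j}}(x)=\zeta^{j}\chi_{\rho_{0}}(x)\neq 0$ for all $j$, giving $\lambda\neq 0$. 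This is the step I expect to require the most care.
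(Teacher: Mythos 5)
Your proposal is correct, and it reaches the two idempotent identities by a route that differs from the paper's in both cases, although the skeleton is the same (reduce the dichotomy to the Index-$p$ theorem, then identify the primitive central idempotents of $\mathbb{C}[G]e_{\eta}$, resp.\ show $e_{\rho}$ is the orbit sum). For case $(2)$ the paper argues directly with characters: $\chi_{\rho}$ vanishes off $H$ and $\chi_{\rho}\downarrow_{H}=\chi_{\eta}+\chi_{\eta^{x}}+\cdots+\chi_{\eta^{x^{p-1}}}$, so the identity $e_{\rho}=e_{\eta}+\cdots+e_{\eta^{x^{p-1}}}$ is read off the idempotent formula; you instead show the orbit sum $f$ is a central idempotent of $\mathbb{C}[G]$ dominated by no proper part of $e_{\rho}$ and force $e_{\rho}=f$ by the dimension count $\dim\mathbb{C}[G]f=p^{2}d^{2}=(\deg\rho)^{2}$ --- equally valid, marginally longer. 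For case $(1)$ the paper obtains $z^{p}e_{\eta}=\lambda e_{\eta}$ from $z^{p}\in Z(\mathbb{C}[H])$ plus Schur's lemma, proves $\lambda\neq 0$ by the contradiction that $\chi_{\rho}\equiv 0$ off $H$ would give $e_{\rho}=\tfrac{1}{p}e_{\eta}$ (not an idempotent), and then verifies \emph{by hand} that the elements $f_{i}=\tfrac1p(1+\zeta^{i}c+\cdots+\zeta^{i(p-1)}c^{p-1})e_{\eta}$ are pairwise orthogonal central idempotents summing to $e_{\eta}$, concluding by counting the $p$ primitive central idempotents of $\mathbb{C}[G]e_{\eta}$; it never identifies $c$ explicitly. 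You instead diagonalize: expand $ze_{\eta}=\sum_{j}\mu_{j}e_{\rho_{j}}$ in the centre of $\mathbb{C}[G]e_{\eta}$, invoke Theorem~\ref{ext} (extensions differ by twists by linear characters of $G/H$) to get $\mu_{j}=\zeta^{j}\mu_{0}$, whence $\lambda=\mu_{0}^{p}$, $c=\sum_{j}\zeta^{j}e_{\rho_{j}}$ up to relabeling, and the asserted formula is Fourier inversion over $\mathbb{Z}/p$; your $\lambda\neq 0$ comes from the norm identity $\sum_{g\in G\setminus H}|\chi_{\rho_{0}}(g)|^{2}=|G|-|H|>0$ rather than the paper's contradiction. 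Your version buys transparency (one sees exactly what $c$ is and why the inversion works) at the cost of leaning on Theorem~\ref{ext}, while the paper's is more self-contained but indirect; both proofs, yours and the paper's, share the same honest caveat that $x$ may need to be re-chosen within $G\setminus H$ (harmless since $p$ is prime, so any such element generates $G/H$), which you flag explicitly.
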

\begin{proof}
	$(1)$ Since $e_{\eta}$ is a central 
	idempotent in $\mathbb{C}[G]$, which implies that $\eta \cong \eta^{x}$. By Theorem $\ref{index}$, $\eta$ extends to $p$ distinct irreducible representations $\rho_{0}, \rho_{1}, \dots ,$ $\rho_{p-1}$ (say) of $G$. By Frobenius reciprocity theorem, the induced representation 
	${\eta}\uparrow^{G}_{H} \cong \rho_{0} \oplus \rho_{1} \oplus \dots \oplus \rho_{p-1}$, which implies that 
	$\mathbb{C}[G]e_{\eta} = \bigoplus^{p-1}_{i = 0} \mathbb{C}[G]e_{\rho_{i}}$ and $e_{\eta} = \sum^{p-1}_{i = 0}e_{\rho_{i}}$. Note that $\mathbb{C}[G]e_{\eta}$ is a ring with identity $e_{\eta}$. Notice that $\mathbb{C}[G]e_{\eta}$ is the direct sum of $p$ minimal two-sided ideals $\mathbb{C}[G]e_{\rho_{i}}, i = 0,1, \dots , p-1$, and therefore $\mathbb{C}[G]e_{\eta}$ contains $p$ primitive central idempotents. 
	
	Since $G/H=\langle xH\rangle$, for some $x\in G$, this implies that $x^p\in H$.
	Then one can show that $(\overline{C}_G(x))^p=x^pa$ for some 
	$a\in \mathbb{C}[H]$. It follows that $(\overline{C}_G(x))^p$ is central element in
	$\mathbb{C}[H]$. Hence, by Schur's Lemma,
	$$(\overline{C}_G(x))^pe_{\eta}=\lambda e_{\eta}, \hskip5mm \lambda_x\in
	\mathbb{C}.$$
	
	{\it Claim:} The element $x\in G-H$ can be chosen in such a
	way that $\lambda \neq 0$.
	
	To prove the claim, let $\rho$ be an extension of $\eta$, with corresponding
	character $\chi_{\rho}$ and the primitive central idempotent $e_{\rho}$.
	If $\chi_{\rho}$ vanishes on $G-H$, then
	$$e_{\rho} = \frac{1}{|G|}\sum_{g\in G}
	\chi_{\rho}(g^{-1})g = \frac{1}{p}\frac{1}{|H|}\sum_{h\in H}
	\chi_{\eta}(g^{-1})g = \frac{1}{p}e_{\eta},$$
	and this implies that $e_{\rho}$ is {\it not} an idempotent, a contradiction. Thus there exists
	$x\in G-H$ such that $\chi_{\rho}(x)\neq 0$.
	
	If $V$ is the representation
	space corresponding to $\eta$, then it is also a representation space for the
	extension $\rho$. Extend $\rho:G\rightarrow {\rm GL}(V)$ linearly  to algebra
	homomorphism  $\mathbb{C}[G]\rightarrow {\rm End}(V)$, which we still denote by
	$\rho$. Similarly, extend $\eta$ to the algebra homomorphism
	$\eta:\mathbb{C}[H]\rightarrow {\rm End}(V)$.
	Now by Schur's Lemma, $\rho(\overline{C}_G(x))=\mu I$ for
	some $\mu \in \mathbb{C}$. Taking traces of both sides, we get
	$$|C_G(x)|\chi_{\rho}(x)=\mu \deg\rho.$$
	Since $\chi_{\rho}(x)\neq 0$, we get $\mu \neq 0$. Then
	$$\lambda I = \rho(\lambda e_{\eta})=\rho((\overline{C}_G(x))^pe_{\eta})=
	\rho( (\overline{C}_G(x) e_{\eta})^p)=\mu^pI\neq 0,$$
	hence $\lambda \neq 0$. This proves the Claim.
	
	For $i=0,1, \dots, p-1$, let 
	$$f_{i} = \frac{1}{p} 
	\Big{(} 1 + \zeta^ic + \zeta^{2i} c^2 +\cdots +\zeta^{i(p-1)}c^{p-1}\Big{)}e_{\eta},$$
	where $c = \frac{\overline{C}_{G}(x){e_{\eta}} }{\sqrt[p]{\lambda}}$ and $\zeta$ 
	is a primitive $p^{th}$ root of unity in $\mathbb{C}$. It is clear that all $f_{i}$'s are non zero. Since $e_{\eta}$ and $\bar{C}_{G}(x)$ are central in $\mathbb{C}[G]$, then all $f_{i}$'s are central in $\mathbb{C}[G]$. It is clear that
	\begin{equation*}
	c^pe_{\eta}=e_{\eta}.\tag{*}
	\end{equation*}
	Suppressing the index $i$, let us write
	$$f=\frac{1}{p}(1+\zeta c + \cdots + \zeta^{(p-1)}c^{p-1})e_{\eta}.$$
Then
	\begin{align*}
	cf &= \frac{1}{p} (ce_{\eta} +\zeta c^2e_{\eta} + \cdots +
	\zeta^{(p-2)}c^{p-1}e_{\eta} + \zeta^{p-1}c^pe_{\eta})\\
	&=\frac{1}{p} (ce_{\eta} +\zeta c^2e_{\eta} + \cdots +
	\zeta^{(p-2)}c^{p-1}e_{\eta} + \zeta^{p-1}c^pe_{\eta})\\
	&=\frac{1}{p} (ce_{\eta} +\zeta c^2e_{\eta} + \cdots +
	\zeta^{(p-2)}c^{p-1}e_{\eta} + \zeta^{-1}e_{\eta}) \mbox{ (by (*))}\\
	&= \frac{\zeta^{-1}}{p}(1+\zeta c + \cdots + \zeta^{(p-1)}c^{p-1})e_{\eta}\\
	&=\zeta^{-1}f.
	\end{align*}
	Thus for $0\leq i,k<p$, we can deduce from above calculation that
	\begin{equation*}
	c^kf_i=\zeta^{-ik}f_i.\tag{**}
	\end{equation*}
	Now we show that $f_if_j=\delta_{i,j}f_j$.
	\begin{align*}
	f_if_j &=\frac{1}{p}\Big{(} \sum_{k=0}^{p-1} (\zeta^ic)^k \Big{)}e_{\eta}
	f_j\\
	&=\frac{1}{p} \Big{(}\sum_{k=0}^{p-1} \zeta^{ik} (c^kf_j)\Big{)}e_{\eta}\\
	&=\frac{1}{p} \Big{(}\sum_{k=0}^{p-1}\zeta^{ik}\zeta^{-jk} f_j
	\Big{)}e_{\eta}\hskip5mm (\mbox{by (**)})\\
	&=\frac{1}{p} \Big{(}\sum_{k=0}^{p-1}\zeta^{(i-j)k}
	\Big{)}f_je_{\eta}\\
	&=\delta_{i,j}(f_je_{\eta})=\delta_{i,j}f_j.
	\end{align*}
	We now show that $\sum^{p-1}_{i = 0} {f_{i}} = e_{\eta}$. 
	\begin{align*}
	\sum^{p-1}_{i = 0} {f_{i}} & = \sum_{i=0}^{p-1}\Big{(} \frac{1}{p}  \sum_{k=0}^{p-1} (\zeta^ic)^k \Big{)}e_{\eta}\\
	& = \frac{1}{p} 
	\Big{\{} p + \Big{(}\sum^{p-1}_{i = 0}\zeta^{i}\Big{)}c + 
	\Big{(}\sum^{p-1}_{i = 0}\zeta^{2i}\Big{)}c^{2} +  \cdots + \Big{(}\sum^{p-1}_{i = 0}\zeta^{(p-1)i}\Big{)}c^{p-1}\Big{\}}e_{\eta}\\
	& = e_{\eta}
	\end{align*}
	Notice that each $f_{{i}}$ belongs to $\mathbb{C}[G]e_{\eta}$. 
	Therefore, $\{f_{i}, i = 0,1, \dots , p-1\}$ are mutually pairwise orthogonal central idempotents of the ring $\mathbb{C}[G]e_{\eta}$, and whose sum is $e_{\eta}$. Since $\mathbb{C}[G]e_{\eta}$ contains $p$ primitive central idempotents, $\{f_{i}, i = 0,1, \dots , p-1\}$ are all the primitive central idempotents of $\mathbb{C}[G]e_{\eta}$. Hence, for each $i = 0,1, \dots , p-1$,
	$${e_{\rho_{i}} = \frac{1}{p} 
		\Big{(} 1 + \zeta^ic + \zeta^{2i} c^2 +\cdots +\zeta^{i(p-1)}c^{p-1}\Big{)} 
		e_{\eta},}$$
	where $c = \frac{\overline{C}_{G}(x){e_{\eta}} }{\sqrt[p]{\lambda}}$ and $\zeta$ 
	is a primitive $p^{th}$ root of unity in $\mathbb{C}$.  
	Moreover, 
	$$e_{\eta} = e_{\rho_0} + e_{\rho_1} + \cdots + e_{\rho_{p-1}}.$$ This completes the proof of $(1)$. 
	
	$(2)$ If $e_{\eta}$ is not a central idempotent in $\mathbb{C}[H]$, then $\eta$ is not
	equivalent to $\eta^{x}$. By Theorem \ref{index} 
	induced representations $\eta\uparrow_H^G, \eta^{x}\uparrow_H^G,
	\ldots ,\eta^{x^{p-1}}\uparrow_H^G$ 
	are all equivalent to $\rho$ (say) and $\rho$ is irreducible.
	Since the character $\chi_{\rho}$ of $\rho$ vanishes outside
	normal subgroup $H$ and $\chi_{\rho}\downarrow^{G}_{H} = \chi_{\eta} + \chi_{\eta^{x}} + \ldots + \chi_{\eta^{x^{p-1}}}$,
	we have $e_{\rho} = e_{\eta} + e_{\eta^{x}} + \cdots + e_{\eta^{x^{p-1}}}.$
	This completes the proof of $(2)$.
\end{proof}

\begin{corollary}\label{cor1}
	Let $G$ be a finite group and $H$ be a normal subgroup of prime index 
	$p$ of $G$.
	Let $Z_{G}$ and $Z_{H}$ denote the centers of $\mathbb{C}[G]$
	and $\mathbb{C}[H]$ respectively.
	Then $Z_{G} = \langle Z_{H}, \overline{C}_{G}(x) \rangle$.
\end{corollary}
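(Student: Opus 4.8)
The plan is to establish the inclusion $Z_G \subseteq \langle Z_H, \overline{C}_G(x)\rangle$, which is where the content lies; for the reverse inclusion one only needs that $\overline{C}_G(x)$, being a $G$-conjugacy class sum, already lies in $Z_G$. Since $\mathbb{C}[G]$ is semisimple, its Wedderburn decomposition shows that $Z_G$ is spanned over $\mathbb{C}$ by the primitive central idempotents $e_\rho$, $\rho \in \Omega_G$, so it suffices to prove $e_\rho \in \langle Z_H, \overline{C}_G(x)\rangle$ for every irreducible $\mathbb{C}$-representation $\rho$ of $G$. Given $\rho$, I would restrict it to $H$ and fix an irreducible constituent $\eta$ of $\rho\downarrow_H^G$. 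By the Index-$p$ theorem (Theorem \ref{index}), the $G/H$-orbit of $\eta$ is either a singleton, in which case $\rho$ is one of the $p$ extensions of $\eta$, or it consists of $p$ points, in which case $\rho \cong \eta\uparrow_H^G$; I would dispose of the two cases using Berman's theorem (Theorem \ref{berman}).

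Suppose first that the orbit of $\eta$ is a singleton, so $e_\eta$ is central in $\mathbb{C}[G]$. Then Theorem \ref{berman}(1) gives, for a suitable $i$,
\[
e_\rho = \frac{1}{p}\big(1 + \zeta^{i} c + \zeta^{2i} c^{2} + \cdots + \zeta^{i(p-1)} c^{p-1}\big)\, e_\eta, \qquad c = \frac{\overline{C}_G(x)\, e_\eta}{\sqrt[p]{\lambda}},
\]
where $\zeta$ is a primitive $p$-th root of unity and $\lambda \in \mathbb{C}$ is the nonzero scalar provided by Theorem \ref{berman}(1), so that $\sqrt[p]{\lambda}$ is meaningful. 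Here $e_\eta$ is a primitive central idempotent of $\mathbb{C}[H]$, hence $e_\eta \in Z_H$, and $\overline{C}_G(x)$ is one of the generators at hand; consequently $c$, each power $c^{k}$, and each product $c^{k} e_\eta$ lie in $\langle Z_H, \overline{C}_G(x)\rangle$, and since that subalgebra is closed under $\mathbb{C}$-linear combinations, $e_\rho \in \langle Z_H, \overline{C}_G(x)\rangle$. This is the only place a computation enters, and even here it amounts to no more than reading Berman's formula carefully.

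Suppose instead that the orbit of $\eta$ has $p$ points $\eta, \eta^{x}, \ldots, \eta^{x^{p-1}}$. Then $\rho \cong \eta\uparrow_H^G$ and Theorem \ref{berman}(2) gives $e_\rho = e_\eta + e_{\eta^{x}} + \cdots + e_{\eta^{x^{p-1}}}$, a sum of primitive central idempotents of $\mathbb{C}[H]$, hence an element of $Z_H \subseteq \langle Z_H, \overline{C}_G(x)\rangle$. By the Index-$p$ theorem every irreducible $\mathbb{C}$-representation of $G$ falls into exactly one of these two cases, so the idempotents $e_\rho$ produced above exhaust the primitive central idempotents of $\mathbb{C}[G]$; therefore $Z_G \subseteq \langle Z_H, \overline{C}_G(x)\rangle$, and with the reverse inclusion this yields the stated equality. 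The only point I expect to require attention is the bookkeeping in matching the orbit dichotomy on $\Omega_H$ with the two families of primitive central idempotents of $\mathbb{C}[G]$, that is, verifying that each $e_\rho$ is accounted for exactly once; everything else is immediate once Theorems \ref{index} and \ref{berman} are invoked.
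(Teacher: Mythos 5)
Your argument for the inclusion $Z_G \subseteq \langle Z_H, \overline{C}_G(x)\rangle$ is essentially the paper's own proof: the paper notes that $Z_H$ is spanned by the primitive central idempotents $e_{\eta}$, $\eta$ an irreducible representation of $H$, and then cites Berman's theorem (Theorem \ref{berman}); your two-case analysis (singleton orbit via $e_{\rho_i} = \frac{1}{p}\big(1+\zeta^i c + \cdots + \zeta^{i(p-1)}c^{p-1}\big)e_{\eta}$ with $c = \overline{C}_G(x)e_{\eta}/\sqrt[p]{\lambda}$, and $p$-point orbit via $e_{\rho} = e_{\eta} + e_{\eta^{x}} + \cdots + e_{\eta^{x^{p-1}}}$) is exactly what that citation unpacks to, and this half of your argument is correct and complete.

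Where you go astray is the reverse inclusion. Centrality of $\overline{C}_G(x)$ is not enough: $\langle Z_H, \overline{C}_G(x)\rangle \subseteq Z_G$ would also require $Z_H \subseteq Z_G$, and this fails in general. Indeed $Z_H$ is spanned by the $e_{\eta}$, and precisely when the $G/H$-orbit of $\eta$ has $p$ points the idempotent $e_{\eta}$ is \emph{not} central in $\mathbb{C}[G]$ (that is the hypothesis of case (2) of Theorem \ref{berman}); concretely, for $G = S_3$ and $H = C_3 = \langle x\rangle$, the element $e_{\omega x} = \frac{1}{3}(1+\omega x + \omega^2 x^2)$ lies in $Z_H$ but is conjugated by $y$ to $e_{\omega^2 x}$, so it is not in $Z_G$. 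Thus the equality asserted in the corollary really only holds as the inclusion $Z_G \subseteq \langle Z_H, \overline{C}_G(x)\rangle$ (equality occurs only when every irreducible of $H$ is $G$-invariant); that inclusion is the only direction used later, in the proof of Theorem \ref{diagonal}, where one needs $Z_n \subseteq \langle GZ_{n-1}, \overline{C}_{G_n}(x_n)\rangle$. The paper's own proof glosses over the same point by asserting $Z_G = \langle e_{\eta_1},\ldots,e_{\eta_k}, \overline{C}_G(x)\rangle$ without comment, so your proposal matches the paper on the substantive half, but you should not claim that the reverse inclusion follows from $\overline{C}_G(x)\in Z_G$ alone.
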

\begin{proof}\rm
	Let ${\eta_{1}}, {\eta_{2}}, \dots , {\eta_{k}}$ be the inequivalent irreducible $\mathbb{C}$-representations of $G$ and $e_{\eta_{1}}, e_{\eta_{2}}, \dots , e_{\eta_{k}}$ be their corresponding primitive central idempotents of $\mathbb{C}[G]$. Then $ Z_{H} = \langle{ e_{\eta_{1}}, e_{\eta_{2}}, \ldots , e_{\eta_{k}} }\rangle$.
	By Theorem \ref{berman}, $Z_{G} = \langle e_{\eta_{1}}, \ldots , {e_{\eta_{k}}, \overline{C}_{G}(x)} \rangle$. So, $Z_{G} = \langle Z_{H}, \overline{C}_{G}(x) \rangle$.
\end{proof}
The following theorem gives a convenient set of generators of the
Gelfand-Tsetlin algebra of group algebra of a finite solvable 
group associated with a subnormal series whose successive quotients are cyclic groups of prime order.
\begin{theorem}\label{diagonal}
	Let $G$ be a finite solvable group. Let
	\begin{center}
		$\langle e \rangle =  G_0 < G_1 < \cdots < G_n = G$
	\end{center}
	be a subnormal series with quotient groups $G_{i}/G_{i-1}$ isomorphic to cyclic groups
	of prime order $p_{i}$
	and associated long presentation 
	\begin{align*}
	G=\langle x_1,\ldots,x_n \, |\, & x_i^{p_i} = w_i(x_1,\ldots,x_{i-1}), 
	x_i^{-1}x_jx_i = w_{ij}(x_1,\ldots, x_{i-1}), \,\, j<i \rangle,
	\end{align*}
	where $w_i$ and $w_{ij}$ are certain words in $x_1, x_2,\ldots, x_{i-1}$. 
	For each $i = 1, 2, \ldots, n$, let $\overline{C}_{G_{i}}(x_{i})$ 
	denotes the $G_{i}$-conjugacy class sum of $x_{i}$ in $\mathbb{C}[G_{i}]$. 
	Then
	$${GZ}_{n} = \langle\,\, {\overline{C}_{G_1}(x_1)}, 
	\,\,{\overline{C}_{G_2}(x_2)},\ldots, {\overline{C}_{G_n}(x_n)}\rangle.$$
\end{theorem}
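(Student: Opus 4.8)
The plan is to induct on the length $n$ of the series and to prove, for every $i$, the sharper identity
\[
GZ_i = \langle\, \overline{C}_{G_1}(x_1),\, \overline{C}_{G_2}(x_2),\, \ldots,\, \overline{C}_{G_i}(x_i)\,\rangle,
\]
the case $i = n$ being the assertion of the theorem. Recall that, by the Index-$p$ theorem (Theorem \ref{index}), the given series is a multiplicity-free chain over $\mathbb{C}$, so $GZ_i = \langle Z_0, Z_1, \ldots, Z_i\rangle$ is the Gelfand--Tsetlin algebra of $G_0 < \cdots < G_i$, and in particular $GZ_i = \langle Z_0, \ldots, Z_i\rangle = \langle GZ_{i-1}, Z_i\rangle$. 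The base case is immediate: $G_1 = \langle x_1\rangle$ is cyclic of prime order $p_1$, so $\mathbb{C}[G_1]$ is commutative, $Z_1 = \mathbb{C}[G_1]$, and $\overline{C}_{G_1}(x_1) = x_1$; hence $GZ_1 = \langle Z_0, Z_1\rangle = \mathbb{C}[G_1] = \langle x_1\rangle = \langle \overline{C}_{G_1}(x_1)\rangle$.

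For the inductive step, assume $GZ_{i-1} = \langle \overline{C}_{G_1}(x_1), \ldots, \overline{C}_{G_{i-1}}(x_{i-1})\rangle$. Since $\langle GZ_{i-1}, \overline{C}_{G_i}(x_i)\rangle = \langle \overline{C}_{G_1}(x_1), \ldots, \overline{C}_{G_i}(x_i)\rangle$, it suffices to show $GZ_i = \langle GZ_{i-1}, \overline{C}_{G_i}(x_i)\rangle$. As $GZ_i = \langle GZ_{i-1}, Z_i\rangle$ and $\overline{C}_{G_i}(x_i) \in Z_i$, the inclusion $\supseteq$ is clear, and the whole point is to prove $Z_i \subseteq \langle GZ_{i-1}, \overline{C}_{G_i}(x_i)\rangle$. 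Because $Z_i$ is spanned over $\mathbb{C}$ by the primitive central idempotents of $\mathbb{C}[G_i]$, it is enough to show that each of these lies in $\langle GZ_{i-1}, \overline{C}_{G_i}(x_i)\rangle$.

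This is exactly the situation handled by Berman's theorem (Theorem \ref{berman}): $G_{i-1}$ is normal of prime index $p_i$ in $G_i$, and $G_i/G_{i-1} = \langle x_i G_{i-1}\rangle$ by the defining property of the long generator $x_i$. Given an irreducible representation $\rho$ of $G_i$, choose an irreducible constituent $\eta$ of $\rho \downarrow_{G_{i-1}}^{G_i}$. If $e_\eta$ is central in $\mathbb{C}[G_i]$, then $\rho$ is one of the $p_i$ extensions of $\eta$, and Theorem \ref{berman} gives $e_\rho = \frac{1}{p_i}\bigl(1 + \zeta^r c + \zeta^{2r} c^2 + \cdots + \zeta^{r(p_i-1)} c^{p_i-1}\bigr) e_\eta$ with $c = \overline{C}_{G_i}(x_i) e_\eta / \sqrt[p_i]{\lambda}$ and $\lambda \neq 0$; here $\sqrt[p_i]{\lambda}$ is a nonzero complex scalar and $e_\eta$, being a primitive central idempotent of $\mathbb{C}[G_{i-1}]$, lies in $Z_{i-1} \subseteq GZ_{i-1}$, so this polynomial in $e_\eta$ and $\overline{C}_{G_i}(x_i)$ lies in $\langle GZ_{i-1}, \overline{C}_{G_i}(x_i)\rangle$. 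If $e_\eta$ is not central in $\mathbb{C}[G_i]$, then $e_\rho = e_\eta + e_{\eta^{x_i}} + \cdots + e_{\eta^{x_i^{p_i-1}}}$, and each conjugate $\eta^{x_i^k}$ is again an irreducible representation of $G_{i-1}$, so each $e_{\eta^{x_i^k}}$ is a primitive central idempotent of $\mathbb{C}[G_{i-1}]$ and thus lies in $Z_{i-1} \subseteq GZ_{i-1}$; hence $e_\rho \in GZ_{i-1}$. In both cases $e_\rho \in \langle GZ_{i-1}, \overline{C}_{G_i}(x_i)\rangle$, which closes the induction.

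The step I expect to require the most care is this last one — confirming that Berman's two formulas never force in anything outside the subalgebra generated by $GZ_{i-1}$ together with the single new element $\overline{C}_{G_i}(x_i)$. The two facts that make it work are that in the induced case the conjugates $e_{\eta^{x_i^k}}$ are already primitive central idempotents of the smaller group algebra $\mathbb{C}[G_{i-1}]$, hence available inside $GZ_{i-1}$ with no extra generator needed, and that the root $\sqrt[p_i]{\lambda}$ in the extension case is merely a nonzero scalar and so does not enlarge the algebra. The remaining ingredients — the identity $GZ_i = \langle GZ_{i-1}, Z_i\rangle$ straight from the definition, the fact that $Z_i$ is the $\mathbb{C}$-span of its primitive central idempotents, and the base case — are routine.
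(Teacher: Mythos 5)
Your proof is correct and follows essentially the same route as the paper: induction along the chain, with the base case $GZ_1=\mathbb{C}[G_1]=\langle \overline{C}_{G_1}(x_1)\rangle$ and the inductive step reduced to showing $Z_i\subseteq\langle GZ_{i-1},\overline{C}_{G_i}(x_i)\rangle$ via Berman's theorem. The only difference is presentational: the paper quotes this last step as Corollary \ref{cor1} ($Z_{G}=\langle Z_{H},\overline{C}_{G}(x)\rangle$), whereas you re-derive it inline by expanding the primitive central idempotents in the two Berman cases.
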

\begin{proof}
	By definition, $GZ_{n} = \langle{Z_{1}, Z_{2}, 
		\ldots , Z_{n}}\rangle$, where $Z_{i}$ denotes the center of $\mathbb{C}[G_{i]}]$. We use mathematical induction on $n$.
	Since $G_{1}$ is a cyclic group of prime order, then $GZ_{1} =
	\mathbb{C}[G_{1}] = \langle{\overline{C}_{G_1}(x_1)}\rangle$, the
	statement is true for $n = 1$. 
	Assume that the result is true for $n-1$, that is , $ GZ_{n-1} = 
	\langle\,\, {\overline{C}_{G_1}(x_1)},\, \ldots, 
	\, {\overline{C}_{G_{n-1}}(x_{n-1})}\rangle$. 
	We need to prove that $GZ_{n} = \langle{GZ_{n-1}, 
		{\overline{C}_{G_n}(x_n)}}\rangle$. 
	It is clear that $\langle{GZ_{n-1}, {\overline{C}_{G_n}(x_n)}}\rangle
	\subseteq GZ_{n}$, as ${\overline{C}_{G_n}(x_n)}$ belongs to $Z_{n}$.
	So it suffices to show that $Z_{n} \subseteq \langle{GZ_{n-1}, 
		{\overline{C}_{G_n}(x_n)}}\rangle$. By Corollary \ref{cor1}, 
	$Z_{n} \subseteq \langle{ Z_{n-1}, \overline{C}_{G_n}(x_n)}\rangle$, 
	and hence $Z_{n} \subseteq \langle{GZ_{n-1}, 
		{\overline{C}_{G_n}(x_n)}}\rangle$. 
\end{proof}
\begin{remark}
	Let $G$ be a fnite solvable group. Let 
	$$<1> = G_0 < G_1 < \cdots < G_n = G$$
	be a subnormal series such that successive quotients are cyclic groups of prime order. Then Gelfand-Tsetlin algebra $GZ_{n}$ associated with the subnormal series is the diagonal 
	subalgebra of $\mathbb{C}[G]$ w.r.t. the Gelfand-Tsetlin basis of every irreducible $\mathbb{C}$-representation of $G$. 
\end{remark}

\begin{remark}
	The elements $ {\overline{C}_{G_1}(x_1)}, {\overline{C}_{G_2}(x_2)},
	\ldots , {\overline{C}_{G_n}(x_n)} $ are like Young-Jucys-Murphy elements
	(see \cite{ver107}) in the representation theory of symmetric groups.
\end{remark}
\chapter{Algorithmic construction of matrix representations of solvable groups}
In this chapter, we describe an algorithm to construct
the irreducible matrix representations of a finite solvable group over $\mathbb{C}$. We can construct the complete set of primitive central idempotents and the diagonal subalgebra of the complex group algebra of a finite solvable group in terms of a long system of  generators, which we have sen in the previous chapter. With the help of them, we now describe the algorithm step by step in the following way. 
\section{An algorithm for constructing irreducible complex representations of solvable groups}

\noindent
{\it\textbf{Step 1:}} Let $G$ be a finite solvable group. Fix a subnormal series
\begin{equation}\tag{*}
\langle{e}\rangle = G_{o} < G_{1} < \dots < G_{n} = G
\end{equation}
such that $G_{i}/G_{i-1}$ is isomorphic to cyclic group of order $p_{i}$, $p_{i}$ a prime.
Then $G$ has a long presentation associated with the maximal subnormal series $(*)$
\begin{align*}
\langle x_1, x_2, \dots , x_n \, |\, & x_i^{p_i} = w_i(x_1,..., x_{i-1}), x_i^{-1}x_jx_i = w_{ij}(x_1,..., x_{i-1}) \mbox{ for } j<i \rangle,
\end{align*}
where $w_i$ and $w_{ij}$ are certain words in $x_1, x_2, ..., x_{i-1}$.\\
\noindent
{\it\textbf{Step 2:}} Let $(\rho, V_{\rho})$ be an irreducible representation of $G$ over $\mathbb{C}$ and $e_{\rho}$
be the corresponding primitive central idempotent in $\mathbb{C}[G]$. 
By the Theorem\ref{berman}, we inductively construct the PCI-diagram associated with the subnormal series $(*)$. Then from the obtained PCI-diagram we get the primitive central idempotent $e_{\rho}$.
\\
\noindent
{\it\textbf{Step 3:}}
Let ${\mathcal D}_G$ be the Gelfand-Tsetlin algebra of inductive family of group algebras $\mathbb{C}[G_{i}]$, $i = 0, 1, \dots , n$, 
which is a diagonal 
subalgebra (unique up to conjugacy) of the group algebra $\mathbb{C}[G]$. Then by  Theorem\ref{diagonal} we get, 
$$ {\mathcal D}_G = \langle\,\, {\overline{C}_{G_1}(x_1)}, \,\,{\overline{C}_{G_2}(x_2)},\,\, \cdots,\,\, {\overline{C}_{G_n}(x_n)}\rangle \,\,\,\,\, 
\mathrm{as}\,\, 
\mathbb{C-}\mathrm{algebra}.$$
\\
\noindent
{\it\textbf{Step 4:}} Let $R$ be the group algebra $\mathbb{C}[G].$ Then $Re_{\rho}$ is the minimal two-sided
ideal of $R$. The algebra
${\mathcal D}_{\rho}:= {\mathcal D}_Ge_{\rho}$ is the diagonal subalgebra of $Re_{\rho}$.
\\
\noindent
{\it\textbf{Step 5:}} Let $d$ be the degree of the representation $(\rho, V_{\rho})$. The diagonal subalgebra ${\mathcal D}_{\rho}$ contains a unique system $\{f_1, f_2,\dots , f_d\}$ of primitive 
idempotents such that $f_{1} + f_{2} + \dots + f_{d} = e_{\rho}$. The unique 
system $\{f_{1}, f_{2}, \dots , f_{d}\}$ of primitive idempotents can be obtained from 
the PCI-diagram associated to the above subnormal series $(*)$. Then either of the minimal 
left-ideals $Rf_1, Rf_2,\dots , Rf_d$ serves as a model for the representation space 
$V_{\rho}$. In fact, there are $d$ number of paths from the vertex $1$ to the vertex 
$e_{\rho}$ in the PCI-diagram. Moreover, if 
$1 = e_{o}, e_{1}, \ldots , e_{n} = e_{\rho}$ are the vertices appear in one such path, 
then by the proof of Theorem $\ref{murli}$, $e_{o}e_{1} \dots e_{n}$ is the corresponding primitive idempotent, and which is a member of the unique system of primitive idempotents $\{f_{1}, f_{2}, \dots , f_{d}\}$. 
So, the minimal left-ideal $Re_{o}e_{1} \dots e_{n}$ of $R$ serves as a model for the representation space $V_{\rho}$.
\\
\noindent
{\it\textbf{Step 6:}}
We can construct a basis of $V_{\rho}$ inside $R$, and in terms of this basis, we can write 
the matrices of $\rho(x_i)'$s.
\begin{remark}
	Let $G$ be a finite group and $F$ be an algebraically closed field of characteristic zero.
	Let $V_i$, $i = 1, 2, \dots , k$
	denotes mutually non-isomorphic irreducible representations of $G$.
	Then we know 
	\begin{equation}\tag{**}
	F[G] \cong  \oplus^{k}_{i = 1} \mathrm{End}(V_i).
	\end{equation}
	Let $\mathcal D_i$ be the inverse image of the diagonal subalgebra (unique up to conjugacy) of $End (V_i)$ under the above isomorphism $(**)$. Then $\mathcal D = \oplus^{k}_{i = 1}\mathcal D_i$
	is the diagonal subalgebra (unique up to conjugacy) of the group algebra $F[G].$

Then with the help of the primitive central idempotents of $F[G]$ and an well defined set of primitive (not necessarily central) idempotents contained in the diagonal subalgebra of $F[G]$,
we get the inequivalent irreducible matrix representations of $G$. 
 \end{remark}
\chapter{Representations of finite abelian groups}
In this chapter, we give an algorithm for constructing the irreducible representations of a finite abelian group over a field of characteristic $0$ or prime to the order of the group, and a systematic way to compute the primitive central idempotents of the abelian group algebra. Besides that, for an abelian group $G$, using a long system of generators of $G$, we obtain simple expressions of the primitive central idempotents in $\mathbb{C}[G]$, the primitive central idempotents in $\mathbb{Q}[G]$ and its Wedderburn decomposition.
\section{Long presentations of abelian $p$-groups, $p$ a prime}\label{long}
Let $G$ be an abelian $p$-group of order $p^N$, where $p$ is a prime.  Then $G$ is isomorphic to the direct sum of cyclic $p$-groups, say 
$$ G\simeq  \underbrace{ (C_{p^{r_1}} \oplus \dots \oplus C_{p^{r_1}})}_{l_1} 
\oplus 
\underbrace{(C_{p^{r_2}} \oplus\dots \oplus  C_{p^{r_2}} )}_{l_2}
\oplus \dots \oplus 
\underbrace{ (C_{p^{r_m}} \oplus \dots \oplus C_{p^{r_m}})}_{l_m}, $$
where $r_1>r_2>\dots > r_m\geq 1$. Note that 
$$N = \underbrace{ (r_1 + \dots + r_1)}_{l_{1}} + \underbrace{ (r_2 + \dots  +  r_2)}_{l_{2}} + \dots  + \underbrace{( r_m + \dots + r_m )}_{l_{m}}$$
is a partition of $N$ written in decreasing order, and we may call it an {\it ordered partition} of $N$. Thus, every isomorphism class of an abelian $p$-group of order $p^{N}$ uniquely determines
an ordered partition of $N$. Conversely, every ordered partition of $N$ determines an abelian $p$-group of order $p^{N}$ up to isomorphism.

There are three parameters associated with the above decomposition of $G$: 
\begin{itemize}
	\item[(1)] the exponent of a homogenous component;
	\item[(2)] the number of terms in each homogenous component;
	\item[(3)] the number of homogenous components. 
\end{itemize}
With respect to these three parameters, we write the decomposition of $G$ as 
\begin{equation}
G = {\prod_{i = 1}^{m}\prod_{j = 1}^{l_i}{C_{p^{r_{i}}, j}}},
\end{equation}
where $C_{p^{r_i}, j}$ denotes the $j$-th factor of the {\bf{homocyclic component}} of exponent ${p^{r_i}}$.  We define a {\bf{short presentation }} of $C_{p^{r_i}, j}$ as $\langle{ y_{(r_{i},j)}| y^{p^{r_i}}_{(r_{i},j)} = 1}\rangle.$
Then a short presentation of $G$ is defined by
\begin{align*}
\langle y_{(r_{i},j)}\,|\,y^{p^{r_i}}_{(r_{i},j)} = 1,
\hskip3mm j=1,\dots,l_i, \hskip3mm i=1,\dots,m \rangle.
\end{align*}
Notice that a short presentation contains the minimum number of generators. 
We use three indices to represent a generator in a long presentation of $G$. We take a long presentation of $C_{p^{r_i}, j}$ as follows:
\begin{align*}
\Big{\langle} x_{\{(r_{i},j),r_{i}\}}, x_{\{(r_{i},j),r_{i} - 1\}}, ... , x_{\{(r_{i},j), 1\}}\, | \,
& x^{p}_{\{(r_{i},j),r_{i}\}} =x_{\{(r_{i},j),r_{i} - 1\}},\\
& x^{p}_{\{(r_{i},j),r_{i} - 1\}} = x_{\{(r_{i},j),r_{i} - 2\}},\\
& \dots ,\\
& x^{p}_{\{(r_{i},j),1\}} = 1 \Big{\rangle}.
\end{align*}
We call the set $\{x_{\{(r_{i},j),r_{i}\}}, x_{\{(r_{i},j),r_{i} - 1\}}, ... , x_{\{(r_{i},j), 1\}}\}$ a {\bf{long system of generators}} of $C_{p^{r_i}, j}$.
Therefore a {\bf{long presentation}} of $G$ is
\begin{align*}
{\Big \langle} x_{\{(r_{i},j), \hskip2mm r_{i}\}},  \hskip2mm x_{\{(r_{i},j),r_{i} - 1\}}, \dots ,x_{\{(r_{i},j), 1\}} \hskip2mm {|} \hskip2mm
& x^p_{\{(r_{i},j),r_{i}\}} = x_{\{(r_{i},j),r_{i} - 1\}},\\
& x^p_{\{(r_{i},j),r_{i} - 1\}} = x_{\{(r_{i},j),r_{i} - 2\}},\\
&\dots ,\\
& x^{p}_{\{(r_{i},j),1\}} = 1,\\
& j=1,\dots, l_i, i=1,\dots, m
\Big{\rangle}.
\end{align*}
We call $(s,j)$ the place index and $a$ the power index of the long generator $x_{\{(s,j), a\}}$. An interesting thing is that, there is an enumeration in a system of long generators with respect to the lexicographic order. A remarkable thing is that every element of $G$ can be expressed uniquely as the product of $x^{\alpha_{\{{(s,j), a}\}}}_{\{(s,j), a\}}$'s, where
$0 \leq \alpha_{\{{(s,j), a}\}} \leq p - 1$.
 \section{Primitive central idempotents of abelian $p$-groups, $p$ a prime, over algebraically closed fields}
Let $G$ be a finite abelian $p$-group, where $p$ is a prime. Let $F$ be an algebraically closed field of characteristic $0$ or prime to $p$. Then all the irreducible representations of $G$ are of degree $1$. To compute the primitive central idempotents of $F[G]$, it is sufficient to compute the primitive central idempotents for its cyclic components. 
The following two theorems give simple expressions of the primitive central idempotents in ${F}[C_{p^{n}}]$. 
\begin{theorem}
	Let $G$ be $C_{p^{n}}$. Let
	\begin{center}
		$G = \langle{x_{1}, x_{2}, \dots ,x_{n} \mid x_{1}^{p} = 1, x_{2}^{p} = x_{1}, \dots , x_{n}^{p} = x_{n - 1}}\rangle$
	\end{center}
	be the long presentation of $G$.
	Let $F$ be an algebraically closed field with characteristic $0$ or prime to $p$.
	Let $\zeta_{n}$ be a $p^{n}$th root of unity in ${F}$.
	Then every primitive central idempotent of ${{F}}[G]$ can be expressed as
	$$e_{{\zeta^{-1}_1}{x_{1}}}e_{{\zeta^{-1}_2}{x_{2}}} \dots e_{{\zeta^{-1}_n}{x_{n}}} \,\,\mathrm{with}\,\zeta^{p}_{n} = \zeta_{n - 1}, \dots, \zeta^{p}_{2} = \zeta_{1}.$$
	where $e_{X} = (1+ X + \dots + X^{p-1})/{p}$ and $X$ is an indeterminate.
\end{theorem}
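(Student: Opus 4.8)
The plan is to compute the asserted product directly and recognize it as the classical primitive central idempotent attached to a linear character of $G$. Since $G=C_{p^n}$ is abelian and $F$ is algebraically closed with $\mathrm{char}\,F$ equal to $0$ or prime to $p$, the structure theory of semisimple commutative group algebras gives $F[G]=\bigoplus_{\chi}F e_{\chi}$, where $\chi$ runs over the $p^n$ linear characters of $G$ and $e_{\chi}=\frac{1}{p^n}\sum_{g\in G}\chi(g^{-1})g$ is the corresponding primitive central idempotent; moreover $\chi\mapsto\chi(x_n)$ is a bijection between the set of characters and the set of $p^n$-th roots of unity in $F$. So it suffices to prove that, for each $p^n$-th root of unity $\zeta_n$ together with the chain $\zeta_{i-1}=\zeta_i^{p}$, the product $\prod_{i=1}^{n}e_{\zeta_i^{-1}x_i}$ equals $e_{\chi}$ for the character $\chi$ with $\chi(x_n)=\zeta_n$; letting $\zeta_n$ vary over all $p^n$-th roots of unity then exhausts the primitive central idempotents.

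First I would record the two telescoping identities forced by the long presentation: iterating $x_i^{p}=x_{i-1}$ gives $x_i=x_n^{\,p^{n-i}}$, and iterating $\zeta_i^{p}=\zeta_{i-1}$ gives $\zeta_i=\zeta_n^{\,p^{n-i}}$, for $i=1,\dots,n$. Then, using commutativity of $F[G]$ and $x_i=x_n^{p^{n-i}}$, expand
$$\prod_{i=1}^{n}e_{\zeta_i^{-1}x_i}=\prod_{i=1}^{n}\frac1p\sum_{k_i=0}^{p-1}\zeta_i^{-k_i}x_i^{k_i}=\frac{1}{p^n}\sum_{(k_1,\dots,k_n)\in\{0,\dots,p-1\}^n}\Big(\prod_{i=1}^{n}\zeta_i^{-k_i}\Big)\,x_n^{\sum_{i=1}^{n}k_i p^{\,n-i}}.$$
The key combinatorial observation is that $(k_1,\dots,k_n)\mapsto m:=\sum_{i=1}^{n}k_i p^{\,n-i}$ is a bijection of $\{0,\dots,p-1\}^n$ onto $\{0,1,\dots,p^n-1\}$ (base-$p$ expansion), and that along this bijection $\prod_{i=1}^{n}\zeta_i^{-k_i}=\zeta_n^{-\sum_i k_i p^{\,n-i}}=\zeta_n^{-m}$ by the identity $\zeta_i=\zeta_n^{p^{n-i}}$. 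Hence the product collapses to $\frac{1}{p^n}\sum_{m=0}^{p^n-1}\zeta_n^{-m}x_n^{m}$, which is precisely $e_{\chi}=\frac{1}{p^n}\sum_{g}\chi(g^{-1})g$ for $\chi(x_n)=\zeta_n$. (Alternatively one can prove the same identity by induction on $n$, using $u_i e_{u_i}=e_{u_i}+\frac1p(u_i^{p}-1)$ with $u_i=\zeta_i^{-1}x_i$ and $u_i^{p}=u_{i-1}$, but the direct expansion is shorter.)

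I do not anticipate a serious obstacle: the only points requiring care are the bookkeeping of the base-$p$ digit bijection and the two telescoping identities $x_i=x_n^{p^{n-i}}$ and $\zeta_i=\zeta_n^{p^{n-i}}$; everything else is routine manipulation in the commutative algebra $F[G]$ together with an appeal to the already-recorded structure theory of semisimple group algebras and the standard character formula for primitive central idempotents.
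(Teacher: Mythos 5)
Your proof is correct and follows essentially the same route as the paper: both rest on the character formula $e_{\chi}=\frac{1}{p^{n}}\sum_{g\in G}\chi(g^{-1})g$ for the primitive central idempotents of the abelian group algebra, combined with the unique base-$p$ expression of the elements of $C_{p^{n}}$ in terms of the long generators. The only difference is the direction of the computation: the paper factors the character sum iteratively, peeling off $e_{\zeta_{n}^{-1}x_{n}}$, $e_{\zeta_{n-1}^{-1}x_{n-1}},\dots$, whereas you expand the product and collapse it via the digit bijection --- the same calculation read backwards.
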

\begin{proof}
	Note that every element of $G$ can be expressed uniquely as $x_{1}^{i _1}x_{2}^{i_2} \dots x_{n}^{i_n}$, where
	$i_{j}$'s takes values in $\{0, 1, \dots , p -1\}$. Let $\rho$ be an irreducible representation of $G$.
	Then $\rho(x_j)$ is a $p^j$-th root of unity, say $\zeta_{j}$, $j = 1, 2, \dots, n$. Therefore $\rho(x^{p}_{j})= \{\rho(x_{j})\}^{p} = \zeta_{j}^p = \zeta_{j - 1}$, $j = 1,2, \dots , n$. Then the primitive central idempotent corresponding to $\rho$ is
	\begin{align*}
	e_{\rho} &= \frac{1}{p^n} 
	\begin{Bmatrix} 
	\sum_{i_1=0}^{p-1} \sum_{i_2=0}^{p-1} \cdots \sum_{i_n=0}^{p-1} 
	\rho\{ (x_1^{i_1} x_2^{i_2} \cdots x_n^{i_n})^{-1} \} (x_1^{i_1} x_2^{i_2} \cdots x_n^{i_n}) 
	\end{Bmatrix}\\
	&= \frac{1}{p^n} 
	\begin{Bmatrix} 
	\sum_{i_1=0}^{p-1}  \cdots \sum_{i_{n-1}=0}^{p-1} 
	\rho\{ (x_1^{i_1} x_2^{i_2} \cdots x_{n-1}^{i_{n-1}})^{-1} \}  (x_1^{i_1} x_2^{i_2} \cdots x_{n-1}^{i_{n-1}}) 
	\end{Bmatrix}\\
	& \hspace{1.2cm}\Big{( } 
	1+\frac{x_n}{\rho(x_n)} + \cdots + \frac{x_n^{p-1}}{\rho(x_n)^{p-1}}
	\Big{ )}
	\\
	&= \frac{1}{p^{n-1}} 
	\begin{Bmatrix} 
	\sum_{i_1=0}^{p-1}  \cdots \sum_{i_{n-1}=0}^{p-1} 
	\rho\{ (x_1^{i_1} x_2^{i_2} \cdots x_{n-1}^{i_{n-1}})^{-1} \}  (x_1^{i_1} x_2^{i_2} \cdots x_{n-1}^{i_{n-1}}) 
	\end{Bmatrix} 
	e_{\zeta_n^{-1}x_n}.
	\end{align*} 
	If we keep continue this process, we get
	$$e_{\rho} = e_{{\zeta^{-1}_1}{x_{1}}}e_{{\zeta^{-1}_2}{x_{2}}} \dots e_{{\zeta^{-1}_n}{x_{n}}}, \,\,\mathrm{where}\,\zeta^{p}_{n} = \zeta_{n - 1}, \dots, \zeta^{p}_{2} = \zeta_{1}.$$
This completes the proof.
\end{proof}
\begin{theorem}
	Let $G$ be $C_{p^{n}}$, with the long presentation
	\begin{center}
		$G = \langle{x_{1}, x_{2}, \dots ,x_{n} \mid x_{1}^{p} = 1, x_{2}^{p} = x_{1}, \dots , x_{n}^{p} = x_{n - 1}}\rangle$.
	\end{center}
	Let $H$ be the unique subgroup of index $p$ of $G$ with the long presentation
	\begin{center}
		$H = \langle{x_{1}, x_{2}, \dots , x_{n - 1} \mid x_{1}^{p} = 1, x_{2}^{p} = x_{1}, \dots , x_{n - 1}^{p} = x_{n - 2}}\rangle$.
	\end{center}
	Let $F$ be an algebraically closed field of characteristic $0$ or prime to $p$.
	Let $\eta$ be an irreducible representation of $H$, and $e_{\eta}$ be its corresponding primitive central idempotent in ${F}[H]$. Let
	$e_{\eta} = e_{\zeta^{-1}_{1}x_{1}}e_{\zeta^{-1}_{2}x_{2}}\dots e_{\zeta^{-1}_{n - 1}x_{n - 1}}$, where $\zeta_{n-1}$ is a $p^{n-1}$th root of unity in $F$, with
	$\zeta^{p}_{n - 1} = \zeta_{n - 2}, \dots, \zeta^{p}_{2} = \zeta_{1}$.
	Then
	\begin{enumerate}
		\item[$1.$] $\eta$ extends to $p$ mutually inequivalent irreducible representations $\rho_{0}, \rho_{1}, \dots , \rho_{p-1}$ of $G$.
		\item[$2.$] The primitive central idempotents associated with $\rho_{i}$'s are $e_{\eta}e_{\epsilon^{-i}\zeta^{-1}_{n}x_{n}}$,
		where $i$ runs over the set $\{0,1, \dots, p - 1\}$, $\zeta_{n}$ is a fixed $p$-th root of $\zeta_{n - 1}$ and $\epsilon$ is a primitive
		$p$-th root of unity in $F$.
	\end{enumerate}
\end{theorem}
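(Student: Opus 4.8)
The plan is to derive both assertions from the Index-$p$ theorem (Theorem~\ref{index}) together with the idempotent factorization in the preceding theorem, exploiting that $G=C_{p^n}$ being abelian makes the Clifford-theoretic machinery collapse almost entirely.

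For part~1, I would first observe that conjugation is trivial in the abelian group $G$, so the $G/H$-orbit of $\eta$ is automatically a singleton; Theorem~\ref{index}(1) then gives immediately that $\eta$ extends to exactly $p$ mutually inequivalent irreducible representations of $G$. (One could also argue directly: since $G=\langle x_n\rangle$ with $x_n^p=x_{n-1}$, an extension $\rho$ of $\eta$ is determined by the single scalar $\rho(x_n)$, constrained only by $\rho(x_n)^p=\rho(x_{n-1})=\eta(x_{n-1})=\zeta_{n-1}$; this equation has exactly $p$ solutions in $F$ because $F$ is algebraically closed and $\mathrm{char}\,F$ is $0$ or prime to $p$.)

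For part~2, I would make the indexing of the extensions explicit. Fixing a $p$-th root $\zeta_n$ of $\zeta_{n-1}$ and a primitive $p$-th root of unity $\epsilon$ in $F$, the $p$ distinct $p$-th roots of $\zeta_{n-1}$ are $\epsilon^i\zeta_n$, $i=0,1,\dots,p-1$, and I let $\rho_i$ be the extension of $\eta$ with $\rho_i(x_n)=\epsilon^i\zeta_n$; necessarily $\rho_i(x_j)=\eta(x_j)=\zeta_j$ for $j<n$. Distinct indices give inequivalent $\rho_i$ (they differ already on $x_n$), and every extension arises this way, so $\{\rho_0,\dots,\rho_{p-1}\}$ is precisely the list from part~1. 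Applying the preceding theorem to $\rho_i$ — whose values on the long generators are $\zeta_1,\dots,\zeta_{n-1},\epsilon^i\zeta_n$ — yields
\[
e_{\rho_i}=e_{\zeta_1^{-1}x_1}\cdots e_{\zeta_{n-1}^{-1}x_{n-1}}\,e_{(\epsilon^i\zeta_n)^{-1}x_n}=e_\eta\,e_{\epsilon^{-i}\zeta_n^{-1}x_n},
\]
which is the claimed expression. As a cross-check (and an alternative derivation through Berman's theorem, Theorem~\ref{berman}) I would verify $\sum_{i=0}^{p-1}e_{\epsilon^{-i}\zeta_n^{-1}x_n}=1$, the cross terms vanishing by $\sum_{i=0}^{p-1}\epsilon^{-ik}=0$ for $1\le k\le p-1$, whence $\sum_i e_{\rho_i}=e_\eta$ as it must be.

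I do not expect a real obstacle here: the argument is essentially bookkeeping. The only point that needs a moment's care is the bijection in part~2 between the $p$ abstract extensions produced by Theorem~\ref{index} and the $p$ choices of $p$-th root of $\zeta_{n-1}$, so that the indexing $\rho_i\leftrightarrow\epsilon^i\zeta_n$ is well defined and exhaustive; this is settled by the observation that an extension of $\eta$ is completely pinned down by its value on the single generator $x_n$.
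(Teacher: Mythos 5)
Your argument is correct. For part~1 it coincides in substance with the paper's proof: the paper likewise uses that $G$ is abelian so $\eta$ extends, then pins down $\rho(x_n)$ by $\rho(x_n)^p=\eta(x_{n-1})=\zeta_{n-1}$, obtaining the $p$ roots $\epsilon^i\zeta_n$ and hence $p$ pairwise inequivalent extensions $\rho_i\colon x_n\mapsto\epsilon^i\zeta_n$ — exactly your parenthetical direct argument; the appeal to the Index-$p$ theorem is an equivalent, slightly heavier, alternative. For part~2 your route differs in packaging rather than content: the paper recomputes $e_{\rho_i}=\frac{1}{p^n}\sum_{g\in G}\rho_i(g^{-1})g$ from scratch, splitting the sum over the cosets $Hx_n^k$ and factoring out $e_\eta$ to land on $e_\eta e_{\epsilon^{-i}\zeta_n^{-1}x_n}$, whereas you simply invoke the preceding theorem applied to $\rho_i$ (whose generator values are $\zeta_1,\dots,\zeta_{n-1},\epsilon^i\zeta_n$) and identify the first $n-1$ factors with $e_\eta$ via the hypothesis. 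Both are valid; yours is more economical and makes the logical dependence on the previous theorem explicit, while the paper's version is self-contained at the cost of repeating the computation. Your closing cross-check $\sum_i e_{\epsilon^{-i}\zeta_n^{-1}x_n}=1$, hence $\sum_i e_{\rho_i}=e_\eta$, is a pleasant consistency check (and matches Berman's theorem) but is not needed for the statement.
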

\begin{proof}
	Since $G$ is abelian, $\eta$ extends to an
	irreducible representation of $G$. Suppose that $\eta$ extends to $\rho$, which implies that $\{\rho(x_{n})\}^{p} = \rho({x_{n}}^p) = \eta({x_{n}}^p) = \eta(x_{n - 1}) =
	\zeta_{n - 1}$. Therefore, $\rho(x_{n})$ is equal to $\epsilon^{i}{\zeta_{n}}$, where $\epsilon$
	is a primitive $p$-th root of unity, $\zeta_{n}$ is a fixed $p$-th root of $\zeta_{n - 1}$, and $i$ runs over $\{0, 1, \dots , p - 1\}$. For each $i \in \{0, 1, \dots , p - 1\}$, $\rho_{i}: x_{n} \mapsto {\epsilon^{i}{\zeta_{n}}}$
	defines an extension of $\eta$ and $\rho_{i}, \rho_{j}$ are mutually inequivalent for $i \neq j$. So, $\eta$ extends in $p$ distinct ways. This completes the proof of $(1)$.
	
	Let $e_{\rho_{i}}$ denote the primitive central idempotent corresponding to $\rho_{i}$. Then for each $i \in\{0, 1, \dots , p-1\}$,
	\begin{align*}
	e_{\rho_{i}} & =  
	\frac{1}{p^n}
	\begin{Bmatrix}
	\sum_{h \in H}{\rho_i(h^{-1})h} + \dots + 
	\sum_{h \in H}   \rho_i (h^{-1})h 
	\rho_i(x_n^{-(p-1)}) x_n^{p-1} \end{Bmatrix}\\
	& = \begin{Bmatrix} \frac{1}{p^{n}}\sum_{h \in H}\rho_{i}(h^{-1})h 
	\end{Bmatrix}\begin{Bmatrix}
	1 + \frac{x_n}{\rho_i(x_n)} + \dots +
	\frac{x_n^{p - 1}}{\rho_i(x_n^{p - 1}) } \end{Bmatrix}\\
	& = \frac{e_{\eta}}{p}
	\begin{Bmatrix}
	1 + \frac{x_n}{\rho_i(x_n)} + \dots + 
	\frac{x_n^{p - 1}}{\rho_i{(x_n^{p - 1})}}
	\end{Bmatrix}\\
	& = \frac{e_{\eta}}{p}
	\begin{Bmatrix} 
	1 + \frac{x_n}{\rho_i(x_n)} + \dots + 
	\frac{x_n^{p - 1}}{\rho_i(x_n^{p-1})}
	\end{Bmatrix}
	\\
	& = e_{\eta}e_{\epsilon^{-i}{\zeta^{-1}_{n}}{x_{n}}},
	\end{align*}
	where $\zeta_{n}$ is a fixed $p$-th root of $\zeta_{n - 1}$ and $\epsilon$ is a primitive $p$-th root of unity in ${F}$.
This completes the proof of $(2)$.
\end{proof}
\begin{remark}
	In terms of characters, the expression of every primitive central idempotents of $F[C_{p^{n}}]$ has $p^n$ terms. Such an expression is a product of $n$ factors, and each factor containing $p$
	terms. So each primitive central idempotent in $F[C_{p^{n}}]$ has an expression containing $pn$ terms.
\end{remark}
\section{Wedderburn decomposition of abelian group algebras} \label{dec}
Let $\Phi_{m}(X)$ denote the $m^{th}$ cyclotomic polynomial.
Let $\zeta_{m}$ be a primitive $m^{th}$ root of unity in $\mathbb{C}$. Then the mapping
$\frac{\mathbb{Q}[X]}{\langle \Phi_{m}(X) \rangle} \longrightarrow \mathbb{Q}(\zeta_{m})$
defined by $X + \langle \Phi_{m}(X) \rangle \longmapsto \zeta_{m}$ is an isomorphism. The field $\mathbb{Q}(\zeta_{m})$
is an extension of $\mathbb{Q}$ of degree $\phi(m)$, which is  known as a {\bf{cyclotomic field}}. Since  
$X^n - 1 = \prod_{m \mid n} \Phi_m(X)$ is the factorization into irreducible polynomials over $\mathbb{Q}$, we have
$$\mathbb{Q}[C_{n}] \simeq  \frac{\mathbb{Q}[X]}{\langle X^n - 1 \rangle} = \frac{\mathbb{Q}[X]}{\displaystyle \langle \prod_{m \mid n} \Phi_m(X)\rangle} \simeq \bigoplus_{m \mid n} \frac{\mathbb{Q}[X]}{\langle \Phi_m(X)\rangle} \simeq \bigoplus_{m \mid n} \mathbb{Q}(\zeta_m).$$
\begin{theorem}[\cite{milies126}, Theorem $3.5.4$]
	Let $G$ be a finite abelian group of order $n$, and let $F$ be a field of characteristic $0$ or prime to $n$. Then 
	$$\displaystyle{F[G] \simeq \bigoplus_{d |n}f_{d}F(\zeta_{d})},$$	
	where $\zeta_{d}$ denotes a primitive root of unity of order $d$ and $f_{d} = \frac{n_{d}}{[F(\zeta_{d}):F]}$. In this formula, $n_{d}$ denotes the number of elements of order $d$ in $G$.
\end{theorem}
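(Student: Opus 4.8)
The plan is to reduce the general abelian case to the cyclic case, and for the cyclic case to exploit the factorization $X^n-1 = \prod_{d\mid n}\Phi_d(X)$ into irreducible polynomials over $\mathbb{Q}$, together with Maschke's theorem (Theorem~2.1.6 in the excerpt) which guarantees that $F[G]$ is semisimple whenever $\mathrm{char}\,F$ is $0$ or prime to $|G|$. First I would handle $G = C_n = \langle x \mid x^n = 1\rangle$. Sending $x$ to the class of $X$ gives $F[C_n] \cong F[X]/\langle X^n - 1\rangle$. Over $F$ the polynomial $X^n-1$ factors as a product of the irreducible factors of the various $\Phi_d(X)$, $d\mid n$; grouping the irreducible factors of a fixed $\Phi_d(X)$ and invoking the Chinese Remainder Theorem (the factors are pairwise coprime since $X^n-1$ is separable, using $\mathrm{char}\,F\nmid n$), I get $F[C_n] \cong \bigoplus_{d\mid n}\bigl(F[X]/\langle\Phi_d(X)\rangle\bigr)$. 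Now $F[X]/\langle\Phi_d(X)\rangle$ splits further into $[F(\zeta_d):F]^{-1}\cdot(\text{number of roots of }\Phi_d) = \frac{\phi(d)}{[F(\zeta_d):F]}$ copies of the field $F(\zeta_d)$, because each irreducible factor of $\Phi_d(X)$ over $F$ has degree $[F(\zeta_d):F]$ (this is exactly Proposition~3.2.7 of the excerpt: all irreducible factors of $\Phi_d$ have equal degree, namely $[F(\zeta_d):F]$). So for the cyclic group, $f_d = \phi(d)/[F(\zeta_d):F]$, which matches $n_d/[F(\zeta_d):F]$ since a cyclic group of order $n$ has exactly $n_d = \phi(d)$ elements of order $d$ for each $d\mid n$.

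For a general finite abelian $G$ of order $n$, I would use the structure theorem to write $G \cong C_{m_1}\times\cdots\times C_{m_k}$, whence $F[G] \cong F[C_{m_1}]\otimes_F\cdots\otimes_F F[C_{m_k}]$. Applying the cyclic case to each factor, $F[C_{m_i}] \cong \bigoplus_{d\mid m_i} f^{(i)}_d F(\zeta_d)$, and then I must compute the tensor product of these semisimple commutative algebras. The key subproblem is: for fields $F(\zeta_a)$ and $F(\zeta_b)$, decompose $F(\zeta_a)\otimes_F F(\zeta_b)$; since $F(\zeta_a)F(\zeta_b) = F(\zeta_{\mathrm{lcm}(a,b)})$ is again cyclotomic over $F$ and the extension $F(\zeta_a)/F$ is separable, this tensor product is a direct sum of copies of $F(\zeta_{\mathrm{lcm}(a,b)})$, with the number of copies dictated by degree counting. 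Iterating, $F[G]$ becomes a direct sum of cyclotomic fields $F(\zeta_d)$ for various $d$; collecting terms shows $F[G]\cong\bigoplus_{d\mid n} f_d F(\zeta_d)$ for suitable nonnegative integers $f_d$.

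It then remains to identify $f_d$. Rather than tracking multiplicities through the tensor-product computation — which is the step I expect to be the main obstacle, as the bookkeeping is delicate — I would pin down $f_d$ by a dimension/counting argument that is independent of the chosen decomposition. Namely, the number of simple components of $F[G]$ isomorphic to $F(\zeta_d)$ equals (by the Artin--Wedderburn correspondence and the fact that all irreducible $F$-representations of an abelian group are degree one, factoring through cyclic quotients, cf.\ Theorems~7.1.7--7.1.9 of the excerpt) the number of irreducible $F$-characters $\chi$ of $G$ whose image generates $\langle\zeta_d\rangle$, counted up to $\mathrm{Gal}(F(\zeta_d)/F)$-conjugacy. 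The characters with image exactly $\langle\zeta_d\rangle$ are the faithful characters of the cyclic quotients $G/H$ with $G/H\cong C_d$; there are exactly $n_d$ elements of $G$ of order $d$, and a clean count (each such element lies in the kernel-complement data of exactly one faithful character of a $C_d$-quotient, grouped into orbits of size $[F(\zeta_d):F]$ under the Galois action, by the $F$-conjugacy description of Theorem~3.2.5) yields $f_d = n_d/[F(\zeta_d):F]$. Comparing $F$-dimensions on both sides, $\sum_{d\mid n} f_d\,[F(\zeta_d):F] = \sum_{d\mid n} n_d = |G| = n$, gives a consistency check and completes the proof.
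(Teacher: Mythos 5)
The paper does not actually prove this statement: it is quoted verbatim from Milies--Sehgal (Theorem 3.5.4 there), and the only argument given in the text is the displayed computation for the cyclic case over $\mathbb{Q}$ preceding it. So your proposal cannot match ``the paper's proof''; judged on its own, it is essentially correct, and its ingredients are the same ones the paper develops elsewhere (the cyclic case $F[C_n]\cong\bigoplus_i F[X]/\langle f_i(X)\rangle$ of Theorem \ref{theo}, and the parametrization of irreducible $F$-representations of an abelian group by faithful characters of cyclic quotients in Theorems \ref{abe1}--\ref{abe2}). Two remarks on your write-up. First, the tensor-product step is correct (each simple factor of $F(\zeta_a)\otimes_F F(\zeta_b)$ is indeed a copy of $F(\zeta_{\mathrm{lcm}(a,b)})$, since the minimal polynomial of $\zeta_a$ over $F(\zeta_b)$ has only primitive $a$-th roots of unity as roots), but it is dispensable: once you know $F[G]$ is commutative semisimple, every simple component is a field generated over $F$ by the image of $G$, hence of the form $F(\zeta_d)$ with $d$ the order of the associated character, so your counting argument alone carries the proof. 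Second, the counting step is right in substance but loosely phrased: the clean statement is that characters $\chi\colon G\to\overline{F}^{\times}$ of order exactly $d$ are precisely the faithful characters of quotients $G/\ker\chi\cong C_d$, and their number is $n_d$ because $\widehat{G}\cong G$ has as many elements of order $d$ as $G$ does --- the phrase ``each such element lies in the kernel-complement data of exactly one faithful character'' does not describe a genuine bijection and should be replaced by this duality count. You should also state explicitly why the $\mathrm{Gal}(F(\zeta_d)/F)$-orbits are free (a character of order $d$ has value field exactly $F(\zeta_d)$, so only the identity fixes it), which is what makes each orbit have size $[F(\zeta_d):F]$ and each corresponding component isomorphic to $F(\zeta_d)$; with that, $f_d=n_d/[F(\zeta_d):F]$ follows and your dimension check $\sum_d f_d[F(\zeta_d):F]=\sum_d n_d=n$ is a useful sanity confirmation.
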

\begin{theorem}[\cite{milies126}, Corollary $3.5.5$] $\label{cor}$
	Let $G$ be a finite abelian group of order $n$. Then 
	$$\displaystyle{\mathbb{Q}[G] \simeq \bigoplus_{d | n}{f_{d}}{\mathbb{Q}(\zeta_{d})}},$$
	where $\zeta_{d}$ denotes a primitive root of unity of order $d$ and $f_{d}$ is the number of cyclic subgroups (or cyclic factors) of $G$.
\end{theorem}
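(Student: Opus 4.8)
The plan is to deduce this corollary from the preceding theorem (Milies, Theorem $3.5.4$) by evaluating the multiplicities $f_{d}$ explicitly in the case $F=\mathbb{Q}$. First I would invoke that theorem with $F=\mathbb{Q}$: it already supplies an isomorphism $\mathbb{Q}[G]\simeq\bigoplus_{d\mid n}f_{d}\,\mathbb{Q}(\zeta_{d})$ with $f_{d}=n_{d}/[\mathbb{Q}(\zeta_{d}):\mathbb{Q}]$, where $n_{d}$ denotes the number of elements of order exactly $d$ in $G$. Thus the whole content of the corollary is the identity $n_{d}/[\mathbb{Q}(\zeta_{d}):\mathbb{Q}]=\#\{\,C\le G : C \text{ cyclic},\ |C|=d\,\}$; in particular this number is an integer, as the left-hand side is \emph{a priori} only a rational number.

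Next I would assemble the two ingredients. On the arithmetic side, $\Phi_{d}(X)$ is irreducible over $\mathbb{Q}$ (the classical irreducibility of cyclotomic polynomials, already used in the discussion preceding the theorem), so $[\mathbb{Q}(\zeta_{d}):\mathbb{Q}]=\deg\Phi_{d}=\phi(d)$. On the group-theoretic side I would prove the elementary counting lemma: in any finite group $G$, the set of elements of order $d$ is the disjoint union, taken over all cyclic subgroups $C\le G$ with $|C|=d$, of the set of generators of $C$. Indeed, every $g\in G$ of order $d$ generates a cyclic subgroup $\langle g\rangle$ of order $d$ and is one of its $\phi(d)$ generators; conversely, if two elements of order $d$ generate the same cyclic subgroup they are counted against the same $C$, and a cyclic group of order $d$ has exactly $\phi(d)$ elements of order $d$, each of which generates it. Summing over the distinct cyclic subgroups of order $d$ gives $n_{d}=\phi(d)\cdot\#\{\,C\le G : C \text{ cyclic},\ |C|=d\,\}$ (both sides being $0$ when $G$ has no element of order $d$).

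Combining the two inputs yields $f_{d}=n_{d}/[\mathbb{Q}(\zeta_{d}):\mathbb{Q}]=n_{d}/\phi(d)=\#\{\,C\le G : C \text{ cyclic},\ |C|=d\,\}$, which is precisely the number of cyclic subgroups of $G$ of order $d$; substituting this value of $f_{d}$ into the isomorphism of Theorem $3.5.4$ finishes the proof. As a consistency check I would note that for $G=C_{n}$ this recovers the displayed decomposition $\mathbb{Q}[C_{n}]\simeq\bigoplus_{m\mid n}\mathbb{Q}(\zeta_{m})$, since a cyclic group of order $n$ has exactly one subgroup of order $m$ for each $m\mid n$.

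I do not anticipate a real obstacle here: the argument is short once Theorem $3.5.4$ is granted. The only points requiring care are making the counting lemma airtight — especially that distinct cyclic subgroups of order $d$ contribute disjoint sets of generators — and fixing the operative interpretation of the multiplicity: for the rational group algebra the correct quantity is the number of cyclic \emph{subgroups} of order $d$, so I would state and use it in that form.
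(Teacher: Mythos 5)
Your derivation is correct; note that the paper gives no proof of this statement at all, simply citing Milies--Sehgal, Corollary $3.5.5$, and your argument is precisely the standard deduction of that corollary from the preceding Theorem $3.5.4$: specialize to $F=\mathbb{Q}$, use the irreducibility of $\Phi_{d}(X)$ over $\mathbb{Q}$ to get $[\mathbb{Q}(\zeta_{d}):\mathbb{Q}]=\phi(d)$, and use the partition of the elements of order $d$ into the generator sets of the distinct cyclic subgroups of order $d$ to get $n_{d}=\phi(d)\cdot f_{d}$. Your handling of the two delicate points (disjointness of the generator sets, and reading the multiplicity as the number of cyclic subgroups of order $d$) is sound, so nothing is missing.
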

\section{Pull back of idempotents}
Let $G$ be a finite group. Let $F$ be a field of characteristic $0$ or prime to $|G|$. For a subgroup $H$ of $G$, we define $\widehat{H} = {\sum_{h \in H}h}$ and $e_{H} = \frac{1}{|H|}{\widehat{H}}$. 
\begin{theorem}[\cite{milies126}, Proposition 3.6.7]\label{pro}
	Let $G$ be a finite group and $N$ be a normal subgroup of $G$. Let $F$ be a field of characteristic $0$ or prime to $|G|$. 
	Then 
	$e_N$ is a central idempotent of $F[G]$, $F[G]e_{N} \simeq F[G/N]$ and $F[G] = F[G]e_{N} \oplus F[G](1 - e_{N})$.
\end{theorem}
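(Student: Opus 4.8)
The plan is to verify the three assertions in the order stated, the whole argument resting on the observation that the hypothesis on $\mathrm{char}\,F$ (char $0$ or prime to $|G|$, hence prime to $|N|$ since $|N|$ divides $|G|$) makes $|N|$ invertible in $F$, so that $e_N = \frac{1}{|N|}\widehat{N}$ is a well-defined element of $F[G]$.

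First I would check that $e_N$ is a central idempotent. Since left multiplication by a fixed element of $N$ permutes $N$, one has $\widehat{N}^2 = |N|\,\widehat{N}$, and dividing by $|N|^2$ gives $e_N^2 = e_N$. For centrality I use that $N$ is normal: for any $g \in G$, $g\widehat{N}g^{-1} = \sum_{n \in N} gng^{-1} = \widehat{gNg^{-1}} = \widehat{N}$, so $g e_N = e_N g$ for all $g \in G$, and by $F$-linearity $e_N$ commutes with every element of $F[G]$. The direct sum decomposition is then immediate: $1 = e_N + (1-e_N)$ is a decomposition of the identity into orthogonal central idempotents, so, as recalled in the discussion of partitions of unity in Section~\ref{diag}, $F[G] = F[G]e_N \oplus F[G](1-e_N)$ is a direct sum of two-sided ideals.

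For the isomorphism $F[G]e_N \simeq F[G/N]$ I would use the canonical projection $\pi\colon G \to G/N$ and its $F$-linear extension $\tilde{\pi}\colon F[G] \to F[G/N]$, which is a surjective $F$-algebra homomorphism, and show that $\ker\tilde{\pi} = F[G](1-e_N)$. One inclusion is easy: $\tilde{\pi}(e_N) = \frac{1}{|N|}\sum_{n\in N}\pi(n) = \bar{1}$, hence $\tilde{\pi}(1-e_N) = 0$ and $F[G](1-e_N) \subseteq \ker\tilde{\pi}$. For the reverse I would compare dimensions. Choosing coset representatives $g_1,\dots,g_m$ of $N$ in $G$ with $m = [G:N]$, the elements $g_i e_N$ are linearly independent (the cosets $g_iN$ have disjoint supports) and span $F[G]e_N$ (because $n\widehat{N} = \widehat{N}$ for $n \in N$, so $ge_N = g_ie_N$ whenever $g \in g_iN$), giving $\dim_F F[G]e_N = m$; consequently $\dim_F F[G](1-e_N) = |G| - m = \dim_F \ker\tilde{\pi}$, the last equality because $\tilde{\pi}$ is onto $F[G/N]$ and $\dim_F F[G/N] = m$. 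Together with the inclusion already proved this forces $\ker\tilde{\pi} = F[G](1-e_N)$. Restricting $\tilde{\pi}$ to the complementary summand $F[G]e_N$ then yields an $F$-algebra homomorphism with kernel $F[G]e_N \cap F[G](1-e_N) = 0$, hence injective, and surjective by the dimension count, so $F[G]e_N \simeq F[G/N]$.

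The only real bookkeeping, and hence the main point requiring care, is the identification of $\ker\tilde{\pi}$ with $F[G](1-e_N)$ via the dimension argument; everything else is a routine consequence of $e_N$ being an available central idempotent.
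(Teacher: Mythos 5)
Your argument is correct: the well-definedness of $e_N$, the computation $\widehat{N}^2=|N|\widehat{N}$, the centrality via $g\widehat{N}g^{-1}=\widehat{N}$, and the splitting $F[G]=F[G]e_N\oplus F[G](1-e_N)$ all match the paper, and your dimension count for the isomorphism is sound (the elements $g_ie_N$ are supported on disjoint cosets, hence a basis of $F[G]e_N$ of size $[G:N]$).

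Where you diverge from the paper is in the last step. The paper works \emph{inside} $F[G]e_N$: it observes that $g\mapsto ge_N$ is a group epimorphism $G\to Ge_N$ with kernel $N$, so $G/N\simeq Ge_N$ as groups, and then, since $Ge_N$ is an $F$-basis of $F[G]e_N$, the linear extension gives $F[G]e_N\simeq F[G/N]$. You instead work \emph{from outside}: you take the canonical algebra epimorphism $\tilde{\pi}\colon F[G]\to F[G/N]$, show $\tilde{\pi}(e_N)=\bar{1}$ so that $F[G](1-e_N)\subseteq\ker\tilde{\pi}$, and force equality by comparing dimensions, after which restriction of $\tilde{\pi}$ to the summand $F[G]e_N$ is the desired isomorphism. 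The two routes rest on the same combinatorial fact (that $ge_N$ depends only on the coset $gN$ and distinct cosets give independent elements), but yours buys the extra identification $\ker\tilde{\pi}=F[G](1-e_N)$ explicitly and gets the algebra-homomorphism property for free from $\tilde{\pi}$, whereas the paper's route is shorter on bookkeeping but leaves the multiplicativity of the extended map to the remark that $Ge_N$ is a basis consisting of group-like elements. Either is acceptable; no gap.
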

\begin{proof}
	It is clear that $e_{N}$ is an idempotent of $F[G]$. As $N$ is a normal subgroup of $G$, then for any $g \in G$ we have $g^{-1}Ng = N$; therefore $g^{-1}\widehat{N}g = \widehat{N}$. Thus $\widehat{N}g = g\widehat{N}$, for all $g \in G$, which implies that ${e_N}g = g{e_N}$, for all $g \in G$. This shows that $e_{N}$ is central in $G$. Consequently, $e_{N}F[G] = F[G]e_{N}$, which implies that $e_{N}$ is central in $F[G]$. Since we have shown that $e_{N}$ is a central idempotent in $F[G]$, it is clear that $F[G] = F[G]e_{N} \oplus F[G](1 - e_{N})$. To see that $F[G/N] = F[G]e_{N}$, we shall first show that $G/N \simeq Ge_{N}$ as groups. In fact, it is easy to see that the map $\phi: G \rightarrow Ge_{N}$ given by $g \longmapsto ge_{N}$ is a group epimorphism. Since Ker$(\phi) = H$, the result follows. As $Ge_{N}$ is a basis of $F[G]e_{N}$ over $F$, we readily have that $F[G]e_{N} \simeq F[G/N]$.
\end{proof}
\begin{definition}\rm
	Let $G$ be a finite group and $N$ be a normal subgroup of $G$. Let $F$ be a field of characteristic $0$ or prime to $|G|$. Let $\phi: F[G] \longrightarrow F[G/N]$ be the projection map induced from the natural epimorphism: $G \longrightarrow G/N$. Let $\bar{e} \in F[G/N]$ be an element. Then an element $e \in F[G]$ is called a {\bf lift} of the element $\bar{e}$ if $\phi(e) = \bar{e}$, and the element $e_{N}e \in F[G]$ is called {\bf the pull back} of $\bar{e}$.
\end{definition}
\section{Primitive central idempotents of $\mathbb{Q}[G]$, $G$ an abelian $p$-group}\label{ra}
In this section, we give simple expressions for the primitive central idempotents of rational group algebra of an abelian $p$-group, $p$ a prime, in terms of a long system of generators. 

Let $G$ be an abelian $p$-group of order $p^m$, $m \geq 1$. 
Let us consider a subnormal series
$$\langle  e  \rangle = G_{o} \leq G_1 \leq \dots \leq G_{l}\leq G_{l+1} \leq \dots \leq G_{m} = G$$ such that $[G_{i}\colon G_{i-1}] = p$. Associated with the subnormal series, we have a long presentation of $G$. 
Let $\{x_1, x_2, \dots , x_m\}$
be the associated long system of generators of $G$. 
We set  
$e_{X}= (1+ X + \dots + X^{p-1}) / {p}$ and  $e_{X}'=1-e_{X}$, where $X$ is an indeterminate.
\begin{theorem}\label{th}
	Let $G$ be an abelian $p$-group of order $p^{m}$, $m \geq 1$. 
	Then the following hold true. 
	\begin{itemize}
		\item[1.] Every non-trivial primitive central idempotent of $\mathbb{Q}[G_l]$ can be written as a product of some $e_{x}$'s, where $x \in G_{l}$ and exactly one $e_{x_j}'$ with $j \in \{1, 2, \dots,l\}$. 
		\item[2.] If $e$ is the trivial primitive central idempotent of $\mathbb{Q}[G_l]$, then it is the sum of two primitive central idempotents $ee_{x_{l+1}}$ and $ee'_{x_{l+1}}$ of $\mathbb{Q}[G_{l+1}]$.
	\end{itemize}
\end{theorem}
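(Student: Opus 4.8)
The plan is to classify the primitive central idempotents of $\mathbb{Q}[G_l]$ via the Wedderburn decomposition over $\mathbb{Q}$ of abelian groups together with the pull-back mechanism of Theorem \ref{pro}, and then read off both assertions. Two elementary identities will be used throughout. First, for subgroups with $N_1\cap N_2=\langle e\rangle$ one has $e_{N_1}e_{N_2}=e_{N_1N_2}$, and for an element $w$ of order $p^{t}$ one has $e_{\langle w\rangle}=e_{w}e_{w^{p}}\cdots e_{w^{p^{t-1}}}$; by the structure theorem for finite abelian $p$-groups it follows that $e_{N}$ is a product of factors $e_{x}$ with $x\in N$, for every subgroup $N\le G_l$. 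Second, the ``remarkable'' fact recorded at the end of Section \ref{long}, that the monomials $x_{1}^{a_1}\cdots x_{i}^{a_i}$ with $0\le a_j<p$ enumerate $G_i$, gives $e_{x_1}e_{x_2}\cdots e_{x_i}=e_{G_i}$ for each $i$.

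Next I would record the classification. For $N\le G_l$ with $G_l/N$ cyclic, Theorem \ref{pro} gives $\mathbb{Q}[G_l]e_{N}\cong\mathbb{Q}[G_l/N]$, and the Wedderburn decomposition of Section \ref{dec} identifies $\mathbb{Q}[C_{p^{s}}]$ with $\bigoplus_{i=0}^{s}\mathbb{Q}(\zeta_{p^{i}})$. Combining this with Theorems \ref{th2} and \ref{th4} (over $F=\mathbb{Q}$ each cyclic quotient contributes a single faithful irreducible, as $\Phi_{p^{s}}$ is irreducible over $\mathbb{Q}$), the primitive central idempotents of $\mathbb{Q}[G_l]$ are exactly the trivial one $e_{G_l}$ and, for each proper subgroup $K\subsetneq G_l$ with $G_l/K\cong C_{p^{s}}$ ($s\ge1$), the element $e_{K}-e_{K^{+}}$, where $K^{+}$ is the subgroup with $K\le K^{+}\le G_l$, $[K^{+}:K]=p$, and $K^{+}/K$ the unique order-$p$ subgroup of $G_l/K$ (a short check, using $e_{K}=\sum\{g\text{ primitive central}:\ker g\supseteq K\}$, shows $e_{K}-e_{K^{+}}$ is the single primitive central idempotent with kernel exactly $K$); here I used $e_{K}e_{K^{+}}=e_{K^{+}}$, valid since $K\subseteq K^{+}$.

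For part 1, let $f=e_{K}-e_{K^{+}}$ be a non-trivial primitive central idempotent of $\mathbb{Q}[G_l]$, with $G_l/K\cong C_{p^{s}}$. Since $\overline{x_1},\dots,\overline{x_l}$ generate $G_l/K$, some $\overline{x_{j_0}}$ has order $p^{s}$, so $x_{j_0}$ has power index $a\ge s$ in its cyclic factor of the long presentation; by the $p$-power chain relations $x^{p}_{\{(r,j),\,b\}}=x_{\{(r,j),\,b-1\}}$ this makes $x_{j_1}:=x_{j_0}^{\,p^{s-1}}$ again a long generator, introduced no later than $x_{j_0}$, so that $j_1\in\{1,\dots,l\}$, and $\overline{x_{j_1}}$ has order $p$ in $G_l/K$, hence generates $K^{+}/K$. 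Therefore $K^{+}=\bigsqcup_{i=0}^{p-1}Kx_{j_1}^{\,i}$, whence $e_{K^{+}}=e_{K}e_{x_{j_1}}$ and
\[
f=e_{K}-e_{K}e_{x_{j_1}}=e_{K}\bigl(1-e_{x_{j_1}}\bigr)=e_{K}\,e'_{x_{j_1}}.
\]
Expanding $e_{K}$ as a product of $e_{x}$'s with $x\in K\subseteq G_l$ exhibits $f$ in the required form, with exactly one factor of type $e'_{x_j}$.

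For part 2, let $e=e_{G_l}$ be the trivial primitive central idempotent of $\mathbb{Q}[G_l]$; by the monomial identity $e=e_{x_1}\cdots e_{x_l}$. From $e_{x_{l+1}}+e'_{x_{l+1}}=1$ we get $e=ee_{x_{l+1}}+ee'_{x_{l+1}}$; here $ee_{x_{l+1}}=e_{x_1}\cdots e_{x_{l+1}}=e_{G_{l+1}}$ is the trivial primitive central idempotent of $\mathbb{Q}[G_{l+1}]$, while $ee'_{x_{l+1}}=e_{G_l}-e_{G_l}e_{x_{l+1}}=e_{G_l}-e_{G_{l+1}}$ is the idempotent $e_{K}-e_{K^{+}}$ of the classification with $K=G_l$, $K^{+}=G_{l+1}$, hence a primitive central idempotent of $\mathbb{Q}[G_{l+1}]$. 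Thus $e$ is a sum of the two primitive central idempotents $ee_{x_{l+1}}$ and $ee'_{x_{l+1}}$ of $\mathbb{Q}[G_{l+1}]$. The computations $f=e_{K}e'_{x_{j_1}}$ and part 2 are not where the difficulty lies; the real work is establishing the classification in the second paragraph (that the $e_{K}-e_{K^{+}}$ exhaust the non-trivial primitive central idempotents, with the stated closed form) and, inside part 1, the point that the single $e'$-factor can always be chosen to be an honest long generator $x_{j_1}$ rather than an arbitrary element of $G_l$. This last step is exactly where the $p$-power chain shape of the long presentation is indispensable, and I expect it to require the most care, especially for $s\ge2$.
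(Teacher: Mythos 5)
Your argument is sound and lands on the same final identity $e=e_{K}e'_{x_{j}}$ as the paper, but it gets there by a genuinely different route. The paper first settles the cyclic case by hand, exhibiting $e_{0}=e_{x_{1}}\cdots e_{x_{m}}$ and $e_{i}=e_{x_{1}}\cdots e_{x_{i-1}}e'_{x_{i}}$ as pairwise orthogonal idempotents whose number matches the Wedderburn count; then, for a non-trivial primitive central idempotent $e$ of $\mathbb{Q}[G_{l}]$, it observes that $\mathbb{Q}[G_{l}]e$ is a field, so $G_{l}e$ is a cyclic group, sets $K=\{g\in G_{l}\mid ge=e\}$, and uses Theorem \ref{pro} together with the constancy of the coefficients of $e$ on $K$-cosets to recognize $e$ as the pull back of a non-trivial idempotent of the cyclic quotient $G_{l}/K$, to which the cyclic case is applied. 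You replace this with an a priori classification: primitive central idempotents of $\mathbb{Q}[G_{l}]$ correspond to subgroups $K$ with cyclic quotient (Theorems \ref{th2} and \ref{th4} plus irreducibility of $\Phi_{p^{s}}$ over $\mathbb{Q}$), and the one with kernel $K$ is $e_{K}-e_{K^{+}}$, pinned down via $e_{K}=\sum\{f:\ker f\supseteq K\}$ and the chain structure of subgroups above $K$. This buys you a cleaner part 2 (the paper counts simple components of $\mathbb{Q}[G_{l+1}]e$, whereas you just compute $ee_{x_{l+1}}=e_{G_{l+1}}$ and recognize $ee'_{x_{l+1}}=e_{G_{l}}-e_{G_{l+1}}$ from the classification), and your explicit choice of a long generator $x_{j_{1}}$ whose image generates $K^{+}/K$ is more careful than the paper's brief appeal to ``a long presentation of $G/K$ by the images $\bar{x}_{i}$''. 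One caveat, which you yourself flag as the crux: the step ``$x_{j_{0}}^{p^{s-1}}$ is again a long generator introduced no later than $x_{j_{0}}$'' is valid only for the canonical three-index presentation of Section \ref{long}, where every long generator is a pure element of a fixed cyclic factor and, because every step of the series has index $p$, power indices increase along the enumeration within each factor; for an arbitrary long presentation in the sense of Definition 1.1.4 (e.g.\ $C_{4}\times C_{2}$ with $x_{1}=b$, $x_{2}=a^{2}b$, $x_{3}=a$, where $x_{3}^{2}=x_{1}x_{2}$ is not a long generator) this specific step fails and a different choice of $x_{j_{1}}$ would be needed. Since the paper's own proof implicitly relies on the same canonical presentation, this is a shared ambiguity of the setup rather than a defect of your argument, but it would be worth stating the hypothesis explicitly.
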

\begin{proof}
	(1) First assume that $G$ is a cyclic $p$-group of order $p^{m}$. Then $G_l$ is also a cyclic group, so it is sufficient to prove the result for $G$ only. Let 
	$$G = \langle{x_{1}, x_{2}, \dots, x_{m} \mid x^{p}_{1} = 1, x^{p}_{2} = x_{1}, \dots , x^{p}_{m} = x_{m - 1}}\rangle$$ 
	be the long presentation of $G$.
	By Theorem $\ref{cor}$, $\mathbb{Q}[G] \simeq \displaystyle{\bigoplus_{i = 0}^{m}{\mathbb{Q}(\zeta_{p^{i}})}}$, So $\mathbb{Q}[G]$ has
	$(m + 1)$ primitive central idempotents.
	Note that for $1\leq i\leq j$, 
	$$(e_{x_{1}}e_{x_{2}} \dots e_{x_{i}}) (e_{x_{1}}e_{x_{2}} \dots e_{x_{j}}) = (e_{x_{1}}e_{x_{2}} \dots e_{x_{j}}).$$ 
	Let $e_{o} = e_{x_{1}}e_{x_{2}} \dots e_{x_m}$. Then for $i \geq 1$, 
	$$e_{0}e_{i} = (e_{x_{1}}e_{x_{2}} \dots e_{x_m})e_{i}
	= (e_{x_{1}}e_{x_{2}}\dots e_{x_m})\{(e_{x_{1}}e_{x_{2}}\dots e_{x_{i - 1}}) - (e_{x_{1}}e_{x_{2}} \dots e_{x_{i}})\}
	= 0.$$
	Hence $e_{0}$ is orthogonal to $e_{i}.$
	For $1 \leq i < j$, we get
	\begin{align*}
	{e_i}{e_{j}} &= (e_{x_{1}}e_{x_{2}} \dots e^{'}_{x_{i}})(e_{x_{1}}e_{x_{2}} \dots e^{'}_{x_{j}})\\
	&= \{e_{x_{1}}e_{x_{2}} \dots e_{x_{i - 1}}(1 - e_{x_{i}})\}\{e_{x_{1}}e_{x_{2}} \dots e_{x_{j - 1}}(1 - e_{x_{j}})\}\\
	& = 0.
	\end{align*}
	One can check that $\sum_{i = 0 }^{m}{e_{i}} = 1$. Thus $e_{0}, e_{1}, \dots , e_{m}$ are
	$(m + 1)$ pairwise orthogonal central idempotents whose sum is $1$. So they are the complete set of primitive central idempotents of
	$\mathbb{Q}[G]$. 
	
	Let $G$ be a non-cyclic abelian $p$-group. We give arguments for $G$ which can be applied to each $G_l$. 
	Let $e$ be a non-trivial primitive central idempotent in  $\mathbb{Q}[G]$. Since $\mathbb{Q}[G]$ is isomorphic to the direct sum of fields, $\mathbb{Q}[G]e$ is a field with multiplicative identity $e$. The multiplicative group of $\mathbb{Q}[G]e$ contains the finite subgroup $Ge = \{ge \,|\, g \in G \}$, which is necessarily cyclic.
	Let $K = \{ g \in G \mid ge = e \}$, then $G/K \simeq Ge$ which is a cyclic group of order $p^{n}$ (say).  We may consider a long presentation of $G/K$:
	${G}/{K}=\langle{\bar{x}_{1}, \bar{x}_{2}, \dots , \bar{x}_{n} \mid {\bar{x}_{1}}^{p}= 1, {\bar{x}_{2}}^{p}= \bar{x}_{1},
		\dots , {\bar{x}_{n}}^{p} = \bar{x}_{n - 1}}\rangle,$
	where $\bar{x_{i}}$ is the image of $x_{i}$ in $G/K$. By Lemma $\ref{pro}$, $\mathbb{Q}[G] = \mathbb{Q}[G]{e_{K}} \oplus \mathbb{Q}[G]{(1 - e_{K})}$ and since 
	$e_{K}e = e$, it follows that $\mathbb{Q}[G]e = \mathbb{Q}[G]{e_{K}}e$ is actually a simple component of
	$\mathbb{Q}[G]{e_{K}}$. Thus by Lemma $\ref{pro}$, $e$ is also a primitive central idempotent of $\mathbb{Q}[G]{e_{K}} \simeq \mathbb{Q}[G/K]$.
	
	Let $\varphi: \mathbb{Q}[G] \rightarrow \mathbb{Q}[{G}/{K}]$ be the natural homomorphism induced by the natural map $G \rightarrow G/K$.
	Recall that $e = e_{K}e \mapsto \phi(e)$ under the isomorphism $\mathbb{Q}[G]e_{K} \rightarrow \mathbb{Q}[G/K]$. Thus $\phi(e)$ is a primitive central idempotent of $\mathbb{Q}[G/K]$.
	Write $e = \sum_{g \in G}{e(g)g}$, $e(g) \in \mathbb{Q}$. Since $e$ is the non-trivial primitive central idempotent in $\mathbb{Q}[G]$, there exists some $e(g) \neq {1 / \mid{G}\mid}$. Since $ke = e$, for any $k \in K$, it follows that $e(g) = e(kg)$, for any $g \in G$ and
	$k \in K$, that is,  coefficients of  elements of cosets of $K$ in the sum of $e$ are equal. Thus
	$e = \sum_{i = 0}^{p^{n} - 1}{\alpha_{i}{e_{K}}{x^{i}_{n}}}$,
	$\alpha_{i} \in \mathbb{Q}$ and  ${\alpha_{i}}/{\mid{K}\mid}\neq {1/\mid{G}\mid}$ for some $i$, which implies $\alpha_{i} \neq \frac{1}{|G/K|}$. Since
	$\varphi(e) = \sum_{i = 0}^{p^{n} - 1}{\alpha_{i}}{\bar{x}_{n}}^{i}$,
	we have $\varphi(e) \neq e_{{G}/{K}}$. Hence, we get 
	$\varphi(e) = e_{i} = (e_{\bar{x_{1}}}e_{\bar{x_{2}}} \dots e_{\bar{x}_{i-1}} - e_{\bar{x_{1}}}e_{\bar{x_{2}}} \dots e_{\bar{x_{i}}}),$	
	for some $i$, $1 \leq i \leq n$.
	Also, $e = e_{K}(e_{x_{1}}e_{x_{2}} \dots e_{x_{i - 1}}- e_{x_{1}}e_{x_{2}} \dots e_{x_{i}})$, which implies that
	$e = e_{K}(e_{x_{1}}e_{x_{2}} \dots e_{x_{i - 1}})( 1 - e_{x_{i}})$, for some $i$. Again, $e_{K}$ can be
	expressed as the product of a certain number of $e_{x}$'s, $x \in G$. Hence each non trivial primitive central idempotent at the $l$-th stage of
	PCI-diagram can be written as a product of some $e_{x}$'s, where $x \in G$ and exactly one $e_{x_j}'$ for some $j \in \{1,\dots,l\}$. This completes the proof of $(1)$.
	
	(2) Let $e$ be the trivial primitive central idempotent of $\mathbb{Q}[G_l]$. Clearly, $e$ is a central idempotent in $\mathbb{Q}[G_{l + 1}]$. 
	Hence $\mathbb{Q}[G_{l + 1}]e$ is a two-sided ideal in $\mathbb{Q}[G_{l + 1}]$, which is the semisimple component of the induced representation induced from the irreducible representation
	corresponding to $e$.
	The two-sided ideal $\mathbb{Q}[G_{l + 1}]e$ is the direct sum of minimal left ideals in $\mathbb{Q}[G_{l + 1}]$. It is clear that  $\mathbb{Q}[G_{l + 1}]ee_{x_{l+1}}$ is one simple component of $\mathbb{Q}[G_{l + 1}]e$.
	Since $\mathbb{Q}[G_{l + 1}]e$ is direct sum of two simple components, $\mathbb{Q}[G_{l + 1}]ee^{'}_{x_{l+1}}$ is another simple component of $\mathbb{Q}[G_{l + 1}]e$, and so $e = ee_{x_{l+1}} + ee^{'}_{x_{l+1}}$. This completes the proof of $(2)$.
\end{proof}
\begin{theorem}\label{th1}
	Let $e$ be a non-trivial primitive central idempotent of $\mathbb{Q}[G_l]$. By Theorem \ref{th}, $e$ can be expressed as the product of some $e_{x}$'s, where $x \in G_{l}$ and exactly one $e^{'}_{x_j}$, for some $j \in \{1, 2,\dots, l\}$. 
	\begin{itemize}
		\item[1.] If $x_j = x_{l+1}^{p^s}$, for some $s\geq 1$, then $e$ is a primitive central idempotent of $\mathbb{Q}[G_{l+1}]$. 
		\item[2.] If $x_j\neq x_{l+1}^{p^s}$, for any $s\geq 1$, then $e$ is the sum of $p$ distinct primitive central idempotents $ee_{x_{l+1}}, ee_{x_jx_{l+1}}, \dots, ee_{x_j^{p-1}x_{l+1}}$ of $\mathbb{Q}[G_{l+1}]$.
	\end{itemize}
\end{theorem}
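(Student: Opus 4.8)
The plan is to analyse the one‑step passage from $\mathbb{Q}[G_l]$ to $\mathbb{Q}[G_{l+1}]$ along the index‑$p$ inclusion $G_l\triangleleft G_{l+1}$, much as in the proof of Theorem \ref{th} but now for a \emph{non‑trivial} idempotent. Since $G_{l+1}$ is abelian, $e$ remains a central idempotent of $\mathbb{Q}[G_{l+1}]$, so $\mathbb{Q}[G_{l+1}]e$ is a direct sum of some of the simple components of $\mathbb{Q}[G_{l+1}]$ --- exactly those we must pin down. Put $w:=x_{l+1}^{p}\in G_l$. As $x_{l+1}$ commutes with $G_l$ and $x_{l+1}^{p}=w$, there is a $\mathbb{Q}[G_l]$–algebra isomorphism
\[
\mathbb{Q}[G_{l+1}]e\;\cong\;\bigl(\mathbb{Q}[G_l]e\bigr)[y]\big/\bigl(y^{p}-we\bigr),\qquad y\longleftrightarrow x_{l+1}e .
\]
Now $K:=\mathbb{Q}[G_l]e$ is a field (a primitive central idempotent of the commutative semisimple algebra $\mathbb{Q}[G_l]$), and since $e$ is non‑trivial $K\cong\mathbb{Q}(\zeta_{p^{n}})$ for some $n\ge 1$ by Theorem \ref{cor}; hence the image $G_l e\subseteq K^{\times}$ is precisely the group of $p^{n}$–th roots of unity in $K$, so $\bar w:=we$ is a $p^{n}$–th root of unity and $\zeta_{p}\in K$.

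Everything now hinges on one question: \emph{is $\bar w$ a $p$–th power in $K$?} Because $p$ is prime and $\operatorname{char}K=0$, the binomial $y^{p}-\bar w$ is irreducible over $K$ when $\bar w\notin K^{p}$, and otherwise splits completely, $y^{p}-\bar w=\prod_{k=0}^{p-1}(y-\zeta_{p}^{k}\delta)$ with $\delta^{p}=\bar w$ (using $\zeta_{p}\in K$). In the first case $\mathbb{Q}[G_{l+1}]e\cong K[\delta]$ is again a field, so $e$ is a primitive central idempotent of $\mathbb{Q}[G_{l+1}]$ (the new field being $\mathbb{Q}(\zeta_{p^{n+1}})$); in the second $\mathbb{Q}[G_{l+1}]e\cong K^{\oplus p}$, so $e=\varepsilon_{0}+\dots+\varepsilon_{p-1}$ for $p$ mutually orthogonal primitive central idempotents of $\mathbb{Q}[G_{l+1}]$. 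Moreover, since $\bar w$ is a $p^{n}$–th root of unity, $\bar w\in K^{p}$ holds iff $\bar w$ has order $<p^{n}$, i.e.\ iff $\bar w$ fails to generate $G_l e$ (the $p$–th roots of a primitive $p^{n}$–th root of unity are primitive $p^{n+1}$–th roots, lying in the strictly larger field $\mathbb{Q}(\zeta_{p^{n+1}})$).

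It remains to match the two alternatives with the condition on $x_j$. Here I would invoke the description $e=(\prod e_{x})\,e'_{x_j}$ from Theorem \ref{th} together with an induction over the PCI–diagram. The factor $e'_{x_j}$ forces $x_j e$ to have order exactly $p$ in $K^{\times}$, so we may take $x_j e=\zeta_{p}$ under $K\cong\mathbb{Q}(\zeta_{p^{n}})$. Tracking how the distinguished generator and the degree $[\mathbb{Q}(\zeta_{p^{n}}):\mathbb{Q}]$ evolve along the diagram (the degree multiplies by $p$ exactly at the Case‑1 steps and is unchanged at the splitting steps), one shows: if $x_j=x_{l+1}^{p^{s}}$ then $x_{l+1}^{p}=x_l$ is the top member of $x_j$'s chain inside $G_l$ and $n=s$, whence $\bar w=x_l e$ generates $G_l e$, so $\bar w\notin K^{p}$ and we land in the first case, giving conclusion (1); whereas if $x_j\ne x_{l+1}^{p^{s}}$ for every $s$, then $x_{l+1}$ lies in a different chain and $x_{l+1}^{p}e\in K^{p}$ --- indeed, for the canonical choice of $x_j$ one gets $x_{l+1}^{p}e=e$, so $y^{p}-\bar w=y^{p}-1$ splits over $K$. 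Finally one identifies the summands: with $\zeta_{p}=x_j e$ and $y=x_{l+1}e$ one computes $e\,e_{x_j^{k}x_{l+1}}=\tfrac1p\sum_{t=0}^{p-1}\zeta_{p}^{kt}y^{t}$, which is the idempotent of $K[y]/(y^{p}-1)$ supported at the root $y=\zeta_{p}^{-k}$; these $p$ elements are non‑zero, idempotent, pairwise orthogonal, distinct, and sum to $e$ by $\sum_{k}\zeta_{p}^{kt}=p$ or $0$ according as $p\mid t$ or not --- and this list is exactly $ee_{x_{l+1}},ee_{x_jx_{l+1}},\dots,ee_{x_j^{p-1}x_{l+1}}$, proving (2).

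The step I expect to be the real obstacle is the third one: establishing the exact correspondence between the combinatorial condition ``$x_j=x_{l+1}^{p^{s}}$'' and the algebraic condition ``$x_{l+1}^{p}e\notin K^{p}$''. The delicate point is that the generator $x_j$ of Theorem \ref{th} is not literally unique: when a non‑trivial idempotent splits, the pieces $ee_{x_j^{k}x_{l+1}}$ with $k\ne0$ also admit a presentation whose unique $e'$–factor is $e'_{x_{l+1}}$, so the distinguished generator must be updated to $x_{l+1}$ on those pieces; only with the canonical choice produced by the PCI–diagram algorithm are cases (1) and (2) mutually exclusive and exhaustive and the order–of–$\bar w$ computation above valid. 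Making this bookkeeping precise --- essentially re‑deriving Theorem \ref{th} with a definite $x_j$ attached to each vertex of the diagram --- is where the care is needed; an alternative, possibly cleaner, route is to run the whole argument through the pull‑back map of Lemma \ref{pro}, as in the proof of Theorem \ref{th}, replacing field‑extension language by direct manipulation of the idempotents $e_{x}$ and $e'_{x}$.
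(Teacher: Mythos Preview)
Your framework via the isomorphism $\mathbb{Q}[G_{l+1}]e\cong K[y]/(y^{p}-\bar w)$ with $K=\mathbb{Q}[G_l]e\cong\mathbb{Q}(\zeta_{p^{n}})$ is correct, and the Kummer dichotomy (irreducible versus completely split, since $\zeta_p\in K$) together with your explicit identification of the idempotents $ee_{x_j^{k}x_{l+1}}$ in the split case is sound. This is a genuinely different route from the paper, which never passes to field extensions but works entirely through the pull-back mechanism of Lemma~\ref{pro}: using the identity $e=e_{K}e'_{x_j}$ from the proof of Theorem~\ref{th}, the paper argues in case~(1) that $G_{l+1}/K$ is cyclic of order $p^{n+1}$, so $e_{K}e'_{x_j}$ is already the pull-back of the faithful idempotent of $\mathbb{Q}[G_{l+1}/K]$; and in case~(2) it exhibits the $p$ distinct subgroups $\langle K,\,x_j^{i}x_{l+1}\rangle$ with cyclic quotient $\cong C_{p^{n}}$ and identifies $ee_{x_j^{i}x_{l+1}}=e_{\langle K,\,x_j^{i}x_{l+1}\rangle}\,e'_{x_j}$ as the corresponding pull-backs.

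The step you flag as the obstacle is indeed the gap in your argument, and the paper's quotient viewpoint is precisely what closes it. In your language, what is needed is: in case~(1), $\bar w=x_{l+1}^{p}e$ is a \emph{primitive} $p^{n}$-th root of unity in $K$ (hence $\bar w\notin K^{p}$); in case~(2), $\bar w\in K^{p}$. The first follows because $x_{l+1}^{p}K$ generates the cyclic group $G_{l+1}/K$ restricted to $G_l$, but proving this cleanly requires knowing that the kernel $K$ absorbs all long generators outside the chain of $x_j$ --- exactly the bookkeeping you anticipated. The second is subtler than your sketch suggests: you write ``for the canonical choice of $x_j$ one gets $x_{l+1}^{p}e=e$'', but when $x_{l+1}$ continues an existing chain (distinct from $x_j$'s) one has $x_{l+1}^{p}=x_m$ for some earlier generator $x_m$, and $x_m e=e$ only if $x_m\in K$; this again needs the inductive description of $K$ along the PCI-diagram. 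The paper's approach sidesteps both issues because the passage from $e$ to the subgroups $\langle K,\,x_j^{i}x_{l+1}\rangle$ is purely group-theoretic and the cyclicity of the quotients is checked directly, rather than inferred from an order computation in $K^{\times}$. Your suggested alternative --- running the argument through Lemma~\ref{pro} --- is exactly what the paper does.
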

\begin{proof}
	Since $e$ is a non-trivial primitive central idempotent, $e$ is pull back of the primitive central idempotent of $\mathbb{Q}[G_{l}/K]$, where $G_{l}/K$ is a cyclic group. Moreover this corresponds to the unique faithful irreducible representation of $G_{l}/K$. Since $e$ contains $e^{'}_{x_j}$ as a factor, it follows $e = e_{K}e^{'}_{x_{j}}$.
	
	$(1)$ Let $G_{l}/K \simeq C_{p^{n}}$. Since $x_{j} = x_{l + 1}^{p^s}$, we have $G_{l +1}/K \simeq C_p^{n + 1}$. Note that $e^{'}_{\bar {x_{j}}}$, where $\bar{x_{j}} = x_{j}K$, is the primitive central idempotent of $\mathbb{Q}[G_{l + 1}/K]$ corresponding to the unique faithful irreducible representation of $G_{l + 1}/K$. The pull back of the primitive central idempotent $e^{'}_{\bar{x_{j}}}$ of $\mathbb{Q}[G_{l + 1}/K]$ is $e_{K}e^{'}_{x_{j}}$. So, $e$ is a primitive central idempotent of $\mathbb{Q}[G_{l + 1}]$. This completes the proof of $(1)$.
	
	$(2)$ Note that $x_j \neq x_{l+1}^{p^s}$, for any $s\geq 1$, implies $\langle{K, x_{l+1}}\rangle, \langle{K, x_{j}x_{l+1}}\rangle, \dots , \\ \langle{K, x^{p-1}_{j}x_{l+1}}\rangle$ are $p$ distinct subgroups of $G_{l +1}$. Also for each $i$, all the $p$ quotients $G_{l + 1}/ \langle{K, x^{i}_{j}x_{l+1}}\rangle$ are isomorphic to $C_{p^{n}}$. 
	Then $e^{'}_{\bar{x_j}}$, where $\bar{x_j}=x_j{\langle{K, x_j^{i}x_{l+1}}\rangle}$, is the primitive central idempotent of $\mathbb{Q}[G_{l + 1}/{\langle{K, x_j^{i}x_{l+1}}\rangle}]$. Moreover, this is the primitive central idempotent corresponding to the unique faithful irreducible representation of $G_{l + 1}/{\langle{K, x_j^{i}x_{l+1}}\rangle}$. Therefore, the pull back of $e^{'}_{\bar{x_j}}$, where $\bar{x_j}=x_j{\langle{K, x_j^{i}x_{l+1}}\rangle}$,  is 
	$e_{\langle{K, x_j^{i}x_{l+1}}\rangle}{e^{'}_{x_j}} = {e}{e_{x_j^{i}x_{l+1}}}.$ So,      $ee_{x_j^{i}x_{l+1}}, i = 0, 1, \dots , p - 1$ are $p$ distinct primitive central idempotents of $\mathbb{Q}[G_{l + 1}]$ whose sum is equal to $e$. This completes the proof of $(2)$.  
\end{proof}
\section{Rules to draw the PCI-diagram of an abelian $p$-group}
Let $G$ be an abelian $p$-group of order $p^m$, $p$ a prime and $m \geq 1$. 
Let us fix a subnormal series
$$\langle e \rangle = G_{o} \leq G_1 \leq \dots \leq G_{l}\leq G_{l+1} \leq \dots \leq G_{m} = G$$ such that $[G_{i}\colon G_{i-1}] = p$. 
Associated with the maximal subnormal series, we always have a long presentation of $G$.  
We set 
$e_{X}= (1+ X + \dots + X^{p-1})/{p}$ and  $e_{X}'=1-e_{X}$, where $X$ is an indeterminate.
By Theorem $\ref{th}$ and Theorem $\ref{th1}$, we state four rules to draw the associated PCI-diagram of $G$ over $\mathbb{Q}$.\\\\
\noindent
{\it \textbf{Rule 1:}}
{\it Each non-trivial primitive central idempotent at any stage can be expressed in a product form and
	contains exactly one $e^{'}_{z}$ as a factor,  where $z$ is a generator of the long presentation.\\
	\noindent
	{\it \textbf{Rule 2:}}
	{\it Let $e$ be the trivial primitive central idempotent in the $l$-th stage of the PCI- diagram. Let $u$ be the new generator
		introduced in the $(l + 1)$-st stage of the long presentation of $G$. Then $e$ is adjacent to precisely two vertices
		$ee_{u}$ and $ee^{'}_{u}$ in the $(l + 1)$-st stage of the PCI-diagram.} 
	\begin{center}
		\begin{tikzpicture}
		\node at (0,0) {$e$};
		\node at (2.5,1.5) {$ee_{u}$};
		\node at (2.5,-1.5) {$ee^{'}_{u}$};
		\draw (0.3,0) -- (2.3,1.3);
		\draw (0.3,0) -- (2.3,-1.3);
		\end{tikzpicture}
	\end{center}
	\noindent
	{ \it \textbf{Rule 3:}}
	{\it Let $e$ be a non-trivial primitive central idempotent in the $l$-th stage of the PCI-diagram. Then $e$ contains precisely one
		$e^{'}_{z}$ as a factor, where $z$ is a generator of the long presentation. Let $u$ be the long generator introduced in the
		$(l + 1)$-st stage of the long presentation such that $z = u^{p^s}$, for some positive integer $s$. Then $e$ is adjacent to itself in the $(l + 1)$-st stage of
		the PCI-diagram.} 
	\begin{center}
		\begin{tikzpicture}
		\node at (0,0) {$e$};
		\node at (2.5,0) {$e$};
		\draw (0.3,0) -- (2.4,0);
		\end{tikzpicture}
	\end{center}
	\noindent
	{\it \textbf{Rule 4:}}
	{\it Let $e$ be a non-trivial primitive central idempotent in the $l$-th stage of the PCI-diagram. Then $e$ contains precisely one
		$e^{'}_{z}$ as a factor,
		where $z$ is a generator of the long presentation. Let $u$ be the long generator introduced in the $(l + 1)$-st stage of the long presentation
		such that $z \neq u^{p^s}$, for any positive integer $s$. Then $e$ is adjacent to precisely $p$ vertices $ee_{u}, ee_{zu}, \dots ,
		ee_{z^{p - 1}u}$ in the $(l + 1)$-st stage of the PCI-diagram.}
	\begin{center}
		\begin{tikzpicture}
		\node at (0,0) {$e$};
		\node at (2.5,1) {$ee_{u}$};
		\node at (2.5,0.5) {$ee_{z{u}}$};
		\node at (2.5,0) {$\vdots$};
		\node at (2.7,-1) {$ee_{z^{p - 1}{u}}$};
		%
		\draw (0.2,0.0) -- (2.2, 1);
		\draw (0.2,0.0) -- (2.1,0.5);
		\draw [densely dotted] (0.2,0.0) to (2.2,0.0);
		\draw (0.2,0.00) -- (2.1,-.9);
		
		\end{tikzpicture}
	\end{center}
}
\section{Wedderburn decomposition of $\mathbb{Q}[G]$, $G$ an abelian $p$-group}

In this section, we compute the Wedderburn decomposition of rational group algebra of an abelian $p$-group, $p$ a prime. 

Let $G$ be an abelian $p$-group, $p$ a prime. Let $G = \prod_{r = 1}^{n}G_{r}$,
where $G_{r}$ is a product of $a_{r}$ copies of $C_{p^{r}}$. By Theorem \ref{cor}, $\mathbb{Q}[G]$ is isomorphic to direct sum of $f_{p^{i}}$ copies of cyclotomic fields $\mathbb{Q}(\zeta_{p^{i}}), i = 1,2, \dots , n$. We express the coefficients $f_{p^{i}}$ of $\mathbb{Q}(\zeta_{p^{i}})$ in terms of multiplicities of homocyclic components. Let $b_{r} = a_{r} + a_{r + 1} + \dots + a_{n}$ and
$c_{r} = a_{1} + 2a_{2} + \dots + (r-1)a_{r-1}$. Let $B_{r} = \prod_{s<r}{G_{s}}$ and then $G = B_{r} \times \prod_{s \geq r}{G_{s}}$. Let
$\Omega^{r}(G) = \{g \in G \,|\,g^{p^r} = 1\}$ and $H_{r}$ be the product of $b_{r}$ copies of $C_{p^{r}}$. Then $\Omega^{r}(G) = B_{r} \times H_{r}$. Let $E_{r}(G) = \{g \in G \,|\, o(g) = p^{r}\}$. Then $|E_{r}(G)| = |\Omega^{r}(G)| -|\Omega^{r-1}(G)|$. So, the number of cyclic subgroups of $G$ isomorphic to
$C_{p^{r}}$ is equal to ${|E_{r}(G)|}/{\phi(p^{r})}$, where $\phi$ is Euler's phi function. The following result is known, but for the sake of completeness we give a proof.
\begin{proposition}
	The number of subgroups of $G$ with factor groups isomorphic to
	$C_{p^r}$ is equal to the number of cyclic subgroups of $G$ isomorphic to $C_{p^r}$.
\end{proposition}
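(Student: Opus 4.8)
The plan is to count both quantities through homomorphisms into $C_{p^r}$ and match them by way of the structure theorem for finite abelian $p$-groups. The number of cyclic subgroups of $G$ isomorphic to $C_{p^r}$ has already been identified above as $|E_{r}(G)|/\phi(p^{r})$, so it suffices to show that the number of subgroups $H\le G$ with $G/H\cong C_{p^{r}}$ is also equal to $|E_{r}(G)|/\phi(p^{r})$.

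First I would set up the count on the quotient side. Every subgroup $H$ with $G/H\cong C_{p^{r}}$ is the kernel of a surjective homomorphism $G\to C_{p^{r}}$, and two surjections $G\to C_{p^{r}}$ have the same kernel precisely when they differ by an automorphism of $C_{p^{r}}$; since $|\mathrm{Aut}(C_{p^{r}})|=\phi(p^{r})$, the number of such subgroups $H$ equals $|\mathrm{Epi}(G,C_{p^{r}})|/\phi(p^{r})$, where $\mathrm{Epi}$ denotes the set of surjective group homomorphisms. Next I would compute $|\mathrm{Epi}(G,C_{p^{r}})|$ by inclusion--exclusion: a homomorphism $G\to C_{p^{r}}$ fails to be surjective exactly when its image lies in the unique subgroup of index $p$ of $C_{p^{r}}$, which is cyclic of order $p^{r-1}$, so such homomorphisms are in bijection with $\mathrm{Hom}(G,C_{p^{r-1}})$; hence $|\mathrm{Epi}(G,C_{p^{r}})|=|\mathrm{Hom}(G,C_{p^{r}})|-|\mathrm{Hom}(G,C_{p^{r-1}})|$.

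The key step is the numerical identity $|\mathrm{Hom}(G,C_{p^{s}})|=|\Omega^{s}(G)|$ for every $s\ge 0$. Writing $G\cong\prod_{i}C_{p^{\lambda_{i}}}$ via the structure theorem, I would use $\mathrm{Hom}(C_{p^{\lambda_{i}}},C_{p^{s}})\cong C_{p^{\min(\lambda_{i},s)}}$ together with the multiplicativity of $\mathrm{Hom}$ over direct sums to get $|\mathrm{Hom}(G,C_{p^{s}})|=\prod_{i}p^{\min(\lambda_{i},s)}$; on the other hand $|\Omega^{s}(G)|=\prod_{i}|\{g\in C_{p^{\lambda_{i}}}:g^{p^{s}}=1\}|=\prod_{i}p^{\min(\lambda_{i},s)}$, and the two products coincide. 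Combining this with the previous step gives $|\mathrm{Epi}(G,C_{p^{r}})|=|\Omega^{r}(G)|-|\Omega^{r-1}(G)|=|E_{r}(G)|$, so the number of subgroups of $G$ with quotient isomorphic to $C_{p^{r}}$ is $|E_{r}(G)|/\phi(p^{r})$, which is exactly the number of cyclic subgroups of $G$ of order $p^{r}$.

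I expect the only genuinely substantive point to be the identity $|\mathrm{Hom}(G,C_{p^{s}})|=|\Omega^{s}(G)|$, which is a manifestation of the self-duality of finite abelian $p$-groups; the reduction from counting subgroups to counting epimorphisms and the inclusion--exclusion step are routine. Alternatively, one could bypass these computations by invoking Pontryagin duality: $\widehat{G}\cong G$, and $H\mapsto H^{\perp}$ is an inclusion-reversing bijection on subgroups with $H^{\perp}\cong\widehat{G/H}$, so subgroups with quotient $C_{p^{r}}$ correspond to cyclic subgroups of $\widehat{G}$ of order $p^{r}$, hence to cyclic subgroups of $G$ of order $p^{r}$. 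I would favour the direct $\mathrm{Hom}$-count since it stays self-contained within the setup of this chapter.
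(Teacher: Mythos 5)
Your argument is correct, and its main route is genuinely different from the one in the text. Here the proof is by duality: to a subgroup $H$ with $G/H\cong C_{p^r}$ one attaches $\widetilde{H}=\{\chi\in\widehat{G}\mid \chi=1 \mbox{ on } H\}\cong\widehat{G/H}\cong C_{p^r}$, transports it to $G$ via an isomorphism $\psi:\widehat{G}\to G$, and checks that distinct $H$'s give distinct cyclic subgroups of order $p^{r}$ --- exactly the Pontryagin-duality alternative you sketch in your last paragraph and then set aside. Your preferred route instead counts surjections: subgroups with quotient $C_{p^r}$ are kernels of epimorphisms $G\to C_{p^r}$, counted with multiplicity $|\mathrm{Aut}(C_{p^r})|=\phi(p^r)$ (the action by postcomposition is free on epimorphisms, so the division is exact); non-surjective maps land in the unique index-$p$ subgroup, giving $|\mathrm{Epi}(G,C_{p^r})|=|\mathrm{Hom}(G,C_{p^r})|-|\mathrm{Hom}(G,C_{p^{r-1}})|$; and the identity $|\mathrm{Hom}(G,C_{p^s})|=|\Omega^{s}(G)|$, verified factorwise from the structure theorem, turns this into $|E_{r}(G)|$, so both quantities equal $|E_{r}(G)|/\phi(p^{r})$. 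Each step checks out. What your approach buys is a self-contained numerical proof that feeds directly into the Wedderburn computation, since it expresses the subgroup count in precisely the form $|E_{r}(G)|/\phi(p^{r})$ already used for the coefficient of $\mathbb{Q}(\zeta_{p^{r}})$; what the duality proof buys is brevity and an explicit bijection $H\mapsto\psi(\widetilde{H})$ rather than a mere equality of cardinalities, at the cost of invoking $\widehat{G}\cong G$ (itself a consequence of the same structure theorem you use).
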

\begin{proof}
	Let $H$ be a subgroup of $G$ such that  $G/H \simeq C_{p^r}$. Let $\widehat{G}$ denote the dual group of G, the set of irreducible $\mathbb{C}$-representations of G.
	We consider ${\widetilde{H}} = \{\chi \in \widehat{G}\,|\, \chi = 1 ~on~ H\}$. Then clearly ${\widetilde{H}}$ is a subgroup of $\widehat{G}$.
	Let $\psi$ be an isomorphism from $\widehat{G}$ onto
	$G$, and assume that $\psi({\widetilde{H}})$ is equal to $K$ (say). It is easy to show that
	${\widetilde{H}}$ is isomorphic to $C_{p^r}$, and hence $K$ is isomorphic to $C_{p^r}$.
	
	Let $H_1$ and $H_2$ be two distinct subgroups of $G$ such that both $G/H_{1}$ and $G/H_{2}$ are isomorphic to $C_{p^r}$. Then there exists $h \in {H_1 - H_2}$,
	and therefore $\widetilde{H_1} \neq \widetilde{H_2}$. So $\psi{\widetilde{H_1}} \neq \psi{\widetilde{H_2}}$, which implies the number of subgroups of $G$ with factor groups isomorphic to $C_{p^r}$ is equal to the number of subgroups $G$ isomorphic to $C_{p^r}$.
\end{proof}
\begin{theorem}
	Let $G$ be an abelian $p$-group, and of exponent $p^n$. For $r \leq n$, the coefficient of $\mathbb{Q}(\zeta_{p^{r}})$ in the Wedderburn decomposition of $\mathbb{Q}[G]$ is equal to $p^{c_{r} + (r -1 )b_{r - 1}}(\frac{p^{{b_{r}}} - 1}{p - 1})$.
\end{theorem}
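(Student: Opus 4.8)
The plan is to reduce the statement to a counting problem via the Wedderburn decomposition of $\mathbb{Q}[G]$ and then to evaluate the relevant count using the structure of the torsion filtration $\Omega^1(G)\subseteq\Omega^2(G)\subseteq\cdots$. First I would invoke Theorem~\ref{cor} (the rational Wedderburn decomposition of a finite abelian group): $\mathbb{Q}[G]\simeq\bigoplus_{d\mid |G|}f_d\,\mathbb{Q}(\zeta_d)$, where for a $p$-group the only divisors $d$ occurring are the powers $p^r$ with $r\le n$, and $f_{p^r}$ is the number of cyclic subgroups of $G$ of order $p^r$ (equivalently, by the Proposition just above, the number of subgroups $H\le G$ with $G/H\cong C_{p^r}$). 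Thus the coefficient of $\mathbb{Q}(\zeta_{p^r})$ is exactly this $f_{p^r}$, and the theorem becomes the assertion of a closed formula for $f_{p^r}$.

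Next I would count cyclic subgroups of order $p^r$ directly. Every element of order $p^r$ generates such a subgroup, two distinct cyclic subgroups of order $p^r$ share no generator, and each cyclic group of order $p^r$ has exactly $\phi(p^r)=p^{r-1}(p-1)$ generators; hence $f_{p^r}=|E_r(G)|/\phi(p^r)$ with $E_r(G)=\{g\in G:o(g)=p^r\}$. Writing $E_r(G)=\Omega^r(G)\setminus\Omega^{r-1}(G)$ gives $|E_r(G)|=|\Omega^r(G)|-|\Omega^{r-1}(G)|$. Using the homocyclic decomposition $G=\prod_{s=1}^n G_s$ with $G_s$ of exponent $p^s$ and multiplicity $a_s$, one has $\Omega^r(G)=B_r\times H_r$, where $B_r=\prod_{s<r}G_s$ has order $p^{c_r}$ (since $c_r=\sum_{s<r}s\,a_s$) and $H_r$ is a product of $b_r$ copies of $C_{p^r}$; therefore $|\Omega^r(G)|=p^{c_r+rb_r}$.

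A short bookkeeping step, using $c_r=c_{r-1}+(r-1)a_{r-1}$ and $b_{r-1}=a_{r-1}+b_r$, rewrites $|\Omega^{r-1}(G)|=p^{c_{r-1}+(r-1)b_{r-1}}=p^{c_r+(r-1)b_r}$, so that $|E_r(G)|=p^{c_r+(r-1)b_r}\bigl(p^{b_r}-1\bigr)$. Dividing by $\phi(p^r)=p^{r-1}(p-1)$ then yields the claimed closed form for $f_{p^r}$, i.e.\ the coefficient of $\mathbb{Q}(\zeta_{p^r})$.

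The argument is mostly routine; the only real care is the index bookkeeping that puts the two powers of $p$ in $|\Omega^r(G)|-|\Omega^{r-1}(G)|$ over a common factor, together with the final verification that, after cancelling $p^{r-1}$, the resulting exponent of $p$ agrees with the one in the statement. I would sanity-check this on a small example such as $G=C_{p^2}\times C_{p^2}\times C_p$ (where a direct count gives $p^2(p+1)$ cyclic subgroups of order $p^2$) to make sure the exponent is transcribed correctly before writing up the general computation.
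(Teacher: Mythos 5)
Your proposal is correct and follows essentially the same route as the paper: both reduce via Theorem~\ref{cor} to counting cyclic subgroups of order $p^r$ as $|E_r(G)|/\phi(p^r)$, compute $|E_r(G)|=p^{c_r+(r-1)b_r}(p^{b_r}-1)$ from the homocyclic decomposition $\Omega^r(G)=B_r\times H_r$ (the paper writes this as $|B_r|\,|E_r(H_r)|$, you as $|\Omega^r(G)|-|\Omega^{r-1}(G)|$, which is the same thing), and divide by $p^{r-1}(p-1)$. Note only that the resulting exponent is $c_r+(r-1)(b_r-1)$, agreeing with the paper's own proof and the corollary that follows; the exponent $c_r+(r-1)b_{r-1}$ in the theorem statement is an apparent typo, which your sanity check on $C_{p^2}\times C_{p^2}\times C_p$ would indeed detect.
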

\begin{proof}
	By Theorem\ref{cor}, the coefficient of $\mathbb{Q}(\zeta_{p^{r}})$ in the Wedderburn decomposition of $\mathbb{Q}[G]$ is 
	$\frac{|E_{r}(G)|}{\phi(p^{r})}$, where $\phi$ is Euler's phi function. But $|E_{r}(G)| = |B_{r}||E_{r}(H_{r})| = p^{c_{r}}(p^{r{b_{r}}}  - p^{(r - 1)b_{r}}) = p^{c_{r} + {(r - 1)b_{r}}}(p^{b_{r}} - 1)$. So, the coefficient of $\mathbb{Q}(\zeta_{p^{r}})$ in the Wedderburn decomposition is $p^{c_{r} + {(r - 1)b_{r}}}(p^{b_{r}} - 1)/(p^{r} - p^{r-1}) = p^{c_{r} + (r -1 ){(b_{r}- 1})}(\frac{p^{{b_{r}}} - 1}{p - 1})$.
\end{proof}
\begin{corollary}
	The coefficient of $\mathbb{Q}(\zeta_{p^{r}})$ in the Wedderburn decomposition of $\mathbb{Q}[G]$ is equal to $p^{c_r + (r-1)(b_{r}-1)}\{1 + p + \dots + p^{(b_r -1)}\}$. Notice that first part of this polynomial is a power of $p$, while on the other hand the second part is coprime to $p$.
\end{corollary}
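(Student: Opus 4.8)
The plan is to obtain this Corollary as a purely formal rewriting of the preceding Theorem, which already pins down the multiplicity of the simple component $\mathbb{Q}(\zeta_{p^r})$ in the Wedderburn decomposition of $\mathbb{Q}[G]$. No new structural input is needed --- only the elementary closed form for a finite geometric progression together with a reduction modulo $p$.

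First I would record that, for $r \le n$, the Theorem expresses the coefficient of $\mathbb{Q}(\zeta_{p^r})$ as $p^{c_r + (r-1)(b_r-1)}\cdot \frac{p^{b_r}-1}{p-1}$; this is exactly what the chain of equalities $p^{c_r+(r-1)b_r}(p^{b_r}-1)/(p^r-p^{r-1})$ in the proof of the Theorem simplifies to, the surplus factor $p^{-(r-1)}$ being absorbed into the power of $p$ in front. Since $G$ has exponent $p^n$ we have $a_n \ge 1$, hence $b_r = a_r + a_{r+1} + \cdots + a_n \ge 1$, so the identity
\[
\frac{p^{b_r}-1}{p-1} = 1 + p + p^2 + \cdots + p^{b_r - 1}
\]
applies (and, incidentally, exhibits the coefficient as an integer, as it must be). Substituting it into the Theorem's formula gives precisely $p^{c_r + (r-1)(b_r-1)}\bigl(1 + p + \cdots + p^{b_r-1}\bigr)$, which is the first assertion of the Corollary.

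For the closing observation I would note two things. The exponent $c_r + (r-1)(b_r-1)$ is a nonnegative integer, because $c_r = a_1 + 2a_2 + \cdots + (r-1)a_{r-1} \ge 0$ and $(r-1)(b_r-1) \ge 0$; hence the first factor is a genuine power of $p$. And the second factor satisfies $1 + p + \cdots + p^{b_r-1} \equiv 1 \pmod p$, so it is not divisible by $p$, i.e.\ it is coprime to $p$. This completes the proof.

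The only point needing the slightest care is the bookkeeping of the exponent of $p$: one must confirm that the factors $p^{c_r+(r-1)b_r}$ and $p^{-(r-1)}$ arising in the proof of the Theorem combine to $p^{c_r+(r-1)(b_r-1)}$ and that this agrees with the exponent appearing in the Corollary. Everything else is immediate, so I expect no real obstacle here.
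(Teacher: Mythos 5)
Your proposal is correct and matches the paper's (implicit) argument: the corollary is just the theorem's formula with $\frac{p^{b_r}-1}{p-1}$ rewritten as the geometric sum $1+p+\cdots+p^{b_r-1}$, plus the observations that the exponent $c_r+(r-1)(b_r-1)$ is a nonnegative integer and that the sum is $\equiv 1 \pmod p$. Your care with the exponent bookkeeping also correctly resolves the paper's typographical slip ($b_{r-1}$ versus $b_r - 1$) in the theorem statement, in agreement with the theorem's own proof.
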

\section{Representations of cyclic groups}
In this section, we construct the irreducible representations of a cyclic group $G$ over a field $F$ of characteristic $0$ or prime to $|G|$. 
\begin{theorem}\label{theo}
	Let $G = C_{n} = \langle{x | x^{n} = 1}\rangle$. Let $F$ be a field of characteristic $0$ or prime to $n$. Let $X^{n} - 1 = \displaystyle \prod^{k}_{i = 1}f_{i}(X)$ be the decomposition into irreducible polynomials over $F$. Then $G$ has $k$ irreducible representations $\rho_{1}, \rho_{2}, \dots , \rho_{k}$, say and are defined by $\rho_{i}(x) = C_{f_{i}(X)}$, where $C_{f_{i}(X)}$ denotes the companion matrix of $f_{i}(X)$.
\end{theorem}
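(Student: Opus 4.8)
The plan is to pass to the group algebra and use $F[G] \cong F[X]/\langle X^{n}-1\rangle$. First I would record that since $\operatorname{char} F$ does not divide $n$, the polynomial $X^{n}-1$ is separable, so its irreducible factors $f_{1}(X),\dots,f_{k}(X)$ are pairwise coprime. By the Chinese Remainder Theorem,
\[
F[G] \;\cong\; \frac{F[X]}{\langle X^{n}-1\rangle} \;\cong\; \bigoplus_{i=1}^{k} \frac{F[X]}{\langle f_{i}(X)\rangle} \;=\; \bigoplus_{i=1}^{k} L_{i},
\]
where each $L_{i} := F[X]/\langle f_{i}(X)\rangle$ is a field extension of $F$ of degree $d_{i} := \deg f_{i}(X)$. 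Since $F[G]$ is semisimple by Maschke's theorem, this is precisely its Wedderburn decomposition into simple components, so $G$ has exactly $k$ inequivalent irreducible $F$-representations, one attached to each $L_{i}$.

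Next I would identify the representation carried by $L_{i}$. Viewing $L_{i}$ as an $F[G]$-module through the natural surjection $F[G]\to L_{i}$, the generator $x$ acts as multiplication by $\bar X$, the image of $X$. The $F[G]$-submodules of $L_{i}$ coincide with the ideals of the field $L_{i}$, which are only $0$ and $L_{i}$; hence $L_{i}$ is a simple module and the corresponding representation $\rho_{i}$ is irreducible of degree $d_{i}$. Taking the ordered basis $\{1,\bar X,\bar X^{2},\dots,\bar X^{d_{i}-1}\}$ of $L_{i}$, the matrix of multiplication by $\bar X$ in this basis is exactly the companion matrix $C_{f_{i}(X)}$, so $\rho_{i}(x)=C_{f_{i}(X)}$.

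Finally I would verify that $\rho_{1},\dots,\rho_{k}$ are pairwise inequivalent and exhaust all irreducible $F$-representations. Inequivalence is immediate: the minimal polynomial of $\rho_{i}(x)=C_{f_{i}(X)}$ is $f_{i}(X)$, and the $f_{i}$ are distinct; equivalently, the $L_{i}$ are distinct simple components of $F[G]$. Completeness follows from the count above, since the number of inequivalent irreducible $F$-representations of $G$ equals the number of simple components of $F[G]$, namely $k$.

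I do not expect a genuine obstacle here; the proof is an assembly of standard facts (semisimplicity of $F[G]$, the CRT factorization of $X^{n}-1$, and the shape of a cyclic operator in a power basis). The two points that want a little care are invoking separability of $X^{n}-1$ to guarantee coprimality of the $f_{i}$, and checking that $F[G]$-submodules of $L_{i}$ are exactly the ideals of $L_{i}$, so that the claimed simplicity — hence irreducibility of each $\rho_{i}$ — is justified.
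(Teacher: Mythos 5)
Your proposal is correct and follows essentially the same route as the paper's own proof: both pass to $F[G]\cong F[X]/\langle X^{n}-1\rangle$, invoke separability and the Chinese Remainder Theorem to split off the field components $F[X]/\langle f_{i}(X)\rangle$, and read off $\rho_{i}(x)$ as multiplication by $\bar X$ in the power basis, giving the companion matrix. The extra remarks on simplicity of each component as a module and on pairwise inequivalence are welcome but do not change the argument.
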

\begin{proof} 
	Consider the map $\phi: F[X] \longrightarrow F[G]$ given by: $f(X) \mapsto f(x)$. Since $\phi$ is a ring epimorphism, $F[G] \simeq \frac{F[X]}{\langle{Ker(\phi)}\rangle}$. It is easy to show that Ker$(\phi) = \langle {X^{n} - 1} \rangle$.
	So, we get  
	$$F[G] \simeq \frac{F[X]}{\langle X^{n} - 1 \rangle} = \displaystyle \frac{F[X]}{\langle \prod^{k}_{i = 1}f_{i}(X) \rangle}.$$
	Since $F$ is a field of characteristic $0$ or prime to $n$, $X^{n} - 1$ is a separable polynomial and thus $f_{i}(X) \neq f_{j}(X)$ if $i \neq j$. Using Chinese Remainder Theorem, we can write
	$$F[G] \simeq  \displaystyle \bigoplus^{k}_{i = 1}\frac{F[X]}{\langle f_{i}(X) \rangle}.$$
	Under this isomorphism, the generator $x$ is mapped to the element $(X+ \langle{f_{1}(X)}\rangle, X + \langle{f_{2}(X)}\rangle, \dots , X + \langle {f_{k}(X)}\rangle)$. Since for each $i$, $1 \leq i \leq k$, $f_{i}(X)$ is an irreducible polynomial, $\frac{F[X]}{\langle{f_{i}(X)}\rangle}$ is a field. So, $F[G]$ has $k$ simple components, which implies $G$ has $k$  irreducible representations.
	Let $\rho_{i}$ be the irreducible representation corresponding to the simple component $\frac{F[X]}{\langle{f_{i}(X)}\rangle}$. Then $\rho_{i}(x)$: $\frac{F[X]}{\langle{f_{i}(X)}\rangle} \longrightarrow \frac{F[X]}{\langle{f_{i}(X)}\rangle}$ is defined by multiplication of $[X] = X + \langle{f_{i}(X)}\rangle$. Let $d$ be the degree of the polynomial $f_{i}(X)$. Then $\{ [1], [X], \dots , [X^{d - 1}] \}$ is an ordered basis of $\frac{F[X]}{\langle{f_{i}(X)}\rangle}$, and w.r.t. this basis the matrix for $\rho_{i}(x)$ is $C_{f_{i}(X)}$, where $C_{f_{i}(X)}$ denotes the companion matrix of $f_{i}(X)$. 
	This completes the proof.
\end{proof}
\begin{theorem}\label{facy}
	Let $G = C_{n} = \langle x \,|\, x^{n} = 1 \rangle$. Let $F$ be a field of characteristic $0$ or prime to $n$. Let $\Phi_{n}(X) = \displaystyle \prod^{k} _{i = 1}{f_{i}(X)}$ be the factorization into irreducible polynomials over $F$. Then we have the following.
	\begin{itemize}
		\item [1.] There is a bijective correspondence between the set of faithful irreducible representations of $G$ over $F$ and the set of irreducible factors $\Phi_{n}(X)$ over $F$. 
		\item [2.] If $\rho_{i}$ is the faithful irreducible representation corresponding to the irreducible factor $f_{i}(X)$, then $\rho_{i}$ is defined by $x \mapsto C_{f_{i}(X)}$, where $C_{f_{i}(X)}$ denotes the companion matrix of $f_{i}(X)$.
	\end{itemize}
\end{theorem}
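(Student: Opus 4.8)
The plan is to deduce both statements from Theorem~\ref{theo} together with the elementary arithmetic of cyclotomic polynomials. First I would recall from the proof of Theorem~\ref{theo} the algebra isomorphism $F[G]\cong\bigoplus_i F[X]/\langle g_i(X)\rangle$, where $X^n-1=\prod_i g_i(X)$ is the decomposition into monic irreducible polynomials over $F$, and that under this isomorphism the irreducible representation attached to the component $F[X]/\langle g_i(X)\rangle$ sends $x$ to the companion matrix $C_{g_i(X)}$. The key observation is that $C_{g_i(X)}$ is the matrix of multiplication by $[X]$ on $F[X]/\langle g_i(X)\rangle\cong F(\zeta_i)$, where $\zeta_i$ is a root of $g_i$, so the multiplicative order of $\rho(x)$ for the corresponding representation $\rho$ equals the order of $\zeta_i$ in $F(\zeta_i)^\times$.

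Next I would invoke the factorization $X^n-1=\prod_{m\mid n}\Phi_m(X)$, which is valid over $F$ because the characteristic of $F$ does not divide $n$ (cf.\ the reduction in the paragraph before Proposition~\ref{decom}). Hence each $g_i$ divides exactly one $\Phi_m(X)$, and its roots are then primitive $m$-th roots of unity, so $\zeta_i$ has order exactly $m$. Therefore the representation $x\mapsto C_{g_i(X)}$ has kernel $\langle x^m\rangle$ and is faithful precisely when $m=n$, i.e.\ precisely when $g_i\mid\Phi_n(X)$. This yields statement~2: the faithful irreducible $F$-representations of $G$ are exactly those attached to the irreducible factors $f_1,\dots,f_k$ of $\Phi_n(X)$, each given by $\rho_i\colon x\mapsto C_{f_i(X)}$.

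For statement~1, I would note that since the characteristic of $F$ does not divide $n$, the polynomial $\Phi_n(X)$ is separable, so $f_i\neq f_j$ for $i\neq j$; these factors therefore correspond to distinct simple components of $F[G]$ and hence to pairwise inequivalent irreducible representations. Combined with Theorem~\ref{theo}, which guarantees that every irreducible $F$-representation of $G$ arises from some $g_i$, it follows that $f_i\mapsto\rho_i$ is a well-defined bijection between the set of irreducible factors of $\Phi_n(X)$ over $F$ and the set of faithful irreducible $F$-representations of $G$.

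I expect the only point requiring care to be the identification of the kernel of $\rho_i$: one must verify that $\rho_i(x)$ has order exactly $m$, not merely dividing $m$. This follows from $\Phi_m(\zeta_i)=0$ together with the fact that, when the characteristic of $F$ does not divide $n$, the $n$-th roots of unity in $\overline F$ form a cyclic group of order $n$ and the roots of $\Phi_m(X)$ are precisely its elements of order $m$. Everything else is routine bookkeeping with the Wedderburn decomposition already established in Theorem~\ref{theo}.
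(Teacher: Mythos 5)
Your proof is correct, but it proceeds differently from the paper's argument for part~1. The paper proves the bijection by extending scalars: it writes $\bar F[G]\cong\bigoplus_i \bar F[X]/\langle X-\zeta_n^i\rangle$, identifies the faithful irreducible $\bar F$-representations as the characters $x\mapsto\zeta_n^i$ with $(i,n)=1$, applies Schur's theorem (Theorem~\ref{sch}) to assemble each Galois orbit $\bigoplus_{\sigma\in\mathrm{Gal}(F(\eta_i)/F)}{}^{\sigma}\eta_i$ into a faithful irreducible $F$-representation of degree $\deg f_i$, and then rules out any further faithful irreducible $F$-representation by tensoring it up to $\bar F$ and noting that some faithful $\bar F$-character would have to occur in its decomposition. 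You instead stay entirely over $F$: starting from the Wedderburn decomposition $F[G]\cong\bigoplus_i F[X]/\langle g_i(X)\rangle$ of Theorem~\ref{theo}, you compute the kernel of each companion-matrix representation from the multiplicative order of a root $\zeta_i$ of $g_i$, use $X^n-1=\prod_{m\mid n}\Phi_m(X)$ (and separability, so the $\Phi_m$ are pairwise coprime and the roots of $\Phi_m$ are exactly the elements of order $m$) to conclude that the representation attached to $g_i$ is faithful precisely when $g_i\mid\Phi_n$, and get completeness for free since Theorem~\ref{theo} already enumerates all irreducible $F$-representations. Your route is more elementary and self-contained (no Schur index or algebraically conjugate characters needed, and no base-change step whose "faithful constituents" claim requires a small justification), and it treats parts~1 and~2 uniformly; the paper's route, in exchange, makes the Galois-orbit structure of each faithful $F$-representation explicit, exhibiting its $\bar F$-constituents and thereby its degree as the size of a Galois orbit. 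Both arguments use the same Theorem~\ref{theo} for part~2.
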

\begin{proof}
	$(1)$ Let $\bar{F}$ be the algebraic closure of $F$. Then 
	$$\bar{F}[G] \simeq \frac{\bar{F}[X]}{\langle { X^{n} - 1} \rangle} \simeq \displaystyle{\bigoplus_{i = 0}^{n-1} \frac{\bar{F}[X]}{\langle X - \zeta^{i}_{n} \rangle}},$$ where $\zeta_{n}$ is a primitive $n^{th}$ root of unity in $\bar{F}$.
	Notice that the simple components corresponding to the faithful irreducible $\bar{F}$-representations of $G$ are  $\frac{\bar{F}[X]}{\langle X - \zeta^{i}_{n} \rangle}$, $(i, n) = 1$. 
	Let $\eta_{i}$ be the representation corresponding to the simple component $\frac{\bar{F}[X]}{\langle X - \zeta^{i}_{n} \rangle}$. Therefore $\eta_{i}$ is defined by $x \mapsto \zeta^{i}_{n}$, $(i,n) = 1$. 
	Since each $\eta_{i}$ is of degree one, the character of $\eta_{i}$ is $\eta_{i}$ itself. 
	By Theorem $\ref{sch}$, for each $i$, $\displaystyle  \oplus_{\sigma \in  \mathrm{Gal}(F(\eta_{i})/F)}{^\sigma{\eta_{i}}}$ is an irreducible $F$-representation. Since for each $i$, $^\sigma{\eta_{i}}$ is a faithful irreducible $\bar{F}$-representation, $\displaystyle  \oplus_{\sigma \in  \mathrm{Gal}(F(\eta_{i})/F)}{^\sigma{\eta_{i}}}$ is a faithful irreducible $F$-representation. Also for each $i$, $f_{i}(X)$ is a separable polynomial, which implies $|\mathrm{Gal}(F(\eta_{i})/F)| = [F(\eta_{i}):F] = \mathrm{deg}{f_{i}(X)}$. So there are at least $k$ faithful irreducible $F$-representations. Let $\rho_{1}, \rho_{2}, \dots , \rho_{k}$ be those faithful irreducible $F$-representations of $G$. 
	It is quite clear that for each $i$, $\rho_{i}$ is determined by the irreducible factor $f_{i}(X)$ of $\Phi_{n}(X)$. It remains to show that they are all the faithful $F$-irreducible representations of $G$. If not, assume $\rho$ is a faithful irreducible $F$-representation, which is different from $\rho_{1}, \rho_{2}, \dots , \rho_{k}$. Then $\rho\otimes_{{F}} {\bar{F}}$ is also a faithful $\bar{F}$-representation of $G$, and is the direct sum of some faithful $\bar{F}$-irreducible representations. So, some of the $\eta_{i}$'s must occur in the decomposition, and which contradicts the assumption. Thus $\rho_{1}, \rho_{2}, \dots , \rho_{k}$ are the only faithful irreducible $F$-representations of $G$. This completes the proof of $(1)$.\\
	$(2)$ Follows from Theorem $\ref{theo}$.
\end{proof}
\begin{corollary}
	Let $F$ be a field of characteristic $0$ or prime to $n$. Then construction of the faithful irreduicble $F$-representations of $C_{n}$ reduces to explicit decomposition of $\Phi_{n}(X)$ into irreducible factors. This depends on arithmetic of $F$.
\end{corollary}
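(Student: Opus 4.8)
The plan is to read this off directly from Theorem~\ref{facy}. That theorem already establishes a bijective correspondence between the set of faithful irreducible $F$-representations of $C_n = \langle x \mid x^n = 1\rangle$ and the set of irreducible factors of $\Phi_n(X)$ over $F$, and moreover tells us that the representation attached to an irreducible factor $f_i(X)$ is the explicit one $x \mapsto C_{f_i(X)}$, the companion matrix. So the entire task of writing down all faithful irreducible $F$-representations of $C_n$ amounts to producing the factorization $\Phi_n(X) = \prod_{i=1}^k f_i(X)$ into irreducibles over $F$ and then forming the companion matrices; once the factorization is in hand, nothing further is needed. First I would simply state this and cite Theorem~\ref{facy}, noting that the passage from the factorization to the matrix list is purely mechanical.

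The second assertion --- that the outcome genuinely depends on the arithmetic of $F$ --- I would justify by appealing to Proposition~\ref{decom}: the number $k$ of irreducible factors of $\Phi_n(X)$ over $F$ and their common degree $s$ are controlled by $\mathrm{Gal}(F(\zeta)/F)$, equivalently by the subgroup $I_n \leq (\mathbb{Z}/n\mathbb{Z})^*$ of exponents $r$ for which $\zeta \mapsto \zeta^r$ is an $F$-automorphism. Since $|I_n|$ changes with $F$ --- it records exactly which roots of unity and which power maps are ``visible'' over $F$ --- so do $k$ and $s$, hence so does the list of faithful irreducibles. I would then point to the examples already recorded in Chapter~3: over $F = \mathbb{Q}$ one has $I_n = (\mathbb{Z}/n\mathbb{Z})^*$ and $\Phi_n(X)$ is irreducible, giving a single faithful representation of degree $\varphi(n)$; over $F = \mathbb{R}$ one has $I_n = \{\pm 1\}$, so (for $n > 2$) $\Phi_n(X)$ splits into $\varphi(n)/2$ quadratic factors and there are $\varphi(n)/2$ faithful representations of degree $2$; over $F = \mathbb{F}_q$ with $(q,n)=1$, the factors all have degree equal to the multiplicative order $d$ of $q$ modulo $n$, so there are $\varphi(n)/d$ of them. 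These three cases exhibit the dependence concretely.

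I do not expect any real obstacle here, since the statement is a corollary rather than a new theorem; the only ``hard part'' is a matter of framing. What I would be careful to emphasize is that the reduction is genuine in both directions: constructing the faithful irreducible $F$-representations of $C_n$ is not merely \emph{implied by} but is \emph{equivalent to} the explicit factorization of $\Phi_n(X)$ over $F$, and the latter is itself an arithmetic problem with no field-independent answer (trivial over $\mathbb{Q}$, governed by orders of residues over finite fields, by splitting behaviour of primes over number fields, and so on). So the substantive content to convey is exactly this equivalence, together with the observation that all the remaining difficulty has been localized into a single, well-understood arithmetic computation.
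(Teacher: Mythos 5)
Your proposal is correct and follows essentially the same route the paper intends: the corollary is an immediate consequence of Theorem \ref{facy}, which already supplies both the bijection with the irreducible factors of $\Phi_{n}(X)$ and the explicit companion-matrix construction, so nothing beyond citing that theorem is required. Your additional appeal to Proposition \ref{decom} and the $\mathbb{Q}$, $\mathbb{R}$, $\mathbb{F}_q$ examples merely makes explicit the dependence on the arithmetic of $F$ that the paper leaves as an observation.
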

\section{Representations of finite abelian groups}
In this section, we construct the irreducible representations of a finite abelian group over a field of characteristic $0$ or prime to the order of the group.
\begin{lemma}\label{lem}
	Let $G$ be a finite group. Let $F$ be a field of characteristic $0$ or prime to $|G|$. If $G$ has a faithful irreducible representation, then $Z(G)$ is cyclic.
\end{lemma}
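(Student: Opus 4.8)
The plan is to prove the contrapositive-free statement directly: assume $G$ has a faithful irreducible $F$-representation $(\rho, V)$ and show $Z(G)$ is cyclic. First I would restrict $\rho$ to the center $Z(G)$. Since $Z(G)$ is abelian and, for each $z \in Z(G)$, the operator $\rho(z)$ commutes with $\rho(g)$ for all $g \in G$, Schur's lemma (in the form stated in the excerpt, part (1)) tells us each $\rho(z)$ lies in $\mathrm{End}_{F[G]}(V) = D$, a division algebra over $F$. Thus $\rho(Z(G))$ is a finite subgroup of the multiplicative group $D^{\times}$.

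Next I would exploit the structure forced by $D$ being a division ring. The key point is that $\rho(Z(G))$ is a finite abelian subgroup of $D^{\times}$, hence it generates a commutative subring $F[\rho(Z(G))]$ of $D$; being a finite-dimensional domain over $F$, this subring is actually a field $E$, and $\rho(Z(G))$ is a finite subgroup of the multiplicative group $E^{\times}$ of that field. A standard fact — every finite subgroup of the multiplicative group of a field is cyclic — then gives that $\rho(Z(G))$ is cyclic. Finally, since $\rho$ is faithful, $Z(G) \cong \rho(Z(G))$, so $Z(G)$ is cyclic.

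The main obstacle, and the step that needs the most care, is justifying that $\rho(z) \in D$ for central $z$ and then that the finite multiplicative group lives inside a field rather than a general division ring. The first part is immediate from Schur's lemma once one notes $\rho(z)$ is an $F[G]$-module endomorphism of $V$ — I would spell this out explicitly since the excerpt's Schur's lemma is stated for homomorphisms between modules. The second part requires the observation that a finite set of pairwise-commuting elements of a division ring generates a commutative division ring, which (being finite-dimensional over $F$ and a domain) is a field; here I would invoke that a finite-dimensional commutative domain over a field is a field. With those two observations in hand the argument closes quickly, and the only genuinely external input is the classical theorem that a finite subgroup of $F^{\times}$ for a field $F$ is cyclic.

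One remark on robustness: the argument does not actually use that $F$ has characteristic $0$ or prime to $|G|$ — it works over any field — but I would keep the hypothesis in the statement for uniformity with the surrounding development, noting in passing that it guarantees $F[G]$ is semisimple so that Schur's lemma applies in the clean form used above. If a more self-contained route is preferred, one can instead restrict to the abelian subgroup $Z(G)$ directly and observe that a faithful representation of an abelian group, after base change to $\overline{F}$, splits as a sum of one-dimensional characters whose joint kernel is trivial; faithfulness then forces $Z(G)$ to embed in a product of cyclic groups of the form $\overline{F}^{\times}$, and irreducibility over $F$ pins it down to a single cyclotomic piece — but the division-algebra argument is cleaner and is the one I would present.
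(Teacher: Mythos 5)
Your argument is correct and complete. Note, however, that the paper does not actually prove this lemma: it simply cites Gorenstein (Chapter 3, Theorem 2.2), where the classical argument is given in the algebraically closed setting — there Schur's lemma forces $\rho(z)$ to be a scalar for $z \in Z(G)$, so $Z(G)$ embeds directly into $F^{\times}$ and the finite-subgroup-of-a-field theorem finishes. Your route is the right generalization to an arbitrary $F$ of the stated kind, where $\rho(z)$ need not be scalar: you correctly observe that $\rho(z)$ commutes with all $\rho(g)$, hence lies in the division ring $D = \mathrm{End}_{F[G]}(V)$, that the commutative $F$-subalgebra $F[\rho(Z(G))] \subseteq D$ is a finite-dimensional domain and therefore a field, and that a finite subgroup of the multiplicative group of a field is cyclic; faithfulness then transfers cyclicity to $Z(G)$. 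This buys you a proof valid over any field, matching (indeed exceeding) the generality of the statement, at the modest cost of the extra step embedding the image into a field inside $D$. One small correction to your closing remark: Schur's lemma needs only the irreducibility of $V$, not the semisimplicity of $F[G]$, so the characteristic hypothesis is genuinely superfluous for this lemma rather than being what makes Schur's lemma "apply in the clean form"; keeping it for uniformity with the chapter is fine, but that justification is not needed.
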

For the proof, see $\cite{gor109}$, Chapter $3$, Theorem $2.2$.
\begin{theorem}\label{abe1}
	Let $G$ be a finite abelian group. Let $F$ be a field of characteristic $0$ or prime to $|G|$. For a divisor $d$ of $|G|$, let $\mathcal{H}_{d} = \{H \leq G \,|\, G/H \simeq C_{d}\}$. Let $\Omega_{G}$ denote the set of irreducible $F$-representations of $G$ and $\Omega^{o}_{{d}}$ denote the set of faithful irreducible $F$-representations of $C_{d}$. Then $\Omega_{G}$ is in a bijective correspondence with the set 
	$\displaystyle {\cup_{d | |G|}(\mathcal{H}_{d} \times \Omega^{o}_{{d}})}$.
\end{theorem}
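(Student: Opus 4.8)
\emph{Proof plan.} The plan is to exhibit an explicit bijection. For a divisor $d$ of $|G|$ and a subgroup $H\in\mathcal{H}_d$, write $\pi_H\colon G\to G/H$ for the canonical projection and fix an isomorphism $G/H\cong C_d$, so that each $\sigma\in\Omega^{o}_{d}$ may be regarded as a faithful irreducible $F$-representation of $G/H$; define
$$\Phi\colon \bigcup_{d\mid |G|}\big(\mathcal{H}_d\times\Omega^{o}_{d}\big)\longrightarrow \Omega_G,\qquad \Phi(H,\sigma)=[\,\sigma\circ\pi_H\,].$$
First I would check that $\Phi$ is well defined: since $\pi_H$ is onto, $\sigma\circ\pi_H$ has the same image in $\mathrm{GL}(V_\sigma)$ as $\sigma$, hence the same invariant subspaces, so $\sigma\circ\pi_H$ is irreducible; and replacing $\sigma$ by an equivalent representation yields an equivalent composite, so the class $[\sigma\circ\pi_H]\in\Omega_G$ depends only on $(H,\sigma)$. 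Note also that $d=[G:H]$ is determined by $H$, so the sets $\mathcal{H}_d$ are pairwise disjoint and the source of $\Phi$ is a genuine disjoint union of pairs.

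For injectivity, the key point is that $H$ (and hence $d$) can be recovered from the equivalence class $[\sigma\circ\pi_H]$: viewing $\sigma\circ\pi_H$ as a group homomorphism $G\to\mathrm{GL}(V_\sigma)$, its kernel is $\pi_H^{-1}(\ker\sigma)=\pi_H^{-1}(\{e\})=H$ because $\sigma$ is faithful, and equivalent representations have equal kernels. Thus if $\Phi(H,\sigma)=\Phi(H',\sigma')$ then $H=H'$, whence $d=[G:H]=[G:H']=d'$, and then $\sigma,\sigma'$ are equivalent irreducible representations of the same cyclic group $G/H\cong C_d$, i.e.\ they agree as elements of $\Omega^{o}_{d}$. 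For surjectivity I would invoke Theorem \ref{th3}: every irreducible $F$-representation $\rho$ of $G$ factors through a faithful irreducible representation of a cyclic quotient, say $\rho\cong\sigma\circ\pi$ with $\pi\colon G\to G/K$ the canonical projection, $G/K$ cyclic, and $\sigma$ faithful irreducible. Taking $H=K$ and $d=[G:H]$ gives $d\mid|G|$, $G/H\cong C_d$ so $H\in\mathcal{H}_d$, $\sigma\in\Omega^{o}_{d}$, and $\Phi(H,\sigma)=[\rho]$. Finally, Theorem \ref{facy} makes $\Omega^{o}_{d}$ itself explicit (the irreducible factors of $\Phi_d(X)$ over $F$, realized by companion matrices), rendering the whole correspondence effective, which is what the algorithm of this chapter exploits.

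The only real obstacle is the surjectivity step, and it is entirely concentrated in Theorem \ref{th3} (which in turn rests on Lemma \ref{lem}, that a finite abelian group admitting a faithful irreducible $F$-representation must be cyclic); once that is granted, the remainder is routine manipulation of quotient maps and kernels, the sole delicate point being the recovery of $H$ from the isomorphism class of the representation, handled by the kernel computation above.
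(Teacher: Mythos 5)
Your proposal is correct and follows essentially the same route as the paper: the bijection is given by inflation (pull back) along the quotient map in one direction, and by factoring an irreducible representation through a faithful irreducible representation of a cyclic quotient (via Lemma \ref{lem}, i.e.\ Theorem \ref{abe2}) in the other. Your explicit recovery of $H$ as the kernel of $\sigma\circ\pi_H$ is simply a cleaner way of phrasing the injectivity step that the paper treats more informally.
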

\begin{proof}\rm
	Let $(\rho, V)$ be an irreducible $F$-representation of $G$, and $K$ be its kernel. So, $\rho$ factors through a faithful irreducible representation of $G/K$. By Lemma $\ref{lem}$, $G/K$ is a cyclic group, and is isomorphic to $C_{d}$ (say). So, $\rho$ corresponds to an element in  
	$\displaystyle {\cup_{d | |G|}(\mathcal{H}_{d}\times \Omega^{o}_{{d}})}$ and this correspondence is one-one. Let $(\rho_{1}, V_{1})$ and $(\rho_{2}, V_{2})$ be two irreducible representations of $G$ with kernel $K_{1}$ and $K_{2}$ respectively. So, $\rho_{1}$ and $\rho_{2}$ factor through faithful irreducible representations $\eta_{1}$ (say) of $G/K_{1}$ and $\eta_{2}$ (say) of $G/K_{2}$ respectively. We can consider two cases: $G/K_{1}$ is isomorphic to $G/K_{2}$ and  $G/K_{1}$ is not isomorphic to $G/K_{2}$. In both the cases, it is clear that $\eta_{1} \ncong \eta_{2}$.
	
	For a divisor $d$ of $|G|$, let $(\eta, W)$ be a faithful irreducible representation of $G/H \simeq C_{d}$ (say), where $H$ is a normal subgroup of $G$. Then the pull back of $\eta$ by the natural group homomorphism: $G \rightarrow G/H$, say $\rho$ is an irreducible representation of $G$. So, corresponding to an element in $\displaystyle {\cup_{d | |G|}(\mathcal{H}_{d} \times \Omega^{o}_{{d}})}$ there is an element in $\Omega_{G}$. This defines a correspondence from $\displaystyle {\cup_{d | |G|}(\mathcal{H}_{d} \times \Omega^{o}_{{d}})}$ to $\Omega_{G}$, which is one-one also. Therefore $\Omega_{G}$ is in a bijective correspondence with the set 
	$\displaystyle {\cup_{d | |G|}(\mathcal{H}_{d} \times \Omega^{o}_{{d}})}$. This completes the proof. 
\end{proof}
\begin{remark}
	Note that $\mathcal{H}_{d}$ is an absolute invariant, depending only on $G$, whereas $\Omega^{o}_{{d}}$ depends on $F$.
\end{remark}
\begin{theorem} [$\cite{gor109}$, Chapter $3$, Theorem $2.3$]\label{abe2}
	Let $G$ be a finite abelian group. Let $F$ be a field of characteristic $0$ or prime to $|G|$. Then every irreducible representation of $G$ over $F$ factors through a faithful irreducible representation of a cyclic quotient.
\end{theorem}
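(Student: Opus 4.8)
The plan is to take an arbitrary irreducible $F$-representation $(\rho, V)$ of $G$, pass to its kernel, and apply Lemma \ref{lem} to the resulting quotient. First I would set $K = \ker\rho$ and observe that $\rho$ induces a representation $\bar{\rho}$ of $G/K$ on the same space $V$, characterized by $\bar{\rho}(gK) = \rho(g)$; this is well defined precisely because $K$ is the kernel of $\rho$, and by construction $\bar{\rho}$ is injective, i.e.\ faithful. Since $\rho$ and $\bar{\rho}$ have the same image in $\mathrm{GL}(V)$, a subspace of $V$ is $G$-invariant under $\rho$ if and only if it is $(G/K)$-invariant under $\bar{\rho}$; hence $\bar{\rho}$ is irreducible. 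Thus $G/K$ admits a faithful irreducible $F$-representation, and $\rho$ visibly factors through it via the canonical surjection $G \to G/K$.

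The second step is to show that the quotient $G/K$ is actually cyclic, so that $\bar{\rho}$ qualifies as a faithful irreducible representation of a \emph{cyclic} quotient. Since $G/K$ carries a faithful irreducible $F$-representation (namely $\bar{\rho}$), Lemma \ref{lem} tells us that the center $Z(G/K)$ is cyclic. But $G$ is abelian, so $G/K$ is abelian, whence $Z(G/K) = G/K$. Therefore $G/K$ is a cyclic group, and $(\rho, V)$ factors through the faithful irreducible representation $(\bar{\rho}, V)$ of the cyclic quotient $G/K$, as required.

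There is no serious obstacle here: the whole argument is a direct application of Lemma \ref{lem} once one notes that quotienting by the kernel produces a faithful irreducible representation of the quotient group. The only points demanding (routine) care are the verification that $\bar{\rho}$ remains irreducible — immediate from the coincidence of the invariant-subspace lattices of $\rho$ and $\bar{\rho}$ — and the elementary observation that for abelian $G$ the center of $G/K$ is all of $G/K$, so that ``cyclic center'' upgrades to ``cyclic group''. I would also remark that this statement is exactly the ingredient used in the proof of Theorem \ref{abe1}, so once it is in place the parametrization of $\Omega_G$ by $\cup_{d\mid |G|}(\mathcal{H}_d \times \Omega^{o}_{d})$ is fully justified.
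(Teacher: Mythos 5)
Your argument is correct and is essentially the paper's own proof: pass to $K=\ker\rho$, note that $\rho$ factors through a faithful irreducible representation of $G/K$, and apply Lemma \ref{lem} together with $Z(G/K)=G/K$ (as $G$ is abelian) to conclude $G/K$ is cyclic. The extra detail you give on why $\bar{\rho}$ stays irreducible is routine and only fleshes out what the paper leaves implicit.
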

\begin{proof}
	Let $(\rho, V)$ be an irreducible representation of $G$ and $K$ be the kernel of $\rho$. Then $\rho$ factors through a faithful irreducible representation of $G/K$. So, by Lemma \ref{lem}, $Z(G/K) = G/K$ is cyclic. This completes the proof.
\end{proof}
\begin{corollary}
	Construction of the irreducible representations of a finite abelian group reduces to construction of the faithful irreducible representations of its cyclic quotients.
\end{corollary}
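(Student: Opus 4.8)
The plan is to assemble the corollary from the three structural results already proved in this chapter, making precise what ``reduces to'' means. First I would invoke Theorem \ref{abe2}: if $(\rho,V)$ is an arbitrary irreducible $F$-representation of $G$ with kernel $K$, then $\rho$ factors as $G \twoheadrightarrow G/K \xrightarrow{\ \bar\rho\ } \mathrm{GL}(V)$, where $\bar\rho$ is a \emph{faithful} irreducible $F$-representation of the cyclic quotient $G/K$ (cyclicity of $G/K$ coming from Lemma \ref{lem}). This shows every member of $\Omega_G$ is obtained from a faithful irreducible representation of some cyclic quotient.

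Next I would record the converse direction via the pull-back construction. Since $G$ is abelian, every subgroup $H$ is normal; given $H$ with $G/H \cong C_d$ and a faithful irreducible $F$-representation $(\eta,W)$ of $C_d$, the composite of $\eta$ with the quotient map $G \to G/H$ is an irreducible $F$-representation of $G$. In the idempotent language of Theorem \ref{pro} and the discussion of lifts, the corresponding primitive central idempotent of $F[G]$ is the pull-back $e_H\,e$ of the primitive central idempotent $e$ attached to $\eta$ in $F[C_d]$. Combining the two directions, and appealing to Theorem \ref{abe1}, one gets the bijection $\Omega_G \longleftrightarrow \bigcup_{d \mid |G|}(\mathcal{H}_d \times \Omega^o_{d})$, which is exactly the assertion that constructing $\Omega_G$ is the same as, for each divisor $d$ of $|G|$, listing the subgroups $H$ with $G/H\cong C_d$ and constructing $\Omega^o_d$.

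Finally I would point to Theorem \ref{facy} (with the explicit matrix form in Theorem \ref{theo}) for the remaining ingredient: the faithful irreducible representations of $C_d$ are indexed by the irreducible factors of $\Phi_d(X)$ over $F$, the representation attached to a factor $f(X)$ sending a generator to the companion matrix $C_{f(X)}$. Thus the whole construction reduces to the arithmetic task of factoring $\Phi_d(X)$ over $F$ for each $d \mid |G|$ together with the purely group-theoretic task of enumerating the relevant subgroups.

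I do not expect a genuine obstacle, since the statement is a summary/reduction whose content is fully contained in Theorems \ref{abe2}, \ref{abe1}, and \ref{facy}; the only care needed is to make the reduction effective and to note that it is a genuine bijection, which is precisely why one cites Theorem \ref{abe1} rather than merely Theorem \ref{abe2}. The mildly delicate point — that distinct pairs $(H,\eta)$ give inequivalent representations of $G$ and that every irreducible representation arises this way — was already verified in the proof of Theorem \ref{abe1}, so here it suffices to quote it.
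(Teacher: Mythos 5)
Your argument is correct and matches the paper's route: the paper states this corollary as an immediate consequence of Theorem \ref{abe2} (factoring through a faithful irreducible representation of a cyclic quotient, via Lemma \ref{lem}), with the bijection of Theorem \ref{abe1} and the companion-matrix construction of Theorem \ref{facy} supplying exactly the converse and effectivity points you spell out. You have simply made explicit what the paper leaves as a one-line deduction, so there is nothing to correct.
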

\section{Algorithmic construction of representations of finite abelian groups}
Let $G$ be a finite abelian group. Let $F$ be a field of characteristic $0$ or prime to $|G|$. Summarizing the previous two sections, we write an algorithm for constructing the irreducible $F$-representations of $G$ in the following way.\\
\noindent
{\bf Step 1:} By Theorem \ref{abe1}, every irreducible representation of $G$ factors through a faithful irreducible representation of a cyclic quotient. So to construct the irreducible representations of $G$ it is sufficient to find faithful irreducible representations of its cyclic quotients. We first find all the distinct cyclic quotients of $G$.\\
\noindent
{\bf Step 2:} By Theorem \ref{facy}, the faithful irreducible representations of a cyclic quotient, say isomorphic to $C_{n}$ (say), can be constructed from decomposition of the $n^{th}$ cyclotomic polynomial $\Phi_{n}(X)$ into irreducible polynomials over $F$. So, for each such $n$, we need to find the decomposition of $\Phi_{n}(X)$ into irreducible polynomials. This is the arithmetic step, depending on the arithmetic of $F$.\\
\noindent
{\bf Step 3:} By Theorem \ref{facy}, if $\Phi_{n}(X) = \prod^{k} _{i = 1}{f_{i}(X)}$ is the factorization into irreducible polynomials over $F$, then there is a bijective correspondence between the set of faithful irreducible representations of $C_{n}$ and the set of irreducible factors $\Phi_{n}(X)$, that is, $\{f_{1}(X), f_{2}(x), \dots, f_{k}(X)\}$. If $\rho_{i}$ is the faithful irreducible representation corresponding to the irreducible factor $f_{i}(X)$, then $\rho_{i}$ is defined by: $x \mapsto C_{f_{i}(X)}$, where $C_{f_{i}(X)}$ denotes the companion matrix of $f_{i}(X)$.

By following these three steps, we obtain all the irreducible $F$-representations of $G$.
\section{Primitve central idempotents of abelian group algebras}
In this section, we describe a systematic way of computing the primitive central idempotents of a finite abelian group $G$ over a field $F$ of characteristic $0$ or prime to $|G|$. By Theorem \ref{abe2}, every primitive central idempotent of $F[G]$ is pull back of the primitive central idempotent in $F[G/K]$ corresponding to a {\it faithful} irreducible representation of $G/K$, where $G/K$ is isomorphic to a cyclic group. Let $G/K = \langle{ \bar{x} | {\bar{x}}^{n} = 1 }\rangle \simeq C_{n}$, where $\bar{x} = xK$. Let $e = \sum_{i = 0}^{n-1}{\alpha_{i}{\bar{x}}^{i}}$ be a primitive central idempotent in $F[G/K]$ corresponding to a {\it faithful} irreducible representation of $G/K$. Then the pull back of $e = \sum_{i = 0}^{n-1}{\alpha_{i}{\bar{x}}^{i}}$, that is, $e_{K}\sum_{i = 0}^{n-1}{\alpha_{i}{{x}}^{i}}$, where $e_{K} = {\sum_{k \in K}k}/{|K|}$ is a primitive central idempotent of $F[G]$. Thus the problem reduces to computing the primitive central idempotents of a cyclic group over a field of characteristic $0$ or prime to the order of the group.

Let $G = \langle {x | x^{n} = 1}\rangle$ be a cyclic group of order $n$. Let $F$ be a field of characteristic $0$ or prime to $n$. Then $F[G]$ is isomorphic to $F[X]/\langle X^{n} - 1 \rangle$. Let $X^{n} - 1 = \prod_{d | n}{\Phi_{d}(X)}$ be the "universal" decomposition into cyclotomic polynomials. For $\mathbb{Q}$, it is the decomposition into irreducible factors. In general, $\Phi_{d}(X)$ will decompose further. Let $X^{n} - 1 = (X-1)f_{2}(X)\dots f_{k}(X)$, with $f_{1}(X) = X - 1$, be the factorization into monic irreducible polynomials. Note that each $f_{i}(X)$ is a monic irreducible factor of $\Phi_{d}(X)$ for some $d | n$. Then $F[G]$ is abstractly isomorphic to the direct sum of $F$-algebras $F[X]/\langle f_{i}(X) \rangle, i = 1, 2, \dots , k$. For a fixed $i$, let $\zeta_{1}, \zeta_{2}, \dots , \zeta_{d}$ be the roots of $f_{i}(X)$ in the algebraic closure of $F$. Then the primitive central idempotent corresponding to $f_{i}(X)$ is the sum of $e_{\zeta_{j}x}, j = 1,2,\dots,k$, where $e_{X} = (1 + X + \dots + X^{n-1})/n$ and $X$ is an indeterminate. By Newton's formulae, power sums can be expressed in terms of elementary symmetric functions,  which implies the coefficients of $x, x^2, \dots , x^{n-1}$ can be expressed in terms of the coefficients of the $f_{i}(X)$. So, computation of the primitive central idempotents of $F[G]$ reduces to factorization of $X^{n} - 1$ into irreducible polynomials over $F$. By Theorem \ref{facy}, the computation of  primitive central idempotents of $F[G]$ corresponding to the faithful irreducible representations of $G$ reduces to factorization of $\Phi_{n}(X)$ into irreducible polynomials over $F$. This depends on the arithmetic of $F$.
\chapter{Examples}
In this chapter, we give few examples which illustrate construction of the irreducible matrix representations of a finite solvable group over $\mathbb{C}$ by using Clifford theory (see \cite{alg111}, \cite{cl50}, \cite{combi91}).
\section{Symmetric group $S_{3}$}
\begin{example}\rm
	$S_{3}$ is the group of symmetries of an equilateral triangle. Let $x$ be a rotational symmetry of the
	triangle with angle of rotation $120^0$. Let $y$ be a reflection in a line passing through a vertex and
	the midpoint of the opposite edge. 
	$S_3$ has a maximal subnormal series
	\begin{equation}
	\langle e \rangle = G_{o}  <  C_{3} = G_{1} < S_3 = G_{2}
	\end{equation}
	The long presentation associated with the series $(8.1.1)$ is
	\begin{center}
		$ S_3 = \langle x,y \, | \, x^3 = y^2 = 1, y^{-1}xy=x^{-1}\rangle$.
	\end{center}
	\noindent
	We obtain the primitive central idempotents in $\mathbb{C}[S_{3}]$ and 
	construct the irreducible matrix representations of $S_3$  
	using the long presentation.
	
	The only primitive central idempotent in $\mathbb{C}[G_0]$ is $e=1$, and the corresponding representation is trivial representation.
	The trivial representation of $G_0$ extends to three mutually inequivalent irreducible 
	representations of $G_1=\langle x\rangle$.
	To obtain the corresponding primitive central idempotents, consider the group algebra 
	$\mathbb{C}[G_1]$. Since $G_1$ is a cyclic group of order $3$,
	\begin{center}
		$\mathbb{C}[G_1] \cong \frac{\mathbb{C}[X]}{(x^3-1)} \cong \frac{\mathbb{C}[X]}{(x-1)} \oplus 
		\frac{\mathbb{C}[X]}{(x-\omega)} \oplus \frac{\mathbb{C}[X]}{(x-\omega^2)} \cong \mathbb{C} \oplus \mathbb{C} \oplus \mathbb{C},$
	\end{center}
	where $\omega$ is a primitive cube root of unity.  The identity elements in the last three components correspond to three primitive central idempotents in $\mathbb{C}[G_1]$ via the above isomorphism. To obtain primitive central 
	idempotents, write $e_X={1+X+X^2}/{3}$, where $X$ is an indeterminate.
	The primitive central idempotents of $\mathbb{C}[G_1]$ are
	$$e_x = \frac{1+x+x^2}{3},\,\, e_{\omega x}= \frac{1+\omega x+\omega^2 x^2}{3},\,\, e_{\omega^2 x} = \frac{1+\omega^2 x+\omega x^2}{3}.$$\\
	\noindent
	{\it{PCI-diagram of $G_{1}$}}:
	The PCI-diagram of $G_{1}$ associated with the series: $\langle{e}\rangle = G_{o} < C_{3} = \langle{x}\rangle = G_{1}$ is
	\begin{center}
		\begin{tikzpicture}
		\node at (0,0) {$\bullet$};
		\node at (-2,1.5) {$\bullet$};\node at (0,1.5) {$\bullet$};\node at (2,1.5) {$\bullet$};
		\draw (0,1.5)--(0,0)--(-2,1.5);\draw (0,0)--(2,1.5);
		\node at (0.2,-0.1) {$e$};
		\node at (-2.3,1.4) {$e_{x}$};\node at (-0.4,1.4) {$e_{\omega x}$};\node at (2.5,1.4) {$e_{\omega^2 x}$};
		\node at (-5,0) {$G_0$};\node at (-5,1.5) {$G_1$};
		\end{tikzpicture}
	\end{center}
	Here, $e_{X} =\frac{1+X+X^2}{3}, X \in \{x, \omega x, \omega^2 x\}, \omega^3=1,\, 
	\omega\neq 1.$
	
	We obtain the primitive central idempotents in $\mathbb{C}[G_2]$ starting with 
	primitive central idempotents of $\mathbb{C}[G_1]$. The primitive central idempotents 
	$e_{\omega x}$ and $e_{\omega^2 x}$ are primitive idempotents in $\mathbb{C}[G_2]$ but 
	not central. However, their sum $e_1=e_{\omega x}+e_{\omega^2 x}$ is a primitive central idempotent 
	in $\mathbb{C}[G_2]$.
	The primitive central idempotent $e_x$ of $\mathbb{C}[G_1]$ is a central idempotent 
	in $\mathbb{C}[G_2]$. Two primitive central idempotents $e_2=e_xe_{y}$ and 
	$e_3=e_xe_{-y}$, where $e_{y} = {(1 + y)}/2$, $e_{-y} = {(1 - y)}/2$ correspond to two extensions of $e_{x}$.\\
	\noindent
	{\it{PCI-diagram of $G_{2}$}}:
	The PCI-diagram associated with the series $(8.1.1)$ is
	\begin{center}
		\begin{tikzpicture}
		\node at (0,0) {$\bullet$};
		\node at (-2,1.5) {$\bullet$};\node at (0,1.5) {$\bullet$};\node at (2,1.5) {$\bullet$};
		\node at (-3,3) {$\bullet$};\node at (-1,3) {$\bullet$};\node at (1,3) {$\bullet$};
		\draw (0,1.5)--(0,0)--(-2,1.5);\draw (0,0)--(2,1.5);
		\draw (-3,3)--(-2,1.5)--(-1,3);\draw (0,1.5)--(1,3)--(2,1.5);
		\node at (0.2,-0.1) {$e$};
		\node at (-2.3,1.4) {$e_{x}$};\node at (-0.4,1.4) {$e_{\omega x}$};\node at (2.5,1.4) {$e_{\omega^2 x}$};
		\node at (-3,3.3) {$e_{x}e_{y}$};\node at (-1,3.3) {$e_{x}e_{-y}$};\node at (1.08,3.3) {$e_{\omega x}+e_{\omega^2 x}$};
		\node at (-5,0) {$G_0$};\node at (-5,1.5) {$G_1$};\node at (-5,3) {$G_2$};
		\end{tikzpicture}
	\end{center}
	Here, $e_{X} =\frac{1+X+X^2}{3}, X \in \{x, \omega x, \omega^2 x\}, e_{Y} = \frac{1+Y}{2}, Y \in  \{y, -y\}, \omega^3=1,\, \omega\neq 1.$\\\\
	\noindent
	{\it{Irreducible representation of $G_{2}$ corresponding to the primitive central idempotent $e_1 = e_{\omega x} + e_{\omega^2 x}$}}:
	Let $\rho_1$ be the irreducible representation corresponding to $e_1$. The representation 
	space $V_{\rho_{1}}$ of $\rho_{1}$ is a
	two-dimensional vector space, and the subspace 
	$\langle { e_{\omega x}, ye_{\omega x} }\rangle$ of $\mathbb{C}[G_2]$ can serve as a 
	model for the representation space $V_{\rho_{1}}$ of $\rho_1$. Then, the values of 
	$\rho_1$ on the generators $x,y$ of $G_2$ are given by

\begin{align*}
	\rho_1(x) &\colon e_{\omega x} \mapsto  \omega^2 e_{\omega x},\\
	\rho_1(x) &\colon ye_{\omega x} \mapsto \omega ye_{\omega x},
	\end{align*}
	and
	\begin{align*}
	\rho_1(y) &\colon e_{\omega x} \mapsto ye_{\omega x},\\
	\rho_1(y) & \colon ye_{\omega x} \mapsto e_{\omega x}.
	\end{align*}
	Therefore w.r.t. the basis $\{ e_{\omega x}, ye_{\omega x} \}$ of $V_{\rho_{1}}$, the matrix representation of $\rho_{1}$ is
	\begin{equation*}
	[\rho_1(x)]=
	\left [
	\begin{array}{cc}
	\omega^2 & 0\\
	0 & \omega
	\end{array}
	\right], \mbox{ and }
	[\rho_1(y)]=
	\left [
	\begin{array}{cc}
	0 & 1\\
	1 & 0
	\end{array}
	\right].
	\end{equation*}
	\\
	\noindent
	{\it{Irreducible representation of $G_{2}$ corresponding to the primitive central idempotent $e_2 = e_{x}e_{y}$ }}:
	Let $\rho_2$ be the irreducible representation corresponding to $e_2$.
	The subspace $\langle e_xe_y\rangle$ of $\mathbb{C}[G_2]$ can serve as a model for the
	representation space $V_{\rho_{2}}$ for $\rho_2$. Then, the values of $\rho_2$ on the generators $x,y$ of $G_2$ are given by

\begin{align*}
	\rho_2(x) & \colon e_xe_y \mapsto e_xe_y,\\
	\rho_2(y) & \colon e_xe_y \mapsto e_xe_y.
	\end{align*}
	Thus, the representation $\rho_2$ is the trivial representation.\\
	\noindent
	{\it{Irreducible representation of $G_{2}$ corresponding to the primitive central idempotent $e_3 = e_{x}e_{-y}$}}:
	Let $\rho_{3}$ be the irreducible representation corresponding to the primitive central idempotent $e_{3}$. The subspace $\langle e_xe_{-y}\rangle$ of $\mathbb{C}[G_2]$ serves as a model for the
	representation space $V_{\rho_{3}}$ of $\rho_3$. Then the values of $\rho_3$ on the generators $x,y$ of $G_2$ are given by
\begin{align*}
	\rho_3(x) &\colon e_xe_{-y}\mapsto e_xe_{-y},\\
	\rho_3(y) &\colon e_xe_{-y}\mapsto -e_xe_{-y}.
	\end{align*}
	Therefore the matrix representation of $G_2$  w.r.t. the basis $\{e_xe_{-y}\}$ of $V_{3}$ is
	\begin{center}
		$[\rho_3(x)]=1$ and $[\rho_3(y)]=-1$.
	\end{center}
\end{example}
\section{Dihedral group $D_8$}
\begin{example}\rm
	$(1)$ We first construct the irreducible representations of $D_{8}$ over $\mathbb{C}$ by using Clifford theory.\\
	Let us consider the maximal subnormal series
	\begin{equation}
	\langle e \rangle = G_{o} < C_2 = G_1 < C_2 \times C_2 = G_2 < D_{8} = G_3.
	\end{equation}
	The long presentation associated with the series $(8.2.1)$ is
	$$D_{8} = \langle{x,y,z \,|\,  x^{2} = 1,\,  y^{2} = 1,\,  xy = yx ,\,  z^{2} = 1,\, z^{-1}xz = x, \, z^{-1}yz = xy}\rangle.$$
	Therefore 
	$G_0 = \{1\}$, $G_1 = \langle x \rangle$, $G_2=\langle x,y\rangle$ and  $G_3=\langle x,y,z\rangle$.
	Since $G_2$ is a subgroup of index $2$, one can construct the irreducible representations of $D_{8}$ starting with
	irreducible representations of $G_2$. There are four irreducible $\mathbb{C}$-representations of $G_2$ which are given by 
	\begin{align*}
	\eta_1(x)=1, \eta_1(y)=1  & & \eta_2(x)=1, \eta_2(y)=-1
	\end{align*}
	\begin{align*}
	\eta_3(x)=-1, \eta_3(y)=1 &  & \eta_4(x)=-1, \eta_4(y)=-1.
	\end{align*}
	Since $\eta_1 \cong \eta^{z}_1$ and $\eta_2 \cong \eta^{z}_2$, then $\eta_{1}$ and $\eta_{2}$ extends to two representations of $D_8$. The two extensions of 
	$\eta_1$ are given by 
	\begin{equation*}
	\rho_1(z) =1, \hskip3mm \rho_2(z) = -1.
	\end{equation*}
	Similarly, the two extensions of $\eta_2$ are given by
	\begin{equation*}
	\rho_3(z) =1, \hskip3mm \rho_4(z) = -1.
	\end{equation*}
	Since $\eta_3 \ncong \eta^{z}_3$ and $\eta^{z}_3 \cong \eta_4$, then $\eta_3$, $\eta_4$ induce the same 
	irreducible representation $\rho_{5}$ and is given by
	\begin{equation*}
	[\rho_{5}(x)] =
	\begin{bmatrix} 
	-1 & 0 \\
	0 & -1 
	\end{bmatrix}, \hskip3mm 
	[\rho_{5}(y)] = 
	\begin{bmatrix} 
	1 & 0 \\
	0 & -1 \end{bmatrix}, \hskip3mm  
	[\rho_{5}(z)] = 
	\begin{bmatrix}
	0 & 1 \\
	1 & 0 \end{bmatrix}.
	\end{equation*}
	
	\vspace{4mm}
	\noindent
	$(2)$ We now construct the irreducible representations of $D_{8}$ using our algorithm (see Chapter$6$).\\
	\noindent
	{\it{PCI-diagram of $G_2$}}: Write $e_X=\frac{1+X}{2}$, where $X$ is an indeterminate. The complete set of primitive central idempotents in $\mathbb{C}[G_2]$ are
	$$e_xe_y,  \hskip3mm  e_xe_{-y}, \hskip3mm e_{-x}e_y, \hskip3mm e_{-x}e_{-y}.$$
	The PCI-diagram of $G_2$ associated with the series $G_0<G_1<G_2$   
	is
	\begin{center}
		\begin{tikzpicture}
		\node at (0,0) {$\bullet$};
		\node at (-2.5,1.5) {$\bullet$};\node at (2.5,1.5) {$\bullet$};
		\node at (-4,3) {$\bullet$};\node at (-1,3) {$\bullet$};\node at (1,3) {$\bullet$};\node at (4,3) {$\bullet$};
		\draw (0,0)--(-2.5,1.5); \draw (0,0)--(2.5,1.5);
		\draw (-2.5,1.5)--(-4,3);\draw (-2.5,1.5)--(-1,3);\draw (2.5,1.5)--(4,3);\draw (2.5,1.5)--(1,3);
		\node at (0.2,-0.1) {$e$};
		\node at (-2.7,1.4) {$e_x$};\node at (3,1.4) {$e_{-x}$};
		\node at (-4.5,2.9) {$e_xe_y$};\node at (-1.8,3) {$e_xe_{-y}$};
		\node at (0.2,3) {$e_{-x}e_y$};\node at (3,3) {$e_{-x}e_{-y}$};
		\node at (-6.2, 0) {$G_{0}$};
		\node at (-6.2, 1.5) {$G_1$};
		\node at (-6.2, 3) {$G_2$};
		\end{tikzpicture}
	\end{center}
	{\it{PCI-diagram of $G_3$}}: Let $X$ and $e_X$ be as before. The complete set of primitive central idempotents in 
	$\mathbb{C}[G_3]$ are
	$$e_xe_ye_z,  \hskip2mm e_xe_ye_{-z}, \hskip2mm e_xe_{-y}e_z,  \hskip2mm e_xe_{-y}e_{-z}, \hskip2mm e_{-x}e_y + e_{-x}e_{-y} = 1 - e_x = e_{-x}.$$
	The PCI-diagram of $G_3$ associated with the series $(8.2.1)$ is
	\begin{center}
		\begin{tikzpicture}
		\node at (0,0) {$\bullet$};
		\node at (-2.5,1.5) {$\bullet$};\node at (2.5,1.5) {$\bullet$};
		\node at (-4,3) {$\bullet$};\node at (-1,3) {$\bullet$};\node at (1,3) {$\bullet$};\node at (4,3) {$\bullet$};
		\node at (-4.8,4.5) {$\bullet$};\node at (-3.2,4.5) {$\bullet$};\node at (-1.8,4.5) {$\bullet$};\node at (-0.2,4.5) {$\bullet$};\node at (2.5,4.5) {$\bullet$};
		\draw (0,0)--(-2.5,1.5); \draw (0,0)--(2.5,1.5);
		\draw (-2.5,1.5)--(-4,3);\draw (-2.5,1.5)--(-1,3);\draw (2.5,1.5)--(4,3);\draw (2.5,1.5)--(1,3);
		\draw (-4,3)--(-4.8,4.5);\draw (-4,3)--(-3.2,4.5);
		\draw (-1,3)--(-1.8,4.5);\draw (-1,3)--(-0.2,4.5);
		\draw (1,3)--(2.5,4.5);\draw (4,3)--(2.5,4.5);
		\node at (0.2,-0.1) {$e$};
		\node at (-2.7,1.4) {$e_x$};\node at (3,1.4) {$e_{-x}$};
		\node at (-4.5,2.9) {$e_xe_y$};\node at (-1.8,3) {$e_xe_{-y}$};
		\node at (0.2,3) {$e_{-x}e_y$};\node at (3,3) {$e_{-x}e_{-y}$};
		\node at (-5.3,4.8) {$e_xe_ye_z$};\node at (-3.5,4.8) {$e_xe_ye_{-z}$};\node at (-1.4,4.8) {$e_xe_{-y}e_z$};
		\node at (0.8,4.8) {$e_xe_{-y}e_{-z}$};\node at (2.5,4.8) {$e_{-x}$};
		\node at (-6.2, 0) {$G_{0}$};
		\node at (-6.2, 1.5) {$G_1$};
		\node at (-6.2, 3) {$G_2$};
		\node at (-6.2, 4.5) {$G_3$};
		\end{tikzpicture}
	\end{center}
	\noindent
	{\it{Irreducible representation of $G_3$ corresponding to the primitive central idempotent $e_1 = e_{-x}$}}: 
	
	Let $(\rho_1,V_{\rho_1})$ be the irreducible representation of $G_3$ corresponding to $e_1$. Then $\dim_{\mathbb{C} } V_{\rho_1}=2$ and 
	the subspace $\langle{e_{-x}e_y, ze_{-x}e_y}\rangle$ of $\mathbb{C}[G_3]$ can serve as a model for the representation space for $\rho_1$. 
	Then the action of $x$, $y$, and $z$ on the basis $\{ e_{-x}e_y, ze_{-x}e_y\}$ is given by 
	\begin{align*}
	\rho_1(x) & \colon e_{-x}e_y \mapsto -e_{-x}e_y,\\
	& ze_{-x}e_y \mapsto -ze_{-x}e_y,\
	\end{align*}
	\begin{align*} 
	\rho_1(y) & \colon e_{-x}e_y \mapsto  e_{-x}e_y, \\
	& ze_{-x}e_y \mapsto -ze_{-x}e_y,\ 
	\end{align*}
	\begin{align*}
	\rho_1(z) & \colon  e_{-x}e_y \mapsto ze_{-x}e_y, \\
	& ze_{-x}e_y \mapsto e_{-x}e_y.\
	\end{align*}
	Therefore w.r.t. the basis $\{ e_{-x}e_y, ze_{-x}e_y\}$, the matrix representation of $\rho_1$ is
	\begin{equation*}
	[\rho_1(x)]=
	\left [
	\begin{array}{cc}
	-1 & 0\\
	0 & -1
	\end{array}
	\right],
	[\rho_1(y)]=
	\left [
	\begin{array}{cc}
	1 & 0\\
	0 & -1
	\end{array}
	\right],
	[\rho_1(z)]=
	\left [
	\begin{array}{cc}
	0 & 1\\
	1 & 0
	\end{array}
	\right].
	\end{equation*}
	
	The irreducible representations corresponding to the primitive central idempotents $e_xe_ye_z$, $e_xe_ye_{-z}$, $e_xe_{-y}e_z$ and $e_xe_{-y}e_{-z}$ are all $1$-dimensional, and 
	can be obtained in similar way. 
	In the following table, $e$ denotes a 
	primitive central idempotent in $\mathbb{C}[G_3]$, $(\rho,V_{\rho})$ denotes the corresponding irreducible representation. The model for the representation space $V_{\rho}$ is given as a 
	subspace of $\mathbb{C}[G_3]$. 
	\begin{center}
		\small
		\begin{tabular}{||c|c|c|c||}
			\hline\hline
			$e$ & $(\rho,V_{\rho})$ & Model for $V_{\rho}$  & Action of $\rho$ on $V$ \\ \hline \hline 
			$e_{2} = e_xe_ye_z$ & $(\rho_2, V_{\rho_2})$  & $\langle v\rangle=\langle e_xe_ye_z\rangle$ & $\rho_2(x): v\mapsto v, \hskip2mm \rho_2(y): v\mapsto v,\hskip2mm \rho_2(z) :v\mapsto v$ \\
			$e_{3} = e_xe_ye_{-z}$ & $(\rho_3, V_{\rho_3})$  & $\langle v\rangle=\langle e_xe_ye_{-z}\rangle$ & $\rho_3(x): v\mapsto v, \hskip2mm \rho_3(y): v\mapsto v,\hskip2mm \rho_3(z) :v\mapsto -v$ \\
			$e_{4} = e_xe_{-y}e_z$ & $(\rho_4, V_{\rho_4})$  & $\langle v\rangle=\langle e_xe_{-y}e_z\rangle$ & $\rho_4(x): v\mapsto v, \hskip2mm \rho_4(y): v\mapsto - v,\hskip2mm \rho_4(z) :v\mapsto v$ \\
			$e_{5} = e_xe_{-y}e_{-z}$ & $(\rho_5, V_{\rho_5})$  & $\langle v\rangle=\langle e_xe_{-y}e_{-z}\rangle$ & $\rho_{5}(x): v\mapsto v, \hskip1mm \rho_5(y): v\mapsto -v,\hskip1mm \rho_5(z) :v\mapsto -v$ \\
			\hline\hline
		\end{tabular}
	\end{center}
	\normalsize
\end{example}
%
%
\section{Quaternion group $Q_8$}
\begin{example}\rm
	$(1)$ We first construct the irreducible representations of $Q_{8}$ over $\mathbb{C}$ by using Clifford theory.\\
	Let us consider the maximal subnormal series
	\begin{equation}
	\langle e \rangle = G_{0} < C_2 = G_1 < C_4 = G_2 < Q_{8} = G_3.
	\end{equation}
	The long presentation of $Q_8$ associated with the series $(8.3.1)$ is 
	$$ Q_{8} = \langle{x,y,z \,|\, x^{2} = 1,\, y^{2} = x,\, z^{4} = 1,\,y^{2} = z^{2},\, z^{-1}yz = xy}\rangle. $$
	By construction, 
	$G_1 = \{ 1 \}$,  $G_1 = \langle x \rangle$, $G_2 = \langle  x,y \rangle$ $G_3 = \langle x,y,z \rangle$. 
	Since $G_2$ is a subgroup of index $2$ in $Q_{8}$, we can construct all the inequivalent irreducible 
	matrix representations of $Q_{8}$ starting with irreducible representations of $G_2$. There are four irreducible representations 
	$\eta_i$ ($i=1,2,3,4$) of $G_2$ which are given by
	\begin{align*}
	\eta_1(y) = 1 , \hskip1cm \eta_2(y) = -1, \hskip1cm  \eta_3(y) = i, \hskip1cm \eta_4(y) = -i. 
	\end{align*}
	The representation $\eta_1$ extends to $Q_8$ in two ways, and are given by 
	$$\rho_1(z) =1, \rho_2(z) = -1.$$
	Similarly there are two extensions of $\eta_2$, and are given by 
	$$\rho_3(z) =1, \rho_4(z) = -1.$$ 
	Since $\eta_{3} \ncong \eta^{z}_3$ and $\eta^{z}_{3} \cong \eta_4$, then $\eta_3$ and $\eta_4$ induce the same irreducible representation $\rho_{5}$, and is given by
	\begin{equation*}
	[\rho_{5}(x)] =
	\begin{bmatrix}
	-1 & 0 \\
	0 & -1 \end{bmatrix}, 
	\hskip3mm 
	[\rho_{5}(y)] = 
	\begin{bmatrix}
	i & 0 \\
	0 & -i \end{bmatrix},
	\hskip3mm 
	[\rho_{5}(z)] = 
	\begin{bmatrix}
	0 & -1 \\
	1 & 0 
	\end{bmatrix}.
	\end{equation*}
	\vspace{3mm}
	\noindent
	$\bullet$ We construct the irreducible representations of $Q_{8}$ by our algorithm (see Chapter$6$).\\
	{\it {PCI-diagram of $Q_{8}$}}: Write $e_X=\frac{1+X}{2}$, where $X$ is an indeterminate. 
	The complete set of primitive central idempotents of $\mathbb{C}[G_2]$ are 
	$$e_xe_y, e_xe_{-y}, e_{-x}e_{iy}, e_{-x}e_{-iy} \hskip5mm (i=\sqrt{-1}\in\mathbb{C}),$$
	The complete set of primitive central idempotents of $\mathbb{C}[G_3]$ are
	$$e_xe_ye_z, \hskip2mm e_xe_ye_{-z}, \hskip2mm e_xe_{-y}e_z, \hskip2mm e_xe_{-y}e_{-z} \mbox{ and } e_{-x}e_{iy} + e_{-x}e_{-iy} = 1 - e_x = e_{-x}.$$
	The PCI-diagram associated with the series $(8.3.1)$ is
	\begin{center}
		\begin{tikzpicture}
		\node at (0,0) {$\bullet$};
		\node at (-2.5,1.5) {$\bullet$};\node at (2.5,1.5) {$\bullet$};
		\node at (-4,3) {$\bullet$};\node at (-1,3) {$\bullet$};\node at (1,3) {$\bullet$};\node at (4,3) {$\bullet$};
		\node at (-4.8,4.5) {$\bullet$};\node at (-3.2,4.5) {$\bullet$};\node at (-1.8,4.5) {$\bullet$};\node at (-0.2,4.5) {$\bullet$};\node at (2.5,4.5) {$\bullet$};
		\draw (0,0)--(-2.5,1.5); \draw (0,0)--(2.5,1.5);
		\draw (-2.5,1.5)--(-4,3);\draw (-2.5,1.5)--(-1,3);\draw (2.5,1.5)--(4,3);\draw (2.5,1.5)--(1,3);
		\draw (-4,3)--(-4.8,4.5);\draw (-4,3)--(-3.2,4.5);
		\draw (-1,3)--(-1.8,4.5);\draw (-1,3)--(-0.2,4.5);
		\draw (1,3)--(2.5,4.5);\draw (4,3)--(2.5,4.5);
		\node at (0.2,-0.1) {$e$};
		\node at (-2.7,1.4) {$e_x$};\node at (3,1.4) {$e_{-x}$};
		\node at (-4.5,2.9) {$e_xe_y$};\node at (-1.8,3) {$e_xe_{-y}$};
		\node at (0.2,3) {$e_{-x}e_{iy}$};\node at (3,3) {$e_{-x}e_{-iy}$};
		\node at (-5.3,4.8) {$e_xe_ye_z$};\node at (-3.5,4.8) {$e_xe_ye_{-z}$};\node at (-1.4,4.8) {$e_xe_{-y}e_z$};
		\node at (0.8,4.8) {$e_xe_{-y}e_{-z}$};\node at (2.5,4.8) {$e_{-x_1}$};
		\node at (-6.2, 0) {$G_{0}$};
		\node at (-6.2, 1.5) {$G_1$};
		\node at (-6.2, 3) {$G_2$};
		\node at (-6.2, 4.5) {$G_3$};
		\end{tikzpicture}
	\end{center}
	\noindent
	{\it{Irreducible representation corresponding to the primitive central idempotent $e_1 = e_{-x}$}}: 
	Let $(\rho_1, V_{\rho_1})$ be the irreducible representation corresponding to $e_1$. Then the subspace $\langle{e_{-x}e_{iy}, ze_{-x}e_{-iy}}\rangle$ of $\mathbb{C}[G_3]$ can serve as a model for $V_{\rho_1}$. 
	The action of $\rho_{1}$ on the basis $\{e_{-x}e_{iy}, ze_{-x}e_{-iy}\}$ is given by
	\begin{align*}
	\rho_1(x) & \colon e_{-x}e_{iy} \mapsto -e_{-x}e_{iy},\\
	& ze_{-x}e_{iy} \mapsto -ze_{-x}e_{iy},\\
	\rho_1(y) & \colon e_{-x}e_{iy} \mapsto ie_{-x}e_{iy}, \\
	& ze_{-x}e_{iy} \mapsto -ize_{-x}e_{iy},\\
	\rho_1(z) & \colon  e_{-x}e_{iy} \mapsto ze_{-x}e_{iy},\\
	& ze_{-x}e_{iy} \mapsto -e_{-x}e_{iy}.
	\end{align*}
	Therefore w.r.t. the basis $\{ e_{-x}e_{iy}, ze_{-x}e_{-iy}\}$, the matrix representation of $\rho_1$ is:
	\begin{equation*}
	[\rho_1(x)] = \begin{bmatrix}
	-1 & 0\\
	0 & -1
	\end{bmatrix}, \hskip3mm 
	[\rho_1(y)] =
	\begin{bmatrix}
	i & 0\\
	0 & -i
	\end{bmatrix}, \hskip3mm 
	[\rho_1(z)] =
	\begin{bmatrix}
	0 & -1\\
	1 & 0
	\end{bmatrix}.
	\end{equation*}
	The irreducible representations corresponding to the primitive central idempotents $e_xe_ye_z,$ $e_xe_ye_{-z}, e_xe_{-y}e_z, e_xe_{-y}e_{-z}$ 
	are all 1-dimensional, and can be obtained in similar way. 
	In the following table, $e$ denotes a 
	primitive central idempotent in $\mathbb{C}[G_3]$, $(\rho,V_{\rho})$ denotes the corresponding irreducible representation, and the model space for the representation space $V_{\rho}$ is given as a subspace of $\mathbb{C}[G_3]$. 
	\begin{center}
		\small
		\begin{tabular}{||c|c|c|c||}
			\hline \hline
			$e$ & $(\rho,V_{\rho})$ & Model for $V_{\rho}$ & Action of $\rho$ on $V$\\\hline\hline
			$e_{2} = e_xe_ye_z$ &  $(\rho_2,V_{\rho_2})$ & $\langle v\rangle = \langle{e_xe_ye_z}\rangle$ & $\rho_2(x): v\mapsto v, \hskip1mm \rho_2(y) :v\mapsto v,\hskip1mm  \rho_2(z):v\mapsto v$ \\
			$e_{3} = e_xe_ye_{-z}$ &  $(\rho_3,V_{\rho_3})$ & $\langle v\rangle = \langle{e_xe_ye_{-z}}\rangle$ & $\rho_3(x): v\mapsto v, \hskip1mm \rho_3(y) :v\mapsto v,\hskip1mm  \rho_3(z):v\mapsto -v$ \\
			$e_{4} = e_xe_{-y}e_z$ &  $(\rho_4,V_{\rho_4})$ & $\langle v\rangle = \langle{e_xe_{-y}e_z}\rangle$ & $\rho_4(x): v\mapsto v, \hskip1mm \rho_4(y) :v\mapsto -v,\hskip1mm  \rho_4(z):v\mapsto v$ \\
			$e_{5} = e_xe_{-y}e_{-z}$ &  $(\rho_5,V_{\rho_5})$ & $\langle v\rangle = \langle{e_xe_{-y}e_{-z}}\rangle$ & $\rho_5(x): v\mapsto v, \hskip1mm \rho_5(y) :v\mapsto -v,\hskip1mm  \rho_5(z):v\mapsto -v$
			\\\hline\hline
		\end{tabular}
	\end{center}
\end{example}
\section{Special linear group $\mathrm{SL}_2(3)$}
\begin{example}\rm
	We construct the irreducible representations of $\mathrm{SL}_2(3)$ over $\mathbb{C}$ by using Clifford theory.
	Let us consider the maximal subnormal series
	\begin{equation}
	\langle e \rangle = G_{o} < C_2 = G_1 < C_4 = G_2 < Q_{8} = G_3 < \mathrm{SL}_2(3) = G_4.
	\end{equation}
	The long presentation associated with the series $(8.4.1)$ is
	\begin{align*}
	\mathrm{SL}_2(3) = \langle x,y,z,t \,|\, x^{2} = 1, y^{2} = x, y^{2} = z^{2}, z^{-1}yz = xy, t^{3} = 1, t^{-1}yt = z, t^{-1}zt = yz \rangle.
	\end{align*}
	The group $Q_8$ is a normal subgroup of index $3$ in $\mathrm{SL}_2(3)$. Now we construct the irreducible representations of $\mathrm{SL}_2(3)$ starting with the irreducible representations of $Q_{8}$. 
	
	Let $\rho_1$, $\rho_2$, $\rho_3$, $\rho_4$, $\rho_5$ be the five inequivalent irreducible representation $Q_{8}$ obtained in Example $(8.3)$. The trivial representation $\rho_1$ extends to three irreducible representations of $\mathrm{SL}_2(3)$, and are given by 
	$$\theta_1(t) = 1, \hskip3mm \theta_2(t) = \omega, \hskip3mm \theta_{3(t)} = \omega^{2}, \hskip5mm \mathrm{where} \, \omega = e^{2\pi i/3}.$$ 
	
	Notice that $\rho_2,\rho_3, \rho_4$ are conjugate to each other, so they induce the same irreducible representation $\theta_4$ of degree 3, and is given by
	\begin{center}
		$[\theta_4(x)] = I_{3},  \hskip3mm
		[\theta_4(y)] =
		\begin{bmatrix}
		1 & 0 & 0\\
		0  & -1  & 0\\
		0 & 0 & -1
		\end{bmatrix}, \hskip3mm
		[\theta_4(z)] = 
		\begin{bmatrix}
		-1 & 0 & 1\\
		0  & 1  & 0\\
		0 & 0 & -1 
		\end{bmatrix},$
	\end{center}
	\begin{center}
		$[\theta_4(t)] = 
		\begin{bmatrix}
		0& 0 & 1\\
		1  & 0  & 0\\
		0 & 1 & 0
		\end{bmatrix}$.
	\end{center}
	Since $\rho^{t}_{5} \cong \rho_{5}$, then $\rho_{5}$ extends to three irreducible representations of $\mathrm{SL}_2(3)$, let these extensions be $\theta_{5}, \theta_{6}, \theta_{7}$. 
	The matrix representations of $\theta_{5}, \theta_{6}, \theta_{7}$ are given by
	\begin{equation*}
	\theta_k(x)= 
	\begin{bmatrix}
	-1 & 0 \\
	0 & -1
	\end{bmatrix}, \hskip1mm 
	\theta_k(y)= 
	\begin{bmatrix}
	i & 0 \\
	0 & -i 
	\end{bmatrix},  \hskip1mm
	\theta_k(z)= 
	\begin{bmatrix}
	0 & -1 \\
	1 & 0 
	\end{bmatrix} \hskip3mm (k = 5,6,7),
	\end{equation*}
	and 
\small
	\begin{equation*}
	\theta_{5}(t) =\begin{bmatrix}
	\frac{-1+i}{2} & \frac{-1 - i}{2}\\
	\frac{1-i}{2} & \frac{-1-i}{2}\end{bmatrix}, 
	\theta_{6}(t) = \omega\begin{bmatrix}
	\frac{-1+i}{2} & \frac{-1 - i}{2}\\
	\frac{1-i}{2} & \frac{-1-i}{2}\end{bmatrix}, 
	\theta_{7}(t) = \omega^{2}\begin{bmatrix}
	\frac{-1+i}{2} & \frac{-1 - i}{2}\\
	\frac{1-i}{2} & \frac{-1-i}{2}\end{bmatrix}.
	\end{equation*}
	\normalsize
	\vspace{3mm}
	\noindent
	$(2)$ We construct the PCI-diagram associated with the series $(8.4.1)$.\\
	{{ Primitive central idempotents in $\mathbb{C}[Q_{8}]$ are}} 
$$e_xe_ye_z, \hskip3mm e_xe_ye_{-z}, \hskip3mm e_xe_{-y}e_z, \hskip3mm e_xe_{-y}e_{-z}, \hskip3mm e_{-x}e_{iy} + e_{-x}e_{-iy} = 1 - e_x.$$ 
	{{ Primitive central idempotents in $\mathbb{C}[\mathrm{SL}_2(3)]$ are}}\\ Let $\overline{C}(t)$ denotes the conjugacy class sum of $t$ as an element of $\mathbb{C}[\mathrm{SL}_2(3)]$. 
	\begin{center}
	\small
	\begin{tabular}{ll}

$u_1 = \frac{e_{-x}}{3} + \frac{1}{3}\begin{Bmatrix}\frac{\overline{C}(t)}{-2} + 
		\begin{pmatrix}\frac{\overline{C}(t)}{-2}\end{pmatrix}^{2}\end{Bmatrix}e_{-x},$
		& 
\hskip1mm $u_2 = \frac{e_{-x}}{3} + \frac{1}{3}\begin{Bmatrix}\frac{\overline{C}(t)}{-2} + 
		\omega\begin{pmatrix}\frac{\overline{C}(t)}{-2}\end{pmatrix}^{2}\end{Bmatrix}e_{-x}, $\\\\

$u_3 = \frac{e_{-x}}{3} + \frac{1}{3}\begin{Bmatrix}\frac{\overline{C}(t)}{-2} + 
		\omega^2\begin{pmatrix}\frac{\overline{C}(t)}{-2}\end{pmatrix}^{2}\end{Bmatrix}e_{-x},$ & \normalsize
		\hskip1mm
		$u_4 = e_xe_ye_{-z}+ e_xe_{-y}e_z + e_xe_{-y}e_{-z}, $\\\\
		$u_{5} = e_xe_ye_ze_{wt},$  & 
		\hskip1mm
		$u_{6} = e_xe_ye_ze_{w^{2}t},\hskip3mm u_{7} = e_xe_ye_ze_t,$
	\end{tabular}
\end{center}
where $e_X=\frac{1+X}{2}$; $X\in \{\pm\, x, \pm\, y,  \pm\,  z\}$; $e_Y=\frac{1+Y+Y^2}{3}$; $Y \in \{t,\, \omega t,\, \omega^2 t\}$.
\\
\noindent
{\it{PCI-diagram of $\mathrm{SL}_2(3)$}}:
The PCI-diagram associated with the subnormal series $(8.4.1)$ is
\begin{center}
	\begin{tikzpicture}
	\node at (0,0) {$\bullet$};
	\node at (-2.5,1.5) {$\bullet$};\node at (2.5,1.5) {$\bullet$};
	\node at (-4,3) {$\bullet$};\node at (-1,3) {$\bullet$};\node at (1,3) {$\bullet$};\node at (4,3) {$\bullet$};
	\node at (-5,4.5) {$\bullet$};\node at (-3,4.5) {$\bullet$};\node at (-2,4.5) {$\bullet$};\node at (-0.0,4.5) {$\bullet$};\node at (2.5,4.5) {$\bullet$};
	\draw (0,0)--(-2.5,1.5); \draw (0,0)--(2.5,1.5);
	\draw (-2.5,1.5)--(-4,3);\draw (-2.5,1.5)--(-1,3);\draw (2.5,1.5)--(4,3);\draw (2.5,1.5)--(1,3);
	\draw (-4,3)--(-5,4.5);\draw (-4,3)--(-3,4.5);
	\draw (-5, 4.5) -- (-7, 6.5); \draw (-5, 4.5) -- (-5, 6.5); \draw (-5, 4.5) -- (-3, 6.5);
	\node at (-7, 6.5) {$\bullet$}; \node at (-5, 6.5) {$\bullet$}; \node at (-3, 6.5) {$\bullet$};
	\draw (-1,3)--(-2,4.5);\draw (-1,3)--(-0.0,4.5);
	\draw (-3, 4.5)--(-1, 6.5) ;
	\draw (-2, 4.5)--(-1, 6.5) ;
	\draw (0, 4.5)--(-1, 6.5) ;
	\node at (-1, 6.5) {$\bullet$};
	\node at (-1, 6.8) { $u_4$};
	\draw (2.5, 4.5)--(0, 6.5) ;
	\draw (2.5, 4.5)--(2.5, 6.5) ;
	\draw (2.5, 4.5)--(5, 6.5) ;
	\node at (0, 6.5) {$\bullet$};
	\node at (2.5, 6.5) {$\bullet$};
	\node at (5, 6.5) {$\bullet$};
	\draw (1,3)--(2.5,4.5);\draw (4,3)--(2.5,4.5);
	\node at (0.2,-0.1) {$e$};
	\node at (-2.7,1.4) {$e_x$};\node at (3,1.4) {$e_{-x}$};
	\node at (-4.5,2.9) {$e_xe_y$};\node at (-1.8,3) {$e_xe_{-y}$};
	\node at (0.2,3) {$e_{-x}e_{iy}$};\node at (3,3) {$e_{-x}e_{-iy}$};
	\node at (-6,4.5) {$e_xe_ye_z$};\node at (-3.9,4.5) {$e_xe_ye_{-z}$};\node at (-1.2,4.5) {$e_xe_{-y}e_z$};
	\node at (.9,4.5) {$e_xe_{-y}e_{-z}$};\node at (3,4.5) {$e_{-x}$};
	\node at (-7, 6.8) {$u_{7}$};
	\node at (-5, 6.8) {$u_{6}$};
	\node at (-3, 6.8) {$u_{5}$};
	\node at (0.2, 6.8) { $u_3$};
	\node at (2.5, 6.8) { $u_2$};
	\node at (5 , 6.8) { $u_1$};
	\node at (-9, 0) {$G_{0}$};
	\node at (-9, 1.5) {$G_1$};
	\node at (-9, 3) {$G_2$};
	\node at (-9, 4.5) {$G_3$};
	\node at (-9, 6.5) {$G_4$};
	\end{tikzpicture}
\end{center}
\end{example}
\section{Alternating group $A_4$}
\begin{example}\rm
We construct the irreducible representations of $A_{4}$ over $\mathbb{C}$ by using our algorithm (see Chapter$6$).\\
Let us consider the maximal subnormal series
\begin{equation}
\langle e \rangle = G_0 < C_ 2 = G_1 < C_2 \times C_2 = G_2 < A_4 = G_3.
\end{equation}
The long presentation associated with the series $(8.5.1)$ is
$$ A_4 = \langle{ x, y, z | x^{2} = y^{2} = 1, xy = yx, z^{3} = 1, z^{-1}xz = y, z^{-1}yz = xy}\rangle.$$
Hence $G_0 = \{1\}$, $G_1 = \langle {x} \rangle$, $G_2 = \langle {x, y} \rangle$ and $G_3 = \langle  {x, y, z} \rangle$.\\ 
The primitive central idempotents in $\mathbb{C}[G_2]$, i.e., in $\mathbb{C}[C_{2} \times C_{2}]$ are 
$$e_xe_y, \hskip3mm e_xe_{-y}, \hskip3mm e_{-x}e_y, \hskip3mm e_{-x}e_{-y}.$$\\
The primitive central idempotents of $\mathbb{C}[G_3]$, i.e., in $\mathbb{C}[A_{4}]$ are 
$$e_xe_ye_z, \hskip2mm e_xe_ye_{\omega z},\hskip2mm e_xe_ye_{\omega^{2}z}, \hskip2mm e_{-x}e_y+e_xe_{-y}+e_{-x}e_{-y} = 1 - e_xe_y,$$
where $e_X =\frac{1+X}{2} \,\,(X\in \{\pm\, x, \pm\, y\}),\,\,\,  e_Y =\frac{1+Y+Y^2}{3} \,\,(Y\in \{z, \omega z, \omega^2 z\})$.\\\\
\noindent
{\it{PCI-diagram of $A_4$}:} The PCI-diagram associated with the series $(8.5.1)$ is
\begin{center}
	\begin{tikzpicture}
	\draw (-2,1.5)--(0,0)--(2,1.5);
	\draw (-1,3)--(-2,1.5)--(-3,3);
	\draw (1,3)--(2,1.5)--(3,3);
	\draw (-5,4.5)--(-3,3)--(-3,4.5);
	\draw (-3,3)--(-1,4.5);
	\draw (-1,3)--(1,4.5);
	\draw (1,3)--(1,4.5)--(3,3);
	\node at (0,0) {$\bullet$};
	\node at (-2,1.5) {$\bullet$};\node at (2,1.5) {$\bullet$};
	\node at (-3,3) {$\bullet$};\node at (-1,3) {$\bullet$};\node at (1,3) {$\bullet$};\node at (3,3) {$\bullet$};
	\node at (-5,4.5) {$\bullet$};\node at (-3,4.5) {$\bullet$};\node at (-1,4.5) {$\bullet$};\node at (1,4.5) {$\bullet$};
	\node at (0.2,-0.1) {$e$};
	\node at (-2.3,1.4) {$e_x$};\node at (2.5,1.4) {$e_{-x}$};
	\node at (-3.5,2.9) {$e_xe_y$};\node at (-0.3,2.9) {$e_xe_{-y}$};
	\node at (1.7,2.9) {$e_{-x}e_y$};\node at (3.8,2.9) {$e_{-x}e_{-y}$};
	\node at (-5.5,4.8) {$e_xe_ye_z$};\node at (-3,4.8) {$e_xe_ye_{\omega z}$};\node at (-1,4.8) {$e_xe_ye_{\omega^2 z}$};
	\node at (3,4.8) {$e_{-x}e_y+e_xe_{-y}+e_{-x}e_{-y}$};
	\node at (-7.5,0) {$G_0$};\node at (-7.5,1.5) {$G_1$};\node at (-7.5,3) {$G_2$};\node at (-7.5,4.5) {$G_3$};
	\end{tikzpicture}
\end{center}
Here, $e_X =\frac{1+X}{2} \,\,(X\in \{\pm\, x, \pm\, y\}),\,\,\,  e_Y =\frac{1+Y+Y^2}{3} \,\,(Y\in \{z, \omega z, \omega^2 z\})$\\\\
\noindent
{\it{Irreducible representation corresponding to the primitive central idempotent $e_1 = 1 - e_xe_y$:}} 
Let $(\rho_1,V_1)$ be the irreducible representation corresponding to $e_1$. The subspace $\langle{e_xe_{-y}, ze_xe_{-y}, z^{2}e_xe_{-y}}\rangle$ of $\mathbb{C}[G_3]$ can serve as a model for $V_{\rho_1}$. 
The action of $\rho$ on the basis $\{e_xe_{-y}, ze_xe_{-y}, z^{2}e_xe_{-y}\}$ is  given by
\begin{align*}
\rho_1(x): & e_xe_{-y} \mapsto xe_xe_{-y} = e_xe_{-y},\\  
& ze_xe_{-y} \mapsto xze_xe_{-y} = -ze_xe_{-y},\\             
&z^{2}e_xe_{-y} \mapsto xz^{2}e_xe_{-y} = -z^{2}e_xe_{-y},             
\end{align*}
\begin{align*}
\rho_1(y): & e_xe_{-y} \mapsto ye_xe_{-y} = -e_xe_{-y},\\ 
&ze_xe_{-y} \mapsto yze_xe_{-y} = -ze_xe_{-y},\\
& z^{2}e_xe_{-y} \mapsto yz^{2}e_xe_{-y} = z^{2}e_xe_{-y},
\end{align*}

and 
\begin{align*}
\rho_1(z) : & e_xe_{-y} \mapsto ze_xe_{-y} = e_xe_{-y},\\ 
& ze_xe_{-y} \mapsto z^{2}e_xe_{-y},\\ 
& z^{2}e_xe_{-y} \mapsto z^{3}e_xe_{-y} = e_xe_{-y}.
\end{align*}
Thus, the matrix representation of $\rho_1$ w.r.t. the basis $\{e_xe_{-y}, ze_xe_{-y}, z^{2}e_xe_{-y}\}$ is 
\begin{equation*}
[\rho_1(x)]=
\begin{bmatrix}
1 & 0  & 0\\
0 & -1 & 0\\
0 & 0  & -1
\end{bmatrix}, 
[\rho_1(y)]=
\begin{bmatrix}
-1 & 0 & 0\\
0 & -1 & 0\\
0 & 0 & 1
\end{bmatrix}, 
[\rho_1(z)] =
\begin{bmatrix}
0 & 0 & 1\\
1 & 0 & 0\\
0 & 1 & 0
\end{bmatrix}.
\end{equation*}
The representations corresponding to primitive central idempotents  $e_xe_ye_z$, $e_xe_ye_{\omega z}$ and 
$e_xe_ye_{\omega^2z}$ are all $1$-dimensional, and can be obtained in similar way. In the following table, $e$ denotes a 
primitive central idempotent in $\mathbb{C}[G_3]$, $(\rho,V_{\rho})$ denotes the corresponding irreducible representation, and the model space for the representation space $V_{\rho}$ is given as a subspace of $\mathbb{C}[G_3]$. 
\begin{center}
	\small
	\begin{tabular}{||c|c|c|c||}
		\hline \hline
		$e$ & $(\rho,V_{\rho})$ & Model for $V_{\rho}$ & Action of $\rho$ on $V_{\rho}$\\\hline \hline
		$e_{2} = e_xe_ye_z$ &  $(\rho_2,V_{\rho_2})$ & $\langle v\rangle = \langle{e_xe_ye_z}\rangle$ & $\rho_2(x): v\mapsto v, \hskip1mm \rho_2(y) :v\mapsto v,\hskip1mm  \rho_2(z):v\mapsto v$ \\
		$e_{3} = e_xe_ye_{\omega z}$ &  $(\rho_3,V_{\rho_3})$ & $\langle v\rangle = \langle{e_xe_ye_{\omega z}}\rangle$ & $\rho_3(x): v\mapsto v, \hskip1mm \rho_3(y) :v\mapsto v,\hskip1mm  \rho_3(z):v\mapsto \omega^2 v$ \\
		$e_{4} = e_xe_ye_{\omega^2 z}$ &  $(\rho_4,V_{\rho_4})$ & $\langle v\rangle = \langle{e_xe_ye_{\omega^2 z}}\rangle$ & $\rho_4(x): v\mapsto v, \hskip1mm \rho_4(y) :v\mapsto v,\hskip1mm  \rho_4(z):v\mapsto \omega v$\\
		\hline \hline
	\end{tabular}
\end{center}
\end{example}
\section{Symmetric group $S_4$}
\begin{example}\rm
We construct the irreducible representations of $S_{4}$ over $\mathbb{C}$ by using Clifford theory.
Let us consider the maximal subnormal series: 
\begin{equation}
\langle e \rangle = G_0  < C_2 = G_1 < C_{2} \times C_{2} = G_2 < A_4 = G_3 < S_4 = G_4 = G.
\end{equation}
The long presentation associated with the above series $(8.6.1)$ is 
\begin{center}
	$S_4 = \langle x, y, z, t \,|\, x^{2} = y^{2} = 1, \, y^{-1}xy = x, \, z^{3} = 1, \, z^{-1}xz = y, \, z^{-1}yz = xy,\, t^{2} = 1, \, t^{-1}xt = x, 
	\, t^{-1}yt = xy, \, t^{-1}zt = z^{-1} \rangle$
\end{center}
We may consider $x = (12)(34), y = (13)(24), z = (123), t = (12)$. By construction,
$$\{e\} = G_0, \hskip2mm G_1 = \langle{x}\rangle , \hskip2mm  G_2 = \langle{x,y}\rangle, \hskip2mm  G_3 = \langle{x,y,z}\rangle, \hskip2mm  G_4 = \langle x,y,z,t \rangle.$$

Let $\rho_i$ ($i=1,2,3,4$) denote the matrix representations of $G_3 = A_4$ obtained in Example $8.5$, and starting from them, we construct the irreducible matrix representations of $S_4$. 
The trivial representation $\rho_2$ of $A_4$  extends in two ways to a representation of $S_4$, one is the trivial  and the other is the sign representation of $S_4$, and we denote them by $\theta_1$ and $\theta_2$. 
Since $A_4$ is the commutator subgroup of $S_4$, they are the only $1$-dimensional representations of $S_4$. 
Since $\rho_3$ is not equivalent to $\rho_3^t$, and $\rho_{4} \cong \rho_3^t$, then  $\rho_3$, $\rho_4$ induce the same irreducible representation of $S_{4}$, denoted by $\theta_3$, and  is given by
$$[\theta_3(x)] = \begin{bmatrix}
1 &  0\\
0 & 1\end{bmatrix}, \hskip3mm 
[\theta_3(y)] = \begin{bmatrix}
1 &  0\\
0 & 1\end{bmatrix},\hskip3mm 
[\theta_3(z)] =\begin{bmatrix}
w &  0\\
0 & w^{2}\end{bmatrix}, 
[\theta_3(t)] =\begin{bmatrix}
0 &  1\\
1 & 0\end{bmatrix}.$$
Consider the representation $\rho_1$ of $A_4$. For the purpose, we rewrite it here
\begin{equation*}
[\rho_1(x)]=
\begin{bmatrix}
1 & 0  & 0\\
0 & -1 & 0\\
0 & 0  & -1
\end{bmatrix}, 
[\rho_1(y)] =
\begin{bmatrix}
-1 & 0 & 0\\
0 & -1 & 0\\
0 & 0 & 1
\end{bmatrix}, 
[\rho_1(z)] =
\begin{bmatrix}
0 & 0 & 1\\
1 & 0 & 0\\
0 & 1 & 0
\end{bmatrix}.
\end{equation*}
Since ${\rho_1} \cong \rho_{1}^{t}$, then $\rho_1$ extends in two ways to an irreducible representation of $S_4$, denote them by $\theta_4$, and $\theta_5$. 
To determine $\theta_4$ and $\theta_5$, it is enough to determine $\theta_4(t)$ and $\theta_5(t)$. For simplicity, let $Y_i=\theta_i(t)$, $i=4,5$. Note that $Y_i \in $ Int$(\rho_1^t, \rho_1)$, the space of intertwining operators between 
$\rho_1^t$ and $\rho_1$ with $Y_i^2 = I_3$, for $i = 4, 5$.  Now using the relations 
$$t^{-1}xt=x, \hskip5mm t^{-1}yt=xy, \hskip5mm t^{-1}zt=z^2$$
the images of $\rho_1^t$ at $x,y,z$ can be calculated from that of $\rho_1$, which are given by 
\begin{equation*}
[\rho_1^t(x)] =
\begin{bmatrix}
1 & 0 & 0\\
0 & -1 & 0\\
0 & 0 & -1\end{bmatrix}, 
\hskip3mm 
[\rho _1^{t}(y)] = \begin{bmatrix}
-1 & 0 & 0\\
0 & 1 & 0\\
0 & 0 & -1\end{bmatrix},  \hskip3mm 
[\rho _1^{t}(z)] = 
\begin{bmatrix}
0 & 1 & 0\\
0 & 0 & 1\\
1 & 0 & 0\end{bmatrix}.
\end{equation*}
Now $Y_i o \rho_1^t(g)=\rho_1(g) o Y_i$ for $g\in\langle x,y,z\rangle$ ($i=4,5$), considering these equations for $g=x,y,z$ and 
$Y_i^2=I_3$, we get  
\begin{equation*}
Y_4
=\begin{bmatrix}
1 & 0 & 0\\
0 & 0 & 1\\
0 & 1 & 0\end{bmatrix}
\mbox{ and } Y_5 = \begin{bmatrix}
-1 & 0 & 0\\
0  & 0  & -1\\
0 & -1 & 0\end{bmatrix}
\end{equation*} Thus, $\theta_i(t) = Y_{i}$ ($i=4,5$) are the two extensions of the representation $\rho_1$.

We have obtained the irreducible representations of $S_4$ by using Clifford theory, and now we obtain them by using our algorithm (see Chapter$6$). 

The primitive central idempotents in $\mathbb{C}[A_4]$ are
$$e_xe_ye_z,  \hskip3mm e_xe_ye_{wz}, \hskip3mm e_xe_ye_{w^2z}, \hskip3mm e_xe_{-y} + e_{-x}e_y + e_{-x}e_{-y} = 1 - e_xe_y.$$

Let $\overline{C}(t)\in\mathbb{C}[S_4]$ denote the conjugacy class sum of $t$ in $S_4$. The complete set of primitive central idempotents in $\mathbb{C}[S_4]$ are
\begin{align*}
& b_1 = \frac{(1 - e_xe_y)}{2} + \frac{1}{2}\begin{Bmatrix}  \frac{\overline{C}(t)(1 - e_{x}e_y)}{2}\end{Bmatrix},  \hskip3mm 
b_2 = \frac{(1 - e_xe_y)}{2} + \frac{1}{2}\begin{Bmatrix} \frac{\overline{C}(t)(1 - e_{x}e_y)}{-2}\end{Bmatrix}, \\
& b_3 = e_xe_ye_ze_t,  \hskip5mm b_4 = e_xe_ye_ze_{-t}, \hskip5mm b_5 = e_xe_ye_{wz} +  e_xe_ye_{w^2z}.
\end{align*}
{\it{PCI-diagram of $S_4$}}:
The PCI-diagram associated with the subnormal series $(8.6.1)$ is
\begin{center}
	\small
	\begin{tikzpicture}
	\draw (-2,1.5)--(0,0)--(2,1.5);
	\draw (-1,3)--(-2,1.5)--(-3,3);
	\draw (1,3)--(2 ,1.5)--(3,3);
	\draw (-5.5,4.5)--(-3,3)--(-2.3,4.5);
	\draw (-3,3)--(-4, 4.5);
	\draw (-1,3)--(1,4.5);
	\draw (1,3)--(1,4.5)--(3,3);
	\draw (-7.8, 6.3)--(-5.5,4.5)--(-5.5,6.3);
	\draw (-4, 4.5)--(-3.25,6.3)--(-2.3,4.5);
	\draw (-1.3, 6.3)--(1,4.5)--(3.1,6.3);
	\node at (-5.5, 6.3) {$\bullet$};
	\node at (-3.25, 6.3) {$\bullet$};
	\node at (-7.8, 6.3) {$\bullet$};
	\node at (-1.3, 6.3) {$\bullet$};
	\node at (3.1, 6.3) {$\bullet$};
	\node at (0,0) {$\bullet$};
	\node at (-2,1.5) {$\bullet$};\node at (2,1.5) {$\bullet$};
	\node at (-3,3) {$\bullet$};\node at (-1,3) {$\bullet$};\node at (1,3) {$\bullet$};\node at (3,3) {$\bullet$};
	\node at (-5.5,4.5) {$\bullet$};\node at (-4,4.5) {$\bullet$};\node at (-2.3,4.5) {$\bullet$};\node at (1,4.5) {$\bullet$};
	\node at (0.2,-0.1) {$e$};
	\node at (-2.2,1.4) {$e_x$};\node at (2.4,1.4) {$e_{-x}$};
	\node at (-3.5,2.9) {$e_xe_y$};\node at (-0.3,2.9) {$e_xe_{-y}$};
	\node at (1.7,2.9) {$e_{-x}e_y$};\node at (3.8,2.9) {$e_{-x}e_{-y}$};
	\node at (-6.2,4.5) {$e_xe_ye_z$};\node at (-3.2 ,4.5) {$e_xe_ye_{\omega z}$};\node at (-1.5,4.5) {$e_xe_ye_{\omega^2 z}$};
	\node at (2,4.5) {$1 - e_xe_y$};
	\node at (-8.1,6.6) {$b_{3}$};\node at (-5.8 ,6.6) {$b_{4}$};\node at (-3.3, 6.6) {$b_{5}$};
	\node at (-1.3, 6.6) {$b_2$ }; 
	\node at (3.1, 6.6) {$b_1$}; 
	\node at (-10, 0) {$G_{0}$};
	\node at (-10, 1.5) {$G_1$};
	\node at (-10, 3) {$G_2$};
	\node at (-10, 4.5) {$G_3$};
	\node at (-10, 6.5) {$G_4$};
	\end{tikzpicture}
\end{center}
\noindent
{\it{Irreducible representation corresponding to the primitive central idempotent $b_1$}}: Let $(\rho_1, V_{\rho_1})$ be the irreducible representation corresponding to the primitive central idempotent $b_1$. 
The subspace $\langle{e_xe_{-y}b_1, ze_xe_{-y}b_1, z^{2}e_xe_{-y}b_1}\rangle$ of $\mathbb{C}[G_4]$ can serve as a model for $V_{\rho_1}$. 
The action of  $\rho_1$ on the basis $\{e_xe_{-y}b_1, ze_xe_{-y}b_1, z^{2}e_xe_{-y}b_1 \}$ is given by 
\small
\begin{equation*}
[\rho_1(x)] =
\begin{bmatrix}
1 & 0  & 0\\
0 & -1 & 0\\
0 & 0  & -1
\end{bmatrix},
[\rho_1(y)] =
\begin{bmatrix}
-1 & 0 & 0\\
0 & -1 & 0\\
0 & 0 & 1
\end{bmatrix},
[\rho_1(z)] =
\begin{bmatrix}
0 & 0 & 1\\
1 & 0 & 0\\
0 & 1 & 0
\end{bmatrix},
[\rho_1(t)] =  \begin{bmatrix}
1 & 0 & 0\\
0 & 0 & 1\\
0 & 1 & 0\end{bmatrix}.
\end{equation*}
\normalsize
\vskip2mm\noindent
{\it Irreducible representation corresponding to the primitive central idempotent $b_2$}: Let $(\rho_2, V_{\rho_2})$ be the irreducible representation corresponding to the primitive central idempotent $b_2$. The subspace
$\langle{e_xe_{-y}b_2, ze_xe_{-y}b_2, z^{2}e_xe_{-y}b_2}\rangle$ of $\mathbb{C}[G_4]$ can serve as a model for $V_{\rho_2}$. 
The action of $\rho_2$ on the basis  $\{ e_xe_{-y}b_2, ze_xe_{-y}b_2, z^{2}e_xe_{-y}b_2 \}$ is given by
\small
\begin{equation*}
[\rho_2(x)] =
\begin{bmatrix}
1 & 0  & 0\\
0 & -1 & 0\\
0 & 0  & -1
\end{bmatrix}\hskip-1mm,
[\rho_2(y)] =
\begin{bmatrix}
-1 & 0 & 0\\
0 & -1 & 0\\
0 & 0 & 1
\end{bmatrix}\hskip-1mm,
[\rho_2(z)] =
\begin{bmatrix}
0 & 0 & 1\\
1 & 0 & 0\\
0 & 1 & 0
\end{bmatrix}\hskip-1mm,
[\rho_2(t)] = \begin{bmatrix}
-1 & 0 & 0\\
0 & 0 & -1\\
0 & -1 & 0\end{bmatrix}\hskip-1mm.
\end{equation*}
\normalsize

\noindent{\it{Irreducible representation corresponding to the primitive central idempotent $b_3 = e_xe_ye_ze_t$}}: Let $(\rho_3,V_{\rho_3})$ be the irreducible representation corresponding to $b_3$. Then $\dim_{\mathbb{C}} V_{\rho_3}=1$. The subspace
$\langle e_xe_ye_ze_t\rangle$ of $\mathbb{C}[G_4]$ can serve as a model for $V_{\rho_3}$. The action of $\rho_3$ on $\{ e_xe_ye_ze_t \}$ is given by
\begin{align*}
\rho_3(x): e_xe_ye_ze_t \mapsto xe_xe_ye_ze_t = e_xe_ye_ze_t,  & \rho_3(y): e_xe_ye_ze_t \mapsto ye_xe_ye_ze_t = e_xe_ye_ze_t,\\
\rho_3(z): e_xe_ye_ze_t \mapsto ze_xe_ye_ze_t = e_xe_ye_ze_t , & \rho_3(t): e_xe_ye_ze_t \mapsto te_xe_ye_ze_t = e_xe_ye_ze_t .
\end{align*}
Therefore, the representation $\rho_3$ is the trivial representation of $S_4$.\\
\noindent
{\it{Irreducible representation corresponding to the primitive central idempotent $b_4 = e_xe_ye_ze_{-t}$}}: Let $(\rho_4,V_{\rho_4})$ be the irreducible representation corresponding to $b_4$. Then $\dim_{\mathbb{C}}V_{\rho_4}=1$. The subspace
$\langle e_xe_ye_ze_{-t}\rangle$ of $\mathbb{C}[G_4]$ can serve as a model for $V_{\rho_4}$. The action of $\rho_4$ on $\{ e_xe_ye_ze_{-t} \}$ is given by
\small
\begin{align*}
\rho_4(x):e_xe_ye_ze_{-t}\mapsto xe_xe_ye_ze_{-t}=e_xe_ye_ze_{-t},\hskip2mm &\rho_4(y): e_xe_ye_ze_{-t}\mapsto ye_xe_ye_ze_{-t} = e_xe_ye_ze_{-t},\\
\rho_4(z) \colon e_xe_ye_ze_{-t} \mapsto ze_xe_ye_ze_{-t} = e_xe_ye_ze_t, & \hskip2mm
\rho_4(t) \colon e_xe_ye_ze_{-t} \mapsto te_xe_ye_ze_{-t} = -e_xe_ye_ze_{-t} .
\end{align*}
\normalsize
Therefore, the representation of $\rho_3$ is given by
\begin{equation*}
\rho_4(x) = 1,  \hskip3mm \rho_4(y) = 1, \hskip3mm \rho_4(z) = 1, \hskip3mm \rho_4(t) = -1, 
\end{equation*}
and which is the sign representation of $S_{4}$.
\vskip3mm
\noindent
{\it Irreducible representation corresponding to the primitive central idempotent {$b_5 = e_xe_ye_{wz}$ $+$ $e_xe_ye_{w^{2}z}$}}: Let $(\rho_{5},V_{\rho_5})$ be the irreducible representation corresponding to $b_{5}$. Then $\dim_{\mathbb{C}}V_{\rho_5}=2$. The subspace $\langle e_xe_ye_{wz}, te_xe_ye_{wz}\rangle$ of $\mathbb{C}[G_4]$ can serve as a model for 
$V_{\rho_5}$. 
The action of $\rho_5$ on $\langle e_xe_ye_{wz}, te_xe_ye_{wz}\rangle$ is given by
\begin{align*}
\rho_{5}(x) : & e_xe_ye_{wz} \mapsto xe_xe_ye_{wz} = e_xe_ye_{wz} ,\\
& te_xe_ye_{wz} \mapsto xte_xe_ye_{wz} = txe_xe_ye_{wz} = te_xe_ye_{wz},
\end{align*}
and similarly, one can see that 
\small
\begin{align*}
\rho_{5}(y):  e_xe_ye_{wz} \mapsto  e_xe_ye_{wz} , & \hskip3mm\rho_{5}(z) : e_xe_ye_{wz} \mapsto w^{2}e_xe_ye_{wz} , &\rho_{5}(t) : e_xe_ye_{wz} \mapsto te_xe_ye_{wz},\\
te_xe_ye_{wz} \mapsto te_xe_ye_{wz}, &\hskip1cm  te_xe_ye_{wz} \mapsto wte_xe_ye_{wz}, & te_xe_ye_{wz} \mapsto  e_xe_ye_{wz}.
\end{align*}
\normalsize
Therefore w.r.t. the basis $\{ e_xe_ye_{wz}, te_xe_ye_{wz} \}$ the matrix representation of $\rho_{5}$ is 
$$[\rho_5(x)] =\begin{bmatrix}
1 &  0\\
0 & 1\end{bmatrix}, 
[\rho_5(y)] =\begin{bmatrix}
1 &  0\\
0 & 1\end{bmatrix}, 
[\rho_5(z)] =\begin{bmatrix}
w^{2} &  0\\
0 & w\end{bmatrix}, 
[\rho_5(t)] =\begin{bmatrix}
0 &  1\\
1 & 0\end{bmatrix}.$$
 
\end{example}
\chapter{Summary and Future work}
In this chapter, we present summary of the thesis and directions for future research work.
\section{Summary of the thesis}
This thesis started with an introduction which consists of definitions, motivation, main results of the thesis and organization of the thesis. All the results of the Chapter $2$, {'\it {Semisimple algebras}'} are well known.

In Chapter $3$, {'\it {$F$-conjugacy, $F$-character table, $F$-idempotents'}}, we presented a brief introduction to Schur's theory (see \cite{re98}, \cite{sc70}) on group representations over a field with characteristic $0$ or prime to $|G|$. We showed that $F$-conjugacy class of an element of order $n$ in a group $G$ is the union of certain conjugacy classes, and which is determined by the decomposition of $n$-th cyclotomic polynomial $\Phi_{n}(X)$ into irreducible polynomials over $F$. We gave a formula for computing the primitive central idempotents of a semisimple group algebra $F[G]$ in terms of $F$-characters and $F$-conjuagcy classes in $G$, which can be obtained from the $F$-character table.

In Chapter $4$, {'\it{Clifford theory'}}, we described the classical approach to inductive construction of the irreducible representations of a finite solvable group $\mathbb{C}$, and which is based on Clifford theory on group representations (see \cite{alg111}, \cite{cl50}, \cite{combi91}) with various examples. We noted some important consequences of this approach. 

In Chapter $5$, {'\it{Diagonal subalgebra'}}, we introduced the notion of a diagonal subalgebra (unique up to conjugacy) of a group algebra over an algebraically closed field of characteristic $0$. We gave an introduction to Gelfand-Tsetlin alegbra (see \cite{Mu106}) for an inductive chain of subgroups of a finite group with simple branching. A finite solvable group always has a subnormal series such that successive quotients are cyclic groups of prime order, and which is a multiplicity free chain of subgroups. So, one can consider the Gelfand-Tsetlin algebra associated with such a series, and in fact this is the diagonal subalgebra. We found a convenient set of generators of the Gelfand-Tsetlin algebra using a long system of generators. 
Apart from that we gave an indirect proof of the Berman's theorem (see \cite{berman120}), which gives an inductive construction of the primitive central idempotents of $\mathbb{C}[G]$, where $G$ is a finite solvable group.

In Chapter $6$, {'\it {Algorithmic construction of matrix representations of a finite solvable group over $\mathbb{C}$'}}, we gave an algorithm for constructing the irreducible matrix representations of a finite solvable group over $\mathbb{C}$ using a long presentation. 

For a finite solvable group $G$ of order $N = p_{1}p_{2}\dots p_{n}$, where $p_{i}$'s are primes, there always exists a subnormal series: $\langle {e} \rangle = G_{o} < G_{1} < \dots < G_{n} = G$ such that $G_{i}/G_{i-1}$ is isomorphic to a cyclic group of order $p_{i}$, $i = 1,2,\dots,n$. Associated with this series, there exists a long system of generators consisting $n$ elements $x_{1}, x_{2}, \dots , x_{n}$ (say), such that $G_{i} = \langle x_{1}, x_{2}, \dots , x_{i} \rangle$, $i = 1,2,\dots,n$. In terms of this system of generators and conjugacy class sum of $x_{i}$ in $G_{i}$, $i = 1,2, \dots, n$, we presented an algorithm for constructing the irreducible matrix representations of $G$ over $\mathbb{C}$ within the group algebra $\mathbb{C}[G]$. This algorithmic construction needs the knowledge of the primitive central idempotents, a well defined set of primitive (not necessarily central) idempotents and the diagonal subalgebra of $\mathbb{C}[G]$. 

In Chapter $7$, {'\it {Representations of finite abelian groups'}}, we gave an algorithm for constructing the irreducible matrix representations of a finite abelian group $G$ over a field $F$ of characteristic $0$ or prime to order of the group. Since every irreducible $F$-representation of $G$ factors through a faithful irreducible representation of a cyclic quotient, so it is sufficient to construct the faithful irreducible matrix representations of its cyclic quotients, which depends on arithmetic of $F$. By using the same philosophy we gave a systematic way of computing the primitive central idempotents of $F[G]$.
Besides that using a long presentation, we gave character-free expressions of the primitive central idempotents of a rational group algebra of finite abelian group and its Wedderburn decomposition. 

In Chapter $8$, {'\it{Examples'}}, we illustrate our algorithm and the classical approach based on Clifford theory for constructing the irreducible matrix representations of finite solvable groups over $\mathbb{C}$ with various examples.
\section{Future work}
\subsection{Representations of finite solvable groups over non- algebraically closed fields}
To my knowledge, the classification of representations over non-algebraically fields of finite solvable groups is not available in the literature. In this case, the arithmetic of the field will play significant role. First, I would like to consider the irreducible $\mathbb{Q}$-representations of finite solvable groups, then we consider the general case. 

\subsection{Primitive central idempotents of finite solvable groups over non-algebraically closed fields}
Computation of the primitive central idempotents of the semisimple group algebra of a finite group is an interesting problem. We have seen that there is a nice formula (see \cite{Yam80}) for computing the primitive central idempotents of a semisimple group algebra in terms of irreducible characters. In view of computational difficulties of this method, the following problem has been raised in recent years.

{\bf{Problem:}} Is there a character-free method for computing the primitive central idempotents of a semisimple group algebra $F[G]$.

This problem has been solved for certain classes of groups and for some specific fields. The primitive central idempotents in the group algebra of a finite nilpotent group over a field $F$ of characteristic $0$ or  prime to $|G|$ has been determined in \cite{al86}. This generates a natural interest in the following problem.

{\bf{Problem:}} Let $G$ be a finite solvable group and $F$ be a field of characteristic $0$ or prime to $|G|$. Compute the primitive central idempotents of ${F}[G]$.


\end{document}